\setlist{leftmargin=9mm}
\numberwithin{equation}{section}
\newcommand{\N}{\mathbb{N}}
\newcommand{\R}{\mathbb{R}}
\newcommand{\pnorm}[2]{\lVert #1\rVert_{#2}}
\newcommand{\bigpnorm}[2]{\big\lVert#1\big\rVert_{#2}}
\newcommand{\biggpnorm}[2]{\bigg\lVert#1\bigg\rVert_{#2}}
\newcommand{\abs}[1]{\lvert#1\rvert}
\newcommand{\bigabs}[1]{\big\lvert#1\big\rvert}
\newcommand{\biggabs}[1]{\bigg\lvert#1\bigg\rvert}
\newcommand{\iprod}[2]{\langle#1,#2\rangle}
\newcommand{\bigiprod}[2]{\big\langle#1,#2\big\rangle}
\renewcommand{\epsilon}{\varepsilon}
\renewcommand{\d}[1]{\mathrm{d}#1}
\newcommand{\smallop}{\mathfrak{o}_{\mathbf{P}}}
\newcommand{\bigop}{\mathcal{O}_{\mathbf{P}}}
\newcommand{\smallo}{\mathfrak{o}}
\newcommand{\bigo}{\mathcal{O}}
\renewcommand{\hat}{\widehat}
\renewcommand{\tilde}{\widetilde}
\DeclareMathOperator{\E}{\mathbb{E}}
\DeclareMathOperator{\Prob}{\mathbb{P}}
\DeclareMathOperator{\tr}{tr}
\DeclareMathOperator{\cov}{Cov}
\DeclareMathOperator{\op}{op}
\DeclareMathOperator{\err}{\mathsf{err}}
\DeclareMathOperator{\prox}{\mathsf{prox}}
\let\limsup\relax
\DeclareMathOperator*\limsup{\overline{lim}}
\DeclareMathOperator*{\argmin}{arg\,min\,}
\newcommand{\beq}{\begin{equation}}
\newcommand{\eeq}{\end{equation}}
\newcommand{\beqa}{\begin{equation} \begin{aligned}}
\newcommand{\eeqa}{\end{aligned} \end{equation}}
\newcommand{\beqas}{\begin{equation*} \begin{aligned}}
\newcommand{\eeqas}{\end{aligned} \end{equation*}}
\newcommand{\bit}{\begin{itemize}}
	\newcommand{\eit}{\end{itemize}}
\newcommand{\bmat}{\begin{bmatrix}}
	\newcommand{\emat}{\end{bmatrix}}
\theoremstyle{definition}\newtheorem{problem}{Problem}[section]
\theoremstyle{definition}\newtheorem{definition}[problem]{Definition}
\theoremstyle{remark}
\theoremstyle{remark}\newtheorem{remark}{Remark}
\theoremstyle{definition}
\theoremstyle{plain}\newtheorem{theorem}[problem]{Theorem}
\theoremstyle{plain}
\theoremstyle{plain}\newtheorem{lemma}[problem]{Lemma}
\theoremstyle{plain}\newtheorem{proposition}[problem]{Proposition}
\theoremstyle{plain}
\theoremstyle{plain}
	\def\MR#1{}
\begin{document}

\title[Entrywise dynamics and universality of GFOMs]{Entrywise dynamics and universality of general first order methods}
\thanks{The research of Q. Han is partially supported by NSF grant DMS-2143468.}

\author[Q. Han]{Qiyang Han}

\address[Q. Han]{
Department of Statistics, Rutgers University, Piscataway, NJ 08854, USA.
}
\email{qh85@stat.rutgers.edu}

\date{\today}

\keywords{approximate message passing, delocalization, empirical risk minimization, general first order methods, gradient descent, leave-k-out, logistic regression, random matrix theory, state evolution, universality}
\subjclass[2000]{60E15, 60G15}

\begin{abstract}
General first order methods (GFOMs), including many variants of gradient descent and approximate message passing algorithms, constitute a broad class of iterative algorithms widely applied in modern statistical learning problems. Some GFOMs also serve as constructive proof devices, iteratively characterizing the empirical distributions of statistical estimators in the asymptotic regime of large system limits for any fixed number of iterations.

This paper develops a non-asymptotic, entrywise characterization of the dynamics for a general class of GFOMs. Our characterizations capture the precise stochastic behavior of each coordinate of the GFOM iterates, and more importantly, hold universally across a broad class of heterogeneous random matrix models. As a corollary, we provide the first non-asymptotic description of the empirical distributions of the GFOM iterates beyond Gaussian ensembles.

We demonstrate the utility of our general GFOM theory through two sets of applications. In the first application, we develop a new algorithmic approach to prove universality for general empirical risk minimizers. Specifically, we establish new entrywise universality for a broad class of regularized least squares estimators in the linear model, by controlling the entrywise error relative to a suitably constructed GFOM iterate. This algorithmic proof method also systematically improves averaged universality results for general regularized regression estimators in the linear model, and resolves the universality conjecture for (regularized) maximum likelihood estimators in the logistic regression model. In the second application, we obtain entrywise Gaussian approximations for a general class of gradient descent algorithms. Our approach provides non-asymptotic state evolution for the bias and variance of the algorithm along the iteration path, applicable even to non-convex loss functions.

The proof relies on a new recursive leave-k-out method that provides `almost' delocalization for the GFOM iterates, and their higher-order derivatives with respect to the underlying random matrix ensembles. Crucially, our method ensures the validity of entrywise universality for up to poly-logarithmic many iterations, which facilitates effective $\ell_2/\ell_\infty$ control between certain GFOM iterates and statistical estimators in applications.
\end{abstract}

\maketitle


\setcounter{tocdepth}{1}
\tableofcontents

\sloppy


\section{Introduction}

\subsection{Overview and main results}
General first order methods (GFOMs), a concept first introduced in \cite{celentano2020estimation}, encompass a broad class of iterative algorithms that include many modern first order optimization methods. In its symmetric version, starting with an initialization $z^{(0)}\in \R^n$, the GFOM generates a sequence of iterates $z^{(1)},z^{(2)},\ldots,$ according to the update rule
\begin{align}\label{eqn:intro_GFOM}
z^{(t)} = A \mathsf{F}_t(z^{(0)},\cdots,z^{(t-1)})+\mathsf{G}_t (z^{(0)},\cdots, z^{(t-1)})\in \R^n,\quad t=1,2\ldots.
\end{align}
Here $A \in \R^{n\times n}$ is a symmetric matrix that remains fixed throughout the iterations, and $\mathsf{F}_t, \mathsf{G}_t:\R^{n\times [0:t-1]}\to \R^n$ are sufficiently smooth functions that act row-wise on the $n$-dimensional vectors $z^{(0)},\ldots,z^{(t-1)}$. 

Important special examples of (\ref{eqn:intro_GFOM}) (and its asymmetric version) include projected, proximal, and stochastic gradient descent algorithms, along with its accelerated (Nesterov) or noisy (Langevin) versions, and a general class of approximate message passing (AMP) algorithms, cf. \cite{celentano2020estimation,celentano2021high,gerbelot2024rigorous}. These algorithms have been widely used in statistical learning in high dimensions.

Beyond their intrinsic interest as a general class of iterative algorithms, (\ref{eqn:intro_GFOM}) can also be used as a proof device to understand the behavior of statistical estimators. Specifically, it is now well understood that various statistical estimators can be iteratively approximated by a suitable AMP algorithm \cite{bayati2011dynamics,bayati2012lasso,donoho2016high,sur2019likelihood,sur2019modern,bu2021algorithmic,li2021minimum,jiang2025new}, and therefore understanding towards the properties of the underlying AMP algorithm, or more generally the GFOM, provides direct insights into the behavior of the statistical estimators at hand. 

For instance, suppose $A$ and $B$ are symmetric $n\times n$ random matrices, whose upper triangular entries are independent, mean $0$, variance $1/n$ and suitably light-tailed variables. A standard formulation asserts that for a sufficiently good test function $\psi:\R \to \R$ and a fixed iteration $t \in \N$, there exists a deterministic constant $\kappa\big(\{\mathsf{F}_s,\mathsf{G}_s\}_{s\leq t},\psi\big)\in \R$ such that, almost surely or in probability,
\begin{align}\label{eqn:intro_dynamics_avg}
\lim_{n\to \infty} \frac{1}{n}\sum_{j \in [n]} \psi\big(z_j^{(t)}(A)\big) =\lim_{n\to \infty} \frac{1}{n}\sum_{j \in [n]} \psi\big(z_j^{(t)}(B)\big) =\kappa\big(\{\mathsf{F}_s,\mathsf{G}_s\}_{s\leq t},\psi\big).
\end{align}
In particular, (\ref{eqn:intro_dynamics_avg}) has been verified for the AMP algorithms in \cite{bayati2011dynamics,bayati2015universality,berthier2020state,chen2021universality}, and for some of the GFOM (\ref{eqn:intro_GFOM}) in the context of the so-called dynamical mean-field theory \cite{celentano2021high,montanari2022statistically,gerbelot2024rigorous}.

The primary utility of (\ref{eqn:intro_dynamics_avg}) lies in providing precise characterizations of global quantities associated with the GFOM iterate $z^{(t)}$---such as the standard $\ell_2$ estimation error---in the asymptotic regime of large system limits with a fixed number of iterations. Consequently, when the GFOM iterate $z^{(t)}$ is used as a proof device to approximate the statistical estimator---denoted $\hat{z}$---of interest, one may expect similar asymptotic characterizations to hold for global functionals of $\hat{z}$ in the large system limit. An important feature of this program is that the behavior of the GFOM iterate $z^{(t)}$---in the averaged sense of (\ref{eqn:intro_dynamics_avg})---is universal within the class of the aforementioned general Wigner ensembles, and therefore such averaged universality property is expected to carry over to the statistical estimator of interest.

In many high-dimensional statistical learning problems, however, it is more important to understand both the finite-sample, non-asymptotic behavior of the statistical estimator $\hat{z}$, as well as the behavior of its low-dimensional components beyond mere global functionals of the entire vector $\hat{z}$. Unfortunately, the existing technical machinery and the proof method of (\ref{eqn:intro_dynamics_avg}) present major obstacles in understanding both these aspects of the behavior of the GFOM iterate $z^{(t)}$, and ultimately, of the statistical estimator $\hat{z}$ of interest.

The main goal of this paper is to provide a non-asymptotic, entrywise distributional theory for the GFOM in its most general form (\ref{eqn:intro_GFOM}), which addresses both challenges mentioned above for the existing theory (\ref{eqn:intro_dynamics_avg}). Our main abstract results are two-fold:
\begin{enumerate}
	\item[(R1)]  (\emph{Universality}). We show in Theorem \ref{thm:universality} that under regularity conditions on $\{\mathsf{F}_t,\mathsf{G}_t\}$ and `typical scenarios', for any sufficiently good $\psi:\R\to \R$,
	\begin{align}\label{eqn:intro_entrywise_universality}
	\max_{j \in [n]} \bigabs{\E\psi \big(z_j^{(t)}(A)\big)- \E \psi\big(z_j^{(t)}(B)\big)}\leq (C_0\log n)^{c_0 t^3}\cdot n^{-1/2}.
	\end{align}
    Here $A,B$ are two symmetric $n\times n$ matrices with independent, mean $0$, light-tailed variables with matching second moments on their upper triangles.
    \item[(R2)] (\emph{State evolution}). We show in Theorem \ref{thm:GFOM_se_sym} that again under regularity conditions on $\{\mathsf{F}_t,\mathsf{G}_t\}$ and `typical scenarios', for any sufficiently good $\psi:\R\to \R$, 
    \begin{align}\label{eqn:intro_entrywise_se}
    \max_{j \in [n]} \bigabs{\E\psi \big(z_j^{(t)}(\texttt{Gaussian})\big)- \E \psi\big(\Theta_t(\mathfrak{Z}_j^{[0:t]})\big)}\leq (C_0\log n)^{c_0 t^5}\cdot n^{-1/c_0^t}.
    \end{align}
    Here $\Theta_t: \R^{t+1}\to \R$ is an implicit mapping, and $\mathfrak{Z}_j^{[0:t]}$ is the $j$-th row of a centered Gaussian matrix $\mathfrak{Z}^{[0:t]}\in \R^{n\times [0:t]}$, both determined recursively via the state evolution in Definition \ref{def:GFOM_se_sym} ahead.
\end{enumerate}

An important distinction of (\ref{eqn:intro_entrywise_universality}) and (\ref{eqn:intro_entrywise_se}) lies in the permissible range of $t$. In particular, the poly-logarithmic range $t\ll (\log n/\log \log n)^{1/3}$ in (\ref{eqn:intro_entrywise_universality}) will play a crucial role for the application of this result into proving several universality results of empirical risk minimizers in Section \ref{section:ERM}. 

As a straightforward by-product of (\ref{eqn:intro_entrywise_universality})-(\ref{eqn:intro_entrywise_se}), we obtain a non-asymptotic version of (\ref{eqn:intro_dynamics_avg}) that provides explicit error bounds for arbitrary moments of the difference between the two averages therein. To the best of our knowledge, non-asymptotic characterizations, even in the averaged formulation (\ref{eqn:intro_dynamics_avg}), have been confined to the Gaussian setting in the literature, cf. \cite{rush2018finite,li2022non,cademartori2024non}.

\subsection{Applications}

To demonstrate the technical scope of our main results (\ref{eqn:intro_entrywise_universality})-(\ref{eqn:intro_entrywise_se}) in concrete applications, we consider a canonical statistical learning setting, where we observe i.i.d. $(A_i,Y_i)\in \R^n\times \R$'s from the standard linear model
\begin{align}\label{def:linear_model}
Y_i= A_i^\top \mu_0+\xi_i,\quad i\in [m],
\end{align}
where $\xi_i\in \R$'s play the role of measurement errors. We focus on the following class of empirical risk minimizers:
\begin{align}\label{def:ERM}
\hat{\mu} = \hat{\mu}(A)\in \argmin_{\mu \in \R^n } \bigg\{\sum_{i \in [m]} \mathsf{L}(Y_i-A_i^\top \mu)+ \sum_{j \in [n]} \mathsf{f}(\mu_j) \bigg\}.
\end{align}
Here $\mathsf{L}:\R\to \R$ is a loss function (not necessarily convex at this point), and  $\mathsf{f}:\R\to \R_{\geq 0}$ is a convex regularizer.

\subsubsection{Universality of empirical risk minimizers}

A significant recent line of statistical theory for $\hat{\mu}$ in (\ref{def:ERM}) has focused on characterizing its precise stochastic behavior under the Gaussian design $A$, cf. \cite{bayati2012lasso,donoho2016high,thrampoulidis2018precise,sur2019modern,miolane2021distribution,celentano2023lasso,han2023noisy}. While the exact Gaussianity condition on $A$ appears rather restrictive, numerical experiments strongly suggest `universality' of these results for general non-Gaussian designs, which has therefore attracted significant recent attention in the literature\footnote{A complete literature review will be deferred to Section \ref{subsection:universality_review}.}.

In the first application, we develop a general algorithmic approach to proving the universality of both $\hat{\mu}$ in (\ref{def:ERM}) and beyond. The underlying conceptual idea is straightforward: we construct a GFOM iterate $\{\mu^{(t)}\}$ that approximates $\hat{\mu}$, and leverage our universality theory (\ref{eqn:intro_entrywise_universality}) to propagate the universality of  $\{\mu^{(t)}\}$ to its `limit' $\hat{\mu}$. A version of this high-level strategy was implemented in \cite{dudeja2024spectral} to establish asymptotic, averaged universality—specifically in the sense of  (\ref{eqn:intro_dynamics_avg})---for a class regularized estimators in the linear model under the squared loss. Here, we significantly enhance the scope of this algorithmic proof method by utilizing our non-asymptotic, entrywise universality theory (\ref{eqn:intro_entrywise_universality}). 

As a first example to illustrate the power of our new approach, we prove entrywise universality of $\hat{\mu}$ for the squared loss $\mathsf{L}(x)=x^2/2$.  The crux of our new approach, is to design a suitable sequence of (proximal) gradient descent estimates $\{\mu^{(t)}\}$---a special case of GFOM---that iteratively approximate $\hat{\mu}$ in an entrywise sense. Then by using an asymmetric version of (\ref{eqn:intro_entrywise_universality}), we show in Theorem \ref{thm:universality_ls} that for sufficiently regular $\mathsf{f}$, the entrywise universality of the iterate $\mu^{(t)}$ in (\ref{eqn:intro_entrywise_universality}) can be upgraded to $\hat{\mu}$ itself: for any sufficiently good $\psi:\R \to \R$, 
\begin{align}\label{eqn:intro_ls_universality}
\max_{j \in [n]} \bigabs{\E\psi \big(\hat{\mu}_j(A)\big)- \E \psi\big(\hat{\mu}_j(B)\big)}\leq C_0 e^{-(\log n)^{1/4}/C_0}.
\end{align}
To the best of our knowledge, (\ref{eqn:intro_ls_universality}) presents the first entrywise universality result for $\hat{\mu}$ that goes beyond the exclusive focus on the universality for global functionals associated with $\hat{\mu}$ in the literature\footnotemark[1].

Interestingly, the proof of the entrywise universality for $\hat{\mu}$ in (\ref{eqn:intro_ls_universality}) intrinsically requires the validity of our non-asymptotic theory (\ref{eqn:intro_entrywise_universality}) that allows the iteration $t$ to grow with $n$ at a poly-logarithmic rate. Fundamentally, this need arises because converting the entrywise universality result from $\mu^{(t)}$ to $\hat{\mu}$ requires precise control of $\pnorm{\mu^{(t)}-\hat{\mu}}{\infty}$ on the order $\smallop(1)$. We prove such a sharp, high probability control $\pnorm{\mu^{(t)}-\hat{\mu}}{\infty}\lesssim \log n\cdot (1-\epsilon_0)^t$ via a `second-order' leave-one-out argument in Section \ref{section:proof_ERM_ls},  which specifically requires $t\gg \log \log n$ to achieve effective $\ell_\infty$ control between $\mu^{(t)}$ and $\hat{\mu}$.

The technical flexibility in our non-asymptotic universality theory for general GFOMs in (\ref{eqn:intro_entrywise_universality}) also proves useful within the formulation of the standard, averaged universality (\ref{eqn:intro_dynamics_avg}). In particular:
\begin{itemize}
	\item In Theorem \ref{thm:avg_universality_ERM}, we prove the averaged universality for $\hat{\mu}$ for a general convex loss-penalty pair $(\mathsf{L},\mathsf{f})$---including Lasso and regularized robust regression estimators---under a large class of heterogeneous random matrix models. Surprisingly, the simple universality proof method via controlling the global $\ell_2$ error between a proximal gradient descent iterate $\{\mu^{(t)}\}$ and $\hat{\mu}$, provides systematic improvements over existing universality results for $\hat{\mu}$ that are confined either to Wigner-type ensembles \cite{han2023universality}, or to strongly convex problems under the squared loss \cite{dudeja2024spectral}.
	\item Beyond the linear model (\ref{def:linear_model}), we prove in Theorem \ref{thm:universality_logistic} the averaged universality for a general class of regularized maximum likelihood estimators in logistic regression. This is achieved by compensating for the discontinuity in the loss function with a progressively smoothed proximal gradient descent iterate, obtained after a suitable number of iterations that grow with $n$. To the best of our knowledge, universality for (regularized) logistic regression estimators remains open since the seminal works of \cite{salehi2019impact,sur2019likelihood,sur2019modern,candes2020phase} that provide exact characterizations under Gaussian designs. 
\end{itemize} 

An important feature of the above universality results is that they are entirely detached from understanding the global existence, uniqueness and the stability of the solution to the mean-field system of equations for $\hat{\mu}$. These properties, known only for specific instances of the pair $(\mathsf{L},\mathsf{f})$ and the random matrix ensemble $A$,  are now well recognized as the primary theoretical challenges in understanding the high dimensional behavior of $\hat{\mu}$, cf. \cite{sur2019modern,miolane2021distribution,celentano2023lasso,han2023noisy,montanari2023generalization}. Consequently, we expect that the algorithmic proof method developed here---which entirely circumvents the need to analyze these properties, much like the approach in \cite{dudeja2024spectral}, while also offering significant technical flexibility by accommodating non-trivial growth in the iterations---will have broad applicability in establishing universality for other empirical risk minimizers.

\subsubsection{Entrywise dynamics of gradient descent iterates}
In the second application, we precisely characterize the entrywise dynamics of a general class of gradient descent algorithms aimed at finding a solution $\hat{\mu}$ of (\ref{def:ERM}), under possibly non-convex loss functions $\mathsf{L}$. To keep the presentation simple, we shall focus on the Ridge regularizer $\mathsf{f}(x)=\lambda x^2/2$, where $\lambda\geq 0$. Specifically, we examine a stochastic gradient descent version of the following iterative algorithm: for a step size $\eta>0$, let for $t=1,2,\ldots$, 
\begin{align}\label{def:ERM_grad_descent_general_loss}
\mu^{(t)} \equiv \mu^{(t-1)}-\eta\cdot  \big(-A^\top \mathsf{L}'(Y-A \mu^{(t-1)})+\lambda \mu^{(t-1)}\big),
\end{align}
with the initialization $\mu^{(0)}=0$ (for simplicity). Here $\mathsf{L}':\R\to \R$ is applied component-wise. Using an asymmetric version of the state evolution (\ref{eqn:intro_entrywise_se}), we show in Theorem \ref{thm:grad_descent} that (uniformly) for $\ell \in [n]$,
\begin{align}\label{eqn:intro_gd}
\mu^{(t)}_\ell-\mu_{0,\ell}\stackrel{d}{\approx} \mathcal{N}\Big(b^{(t)}_{\ell;\mathsf{GD}} \mu_{0,\ell}, \sigma^{2,(t)}_{\ell;\mathsf{GD}} \Big).
\end{align}
Here $\big(b^{(t)}_{\ell;\mathsf{GD}}\big)_{\ell \in [n]}\in \R^n, \big(\sigma^{2,(t)}_{\ell;\mathsf{GD}}\big)_{\ell \in [n]} \in \R_{\geq 0}^n$, defined in (\ref{def:grad_descent_key_para}) via a deterministic state evolution (cf. Definition \ref{def:grad_descent_se}), precisely quantify the entrywise bias and variance of the (stochastic) gradient descent algorithm $\{\mu^{(t)}\}$ in approximating the underlying, unknown signal $\mu_0$ in the linear model (\ref{def:linear_model}). 

A closely related line of research \cite{celentano2021high,gerbelot2024rigorous} examines the high-dimensional, asymptotic behavior of the gradient descent algorithm (\ref{def:ERM_grad_descent_general_loss}) and its continuum gradient flow version. Here our non-asymptotic, entrywise distributional characterizations (\ref{eqn:intro_gd}) pave the way for a deeper understanding of the actual, algorithmic behavior of gradient descent methods beyond the scope of these existing technical tools. For instance, we expect results of the type (\ref{eqn:intro_gd}) to be useful for, but not limited to, (i) tracking the precise impact of the step size $\eta>0$ in the convergence/divergence of $\{\mu^{(t)}\}$ and its accelerated/noisy versions, even for possibly non-convex loss function $\mathsf{L}$'s; (ii) systematically quantifying the effect of implicit regularization in gradient descent methods due to early stopping, beyond the existing approaches to the special squared loss $\mathsf{L}(x)=x^2/2$ via direct random matrix techniques \cite{ali2019continuous,ali2020implicit}, and (iii) providing a distributional foundation for algorithmic debiased statistical inference methods, as recently introduced in \cite{bellec2024uncertainty}. Detailed applications along these directions fall beyond the scope of this paper and will be pursued elsewhere; see, e.g. the follow-up work \cite{han2024gradient} in the direction (iii).

\subsection{Proof techniques}

The method of proof for the entrywise universality (\ref{eqn:intro_entrywise_universality}) differs substantially from existing methods aimed at proving the averaged universality for the special case of the AMP algorithm. For instance, \cite{bayati2015universality,dudeja2023universality,wang2024universality} used the method of moments that involve intricate combinatorial calculations; \cite{chen2021universality} used Gaussian interpolation techniques, coupled with combinatorial estimates that control arbitrary moments of the derivatives in the sense that $\max_{k \in [n]}\E^{1/p}\abs{\partial^m_\cdot z^{(t)}_k)}^p\lesssim 1$ for a general order $m\in \N$. 

Here our approach is based on a variant of Chatterjee's version of the Lindeberg principle \cite{chatterjee2006generalization}, coupled with `almost' delocalization of the GFOM iterate $z^{(t)}\in \R^n$, its derivatives $\partial_{ij} z^{(t)},\partial_{ij}^2 z^{(t)}\in \R^n$ ($\partial_{ij}\equiv \partial/\partial A_{ij}$) and certain other higher-order interactions. In particular, we prove in Propositions \ref{prop:loo_l2_bound}-\ref{prop:der_cross_cubic} the following estimate under `typical scenarios': with probability at least $1-C_0 n^{-D}$, 
\begin{align}\label{eqn:intro_delocalization}
&\pnorm{z^{(t)}}{\infty}+ \max_{i,j\in [n]}\Big\{ \Big(\max_{k\in [n]\setminus \{i,j\}} n^{1/2} + \max_{k\in \{i,j\} }\Big) \bigabs{\partial_{ij} z^{(t)}_k}\vee \bigabs{\partial_{ij}^2 z^{(t)}_k} \Big\}\nonumber\\
&\qquad\qquad\qquad\qquad + n^{1/2}\max_{k \in [n]}\biggabs{\sum_{i,j\in [n]} A_{ij}^3 \partial_{ij}^3 z^{(t)}_k} \leq (C_0\log n)^{c_0 t^3}.
\end{align}
A concrete example of GFOM that shows the optimality of the estimate in (\ref{eqn:intro_delocalization}) (up to logarithmic factors) is provided in Remark \ref{rmk:deloc_der} ahead. Interestingly, already in the AMP setting, the second term in our delocalization estimate in (\ref{eqn:intro_delocalization}) provides a crucial, second-order non-asymptotic refinement to the derivative estimates obtained in \cite{chen2021universality} that holds beyond Gaussian ensembles.

The proof of the above delocalization estimate (\ref{eqn:intro_delocalization}) is based on a recursive leave-k-out method that iteratively reduces the size of the complicated summation in the formulae of the derivatives $\partial_{ij}^{(\cdot)} z^{(t)}$ (cf. Lemma \ref{lem:z_der_form}). In extreme synthesis, this method in the simplest form provides an inductive scheme to bound the complicated summation term
\begin{align}\label{eqn:intro_A_hadamard}
&\sum_{\ell_1,\ldots,\ell_r \in [n]} A_{k,\ell_1}A_{\ell_1,\ell_2}\cdots A_{\ell_{r-1},\ell_r}A_{\ell_r, i} \cdot \mathsf{H}_{t_1}(z_{\ell_1}^{(t_1)})\cdots \mathsf{H}_{t_r}(z_{\ell_r}^{(t_r)})
\end{align}
of size $r$, by those similar summation terms of size at most $r-1$. The major difficulty in controlling (\ref{eqn:intro_A_hadamard}) lies in the complicated dependence between the GFOM iterate $\{z^{(t)}\}$ and $A$. The leave-k-out method is used to formally establish that (\ref{eqn:intro_A_hadamard}) does not change significantly in order, when $\{A_{k,\ell_1}, A_{\ell_r, i}: \ell_1,\ell_r \in [n]\}$ are replaced by their independent copies. Once this is verified, a reduction in the size of (\ref{eqn:intro_A_hadamard}) can then be achieved through the concentration of linear and quadratic forms. 

The leave-k-out method discussed above is conceptually related to the leave-one-out approach in \cite{han2023universality}, which is used to establish delocalization estimates for statistical estimators. The key distinction is that the leave-one-out method in \cite{han2023universality} is applied only once to the statistical estimator of interest, whereas the leave-k-out method in this paper must be implemented recursively to reduce the complexity of (\ref{eqn:intro_A_hadamard}).

On the other hand, the key to the proof of the entrywise state evolution (\ref{eqn:intro_entrywise_se}) is to relate the GFOM iterate to an underlying AMP iterate via the implicit functions $\{\Theta_t\}$ in Definition \ref{def:GFOM_se_sym} ahead. This idea appeared in \cite{celentano2020estimation,celentano2021high,dudeja2024spectral} in an asymptotic form for (Gaussian) i.i.d. design matrices; here we provide a non-asymptotic correspondence in the case of random matrix ensembles with a general variance profile. Once the correspondence between the GFOM and an underlying AMP is established, we may then leverage the recent entrywise distribution theory for the AMP (in the Gaussian setting) developed in \cite{bao2025leave} to prove (\ref{eqn:intro_entrywise_se}).

\subsection{Further related literature}\label{subsection:universality_review}

Universality for various global functionals of statistical estimators has been intensively investigated in the literature. We refer the readers to \cite{korada2011applications,elkaroui2013asymptotic,montanari2017universality,panahi2017universal,elkaroui2018impact,oymak2018universality,abbasi2019universality,han2023universality,dudeja2024spectral} for a number of settings arising from the linear model, and \cite{montanari2022universality,gerace2022gaussian,montanari2023universality,hu2023universality} for recent universality results for training/test errors arising from generalized/non-linear models. 

From a technical perspective, the relationship between delocalization and universality of statistical estimators is explored in \cite{han2023universality,han2023distribution} within a related theoretical framework under the so-called Convex Gaussian Min-Max Theorem (CGMT), cf. \cite{gordon1985some,stojnic2013framework,thrampoulidis2018precise,miolane2021distribution,celentano2023lasso,han2023noisy,montanari2023generalization}. In essence, this approach asserts averaged universality of statistical estimators resulted from convex optimization problems, provided simultaneous delocalization is verified for both the primal and dual variables.

The technical approach of this paper draws inspiration from this line of works, but proves a much stronger phenomenon: delocalization of the GFOM iterate, as described in (\ref{eqn:intro_delocalization}), ultimately leads to the entrywise universality (\ref{eqn:intro_entrywise_universality}). When specialized to AMP iterates associated with convex problems, these delocalization results can be roughly understood as approximate verification of the simultaneous primal-dual delocalization condition required in the CGMT universality framework in \cite{han2023universality}. Interestingly, while our constructive approach iteratively captures the entrywise universality of the statistical estimator, this goal falls beyond the scope of the CGMT machinery, as the fluctuation of the cost optimum is usually much larger than the effect of individual coordinates.

In a very different direction, the stochastic behavior of various statistical estimators under rotational invariant $A$'s has been studied in \cite{gerbelot2020asymptotic,gerbelot2020asymptotic2,fan2022approximate}, with further asymptotic, averaged universality results obtained in \cite{dudeja2023universality,dudeja2024spectral,wang2024universality} via the method of moments. While some form of the leave-one-out method is known to be effective for rotational invariant ensembles \cite{bao2017local}, it remains open to extend these ideas to provide non-asymptotic entrywise dynamics and universality results under the prescribed random matrix models.

Finally, as mentioned above, our entrywise state evolution theory (\ref{eqn:intro_entrywise_se}) builds upon the recent non-asymptotic entrywise theory for AMP developed in \cite{bao2025leave}. Additionally, several related asymptotic entrywise results for AMP-type algorithms, varying in generality, can be found in \cite{chen2021convergence,hachem2024approximate,gufler2025stein}.

\subsection{Organization}
The rest of the paper is organized as follows. Section \ref{section:main_results_sym} provides formal statements for the entrywise universality (\ref{eqn:intro_entrywise_universality}) and state evolution (\ref{eqn:intro_entrywise_se}) for the GFOM iterate, and analogous results for an asymmetric version are stated in Section \ref{section:main_results_asym}. Section \ref{section:ERM} is devoted to the application to universality of regularized estimators in both linear and logistic regression models. Section \ref{section:grad_des} details the entrywise dynamics characterization (\ref{eqn:intro_gd}) for a stochastic gradient descent version of  (\ref{def:ERM_grad_descent_general_loss}). All proofs are then presented in Sections 
\ref{section:proof_universality}-\ref{section:proof_grad_descent}.

\subsection{Notation}
For any two integers $m,n$, let $[m:n]\equiv \{m,m+1,\ldots,n\}$, $(m,n]\equiv \{m+1,\ldots,n\}$ and $[m,n)\equiv \{m,m+1,\ldots, n-1\}$. We sometimes write for notational convenience $[n]\equiv [1:n]$. When $m>n$, it is understood that $[m:n]=\emptyset$.  

For $a,b \in \R$, $a\vee b\equiv \max\{a,b\}$ and $a\wedge b\equiv\min\{a,b\}$. For $a \in \R$, let $a_\pm \equiv (\pm a)\vee 0$. For a multi-index $a \in \mathbb{Z}_{\geq 0}^n$, let $\abs{a}\equiv \sum_{i \in [n]}a_i$. For $x \in \R^n$, let $\pnorm{x}{p}$ denote its $p$-norm $(0\leq p\leq \infty)$, and $B_{n;p}(R)\equiv \{x \in \R^n: \pnorm{x}{p}\leq R\}$. We simply write $\pnorm{x}{}\equiv\pnorm{x}{2}$ and $B_n(R)\equiv B_{n;2}(R)$. For $x \in \R^n$, let $\mathrm{diag}(x)\equiv (x_i\bm{1}_{i=j})_{i,j \in [n]} \in \R^{n\times n}$.

For a matrix $M \in \R^{m\times n}$, let $\pnorm{M}{\op},\pnorm{M}{F}$ denote the spectral and Frobenius norm of $M$, respectively. $I_n$ is reserved for an $n\times n$ identity matrix, written simply as $I$ (in the proofs) if no confusion arises. For two matrices $M,N \in \R^{m\times n}$ of the same size, let $M\circ N =(M_{ij}N_{ij})\in \R^{m\times n}$ be their Hadamard product.

We use $C_{x}$ to denote a generic constant that depends only on $x$, whose numeric value may change from line to line unless otherwise specified. $a\lesssim_{x} b$ and $a\gtrsim_x b$ mean $a\leq C_x b$ and $a\geq C_x b$, abbreviated as $a=\bigo_x(b), a=\Omega_x(b)$ respectively;  $a\asymp_x b$ means $a\lesssim_{x} b$ and $a\gtrsim_x b$. $\bigo$ and $\smallo$ (resp. $\mathcal{O}_{\mathbf{P}}$ and $\mathfrak{o}_{\mathbf{P}}$) denote the usual big and small O notation (resp. in probability). By convention, sum and product over an empty set are understood as $\Sigma_{\emptyset}(\cdots)=0$ and $\Pi_{\emptyset}(\cdots)=1$. 

For a random variable $X$, we use $\Prob_X,\E_X$ (resp. $\Prob^X,\E^X$) to indicate that the probability and expectation are taken with respect to $X$ (resp. conditional on $X$).

For $\Lambda>0$ and $\mathfrak{p}\in \N$, a measurable map $f:\R^n \to \R$ is called \emph{$\Lambda$-pseudo-Lipschitz of order $\mathfrak{p}$} iff 
\begin{align}\label{cond:pseudo_lip}
\abs{f(x)-f(y)}\leq \Lambda\cdot  (1+\pnorm{x}{}+\pnorm{y}{})^{\mathfrak{p}-1}\cdot\pnorm{x-y}{},\quad \forall x,y \in \R^{n}.
\end{align}
Moreover, $f$ is called \emph{$\Lambda$-Lipschitz} iff $f$ is $\Lambda$-pseudo-Lipschitz of order $1$, and in this case we often write $\pnorm{f}{\mathrm{Lip}}\leq L$, where $\pnorm{f}{\mathrm{Lip}}\equiv \sup_{x\neq y} \abs{f(x)-f(y)}/\pnorm{x-y}{}$. For a proper, closed convex function $f$ defined on $\R^n$, its \emph{proximal operator} $\prox_f(\cdot)$  is defined by $\prox_f(x)\equiv \argmin_{z \in \R^n} \big\{\frac{1}{2}\pnorm{x-z}{}^2+f(z)\big\}$.

\section{General first order methods: symmetric case}\label{section:main_results_sym}

\subsection{Formal setup and examples}

Recall the GFOM in (\ref{eqn:intro_GFOM}), where $A \in \R^{n\times n}$ is a symmetric random matrix, $\mathsf{F}_t, \mathsf{G}_t:\R^{n\times [0:t-1]} \to \R^n$, and $z^{(0)}\in \R^n$ is a deterministic or random initialization independent of $A$. For notational simplicity we write
\begin{align*}
z^{([0:t-1])}\equiv (z^{(0)},z^{(1)},\cdots,z^{(t-1)})\in \R^{n\times [0:t-1]},
\end{align*}
so the GFOM (\ref{eqn:intro_GFOM}) can be written compactly as 
\begin{align}\label{def:GFOM_sym}
z^{(t)}&=A \mathsf{F}_t(z^{([0:t-1])})+\mathsf{G}_t(z^{([0:t-1])}).
\end{align}
For $z^{([0:t-1])} \in \R^{n\times [0:t-1]}$, we write $z^{([0:t-1])}_\ell$ as the $\ell$-th row of $z^{([0:t-1])}$. We assume that $\{\mathsf{F}_t,\mathsf{G}_t\}$ are \emph{row-separate}, in the sense that for some measurable functions $\{\mathsf{F}_{t,\ell},\mathsf{G}_{t,\ell}: \R^{[0:t-1]} \to \R\}_{\ell \in [n]}$,
\begin{align}\label{def:row_sep}
\mathsf{F}_t(z^{([0:t-1])})=\big(\mathsf{F}_{t,\ell}(z_\ell^{([0:t-1])})\big)_{\ell \in [n]},\quad \mathsf{G}_t(z^{([0:t-1])})=\big(\mathsf{G}_{t,\ell}(z_\ell^{([0:t-1])})\big)_{\ell \in [n]}.
\end{align}
We shall work with the normalization that the variance for the entries of $A$ is of order $1/n$, so we typically expect $\pnorm{z^{(t)}}{}/\sqrt{n}=\bigop(1)$. 

Some canonical examples for the symmetric GFOM in (\ref{def:GFOM_sym}) include:
\begin{enumerate}
	\item (\emph{Power iteration}). The power iteration method (without normalization) can be identified by setting 
	\begin{align*}
	\mathsf{F}_t(z^{([0:t-1])}) \equiv z^{(t-1)},\, \mathsf{G}_t(z^{([0:t-1])})\equiv 0.
	\end{align*}
	\item (\emph{AMP algorithms}). The standard AMP algorithm in the symmetric form (cf. \cite{bayati2011dynamics,bayati2015universality,bao2025leave}) can be recovered by setting
	\begin{align*}
	&\mathsf{F}_t(z^{([0:t-1])})\equiv \mathfrak{F}_t(z^{(t-1)}),\\ &\mathsf{G}_t(z^{([0:t-1])})\equiv - \big[\mathscr{V}_A \E \mathfrak{F}_t'(z^{(t-1)})\big]\circ \mathfrak{F}_{t-1}(z^{(t-2)}). 
	\end{align*}
	Here $\{\mathfrak{F}_{t}:\R \to \R\}_{t\geq 1}$ is class of sufficiently smooth functions applied coordinate-wise with $\mathfrak{F}_0(\cdot)\equiv 0$, and $A$ is a generalized Wigner ensemble with a variance profile matrix $\mathscr{V}_A\equiv \E A\circ A \in \R_{\geq 0}^{n\times n}$. 
\end{enumerate}
More examples of relevance in applications in this paper will be detailed after the introduction of the asymmetric GFOM in Section \ref{section:main_results_asym} ahead.

\subsection{Universality} 

The following theorem establishes a precise, non-asymptotic version of the entrywise universality (\ref{eqn:intro_entrywise_universality}) for the symmetric GFOM in (\ref{def:GFOM_sym}).

\begin{theorem}\label{thm:universality}
	Fix $t \in \N$ and $n \in \N$. Suppose the following hold:
	\begin{enumerate}
		\item[(U1)] $A=A_0/\sqrt{n}$, $B=B_0/\sqrt{n}$, where (i) $A_0, B_0$ are symmetric $n\times n$ random matrices, (ii) the entries of its upper triangle are independent mean $0$ variables, and (iii) for all $i,j \in [n]$, $\E A_{0,ij}^2=\E B_{0,ij}^2$ and $\pnorm{A_{0,ij}}{\psi_2}\vee \pnorm{B_{0,ij}}{\psi_2}\leq K$ for some $K\geq 2$.
		\item[(U2)] For all $s \in [t],\ell \in [n]$, $\mathsf{F}_{s,\ell},\mathsf{G}_{s,\ell} \in C^3(\R^{[0:s-1]})$. Moreover, there exists some $\Lambda\geq 2$ and $\mathfrak{p}\in \N$ such that 
		\begin{align*}
		\max_{s \in [t]}\max_{\mathsf{E}_s \in \{\mathsf{F}_s,\mathsf{G}_s\}}\max_{\ell \in [n]}\Big\{\pnorm{\mathsf{E}_{s,\ell}}{\mathrm{Lip}}+\max_{a\in \mathbb{Z}_{\geq 0}^{[0:s-1]}, \abs{a}\leq 3} \bigpnorm{(1+\pnorm{\cdot}{})^{-\mathfrak{p}}\abs{\partial^a \mathsf{E}_{s,\ell}(\cdot)}}{\infty}\Big\}\leq \Lambda.
		\end{align*}
	\end{enumerate}
	Then for any $\emptyset \neq S\subset [n]$, and any $\Psi \in C^3(\R^{\abs{S}\times t})$ satisfying
	\begin{align}\label{cond:Psi}
	\max_{a \in \mathbb{Z}_{\geq 0}^{\abs{S}\times t}, \abs{a}\leq 3} \sup_{x \in \R^{\abs{S}\times t }}\bigg(\sum_{(s,\tau) \in [\abs{S}]\times [t]}(1+\abs{x_{s,\tau}})^{\mathfrak{p}}\bigg)^{-1} \abs{\partial^a \Psi(x)} \leq \Lambda_{\Psi}
	\end{align}
    for some $\Lambda_{\Psi}\geq 2$, it holds for some constant $c_1=c_1(\mathfrak{p})>0$ that with $ \E^{(0)} \equiv \E\big[\cdot | z^{(0)}\big]$,
	\begin{align*}
	& \bigabs{\E^{(0)} \Psi\big(z_S^{([t])}(A)\big)-\E^{(0)} \Psi\big(z_S^{([t])}(B)\big) } \\
	&\qquad \leq \abs{S}^3 \Lambda_\Psi \cdot \big(K\Lambda \log n\cdot (1+\pnorm{z^{(0)}}{\infty})\big)^{c_1 t^3} \cdot n^{-1/2}.
	\end{align*}
\end{theorem}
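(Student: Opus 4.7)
The plan is to implement Chatterjee's version of the Lindeberg principle at the level of the map $\Phi \equiv \Psi \circ z_S^{([t])}(\cdot): \R^{n \times n}_{\mathrm{sym}} \to \R$, swapping the upper-triangular entries of $A$ for those of $B$ one at a time. Enumerate the indices $(i,j)$ with $i \leq j$ as $e_1,\ldots,e_N$ with $N = \binom{n+1}{2}$, and define a symmetric interpolating sequence $A = M^{(0)}, M^{(1)}, \ldots, M^{(N)} = B$ where $M^{(k)}$ agrees with $M^{(k-1)}$ except that entry $e_k$ (together with its symmetric counterpart) is replaced by the corresponding entry of $B$. Introduce the auxiliary matrices $\widetilde{M}^{(k)}$ obtained from $M^{(k-1)}$ by zeroing out the entry $e_k$, so that $\E^{(0)}\Phi(A) - \E^{(0)}\Phi(B)$ telescopes into $\sum_{k=1}^N \E^{(0)}\bigl[\Phi(M^{(k-1)}) - \Phi(\widetilde{M}^{(k)})\bigr] - \E^{(0)}\bigl[\Phi(M^{(k)}) - \Phi(\widetilde{M}^{(k)})\bigr]$.

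At each $k$, I would perform a third-order Taylor expansion of $\Phi$ in the single variable $M_{e_k}$ centered at the zeroed matrix $\widetilde{M}^{(k)}$. Because $A_{e_k}, B_{e_k}$ are independent of $\widetilde{M}^{(k)}$, the zeroth-order terms cancel in the difference; the first-order terms vanish in expectation because both swapped entries have mean zero; and by assumption (U1) the second-order terms cancel because $\E A_{e_k}^2 = \E B_{e_k}^2$. This leaves only the third-order Taylor remainder, of the form $\tfrac{1}{6} \E^{(0)}[\xi_{e_k}^3 \cdot \partial_{e_k}^3 \Phi(\widetilde{M}^{(k)} + \theta \xi_{e_k} E_{e_k})]$ for appropriate $\xi \in \{A_{e_k}, B_{e_k}\}$ and some $\theta \in [0,1]$; by sub-Gaussianity, $\E|A_{e_k}|^3 + \E|B_{e_k}|^3 \lesssim K^3 n^{-3/2}$ per entry.

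The task then reduces to controlling $\partial_{e_k}^3 \Phi$. Writing out the chain rule, $\partial_{ij}^3 \Phi$ decomposes into sums over $s_1,s_2,s_3 \in S$, $\tau_1,\tau_2,\tau_3 \in [t]$, and multi-indices $a_1,a_2,a_3$ with $|a_1|+|a_2|+|a_3| = 3$, of products $\partial^{(a_1,a_2,a_3)}\Psi(z_S^{([t])}) \cdot \prod_{r=1}^3 \partial_{ij}^{a_r} z_{s_r}^{(\tau_r)}$. Assumption (\ref{cond:Psi}) handles the $\Psi$-factor by a pseudopolynomial bound in $|z_{s}^{(\tau)}|$; and the $z$-factors are precisely what the delocalization estimate (\ref{eqn:intro_delocalization}) and its companions (Propositions \ref{prop:loo_l2_bound}--\ref{prop:der_cross_cubic}) control on a very-high-probability event, up to polylogarithmic factors $(\log n)^{c_0 t^3}$. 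Summing over the $O(n^2)$ pairs, the terms where all three indices of $z$ are in $\{k \notin \{i,j\}\}$ contribute $n^2 \cdot n^{-3/2} \cdot (n^{-1/2})^{\leq 2} \cdot \text{polylog} \cdot |S|^3 \Lambda_\Psi \lesssim |S|^3 \Lambda_\Psi (\log n)^{c_1 t^3} n^{-1/2}$; the terms involving the pointwise $\partial_{ij}^3 z$ factor are handled collectively using the cubic estimate on $\sum_{i,j} A_{ij}^3 \partial_{ij}^3 z^{(t)}_k$, and the diagonal contributions from $k \in \{i,j\}$ are already favored by a factor $n^{-1/2}$ because there are only $O(n)$ such triples. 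Off the delocalization event, one falls back to a crude deterministic bound using sub-Gaussian tails and $L^p$-bounds on the derivatives, whose smallness is dictated by the probability-tail factor $n^{-D}$.

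The hardest step is not the Lindeberg bookkeeping but establishing the underlying delocalization estimate (\ref{eqn:intro_delocalization}), in particular the cubic bound $\max_k |\sum_{i,j} A_{ij}^3 \partial_{ij}^3 z_k^{(t)}| \lesssim n^{-1/2} (\log n)^{c_0 t^3}$ and the improved $n^{-1/2}$-smallness of $\partial_{ij}^{1,2} z_k^{(t)}$ for $k \notin \{i,j\}$, uniformly up to $t \lesssim (\log n/\log \log n)^{1/3}$ iterations. These estimates cannot be obtained from a direct concentration argument because derivatives $\partial_{ij}^a z^{(t)}$ are coupled with $A$ through a long chain of nested nonlinearities; the recursive leave-$k$-out scheme described around (\ref{eqn:intro_A_hadamard}) is required to successively decouple the relevant rows/columns of $A$ from the iterate and reduce the summation complexity, with the polylogarithmic factor tracking the combinatorial cost of each recursive step. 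Once the delocalization event has been constructed and the derivative bounds are in hand, the Lindeberg swap scheme above delivers the stated bound with the exact exponent $c_1 t^3$ inherited from the polylogarithmic growth of the derivative estimates.
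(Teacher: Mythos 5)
Your overall strategy matches the paper's: apply a one-entry-at-a-time Lindeberg swap at the level of $\Phi=\Psi\circ z^{([t])}_S$, use mean/variance matching to kill the first two orders, and control the third-order remainder via the delocalization of $\partial_{ij}^{(\cdot)} z^{(t)}$ from Propositions~\ref{prop:loo_l2_bound}--\ref{prop:der_cross_cubic}. However, there is a genuine gap: you invoke these delocalization propositions directly under the sub-Gaussian hypothesis (U1), but they are stated and proved only under the \emph{bounded-entry} hypothesis $\max_{i,j}|A_{0,ij}|\leq K$ (see the assumption (1) in Propositions~\ref{prop:z_deloc} and~\ref{prop:der_cross_cubic}, as well as Propositions~\ref{prop:A_hprod_quad} and~\ref{prop:A_cross_cubic} on which they rest). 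The paper therefore performs a truncation $\overline{A}_{ij}=(A_{ij}\wedge C_0K\sqrt{\log n/n})\vee(-C_0K\sqrt{\log n/n})$ (and similarly for $B$) as a preliminary step, proves the comparison at the level of $(\overline A,\overline B)$, and shows that the replacement $A\leadsto\overline A$ (and $B\leadsto\overline B$) changes the conclusion only by an $n^{-2D}$ error via Proposition~\ref{prop:loo_l2_bound} on the small-probability truncation event. Your proposal omits this step entirely, so your appeal to Propositions~\ref{prop:z_deloc}--\ref{prop:der_cross_cubic} is not yet justified; ``falling back to a crude deterministic bound off the delocalization event'' does not fix this, because without truncation there is no bounded-entry model to which the propositions apply.

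This omission also invalidates a claim you make about the lower-order Taylor terms. You assert that the second-order terms cancel exactly because $\E A_{e_k}^2=\E B_{e_k}^2$. That is true for the raw matrices, but after truncation the identity $\E\overline A_{ij}^2=\E\overline B_{ij}^2$ no longer holds exactly; it holds only up to an error $O(K^2\log n\cdot n^{-2D})$, which the paper collects in the term $\mathscr{T}_1$ and bounds separately via moment estimates on $\partial_{ij}z_k^{(t)}$ and $\partial_{ij}^2 z_k^{(t)}$. Your proposal has no analogue of this residual term. Finally, a minor point: your Taylor remainder is written in Lagrange (``some $\theta\in[0,1]$'') form; since $\theta$ depends on $\xi_{e_k}$ and the remainder must be summed and taken in expectation, the integral-remainder version as in Proposition~\ref{prop:lindeberg} (with $\int_0^1(1-s)^2\partial^3\cdots\,\d{s}$) is the clean form to use, and is in fact what the proposition in the paper provides. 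Aside from these issues, the attribution of the key technical burden to the delocalization estimates (especially the $n^{-1/2}$-smallness of $\partial_{ij}z_k^{(t)}$ for $k\notin\{i,j\}$ and the cubic-sum bound of Proposition~\ref{prop:der_cross_cubic}) and the accounting of the summation over $O(n^2)$ off-diagonal pairs versus $O(n)$ diagonal pairs are correct and aligned with the paper's Step~2.
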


We note that the dependence on the dimension $n$ in the above theorem is optimal up to a multiplicative logarithmic factor. On the other hand, the dependence on $t$ in the above theorem typically allows the number of iteration to grow as large as $t\ll (\log n/\log \log n)^{1/3}$. While this growth rate may be further improved with additional structural assumptions on $\{\mathsf{F}_\cdot,\mathsf{G}_\cdot\}$, it appears sufficiently powerful for our applications to empirical risk minimization problems in Section \ref{section:ERM} ahead. 

We also note that although the above theorem allows $\abs{S}$ to grow polynomially with $n$, it remains open to examine the optimal dependence with respect to $\abs{S}$. Moreover, the constant $2$ in the requirement $K,\Lambda,\Lambda_{\Psi}\geq 2$ can be replaced by any constant larger than $1$; the same convention applies to all other theorems.

As entrywise universality is stronger than averaged universality, using Theorem \ref{thm:universality} above, we may also provide a non-asymptotic estimate for the universality claim in (\ref{eqn:intro_dynamics_avg}). 

\begin{theorem}\label{thm:universality_avg}
Suppose (U1) in Theorem \ref{thm:universality} holds, and (U2) is replaced by
\begin{enumerate}
	\item[(U2)'] 
	$
	\max\limits_{s \in [t]}\max\limits_{\mathsf{E}_s \in \{\mathsf{F}_s,\mathsf{G}_s\}}\max\limits_{\ell \in [n]}\big\{\pnorm{\mathsf{E}_{s,\ell}}{\mathrm{Lip}}+\abs{\mathsf{E}_{s,\ell}(0)}\big\}\leq \Lambda$ for some $\Lambda\geq 2$.
\end{enumerate}
Fix a sequence of $\Lambda_\psi$-pseudo-Lipschitz functions $\{\psi_k:\R^t \to \R\}_{k \in [n]}$ of order $\mathfrak{p}$, where $\Lambda_\psi\geq 2$. Then for any $q \in \N$, there exists some $C_0=C_0(\mathfrak{p},q)>0$ such that with $ \E^{(0)} \equiv \E\big[\cdot | z^{(0)}\big]$,
\begin{align*}
& \E^{(0)} \biggabs{\frac{1}{n}\sum_{k \in [n]} \psi_k\big(z_k^{([t])}(A)\big) - \frac{1}{n}\sum_{k \in [n]}  \psi_k\big(z_k^{([t])}(B)\big)  }^q\\
&\leq \big(K\Lambda\Lambda_\psi\log n\cdot (1+\pnorm{z^{(0)}}{\infty})\big)^{C_0 t^3}\cdot n^{-1/(C_0t^3)}. 
\end{align*}
\end{theorem}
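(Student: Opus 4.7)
The plan is to reduce Theorem~\ref{thm:universality_avg} to Theorem~\ref{thm:universality} via a mollification-plus-concentration scheme. Fix a scale $\delta\in(0,1)$ and a Gaussian mollifier at scale $\delta$; replace each $\mathsf{F}_{s,\ell},\mathsf{G}_{s,\ell},\psi_k$ by its convolution $\mathsf{F}_{s,\ell}^{(\delta)},\mathsf{G}_{s,\ell}^{(\delta)},\psi_k^{(\delta)}$. These are $C^\infty$ with preserved (pseudo-)Lipschitz constants, uniform pointwise errors of order $\delta\Lambda$ respectively $\delta\Lambda_\psi(1+\abs{\cdot})^{\mathfrak{p}-1}$, and derivatives of order $|a|\leq 3$ blown up by at most $\delta^{1-|a|}$ (after a distributional integration-by-parts). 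Let $\tilde z^{([t])}$ be the GFOM iterate driven by $(\mathsf{F}^{(\delta)},\mathsf{G}^{(\delta)})$ and $\tilde\phi(\cdot)\equiv n^{-1}\sum_k \psi_k^{(\delta)}(\tilde z_k^{([t])}(\cdot))$. The mollified system satisfies (U2) of Theorem~\ref{thm:universality} with $\Lambda^{(\delta)}\asymp \Lambda\delta^{-2}$, and with $\Psi=\psi_k^{(\delta)}$, (\ref{cond:Psi}) holds with $\Lambda_\Psi^{(\delta)}\asymp \Lambda_\psi\delta^{-2}$.

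On the high-probability event $\mathcal{E}\equiv\{\pnorm{A}{\op}\vee\pnorm{B}{\op}\le C_1,\ \max_{s\le t}(\pnorm{z^{(s)}}{\infty}\vee\pnorm{\tilde z^{(s)}}{\infty})\le (\log n)^{C_2}\}$ (using sub-Gaussian operator norm tails and the delocalization already established in the proof of Theorem~\ref{thm:universality}), a Gronwall-type induction on the GFOM recursion using the preserved Lipschitz constants gives $\pnorm{z^{(t)}-\tilde z^{(t)}}{}\leq (C\Lambda)^{c_2 t}\sqrt{n}\,\delta$, hence $|\phi-\tilde\phi|\leq \Lambda_\psi(C\Lambda)^{c_2 t}\delta\cdot (\log n)^{C_2\mathfrak{p}}$ on $\mathcal{E}$. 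The same recursion yields the Frobenius-Lipschitz bound $L_{\tilde\phi}\leq \Lambda_\psi(C\Lambda)^{c_3 t}(\log n)^{C_2\mathfrak{p}}$ for $A\mapsto\tilde\phi(A)$ (combining $\pnorm{\tilde z^{(t)}(A+\Delta)-\tilde z^{(t)}(A)}{}\leq (C\Lambda)^{c_3 t}\sqrt{n}\pnorm{\Delta}{\op}$ with the pseudo-Lipschitz bound on $\psi_k^{(\delta)}$ via Cauchy--Schwarz). Decompose via a five-way triangle inequality
\begin{align*}
\phi(A)-\phi(B) &= [\phi-\tilde\phi](A)+[\tilde\phi(A)-\E^{(0)}\tilde\phi(A)]+[\E^{(0)}\tilde\phi(A)-\E^{(0)}\tilde\phi(B)]\\
&\quad +[\E^{(0)}\tilde\phi(B)-\tilde\phi(B)]+[\tilde\phi-\phi](B),
\end{align*}
and apply $|\sum_i x_i|^q\leq 5^{q-1}\sum_i |x_i|^q$. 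The mollification-error terms are bounded by $(\Lambda_\psi(C\Lambda)^{c_2 t}\delta)^q\cdot(\log n)^{O(q\mathfrak{p})}$. The centered fluctuation terms are handled by Talagrand-type concentration for Lipschitz functions of independent sub-Gaussian variables, applied to a Lipschitz extension of $\tilde\phi$ off $\mathcal{E}$; this yields $\lesssim (CqK L_{\tilde\phi})^q n^{-q/2}$ plus a $O(n^{-D})$ tail contribution for any $D$. The mean-difference term uses Theorem~\ref{thm:universality} with $S=\{k\}$ and $\Psi=\psi_k^{(\delta)}$ for each $k\in[n]$, averaged:
\begin{align*}
\bigabs{\E^{(0)}\tilde\phi(A)-\E^{(0)}\tilde\phi(B)}\leq \Lambda_\psi\delta^{-2}\bigl(CK\Lambda\log n\,\delta^{-2}(1+\pnorm{z^{(0)}}{\infty})\bigr)^{c_1 t^3}n^{-1/2}.
\end{align*}
Balancing the mollification error $\delta$ against the universality error $\delta^{-O(t^3)}n^{-1/2}$ by setting $\delta=n^{-1/(c(\mathfrak{p},q)t^3)}$ produces the stated rate $n^{-1/(C_0t^3)}$ with the claimed polylog prefactor.

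The main obstacle is the concentration step: since $\psi_k$ is only pseudo-Lipschitz of order $\mathfrak{p}$, the map $A\mapsto\tilde\phi(A)$ is not globally Lipschitz and Talagrand's inequality cannot be applied directly. The resolution is to introduce a Lipschitz extension of $\tilde\phi$ that coincides with $\tilde\phi$ on $\mathcal{E}$ and has global Lipschitz constant $L_{\tilde\phi}$, then absorb the $O(n^{-D})$ contribution of $\mathcal{E}^c$ using sub-Gaussian tails of $\pnorm{A}{\op}$ and the delocalization-type control $\pnorm{z^{(s)}}{\infty}\leq(\log n)^{C_2}$. The $\delta^{-2}$ blow-up of third mollified derivatives inside the $(\cdot)^{c_1t^3}$ exponent of Theorem~\ref{thm:universality} is precisely what forces the $1/t^3$ exponent in $n^{-1/(C_0t^3)}$; sharper concentration would not remove this factor as long as the smoothness parameters enter polynomially.
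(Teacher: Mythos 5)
Your overall strategy --- mollify the nonlinearities and the test functions, invoke Theorem~\ref{thm:universality} for the means, and use concentration for the centered fluctuations --- is the same skeleton as the paper's proof (which also passes through a smoothed version, Proposition~\ref{prop:universality_avg_smooth}, and a mollification lemma, Lemma~\ref{lem:smooth_F_G}). However, there is a genuine gap in your concentration step that the paper's argument is specifically designed to avoid.

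The gap: you want $\E^{(0)}\bigl|\tilde\phi(A)-\E^{(0)}\tilde\phi(A)\bigr|^q\lesssim n^{-q/2}$ for a general sub-Gaussian ensemble $A$. There is no ``Talagrand-type concentration for Lipschitz functions of independent sub-Gaussian variables'' at the Gaussian rate. Talagrand's convex-distance inequality for bounded product measures requires convexity, and $A\mapsto\tilde\phi(A)$ is not convex. McDiarmid's bounded-differences inequality, applied after truncating the entries to $O(\sqrt{\log n/n})$, controls only the worst-case coordinatewise change; since the Frobenius-Lipschitz constant of $\tilde\phi$ is $O(\text{polylog})$ (not $O(n^{-1/2})$), McDiarmid gives $\sum_{ij}c_{ij}^2\asymp n\,\text{polylog}$, i.e.\ a useless $\Omega(\sqrt n)$ concentration scale. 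The Gaussian rate $n^{-1/2}$ you need comes from a Poincar\'e/LSI-type bound --- it exploits $\E\pnorm{\nabla\tilde\phi}{}^2$ rather than the worst-case gradient component --- and this is exactly what is not available for a general sub-Gaussian product measure. Your proposed Lipschitz extension off the good event $\mathcal E$ handles the boundedness issue but not this more fundamental lack of a concentration inequality.

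The paper avoids the issue entirely by never invoking concentration for the non-Gaussian ensemble. It introduces the Gaussian anchor $G\circ V_n$ with matching second moments, defines $\mathscr D_\psi(A,B)\equiv \E^{(0)}\bigl|\frac1n\sum_k\psi_k(z_k^{(t)}(A))-\frac1n\sum_k\E^{(0)}\psi_k(z_k^{(t)}(B))\bigr|^q$, and expands the $q$-th power multinomially: the cross terms become expectations of products $\E^{(0)}\prod_{\ell\in S}\psi_{k_\ell}(z_{k_\ell}^{(t)}(A))$ over index subsets $S\subset[q]$. To each of these one applies the \emph{multivariate} case of Theorem~\ref{thm:universality} (with $\Psi=\prod_{\ell\in S}\psi_{k_\ell}$ and $\abs S$ up to $q$), giving $\bigl|\mathscr D_\psi(A,G\circ V_n)-\mathscr D_\psi(G\circ V_n,G\circ V_n)\bigr|\lesssim n^{-1/2}\cdot\text{polylog}$. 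Concentration is then needed only to bound $\mathscr D_\psi(G\circ V_n,G\circ V_n)$, which is a purely Gaussian quantity; Lemma~\ref{lem:conc_avg} (relying on a Gaussian operator-norm-Lipschitz concentration estimate from \cite{bao2025leave}) yields the $n^{-q/2}$ bound there. So the key move you are missing is the replacement of pathwise concentration for $A$ by a comparison of $q$-th moments via the multilinear expansion and multi-index entrywise universality, which transfers all the concentration burden onto the Gaussian anchor. Your decomposition applies Theorem~\ref{thm:universality} only with $\abs S=1$ and therefore cannot absorb the $q$-th-power cross terms without concentration for $A$ itself.

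Two smaller remarks. First, since $\psi_k$ is pseudo-Lipschitz of order $\mathfrak p>1$ it is unbounded, so a pure mollification does not give uniformly bounded derivatives at higher orders; the paper first truncates $\psi$ to $\psi^{[M]}$ (with $M$ a polylog-scaled threshold) before mollifying, and absorbs the truncation error via the moment estimates in Proposition~\ref{prop:loo_l2_bound}. You would need an analogous truncation before your mollification. Second, the exponent in the final rate $n^{-1/(C_0t^3)}$ does indeed come from balancing the smoothing scale against the $\delta^{-O(t^3)}$ blow-up of the prefactor in Theorem~\ref{thm:universality}; your heuristic for where the $1/t^3$ originates is correct.
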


Asymptotic versions of averaged universality have been previously obtained through moment calculations in \cite{chen2021universality,dudeja2023universality,wang2024universality}. However, these techniques do not provide non-asymptotic estimates as presented in our Theorem \ref{thm:universality_avg} above. To the best of our knowledge, this theorem provides the first non-asymptotic estimates for averaged universality for the general class of GFOMs in (\ref{def:GFOM_sym}).

We note that the worsened dependence on $n$ in the above theorem is primarily due to a smoothing argument to weaken the regularity condition (U2) in Theorem \ref{thm:universality} to the much weaker (U2)'. This leads to a slightly worsened condition $t\ll (\log n/\log \log n)^{1/6}$ to guarantee a vanishing error for the averaged universality.

\subsection{State evolution}

The goal of this subsection is to provide a deterministic description of the quantities involved in Theorems \ref{thm:universality} and \ref{thm:universality_avg}, at a possibly worsened error estimate. Such deterministic description is known as \emph{state evolution} for the special class of AMP algorithms in an averaged sense \cite{bayati2011dynamics,berthier2020state}, and more recently in an entrywise sense \cite{bao2025leave}. 

For the symmetric GFOM in (\ref{def:GFOM_sym}), such state evolution is iteratively described in the following definition by two objects, namely, (i) a row-separate map $\Theta_t: \R^{n\times [0:t]} \to \R^{n\times [0:t]}$, and (ii) a centered $n\times \infty$ Gaussian matrix $\mathfrak{Z}^{([1:\infty))}\in \R^{n\times [1:\infty)}$.

\begin{definition}\label{def:GFOM_se_sym}
	Initialize with $\Theta_0\equiv \mathrm{id}(\R^n)$ and $\mathfrak{Z}^{(0)}\equiv z^{(0)}$. For $t=1,2,\ldots$, with $\E^{(0)}\equiv \E\big[\cdot | \mathfrak{Z}^{(0)}\big]$, execute the following steps:
	\begin{enumerate}
		\item  Let $\Theta_t: \R^{n\times [0:t]} \to \R^{n\times [0:t]}$ be defined as follows: for $w \in [0:t-1]$, let $\big[\Theta_t(\mathfrak{z}^{([0:t])})\big]_{\cdot,w}\equiv [\Theta_w(\mathfrak{z}^{([0:w])})]_{\cdot,w}$, and for $w=t$,
		\begin{align*}
		\big[\Theta_t(\mathfrak{z}^{([0:t])})\big]_{\cdot,t} \equiv  \mathfrak{z}^{(t)}+ \sum_{s \in [1:t-1]} \mathfrak{b}_s^{(t)}\circ \mathsf{F}_{s}\big(\Theta_{s-1}(\mathfrak{z}^{([0:s-1])})\big)+ \mathsf{G}_t\big(\Theta_{t-1}(\mathfrak{z}^{([0:t-1])})\big),
		\end{align*} 
		where for $s \in [1:t-1]$, the coefficient vector $\mathfrak{b}_s^{(t)}\in \R^n$ is defined by 
		\begin{align*}
		\mathfrak{b}_{s,k}^{(t)} \equiv \sum_{\ell \in [n]} \E A_{k\ell}^2\cdot  \E^{(0)}\partial_{\mathfrak{Z}_\ell^{(s)} } \big(\mathsf{F}_{t,\ell}\circ \Theta_{t-1,\ell} \big) \big(\mathfrak{Z}^{([0:t-1])}_\ell\big),\quad k \in [n].
		\end{align*}
		\item Let the Gaussian law of $\mathfrak{Z}^{(t)}$ be determined via the following correlation specification: for $s \in [1:t]$ and $k \in [n]$,
		\begin{align*}
		\cov(\mathfrak{Z}^{(t)}_k, \mathfrak{Z}^{(s)}_k)\equiv \sum_{\ell \in [n]} \E A_{k\ell}^2 \cdot \E^{(0)}\prod_{\tau \in \{t,s\}} \mathsf{F}_{\tau,\ell} \big(\Theta_{\tau-1,\ell}(\mathfrak{Z}^{([0:\tau-1])}_\ell)\big).
		\end{align*}
		Here $\mathfrak{Z}^{([0:\tau])}_\ell\equiv (\mathfrak{Z}^{(0)}_\ell,\mathfrak{Z}^{(1)}_\ell,\ldots,\mathfrak{Z}^{(\tau)}_\ell)$, and $\Theta_{\tau,\ell}:\R^{[0:\tau]} \to \R^{[0:\tau]}$ is the $\ell$-th row of the map $\Theta_{\tau}: \R^{n\times [0:\tau]}\to \R^{n\times [0:\tau]}$. 
	\end{enumerate}
\end{definition}

Using the state evolution parameters $(\{\Theta_t\}, \mathfrak{Z}^{([0:\infty))})$ in the above Definition \ref{def:GFOM_se_sym}, we may establish an entrywise distribution theory for the symmetric GFOM in (\ref{def:GFOM_sym}); its proof can be found in Section \ref{section:proof_GFOM_se_sym}.

\begin{theorem}\label{thm:GFOM_se_sym}
	Fix $t \in \N$ and $n \in \N$. Suppose the following hold:
	\begin{enumerate}
		\item[(D1)] $A\equiv A_0/\sqrt{n}$, where $A_0$ is a symmetric matrix whose upper triangle entries are independent mean $0$ variables such that $\max_{i,j \in [n]}\pnorm{A_{0,ij}}{\psi_2}\leq K$ holds for some $K\geq 2$.
		\item[(D2)] For all $s \in [t],\ell \in [n]$, $\mathsf{F}_{s,\ell},\mathsf{G}_{s,\ell} \in C^3(\R^{[0:s-1]})$. Moreover, there exists some $\Lambda\geq 2$ such that 
		\begin{align*}
		\max_{s \in [t]}\max_{\mathsf{E}_s \in \{\mathsf{F}_s,\mathsf{G}_s\}}\max_{\ell \in [n]}\Big\{\abs{\mathsf{E}_{s,\ell}(0)}+\max_{0\neq a \in \mathbb{Z}_{\geq 0}^{[0:s-1]}, \abs{a}\leq 3} \pnorm{ \partial^a \mathsf{E}_{s,\ell} }{\infty}\Big\}\leq \Lambda.
		\end{align*}
	\end{enumerate}
	Then for any $\emptyset \neq S\subset [n]$, and any $\Psi \in C^3(\R^{\abs{S}\times t})$ satisfying (\ref{cond:Psi}) for some $\Lambda_{\Psi}\geq 2$, it holds for some universal constant $c_0>0$ and another constant $c_1\equiv c_1(\abs{S},\mathfrak{p})>0$, such that with $\E^{(0)}\equiv \E\big[\cdot | z^{(0)}\big]$
	\begin{align*}
	& \bigabs{\E^{(0)}\Psi\big[z_S^{([t])}(A)\big]-\E^{(0)}\Psi\big[\big(\Theta_t(\mathfrak{Z}^{([0:t])})\big)_{S,[1:t]}\big] } \\
	&\qquad \leq \Lambda_\Psi \cdot \big(K\Lambda \log n\cdot (1+\pnorm{z^{(0)}}{\infty})\big)^{c_1 t^{5}} \cdot n^{-1/c_0^t}.
	\end{align*}
\end{theorem}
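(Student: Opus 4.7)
The plan is to reduce the GFOM to an AMP via an explicit algebraic correspondence, and then to invoke the entrywise Gaussian approximation for AMP from \cite{bao2025leave}. The error budget is $n^{-1/c_0^t}$, which is strictly worse than the universality rate $n^{-1/2}$ of Theorem \ref{thm:universality}, so we can afford to pay the universality cost first and work exclusively with a Gaussian matrix afterwards.

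First I would invoke Theorem \ref{thm:universality} to replace $A$ by a symmetric Gaussian matrix $G$ having variance profile $\mathscr{V}_G \equiv \E(A\circ A)$. Assumption (D2) is strictly stronger than (U2), so the hypotheses of Theorem \ref{thm:universality} are met, and the resulting error is dominated by $(K\Lambda\log n)^{c_1 t^3}\cdot n^{-1/2}$. Thus it suffices to prove the theorem when the input matrix is Gaussian, with the $\Theta_t$-side of the inequality unchanged.

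Next I would set up the AMP correspondence. Define recursively, for $\ell\in[n]$ and $s\in[t]$,
\begin{align*}
\mathfrak{F}_{s,\ell}(\tilde z^{([0:s-1])}_\ell) \equiv \mathsf{F}_{s,\ell}\big(\Theta_{s-1,\ell}(\tilde z^{([0:s-1])}_\ell)\big),
\end{align*}
where the $\Theta_{s-1,\ell}$'s are the row-separate maps of Definition \ref{def:GFOM_se_sym}, and run the symmetric AMP iteration driven by $G$ with nonlinearities $\{\mathfrak{F}_s\}$ and the standard Onsager correction $\mathfrak{b}_s^{(t)}$ specified in Definition \ref{def:GFOM_se_sym}; call the resulting iterate $\tilde z^{(t)}$, with $\tilde z^{(0)}\equiv z^{(0)}$. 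A straightforward induction on $t$ shows the identity
\begin{align*}
z^{(t)}(G) = \big[\Theta_t(\tilde z^{([0:t])})\big]_{\cdot, t}\quad \text{exactly},
\end{align*}
because the Onsager correction $\mathfrak{b}_s^{(t)}\circ\mathfrak{F}_s(\tilde z^{(\cdot)})$ that is subtracted inside the AMP update is precisely what $\Theta_t$ adds back, and the residual $\mathsf{G}_t(\Theta_{t-1}(\tilde z^{(\cdot)})) = \mathsf{G}_t(z^{([0:t-1])}(G))$ is recovered from the definition of $\Theta_t$. The bounded derivative condition (D2) propagates through the composition to give bounds of the form $\pnorm{\partial^a\mathfrak{F}_{s,\ell}}{\infty}\leq (\Lambda\cdot L_s)^{|a|}$ with $L_s$ an explicit Lipschitz constant of $\Theta_{s-1}$ that grows at worst like $\Lambda^{O(s)}$.

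With the exact equality in hand, I would apply the entrywise AMP theory of \cite{bao2025leave}, extended (routinely, since the argument is coordinate-wise and respects a variance profile) to the generalized Wigner ensemble $G$. This yields that for any coordinate $\ell$,
\begin{align*}
\big(\tilde z^{(1)}_\ell,\ldots,\tilde z^{(t)}_\ell\big)\stackrel{d}{\approx}\big(\mathfrak{Z}^{(1)}_\ell,\ldots,\mathfrak{Z}^{(t)}_\ell\big)
\end{align*}
with the Gaussian law of the right-hand side matching that prescribed in Definition \ref{def:GFOM_se_sym}, at the entrywise rate $n^{-1/c_0^t}$ up to polylogarithmic and polynomial-in-$\Lambda$ corrections. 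I would then read off the theorem by inserting the $C^3$ test function $\Psi\circ (\Theta_t)_{S,[1:t]}$ into the entrywise AMP bound. Since $\Theta_t$ is $C^3$ with derivative bounds controlled by (D2) and the recursion, the composite test function satisfies a pseudo-Lipschitz estimate of the form (\ref{cond:Psi}) with an inflated constant $\Lambda_\Psi\cdot(K\Lambda\log n)^{c_1 t^{5}}$; combining with the AMP rate $n^{-1/c_0^t}$ and the universality rate $n^{-1/2}$ from Step 1 gives the stated bound.

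The main obstacle I anticipate is Step 2--3: making the correspondence $z^{(t)}(G) = [\Theta_t(\tilde z^{([0:t])})]_{\cdot,t}$ not just algebraically clean but also quantitatively exploitable, i.e.\ controlling how the Lipschitz/$C^3$ constants of $\Theta_t$ and of $\mathfrak{F}_s$ compound across iterations. This is where the telescoping factor $c_0^t$ in the exponent of $n^{-1/c_0^t}$ originates, and it is also where the variance-profile-sensitive Onsager coefficients $\mathfrak{b}_s^{(t)}$ must be tracked coordinate-by-coordinate rather than under an i.i.d.\ abstraction. Outside of this bookkeeping, each remaining ingredient is either an off-the-shelf application of Theorem \ref{thm:universality} or of the AMP entrywise theorem of \cite{bao2025leave}.
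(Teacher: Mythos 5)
Your proposal is correct and mirrors the paper's own proof in Section \ref{section:proof_GFOM_se_sym}: first reduce to the Gaussian case via Theorem \ref{thm:universality}, then construct the auxiliary symmetric AMP with nonlinearities $\mathfrak{F}_s = \mathsf{F}_s\circ\Theta_{s-1}$, verify the algebraic identity $z^{(t)} = [\Theta_t(\mathfrak{z}^{([0:t])})]_{\cdot,t}$ by the telescoping of the Onsager correction against the $\mathfrak{b}_s^{(t)}$-term added by $\Theta_t$, control the compounded derivative growth of $\Theta_t$ and $\mathfrak{F}_s$ (the paper's Lemma \ref{lem:Theta_est}), and invoke the entrywise AMP theorem of \cite{bao2025leave} (Theorem \ref{thm:AMP_entrywise_dist_sym}). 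The only cosmetic slip is describing the composite test function as satisfying a ``pseudo-Lipschitz estimate of the form (\ref{cond:Psi})''---(\ref{cond:Psi}) is a $C^3$ derivative bound while the AMP theorem uses pseudo-Lipschitz inputs---but the intended bookkeeping is the same as the paper's.
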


We may also derive an averaged version of the above Theorem \ref{thm:GFOM_se_sym} with significantly weaker regularity assumptions on $\{\mathsf{F}_\cdot,\mathsf{G}_\cdot\}$, in a similar spirit to Theorem \ref{thm:universality_avg}. For ease of reference for future applications, we present this result below.

\begin{theorem}\label{thm:GFOM_se_sym_avg}
	Fix $t \in \N$ and $n \in \N$. Suppose (D1) in Theorem \ref{thm:GFOM_se_sym} and (U2)' in Theorem \ref{thm:universality_avg} hold. Fix a sequence of $\Lambda_\psi$-pseudo-Lipschitz functions $\{\psi_k:\R^t \to \R\}_{k \in [n]}$ of order $\mathfrak{p}$, where $\Lambda_\psi\geq 2$. Then for any $q \in \N$, there exists some $C_0=C_0(\mathfrak{p},q)>0$ such that with $\E^{(0)}\equiv \E\big[\cdot | z^{(0)}\big]$,
	\begin{align*}
	&\E^{(0)} \biggabs{\frac{1}{n}\sum_{k \in [n]} \psi_k\big(z_k^{([t])}(A)\big) - \frac{1}{n}\sum_{k \in [n]}  \E^{(0)} \psi_k\big[\big(\Theta_t(\mathfrak{Z}^{([0:t])})\big)_{k,[1:t]}\big]  }^q \\
	&\leq \big(K\Lambda\Lambda_\psi\log n\cdot (1+\pnorm{z^{(0)}}{\infty})\big)^{C_0 t^{5} }\cdot n^{-1/C_0^t}. 
	\end{align*}
\end{theorem}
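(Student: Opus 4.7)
The plan is to reduce Theorem \ref{thm:GFOM_se_sym_avg} to the entrywise Theorem \ref{thm:GFOM_se_sym} through a smoothing argument, mirroring the way Theorem \ref{thm:universality_avg} relates to Theorem \ref{thm:universality}. Fix a bandwidth $h \in (0,1]$ to be optimized at the end, and replace each of $\mathsf{F}_{s,\ell}, \mathsf{G}_{s,\ell}, \psi_k$ by its Gaussian convolution $\mathsf{F}_{s,\ell}^h, \mathsf{G}_{s,\ell}^h, \psi_k^h$ at bandwidth $h$. These smoothed versions are $C^\infty$, preserve the Lipschitz constants of the originals, and satisfy both (D2) and the $C^3$ bound (\ref{cond:Psi}) with constants inflated by a factor polynomial in $1/h$. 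Denote the resulting smoothed GFOM iterate by $z^{(t),h}$ and its state-evolution mapping by $\Theta_t^h$.

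I decompose the quantity of interest as $\mathrm{I} + \mathrm{II}_{\mathrm{bias}} + \mathrm{II}_{\mathrm{fluc}} + \mathrm{III}$, where $\mathrm{I}$ and $\mathrm{III}$ are the smoothing errors on the random GFOM side and the deterministic state-evolution side, and $\mathrm{II}_{\mathrm{bias}} + \mathrm{II}_{\mathrm{fluc}}$ is the difference for the smoothed problem, split into its conditional mean and its stochastic fluctuation. Terms $\mathrm{I}$ and $\mathrm{III}$ are controlled by iterative Lipschitz stability of the GFOM and of the recursion defining $\Theta_t$ in the driving triple $(\mathsf{F}_s, \mathsf{G}_s, \psi_k)$, together with standard moment bounds on $\pnorm{z^{(t)}(A)}{}/\sqrt n$ (available under (U2)' and (D1) via Gronwall-type arguments); this yields $\E^{1/q}\abs{\mathrm{I}}^q + \abs{\mathrm{III}} \lesssim h \cdot \bigl(K\Lambda \Lambda_\psi \log n\cdot (1+\pnorm{z^{(0)}}{\infty})\bigr)^{C(q,\mathfrak{p}) t}$. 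The bias $\mathrm{II}_{\mathrm{bias}} = \frac{1}{n}\sum_k\bigl[\E^{(0)}\psi_k^h(z_k^{(t),h}(A)) - \E^{(0)}\psi_k^h\bigl((\Theta_t^h(\mathfrak{Z}^h))_{k,[1:t]}\bigr)\bigr]$ is bounded uniformly in $k$ by Theorem \ref{thm:GFOM_se_sym} applied to the smoothed problem with $S = \{k\}$, so the relevant constant $c_1 = c_1(1, \mathfrak{p})$ is free of any $n$-growing $\abs{S}$ factor; summing over $k$ gives $\abs{\mathrm{II}_{\mathrm{bias}}} \leq h^{-C}\cdot (K\Lambda \log n)^{c_1 t^5}\cdot n^{-1/c_0^t}$.

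The main obstacle is bounding the $L^q$ moment of $\mathrm{II}_{\mathrm{fluc}} = \frac{1}{n}\sum_k\bigl[\psi_k^h(z_k^{(t),h}(A)) - \E^{(0)}\psi_k^h(z_k^{(t),h}(A))\bigr]$. I plan a two-step reduction: first, apply Theorem \ref{thm:universality_avg} to the smoothed GFOM (the pair $\{\mathsf{F}_s^h, \mathsf{G}_s^h\}$ still satisfies (U2)' with the same $\Lambda$, and $\psi_k^h$ is pseudo-Lipschitz of the same order $\mathfrak{p}$ with only an $h^{-C}$-inflated constant) to transfer from $A$ to a Gaussian reference $G$ with matching variance profile, obtaining $\E^{1/q}\abs{X_n^h(A) - X_n^h(G)}^q \leq n^{-1/(Ct^3)}$ up to the usual $\log n$ and growth factors, where $X_n^h(\cdot) \equiv \frac{1}{n}\sum_k \psi_k^h(z_k^{(t),h}(\cdot))$. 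Second, for Gaussian $G$ I invoke Gaussian concentration: a single-entry perturbation $G_{ij}\mapsto G_{ij}+\delta$ propagates through $t$ smoothed iterations to an $\ell_2$-norm change of order $(K\Lambda)^{Ct}\delta$ in $z^{(t),h}$, so combining this with the $1/n$ averaging in $X_n^h$ via Cauchy--Schwarz yields a Frobenius-Lipschitz constant of $X_n^h(\cdot)$ of order $(K\Lambda\Lambda_\psi h^{-C})^{Ct}/\sqrt n$ on a high-probability set where $\pnorm{z^{(t),h}(G)}{}$ is controlled, and the Gaussian log-Sobolev inequality then delivers $\E^{1/q}\abs{X_n^h(G) - \E X_n^h(G)}^q \lesssim \sqrt q\cdot (K\Lambda\Lambda_\psi h^{-C})^{Ct}/\sqrt n$.

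Combining the four terms and optimizing $h$ to balance the $O(h)$ smoothing errors against the $h^{-C}$ inflation in the main terms yields the optimal choice $h \asymp n^{-1/C_0^t}$, producing the claimed bound. The hardest step in execution will be verifying the Frobenius-Lipschitz estimate for $X_n^h(G)$ with the correct $1/\sqrt n$ scaling: this requires tracking the per-entry sensitivities $\partial_{G_{ij}} z^{(t),h}$ via the chain rule without losing powers of $\sqrt n$, and handling the pseudo-Lipschitz (rather than globally Lipschitz) nature of $\psi_k^h$ via a truncation on a high-probability set where $\pnorm{z^{(t),h}(G)}{\infty}$ remains polylogarithmic in $n$.
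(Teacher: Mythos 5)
The paper omits the proof of Theorem \ref{thm:GFOM_se_sym_avg}, stating only that it is "largely similar to that of Theorem \ref{thm:universality_avg}." Your four-way decomposition $\mathrm{I}+\mathrm{II}_{\mathrm{bias}}+\mathrm{II}_{\mathrm{fluc}}+\mathrm{III}$ correctly reproduces that intended strategy: the fluctuation and universality-transfer pieces ($\mathrm{II}_{\mathrm{fluc}}$, or in the paper's notation $\mathscr{D}_\psi(A,\cdot)-\mathscr{D}_\psi(G,\cdot)$ and the Gaussian concentration of $\mathscr{D}_\psi(G,G)$) are exactly the content of Proposition \ref{prop:universality_avg_smooth} and Lemma \ref{lem:conc_avg}, the GFOM smoothing error $\mathrm{I}$ is Lemma \ref{lem:smooth_F_G}, and the new ingredients relative to the universality theorem are precisely the bias $\mathrm{II}_{\mathrm{bias}}$ (handled by Theorem \ref{thm:GFOM_se_sym} with $\abs{S}=1$, summed over $k$, so that $c_1=c_1(1,\mathfrak{p})$ carries no $n$-dependent factor) and the deterministic state-evolution smoothing stability $\mathrm{III}$, both of which you identify. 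You need not re-derive the Gaussian concentration from scratch; after the truncation step $\psi\mapsto\psi^{[M]}$ used in the paper's proof of Theorem \ref{thm:universality_avg}, Lemma \ref{lem:conc_avg} applies verbatim since $\{\mathsf{F}_s^h,\mathsf{G}_s^h\}$ retains the Lipschitz constant $\Lambda$.

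The one place you are optimistic is the $O(h)$ rate claimed for $\mathrm{III}=\abs{SE^h-SE}$. The Gaussian laws $\mathfrak{Z}^{([0:t])}$ and $\mathfrak{Z}^{h,([0:t])}$ are defined via a covariance recursion (Definition \ref{def:GFOM_se_sym}), and the transfer of a covariance perturbation of size $\epsilon$ into a perturbation of an expectation $\E^{(0)}\varphi(\mathfrak{Z}_\ell^{([0:\tau])})$ for Lipschitz $\varphi$ proceeds via a Wasserstein coupling, which by Powers--St{\o}rmer gives $W_2\lesssim \epsilon^{1/2}$ and cannot in general be upgraded to $O(\epsilon)$ because the relevant covariance matrices become increasingly singular with $t$ (a phenomenon the paper explicitly flags as the source of the $n^{-1/c_0^t}$ error rate). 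Propagating this through the $t$-step recursion degrades the smoothing error from $O(h)$ to roughly $O\big(h^{1/2^t}\big)$; the alternative, the Gaussian interpolation identity with second-order test functions, trades the square root for an $h^{-1}$-inflated Hessian bound and is no better. This does not break your argument---the final target $n^{-1/C_0^t}$ absorbs any such degradation by enlarging $C_0$ beyond $2c_0$ and re-optimizing $h$---but the explicit $h$-dependence and the resulting choice of $h$ must be re-derived with the corrected rate before the optimization step can be carried out rigorously.
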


The proof of the above theorem is largely similar to that of Theorem \ref{thm:universality_avg}, so will be omitted to avoid repetitive details. 

We note that the error bounds in both Theorems \ref{thm:GFOM_se_sym} and \ref{thm:GFOM_se_sym_avg} above typically allow for $t\ll \log \log n$, which is significantly smaller than the typical range of $t\ll (\log n/\log \log n)^{c_0}$ in the universality Theorems \ref{thm:universality} and \ref{thm:universality_avg}. This worsened error bound is closely tied to the increasingly singular covariance for the AMP iterate $\{\mathfrak{z}^{(t)}\}$ (cf. \cite{bao2025leave}). 

In a related direction, for the Gaussian AMP iterate $\mathfrak{z}^{(t)}(G)$ where $G\equiv \mathrm{GOE}(n)$, it is known that the averaged state evolution characterization $n^{-1}\sum_{k \in [n]} \psi\big(\mathfrak{z}_k^{(t)}(G)\big)\approx \E \psi (\sigma_t Z)$ ($Z\sim \mathcal{N}(0,1))$ holds up to $t\ll \log n/\log \log n$ in general, cf. \cite{rush2018finite}, and up to $t\leq  n/\mathrm{polylog}(n)$ in the spiked model and in robust regression, cf. \cite{li2022non,li2024non}. It remains open to examine the optimal dependence of $t$ in the context of the above results (for specific choices of $\{\mathsf{F}_\cdot,\mathsf{G}_\cdot\}$).

\section{General first order methods: asymmetric case}\label{section:main_results_asym}

\subsection{Formal setup and examples}
Next we consider an asymmetric version of the iterative algorithm (\ref{def:GFOM_sym}), which is initialized with $(u^{(0)},v^{(0)})\in \R^m\times \R^n$, and subsequently updated according to 
\begin{align}\label{def:GFOM_asym}
\begin{cases}
u^{(t)} = A \mathsf{F}_t^{\langle 1\rangle}(v^{([0:t-1])})+ \mathsf{G}_{t}^{\langle 1\rangle}(u^{([0:t-1])}) \in \R^m,\\
v^{(t)}= A^\top \mathsf{G}_t^{\langle 2\rangle}(u^{([0:t])})+\mathsf{F}_t^{\langle 2\rangle}(v^{([0:t-1])})\in \R^n.
\end{cases}
\end{align}
Here with a slight abuse of notation, we denote $A$ now as an $m\times n$ random matrix, and the row-separate functions $\mathsf{F}_t^{\langle 1 \rangle}, \mathsf{F}_t^{\langle 2 \rangle}:\R^{n\times [0:t-1]} \to \R^n$, $\mathsf{G}_t^{\langle 1 \rangle}:\R^{m\times [0:t-1]} \to \R^m$ and $\mathsf{G}_t^{\langle 2 \rangle}:\R^{m\times [0:t]} \to \R^m$ are understood as applied row-wise in the sense of (\ref{def:row_sep}). 

Let us now give some examples for the asymmetric GFOM in (\ref{def:GFOM_asym}):
\begin{enumerate}
	\item (\emph{AMP algorithms}). The standard asymmetric AMP algorithm can be recovered by setting
	\begin{align*}
	&\mathsf{F}_t^{\langle 1\rangle}(v^{([0:t-1])})\equiv \mathfrak{F}_t(v^{(t-1)}),\\
	&\mathsf{G}_{t}^{\langle 1\rangle}(u^{([0:t-1])})\equiv - \big[\mathscr{V}_A \E \mathfrak{F}_t'(v^{(t-1)}) \big]\circ \mathfrak{G}_{t-1}(u^{(t-1)}),\\
	&\mathsf{G}_{t}^{\langle 2\rangle}(u^{([0:t])})\equiv \mathfrak{G}_{t}(u^{(t)}),\\
	 &\mathsf{F}_t^{\langle 2\rangle}(v^{([0:t-1])})\equiv -[\mathscr{V}_A^\top \E \mathfrak{G}_{t}'(u^{(t)})]\circ \mathfrak{F}_t(v^{(t-1)}).
	\end{align*}
	Here $\{\mathfrak{F}_{t},\mathfrak{G}_t:\R \to \R\}_{t\geq 1}$ is a class of sufficiently smooth functions applied coordinate-wise with $\mathfrak{G}_0(\cdot)\equiv 0$, and $A \in \R^{m\times n}$ has independent entries with a general variance profile $\mathscr{V}_A\equiv \E A\circ A \in \R_{\geq 0}^{m\times n} $. 
	\item (\emph{Variants of gradient descent algorithms}). Recall the linear model (\ref{def:linear_model}) and the empirical risk minimizer $\hat{\mu}$ in (\ref{def:ERM}). A natural gradient descent algorithm for solving $\hat{\mu}$ can be described as follows: With initialization $\mu^{(0)}=0$ for simplicity, and for a chosen step size $\eta>0$, let
	\begin{align}\label{def:ERM_grad_descent_general_loss_regularizer}
	\mu^{(t)}&\equiv \mu^{(t-1)}-\eta  \big(-A^\top \mathsf{L}'(Y-A \mu^{(t-1)})+\lambda \mathsf{f}_n'(\mu^{(t-1)})\big),
	\end{align}
	where $\mathsf{f}_n(\mu)\equiv \sum_{j \in [n]} \mathsf{f}(\mu_j)$. Then (\ref{def:ERM_grad_descent_general_loss_regularizer}) can be reduced to the asymmetric GFOM (\ref{def:GFOM_asym}) by setting $\mu^{(t)}\equiv v^{(t)}$ and
	\begin{align*}
	&\mathsf{F}_t^{\langle 1\rangle}(v^{([0:t-1])})\equiv -v^{(t-1)}+\mu_0,\, \mathsf{G}_{t}^{\langle 1\rangle}(u^{([0:t-1])})\equiv 0,\\
	&\mathsf{G}_{t}^{\langle 2\rangle}(u^{([0:t])})\equiv \eta \mathsf{L}'(u^{(t)}+\xi),\, \mathsf{F}_t^{\langle 2\rangle}(v^{([0:t-1])})\equiv (\mathrm{id}-\eta \lambda \mathsf{f}_n')(v^{(t-1)}).
	\end{align*}	
	Some variants of the gradient descent algorithm (\ref{def:ERM_grad_descent_general_loss_regularizer}) can be easily incorporated by modifying the above identifications:
	\begin{itemize}
		\item (\emph{Stochastic gradient descent}). This algorithm uses a random subsample $\{(A_i,Y_i): i \in S\}$ (i.e., $S$ is a random subset of $[m]$) in (\ref{def:ERM_grad_descent_general_loss_regularizer}) to reduce the computational cost. It can be reformulated into a GFOM by replacing $\mathsf{G}_{t}^{\langle 2\rangle}$ above with  $\mathsf{G}_{t}^{\langle 2\rangle}(u^{([0:t])})\equiv \eta\big(s^{(t)}\circ \mathsf{L}'(u^{(t)}+\xi)\big)$, where the entries of $s^{(t)}\in \{0,1\}^m$ are i.i.d $\mathrm{Bern}(p)$, independent of all other variables.
		\item (\emph{Polyak's momentum speed up}). This method \cite{poljak1964some} amounts to adding a `momentum' term $\beta(\mu^{(t-1)}-\mu^{(t-2)})$ with $\beta>0$ in (\ref{def:ERM_grad_descent_general_loss_regularizer}). It can be reformulated into a GFOM by replacing $\mathsf{F}_t^{\langle 2\rangle}(v^{([0:t-1])})$ above with $\mathsf{F}_t^{\langle 2\rangle}(v^{([0:t-1])})\equiv (\mathrm{id}-\eta \lambda \mathsf{f}_n')(v^{(t-1)})+\beta(v^{(t-1)}-v^{(t-2)}\bm{1}_{t\geq 2})$.
	\end{itemize}
    For more variants of (\ref{def:ERM_grad_descent_general_loss_regularizer}), including the discrete Langevin algorithm and Nesterov's accelerated gradient descent method, the readers are referred to \cite[Section 3.1]{gerbelot2024rigorous}, where these methods are reformulated into GFOMs.
    
    We note that when the regularizer $\mathsf{f}_n$ is convex but not necessarily differentiable, the proximal version of the gradient descent algorithm (\ref{def:ERM_grad_descent_general_loss_regularizer}) can also be reformulated into an asymmetric GFOM. The readers are referred to Section \ref{subsection:proof_strategy_ERM} for several examples along this line.
\end{enumerate}

\subsection{Universality}
The following theorem provides an analogue of the universality Theorem \ref{thm:universality}, now for the asymmetric GFOM in (\ref{def:GFOM_asym}).

\begin{theorem}\label{thm:universality_asym}
Fix $t \in \N$ and $m,n \in \N$. Suppose the following hold:
\begin{enumerate}
	\item[($U^\ast $1)] $A=A_0/\sqrt{m+n}$, $B=B_0/\sqrt{m+n}$, where (i) $A_0, B_0$ are $m\times n$ random matrices whose entries are independent mean $0$ variables, and (ii) for all $i \in [m], j \in [n]$, $\E A_{0,ij}^2=\E B_{0,ij}^2$ and $\pnorm{A_{0,ij}}{\psi_2}\vee \pnorm{B_{0,ij}}{\psi_2}\leq K$ for some $K\geq 2$.
	\item[($U^\ast $2)] For all $s \in [t], \#\in \{1,2\}, k \in [m], \ell \in [n]$, $\mathsf{F}_{s,\ell}^{\langle \# \rangle},\mathsf{G}_{s,k}^{\langle 1 \rangle} \in C^3(\R^{[0:s-1]})$ and $\mathsf{G}_{s,k}^{\langle 2 \rangle} \in C^3(\R^{[0:s]})$. Moreover, there exists some $\Lambda\geq 2$ and $\mathfrak{p}\in \N$ such that 
	\begin{align*}
	&\max_{s \in [t]}\max_{\# =1,2}\max_{k\in[m], \ell \in [n]}\Big\{ \pnorm{ \mathsf{F}_{s,\ell}^{\langle \# \rangle} }{\mathrm{Lip}}+ \pnorm{\mathsf{G}_{s,k}^{\langle \# \rangle}}{\mathrm{Lip}}+\max_{a \in \mathbb{Z}_{\geq 0}^{[0:s-1]},b \in \mathbb{Z}_{\geq 0}^{[0:s]},\abs{a}\vee \abs{b}\leq 3 }\\ &\qquad \bigpnorm{(1+\abs{\cdot})^{-\mathfrak{p}}\big\{\abs{\partial^a \mathsf{F}_{s,\ell}^{\langle \# \rangle}(\cdot)}+ \abs{\partial^a \mathsf{G}_{s,k}^{\langle 1 \rangle}(\cdot)}+ \abs{\partial^b \mathsf{G}_{s,k}^{\langle 2 \rangle}(\cdot)}  \big\}  }{\infty}   \Big\}\leq \Lambda.
	\end{align*}
\end{enumerate}
Then for any $\emptyset \neq S_1\subset [m]$, $\emptyset\neq S_2 \subset [n]$, and any $\Psi_{1} \in C^3(\R^{\abs{S_{1}}\times t}), \Psi_{2} \in C^3(\R^{\abs{S_{2}}\times t })$ satisfying 
\begin{align}\label{cond:Psi_asym}
\max_{\#=1,2}\max_{a \in \mathbb{Z}_{\geq 0}^{\abs{S_{\#}}\times t}, \abs{a}\leq 3} \sup_{x \in \R^{\abs{S_{\#}}\times t } }\bigg(\sum_{ (s,\tau) \in [\abs{S_{\#}}]\times [t] }(1+\abs{x_{s,\tau}})^{\mathfrak{p}}\bigg)^{-1} \abs{\partial^a \Psi_{\#}(x)} \leq \Lambda_{\Psi}
\end{align}
for some $\Lambda_{\Psi}\geq 2$, it holds for some constant $c_1=c_1(\mathfrak{p})>0$ that with $\E^{(0)}\equiv \E\big[\cdot | (u^{(0)},v^{(0)})\big]$,
\begin{align*}
& \max \Big\{ \abs{S_1}^{-3}\bigabs{\E^{(0)} \Psi_1\big(u_{S_1}^{([t])}(A)\big)-\E^{(0)} \Psi_1\big(u_{S_1}^{([t])}(B)\big)}, \nonumber\\
&\qquad\qquad  \abs{S_2}^{-3}\bigabs{\E^{(0)} \Psi_2\big(v_{S_2}^{([t])}(A)\big)-\E^{(0)} \Psi_2\big(v_{S_2}^{([t])}(B)\big)}   \Big\}\nonumber\\ 
& \leq  \Lambda_\Psi \cdot\big(K\Lambda\log n\cdot (1+\pnorm{u^{(0)}}{\infty}+\pnorm{v^{(0)}}{\infty})\big)^{c_1 t^3} \cdot (m+n)^{-1/2}.
\end{align*}
\end{theorem}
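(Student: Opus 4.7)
The approach is to reduce Theorem~\ref{thm:universality_asym} to its symmetric counterpart, Theorem~\ref{thm:universality}, by the standard symmetric dilation
\[
\tilde{A} \equiv \begin{pmatrix} 0_{m\times m} & A \\ A^\top & 0_{n\times n} \end{pmatrix} \in \R^{N\times N}, \qquad N \equiv m+n,
\]
and analogously $\tilde{B}$. Since $A = A_0/\sqrt{m+n}$, one has $\tilde{A} = \tilde{A}_0/\sqrt{N}$ where $\tilde{A}_0$ is symmetric with independent upper-triangular entries — each either one of the $A_{0,ij}$ or identically $0$ — all mean zero, with $\E \tilde{A}_{0,pq}^2 = \E \tilde{B}_{0,pq}^2$ and sub-Gaussian norm at most $K$. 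Thus condition (U1) of Theorem~\ref{thm:universality} holds verbatim at dimension $N$.

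To realize~(\ref{def:GFOM_asym}) as a symmetric GFOM on $\R^N$, I would interleave the $u$- and $v$-updates by setting $\tilde{z}^{(0)} \equiv (u^{(0)}, v^{(0)})$ and, for $\tau = 1, 2, \ldots$,
\[
\tilde{z}^{(2\tau-1)} \equiv \begin{pmatrix} u^{(\tau)} \\ 0_n \end{pmatrix}, \qquad \tilde{z}^{(2\tau)} \equiv \begin{pmatrix} 0_m \\ v^{(\tau)} \end{pmatrix}.
\]
This schedule respects the causal structure of~(\ref{def:GFOM_asym}): at the odd step $s=2\tau-1$ only $v^{([0:\tau-1])}$ and $u^{([0:\tau-1])}$ are required, which sit at the even (resp.\ odd) positions of $\tilde{z}^{([0:s-1])}$; at the even step $s=2\tau$ one additionally needs $u^{(\tau)}$, which is precisely the top block of $\tilde{z}^{(2\tau-1)}$ and so is available. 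Because $\tilde{A}$ exchanges the top and bottom blocks via $A$ and $A^\top$, the symmetric update $\tilde{z}^{(s)} = \tilde{A}\tilde{\mathsf{F}}_s + \tilde{\mathsf{G}}_s$ reproduces each asymmetric update once $\tilde{\mathsf{F}}_s,\tilde{\mathsf{G}}_s$ are taken to be row-separate coordinate projections composed with the appropriate $\mathsf{F}_{\tau}^{\langle \#\rangle}$ or $\mathsf{G}_{\tau}^{\langle \#\rangle}$ — e.g.\ $\tilde{\mathsf{F}}_{2\tau-1,m+j}(x_0,\ldots,x_{2\tau-2}) \equiv \mathsf{F}_{\tau,j}^{\langle 1\rangle}(x_0, x_2, \ldots, x_{2\tau-2})$, with zero on entries living in the "wrong" block.

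Since each $\tilde{\mathsf{F}}_{s,\cdot}, \tilde{\mathsf{G}}_{s,\cdot}$ is a linear coordinate projection composed with one of the original nonlinearities, hypothesis~($U^\ast$2) transfers to (U2) of Theorem~\ref{thm:universality} with the \emph{same} $\Lambda$ and $\mathfrak{p}$. For the test functions, I would lift $\Psi_1 : \R^{|S_1| \times t} \to \R$ to $\tilde{\Psi}_1 : \R^{|S_1| \times 2t} \to \R$ that reads off only the odd iterations $\tilde{z}_i^{(2\tau-1)} = u_i^{(\tau)}$, and $\Psi_2$ to $\tilde{\Psi}_2$ supported on $\tilde{S}_2 \equiv \{m+j : j \in S_2\} \subset [N]$ reading the even iterations; the bound~(\ref{cond:Psi}) is preserved with the same $\Lambda_\Psi,\mathfrak{p}$ since derivatives along ignored coordinates vanish.

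Applying Theorem~\ref{thm:universality} to $(\tilde{A},\tilde{B})$ at dimension $N$ with $2t$ iterations then yields the claimed estimate upon noting that $\pnorm{\tilde{z}^{(0)}}{\infty} \leq \pnorm{u^{(0)}}{\infty}+\pnorm{v^{(0)}}{\infty}$, $\log N$ differs from $\log n$ by a constant absorbable into the logarithmic factor, and $(2t)^3 = 8t^3$ is absorbed into $c_1$. The main — and essentially only — delicate point is the interleaving step: one must verify that the alternating "zero-block" pattern of $\tilde{z}^{(s)}$ is self-consistent with the row-separate form, i.e.\ the symmetric nonlinearities never need to "read" a non-trivial value in a block forced to be zero at that step. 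The odd/even schedule is designed precisely to sidestep this, and no new probabilistic content beyond Theorem~\ref{thm:universality} is needed.
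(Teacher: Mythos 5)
Your reduction to the symmetric GFOM via block dilation with an interleaved odd/even update schedule is exactly the paper's approach (cf. Section~\ref{subsection:proof_GFOM_universality_asym}); the only cosmetic difference is that the paper encodes the two initializations as separate zero-padded iterates $z^{(-1)} = \binom{u^{(0)}}{0_n}$ and $z^{(0)} = \binom{0_m}{v^{(0)}}$, whereas you combine them into a single $\tilde z^{(0)} = (u^{(0)}, v^{(0)})$, which is equally valid and arguably cleaner. The verifications of (U1), (U2), the lifted test functions, and the constant bookkeeping all match.
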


Similar to Theorem \ref{thm:universality_avg}, we may use Theorem \ref{thm:universality_asym} to provide a precise non-asymptotic version of the averaged universality for the asymmetric GFOM (\ref{def:GFOM_asym}).

\begin{theorem}\label{thm:universality_asym_avg}
	Suppose ($U^\ast $1) in Theorem \ref{thm:universality_asym} holds, and ($U^\ast $2) is replaced by
	\begin{enumerate}
		\item[($U^\ast $2)'] 
		$
		\max\limits_{s \in [t]}\max\limits_{  \# = 1,2   }\max\limits_{k \in [m], \ell \in [n]}\big\{\pnorm{ \mathsf{F}_{s,\ell}^{\langle \# \rangle} }{\mathrm{Lip}}+ \pnorm{\mathsf{G}_{s,k}^{\langle \# \rangle}}{\mathrm{Lip}}+\abs{\mathsf{F}_{s,\ell}^{\langle \# \rangle} (0)}+ \abs{\mathsf{G}_{s,k}^{\langle \# \rangle} (0)}\big\}\leq \Lambda$ for some $\Lambda\geq 2$.
	\end{enumerate}
	Fix a sequence of $\Lambda_\psi$-pseudo-Lipschitz functions $\{\psi_k:\R^t \to \R\}_{k \in [m\vee n]}$ of order $\mathfrak{p}$, where $\Lambda_\psi\geq 2$. Then for any $q \in \N$, there exists some $C_0=C_0(\mathfrak{p},q)>0$ such that with $\E^{(0)}\equiv \E\big[\cdot | (u^{(0)},v^{(0)})\big]$,
	\begin{align*}
	&\E^{(0)} \biggabs{\frac{1}{m+n}\sum_{k \in [m]} \psi_k\big(u_k^{([t])}(A)\big) - \frac{1}{m+n}\sum_{k \in [m]}  \psi_k\big(u_k^{([t])}(B)\big)  }^q\\
	&\qquad \vee \E^{(0)} \biggabs{\frac{1}{m+n}\sum_{\ell \in [n]} \psi_\ell\big(v_\ell^{([t])}(A)\big) - \frac{1}{m+n}\sum_{\ell \in [n]}  \psi_\ell\big(v_\ell^{([t])}(B)\big)  }^q\\
	&\leq \big(K\Lambda\Lambda_\psi\log n\cdot (1+\pnorm{u^{(0)}}{\infty}+\pnorm{v^{(0)}}{\infty})\big)^{C_0 t^3}\cdot (m+n)^{-1/(C_0t^3)}. 
	\end{align*}
\end{theorem}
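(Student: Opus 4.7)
The plan is to reduce Theorem \ref{thm:universality_asym_avg} to the entrywise universality bound of Theorem \ref{thm:universality_asym} via a smoothing argument, following the same strategy that upgrades Theorem \ref{thm:universality} to Theorem \ref{thm:universality_avg} in the symmetric case. The hypotheses ($U^\ast$2)' only assert Lipschitz continuity of the nonlinearities $\mathsf{F}_s^{\langle \# \rangle}, \mathsf{G}_s^{\langle \# \rangle}$, so condition ($U^\ast$2) is not directly available; similarly the test functions $\psi_k$ are only pseudo-Lipschitz. I would therefore replace all nonlinearities, as well as a suitably truncated version of the test functions, by Gaussian convolutions at bandwidth $\delta>0$. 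The mollified functions retain their Lipschitz constants (up to absolute factors) while all derivatives of order $a\in\{1,2,3\}$ become bounded by $C\Lambda\delta^{-(|a|-1)}$ pointwise, after absorbing the polynomial growth into the pseudo-Lipschitz envelope. This yields a mollified GFOM that satisfies ($U^\ast$2) with effective parameters $\Lambda'\asymp \Lambda\delta^{-3}$ and an effective polynomial degree $\mathfrak{p}'\asymp \mathfrak{p}$.

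Next, I would establish $q$-th moment bounds of the form $\E\pnorm{u^{(t)}}{}^q\vee \E\pnorm{v^{(t)}}{}^q\lesssim_{q} (K\Lambda)^{\bigo(t)}(m+n)^{q/2}(1+\pnorm{u^{(0)}}{\infty}+\pnorm{v^{(0)}}{\infty})^{\bigo(t)}$ by an inductive argument on $t$ using sub-Gaussian concentration of $\pnorm{A}{\op}\lesssim K$ on a high-probability event, together with $(U^\ast$2)'. Comparing the original iterates to the mollified ones in the $\ell_2$ sense, Lipschitz stability propagated through the recursion gives a perturbation bound $\pnorm{u^{(t)}-\tilde u^{(t)}}{}\vee\pnorm{v^{(t)}-\tilde v^{(t)}}{}\lesssim (K\Lambda\pnorm{A}{\op})^{\bigo(t)}\cdot \delta\sqrt{m+n}$, controlling the smoothing error. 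Coupling this with a pseudo-Lipschitz estimate on $\psi_k$ (itself handled by an envelope truncation), the averaged functional on the original iterates differs from its smoothed counterpart by at most $(K\Lambda\Lambda_\psi)^{\bigo(t)}\cdot\delta$ in $q$-th moment.

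At this point I would apply Theorem \ref{thm:universality_asym} to the mollified system by choosing the test function $\Psi_\#(x)=\sum_{k\in S_\#}\tilde\psi_k(x_k)$ with $S_1=[m]$, $S_2=[n]$. For general $q$, rather than taking $\Psi_\#$ to be the $q$-th power of the sum directly (which would inflate the polynomial growth factor unboundedly), I would first reduce to a scalar comparison by writing the $q$-th absolute moment against a smooth surrogate $\Phi\in C^3(\R)$ approximating $|\cdot|^q$, then apply Theorem \ref{thm:universality_asym} to $\Phi\circ\Psi_\#$ and absorb the extra derivative factors into the same smoothing parameter $\delta$. The resulting entrywise-to-averaged summation produces an upper bound of the form $\Lambda_\psi(K\Lambda\log n/\delta)^{c_1 t^3}\cdot (m+n)^{-1/2}$, after which optimizing $\delta$ by balancing the perturbation error $(K\Lambda\Lambda_\psi)^{\bigo(t)}\delta$ against the universality error $(K\Lambda\log n/\delta)^{c_1 t^3}(m+n)^{-1/2}$ yields the announced rate $(m+n)^{-1/(C_0 t^3)}$ with the claimed $\log n$ prefactor.

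The main obstacle will be tracking the derivative blow-up under smoothing: each derivative of order three costs a factor $\delta^{-3}$, and Theorem \ref{thm:universality_asym} amplifies this as $\delta^{-\bigo(t^3)}$, so extracting a nontrivial final rate requires the $q$-th moment perturbation bound to be genuinely linear in $\delta$ (rather than exponential), which in turn relies crucially on using the Lipschitz property in ($U^\ast$2)' rather than the pseudo-Lipschitz envelope at each iteration. A secondary technical point is the handling of the unbounded polynomial growth of the $\psi_k$'s: the $q$-th moment bounds on $u^{(t)},v^{(t)}$ allow a truncation argument that reduces to bounded (and then smoothed) test functions with only a polynomial loss, which is harmless after the final optimization.
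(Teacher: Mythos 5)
The paper does not prove Theorem~\ref{thm:universality_asym_avg} directly; in Section~\ref{subsection:proof_GFOM_universality_asym} it builds the block matrix $\overline{A}\in\R^{(m+n)\times(m+n)}$, stacks $u^{(t)},v^{(t)}$ into a single symmetric GFOM, and then simply invokes the already-proven Theorem~\ref{thm:universality_avg}. Your plan of smoothing the nonlinearities, propagating $\ell_2$ perturbation bounds through the recursion, and then appealing to the asymmetric entrywise theorem is the correct \emph{inner} strategy (that is what the proof of Theorem~\ref{thm:universality_avg} does), but as written it has two genuine gaps.

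First, and most seriously, universality bounds of the form ``$\abs{\E f(A)-\E f(B)}\leq \mathrm{small}$'' from Theorem~\ref{thm:universality_asym} (or its smoothed variant) do not by themselves control $\E\abs{\mathrm{avg}_A-\mathrm{avg}_B}^q$: that quantity could be large even when the two averages have identical laws (e.g.\ if they are independent with diffuse marginals). What is needed is an anchoring of both averages to a common deterministic value, plus a concentration estimate showing that at least one of them is tightly clustered around that value. In the paper's argument this appears as the comparison $\E\bigabs{\mathrm{avg}_M-\E\mathrm{avg}_{G\circ V_n}}^q$ for $M\in\{A,B\}$, followed by the sub-Gaussian concentration of Lemma~\ref{lem:conc_avg} for the Gaussian reference $G\circ V_n$. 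Your ``smooth surrogate $\Phi$ approximating $\abs{\cdot}^q$'' applied to $\Phi\circ\Psi_{\#}$ still produces only a comparison of expectations between two single-matrix functionals, which cannot close the argument without this concentration step. This is the key missing ingredient.

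Second, the choice $\Psi_{\#}(x)=\sum_{k\in S_{\#}}\tilde\psi_k(x_k)$ with $S_1=[m]$, $S_2=[n]$ picks up the multiplicative factor $\abs{S_{\#}}^3\sim n^3$ from Theorem~\ref{thm:universality_asym}, which swamps the $(m+n)^{-1/2}$ gain and produces a divergent bound. The paper avoids this in Proposition~\ref{prop:universality_avg_smooth} by expanding the $q$-th power of the difference into a sum over $k_1,\ldots,k_q$ and subsets $S\subset[q]$, so that the entrywise universality theorem is only ever applied to test functions with $\abs{S}\leq q$ coordinates, giving a benign $q^3$ factor. Your objection that ``taking the $q$-th power inflates the polynomial growth unboundedly'' applies to raising the whole averaged sum to a power, but not to this expansion into products of at most $q$ single-coordinate factors; abandoning the expansion in favor of the smooth surrogate throws away exactly the device that keeps the index-set size constant in $n$.
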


The proofs for the above theorems proceed with a reduction of the asymmetric GFOM (\ref{def:GFOM_asym}) to a symmetric GFOM. Such a reduction scheme has been well-known for the special case of the AMP iterate; see, e.g., \cite[Section 6]{berthier2020state}, \cite[Section 6.3]{bao2025leave}. The details of this reduction in the context of GFOM can be found in Section \ref{subsection:proof_GFOM_universality_asym}.

We note that in both Theorems \ref{thm:universality_asym} and \ref{thm:universality_asym_avg}, explicit conditions on $m$ and $n$ are not placed due to the normalization $(m+n)^{-1/2}$ in (U$^*$1). In typical applications, the normalization will be either $m^{-1/2}$ or $n^{-1/2}$, so the conclusions of the above results remain valid in the proportional regime $1/K\leq m/n\leq K$.

\subsection{State evolution}

In the asymmetric case, the state evolution for the GFOM (\ref{def:GFOM_asym}) is more complicated than that in Definition \ref{def:GFOM_se_sym}. In fact, it is iteratively described---in the following definition---by (i) two row-separate maps $\Phi_t: \R^{m\times [0:t]} \to \R^{m\times [0:t]}$ and $\Xi_t: \R^{n\times [0:t]} \to \R^{n\times [0:t]}$, and (ii) two centered Gaussian matrices $\mathfrak{U}^{([1:\infty))}\in \R^{m\times [1:\infty)}$ and $\mathfrak{V}^{([1:\infty))}\in \R^{n\times [1:\infty)}$.

\begin{definition}\label{def:GFOM_se_asym}
Initialize with $\Phi_0=\mathrm{id}(\R^m)$, $\Xi_0\equiv \mathrm{id}(\R^n)$, and $\mathfrak{U}^{(0)}\equiv u^{(0)}$, $\mathfrak{V}^{(0)}\equiv v^{(0)}$. For $t=1,2,\ldots$, with $\E^{(0)}\equiv \E\big[\cdot | (\mathfrak{U}^{(0)}, \mathfrak{V}^{(0)})\big]$, execute the following steps:
\begin{enumerate}
	\item Let $\Phi_{t}:\R^{m\times [0:t]}\to \R^{m\times [0:t]}$ be defined as follows: for $w \in [0:t-1]$, $\big[\Phi_{t}(\mathfrak{u}^{([0:t])})\big]_{\cdot,w}\equiv \big[\Phi_{w}(\mathfrak{u}^{([0:w])})\big]_{\cdot,w}$, and for $w=t$,
	\begin{align*}
	\big[\Phi_{t}(\mathfrak{u}^{([0:t])})\big]_{\cdot,t} \equiv \mathfrak{u}^{(t)}+\sum_{s \in [1:t-1]} \mathfrak{f}_{s}^{(t-1) }\circ \mathsf{G}_{s}^{\langle 2\rangle}\big(\Phi_{s}(\mathfrak{u}^{([0:s])}) \big)+\mathsf{G}_{t}^{\langle 1\rangle}\big(\Phi_{t-1}(\mathfrak{u}^{([0:t-1])}) \big),
	\end{align*}
	where the coefficient vectors $\{\mathfrak{f}_{s}^{(t-1) } \}_{s \in [1:t-1]}\subset \R^m$ are determined by
	\begin{align*}
	\mathfrak{f}_{s,k}^{(t-1) } \equiv \sum_{\ell \in [n]} \E A_{k\ell}^2\cdot \E^{(0)}\partial_{\mathfrak{V}_\ell^{(s)}} \big\{ \mathsf{F}_{t,\ell}^{\langle 1\rangle}\circ \Xi_{t-1,\ell}\big\} (\mathfrak{V}_\ell^{([0:t-1])}), \quad  k \in [m].
	\end{align*}
	\item Let the Gaussian law of $\mathfrak{U}^{(t)}$ be determined via the following correlation specification: for $s \in [1:t]$ and $k \in [m]$,
	\begin{align*}
	\mathrm{Cov}\big(\mathfrak{U}_k^{(t)}, \mathfrak{U}_k^{(s)} \big)\equiv \sum_{\ell \in [n]} \E A_{k\ell}^2\cdot \E^{(0)}  \prod_{\tau \in \{t,s\}}\big\{ \mathsf{F}_{\tau,\ell}^{\langle 1\rangle}\circ \Xi_{\tau-1,\ell}\big\} (\mathfrak{V}_\ell^{([0:\tau-1])}).
	\end{align*}
	\item Let $\Xi_{t}:\R^{n\times [0:t]}\to \R^{n\times [0:t]}$ be defined as follows: for $w \in [0:t-1]$, $\big[\Xi_{t}(\mathfrak{v}^{([0:t])})\big]_{\cdot,w}\equiv \big[\Xi_{w}(\mathfrak{v}^{([0:w])})\big]_{\cdot,w}$, and for $w=t$,
	\begin{align*}
	\big[\Xi_{t}(\mathfrak{v}^{([0:t])})\big]_{\cdot,t} \equiv \mathfrak{v}^{(t)}+\sum_{s \in [1:t]} \mathfrak{g}_{s}^{(t)}\circ \mathsf{F}_{s}^{\langle 1\rangle}\big(\Xi_{s-1}(\mathfrak{v}^{([0:s-1])}) \big)+\mathsf{F}_{t}^{\langle 2\rangle}\big(\Xi_{t-1}(\mathfrak{v}^{([0:t-1])}) \big),
	\end{align*}
	where  the coefficient vectors $\{\mathfrak{g}_{s}^{(t)}\}_{s \in [1:t]}\subset \R^n$ are determined via
	\begin{align*}
	\mathfrak{g}_{s,\ell}^{(t)}\equiv \sum_{k \in [m]} \E A_{k\ell}^2\cdot  \E^{(0)} \partial_{\mathfrak{U}_k^{(s)}} \big\{ \mathsf{G}_{t,k}^{\langle 2\rangle}\circ\Phi_{t,k}\big\} (\mathfrak{U}_k^{([0:t])}),\quad \ell \in [n].
	\end{align*}
	\item Let the Gaussian law of $\mathfrak{V}^{(t)}$ be determined via the following correlation specification: for $s \in [1:t]$ and $\ell \in [n]$,
	\begin{align*}
	\mathrm{Cov}(\mathfrak{V}_\ell^{(t)},\mathfrak{V}_\ell^{(s)})\equiv \sum_{k \in [m]} \E A_{k\ell}^2\cdot  \E^{(0)} \prod_{\tau \in \{t,s\}} \big\{ \mathsf{G}_{\tau,k}^{\langle 2\rangle}\circ\Phi_{\tau,k}\big\} (\mathfrak{U}_k^{([0:\tau])}).
	\end{align*}
\end{enumerate}
\end{definition}

The following theorems establish analogues of Theorems \ref{thm:GFOM_se_sym} and \ref{thm:GFOM_se_sym_avg} for the asymmetric GFOM (\ref{def:GFOM_asym}) by using the state evolution parameters $(\{\Phi_t\},\{\Xi_t\}, \mathfrak{U}^{([1:\infty))},\mathfrak{V}^{([1:\infty))})$ in Definition \ref{def:GFOM_se_asym} above.

\begin{theorem}\label{thm:GFOM_se_asym}
	Fix $t \in \N$ and $n \in \N$. Suppose the following hold:
	\begin{enumerate}
		\item[($D^\ast$1)] $A\equiv A_0/\sqrt{m}$, where the entries of $A_0\in \R^{m\times n}$ are independent mean $0$ variables such that $\max_{i,j \in [n]}\pnorm{A_{0,ij}}{\psi_2}\leq K$ holds for some $K\geq 2$.
		\item[($D^\ast$2)] For all $s \in [t], \#\in \{1,2\}, k \in [m], \ell \in [n]$, $\mathsf{F}_{s,\ell}^{\langle \# \rangle},\mathsf{G}_{s,k}^{\langle 1 \rangle} \in C^3(\R^{[0:s-1]})$ and $\mathsf{G}_{s,k}^{\langle 2 \rangle} \in C^3(\R^{[0:s]})$. Moreover, there exists some $\Lambda\geq 2$ and $\mathfrak{p}\in \N$ such that 
		\begin{align*}
		&\max_{s \in [t]}\max_{\# =1,2}\max_{k\in[m], \ell \in [n]}\Big\{ \abs{ \mathsf{F}_{s,\ell}^{\langle \# \rangle}(0) }+ \abs{\mathsf{G}_{s,k}^{\langle \# \rangle}(0)}+\max_{0\neq a \in \mathbb{Z}_{\geq 0}^{[0:s-1]},0\neq b \in \mathbb{Z}_{\geq 0}^{[0:s]},\abs{a}\vee \abs{b}\leq 3 }\\ &\qquad \Big(\bigpnorm{ \partial^a \mathsf{F}_{s,\ell}^{\langle \# \rangle} }{\infty}+\bigpnorm{ \partial^a \mathsf{G}_{s,k}^{\langle 1 \rangle}  }{\infty}+ \bigpnorm{  \partial^b \mathsf{G}_{s,k}^{\langle 2 \rangle}   }{\infty} \Big)   \Big\}\leq \Lambda.
		\end{align*}
	\end{enumerate}
	Further suppose $1/K\leq m/n\leq K$. Then for any $\emptyset \neq S_1\subset [m]$, $\emptyset\neq S_2 \subset [n]$, and any $\Psi_{1} \in C^3(\R^{\abs{S_{1}}\times t}), \Psi_{2} \in C^3(\R^{\abs{S_{2}}\times t })$ satisfying (\ref{cond:Psi_asym}) for some $\Lambda_{\Psi}\geq 2$, it holds for some universal constant $c_0>0$ and another constant $c_1\equiv c_1(\abs{S_1},\abs{S_2},\mathfrak{p})>0$, such that with $\E^{(0)}\equiv \E\big[\cdot | (u^{(0)},v^{(0)})\big]$,
	\begin{align*}
	& \bigabs{\E^{(0)} \Psi_1\big[u_{S_1}^{([t])}(A)\big]-\E^{(0)}\Psi_1\big[\big(\Phi_t(\mathfrak{U}^{([0:t])})\big)_{S_1,[1:t]}\big] } \\
	& \qquad \vee \bigabs{ \E^{(0)} \Psi_2\big[v_{S_2}^{([t])}(A)\big]-\E^{(0)}\Psi_2\big[\big(\Xi_t(\mathfrak{V}^{([0:t])})\big)_{S_2,[1:t]}\big] } \\
	& \leq \Lambda_\Psi \cdot \big(K\Lambda \log n\cdot (1+\pnorm{u^{(0)}}{\infty}+\pnorm{v^{(0)}}{\infty})\big)^{c_1 t^5} \cdot n^{-1/c_0^t}.
	\end{align*}
\end{theorem}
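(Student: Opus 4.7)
The plan is to reduce the asymmetric GFOM (\ref{def:GFOM_asym}) to a symmetric GFOM on $\R^{m+n}$ and then invoke Theorem \ref{thm:GFOM_se_sym} directly, following the well-known symmetrization trick used for asymmetric AMP \cite{berthier2020state,bao2025leave}. I would form
\[
\tilde{A} \equiv \begin{pmatrix} 0 & A \\ A^\top & 0 \end{pmatrix} \in \R^{(m+n)\times(m+n)},
\]
noting that under the proportionality assumption $1/K \leq m/n \leq K$, the rescaled matrix $\sqrt{m+n}\cdot \tilde{A}$ has independent sub-Gaussian upper-triangular mean-$0$ entries whose $\psi_2$ parameter is bounded by a constant depending only on $K$, so hypothesis (D1) of Theorem \ref{thm:GFOM_se_sym} is met after replacing $n$ by $m+n \asymp_K n$.

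Next I would construct a symmetric GFOM $\{\tilde{z}^{(s)}\}_{s\geq 0} \subset \R^{m+n}$ driven by $\tilde{A}$ that encodes the asymmetric iterates by alternation: at odd step $s=2\tau-1$ the first $m$ coordinates update to $u^{(\tau)}$, and at even step $s=2\tau$ the last $n$ coordinates update to $v^{(\tau)}$, while the inactive block is carried forward unchanged. Using the block action $\tilde{A}(x,y)^\top = (Ay, A^\top x)^\top$, the update rule (\ref{def:GFOM_asym}) translates into (\ref{def:GFOM_sym}) with lifted row-separate functions $\{\tilde{\mathsf{F}}_s, \tilde{\mathsf{G}}_s\}$ obtained by zero-padding $\mathsf{F}^{\langle \# \rangle}, \mathsf{G}^{\langle \# \rangle}$ onto the appropriate block and adding an identity carry-over on the other. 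These inherit the regularity in (D$^\ast$2), so (D2) holds with iteration count doubled to $2t$, and with $\mathfrak{p}$ and $\Lambda$ preserved up to constants depending only on $K$.

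The step I expect to be most technically delicate is matching the state evolutions: I would verify by induction on $s \in [1:2t]$ that the symmetric parameters $(\Theta_{2t}, \mathfrak{Z}^{([0:2t])})$ of Definition \ref{def:GFOM_se_sym}, specialized to the lifted GFOM, reduce exactly to the asymmetric parameters $(\{\Phi_\tau,\Xi_\tau\}_{\tau\leq t}, \mathfrak{U}^{([0:t])}, \mathfrak{V}^{([0:t])})$ of Definition \ref{def:GFOM_se_asym}. The essential point is that $\E \tilde{A}_{k\ell}^2 = \E A_{k,\ell-m}^2$ for $k\in[m]$, $\ell \in (m, m+n]$ (and symmetrically for the transposed block), while the two diagonal blocks vanish; consequently the single symmetric bias coefficient $\mathfrak{b}_s^{(2t-1)}$ in Definition \ref{def:GFOM_se_sym} splits cleanly into the pair $(\mathfrak{f}_s^{(t-1)}, \mathfrak{g}_s^{(t)})$ of Definition \ref{def:GFOM_se_asym} (after identifying even-indexed symmetric times with asymmetric $v$-updates, and odd-indexed ones with $u$-updates), and analogously the Gaussian covariance on $[m+n]$ block-decouples into the $\mathfrak{U}$-covariance on $[m]$ and the $\mathfrak{V}$-covariance on $(m, m+n]$.

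With this correspondence in place, Theorem \ref{thm:GFOM_se_sym} applied to $\{\tilde{z}^{(s)}\}_{s\leq 2t}$ with test function $\Psi_1$ supported on $S_1 \subset [m]$ (respectively $\Psi_2$ supported on $m+S_2 \subset (m, m+n]$) yields the desired bounds; the exponent $t^5$ becomes $(2t)^5$ and the ambient dimension $n$ becomes $m+n \asymp_K n$, both of which are absorbed into the constants $c_0$ (universal) and $c_1 = c_1(\abs{S_1}, \abs{S_2}, \mathfrak{p})$. Beyond the bookkeeping of defining the lifted functions and tracking block actions throughout the recursion, no new probabilistic ingredient is required.
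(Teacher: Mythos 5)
Your proposal is correct in outline but takes a genuinely different route from the paper's. The paper proves Theorem \ref{thm:GFOM_se_asym} directly by constructing an asymmetric AMP iterate $\{(\mathfrak{u}^{(t)},\mathfrak{v}^{(t)})\}$ and showing the GFOM iterates coincide with $(\Phi_t(\mathfrak{u}^{([0:t])}),\Xi_t(\mathfrak{v}^{([0:t])}))$ row by row, then invoking an asymmetric entrywise AMP theory (Theorem \ref{thm:AMP_entrywise_dist_asym}, a variant of a result in \cite{bao2025leave}); the paper explicitly remarks that it avoids the symmetrization scheme you propose ``to avoid unnecessary notational complications'' and to exhibit the underlying AMP explicitly. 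Your route via symmetrization and Theorem \ref{thm:GFOM_se_sym} is the one the paper uses for Theorems \ref{thm:universality_asym}--\ref{thm:universality_asym_avg} and can be made to work here, but two details differ from your sketch. First, the paper's lift does \emph{not} carry the inactive block forward; it zeroes it out (e.g.\ $z^{(2\tau-1)}=\binom{u^{(\tau)}}{0_n}$, with $(u^{(0)},v^{(0)})$ packaged as $z^{(-1)},z^{(0)}$). Zeroing out is exactly what forces $\mathfrak{Z}_k^{(2\tau)}=0$ for $k\in[m]$ and $\mathfrak{Z}_\ell^{(2\tau-1)}=0$ for $\ell\in(m,m+n]$, making the lifted Gaussian degenerate along one parity on each block; an identity carry-over would introduce extra nonlinearities into the lifted $\mathsf{G}_\cdot$ and cross terms into the covariance, spoiling the clean decoupling you are counting on. Second, the assertion that $\mathfrak{b}_s^{(\cdot)}$ ``splits cleanly'' into $(\mathfrak{f}_s^{(t-1)},\mathfrak{g}_s^{(t)})$ needs a parity-filtering induction: for rows $k\in[m]$ at lifted time $2\tau-1$, the sum over $\ell$ collapses to $\ell\in(m,m+n]$ because $\E\tilde{A}_{k\ell}^2=0$ elsewhere, and the sum over $s$ collapses to even $s=2s'$ because the odd-indexed $\mathfrak{Z}_\ell^{(\cdot)}$ are degenerate on that block; tracking this carefully is what produces the asymmetric one-step offset between $\mathfrak{f}_s^{(t-1)}$ (summed to $t-1$ in $\Phi_t$) and $\mathfrak{g}_s^{(t)}$ (summed to $t$ in $\Xi_t$) in Definition \ref{def:GFOM_se_asym}. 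With those corrections the argument is sound; it trades the paper's explicit asymmetric AMP correspondence for heavier bookkeeping but a more mechanical invocation of the already-proven symmetric Theorem \ref{thm:GFOM_se_sym}.
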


\begin{theorem}\label{thm:GFOM_se_asym_avg}
	Fix $t \in \N$ and $n \in \N$, and suppose $1/K\leq m/n\leq K$ for some $K\geq 2$. Suppose ($D^\ast$1) in Theorem \ref{thm:GFOM_se_asym} and ($U^\ast$2)' in Theorem \ref{thm:universality_asym_avg} hold. Fix a sequence of $\Lambda_\psi$-pseudo-Lipschitz functions $\{\psi_k:\R^t \to \R\}_{k \in [m\vee n]}$ of order $\mathfrak{p}$, where $\Lambda_\psi\geq 2$. Then for any $q \in \N$, there exists some $C_0=C_0(\mathfrak{p},q)>0$ such that with $\E^{(0)}\equiv \E\big[\cdot | (u^{(0)},v^{(0)})\big]$,
	\begin{align*}
	&\E^{(0)} \bigg[\biggabs{\frac{1}{m}\sum_{k \in [m]} \psi_k\big(u_k^{([t])}(A)\big) - \frac{1}{m}\sum_{k \in [m]}  \E^{(0)} \psi_k\big[\big(\Phi_t(\mathfrak{U}^{([0:t])})\big)_{k,[1:t]}\big]  }^q\bigg]\\
	&\quad \vee \E^{(0)} \bigg[\biggabs{\frac{1}{n}\sum_{\ell \in [n]} \psi_\ell\big(v_\ell^{([t])}(A)\big) - \frac{1}{n}\sum_{\ell \in [n]} \E^{(0)}  \psi_\ell\big[\big(\Xi_t(\mathfrak{V}^{([0:t])})\big)_{\ell,[1:t]}\big]  }^q\bigg]  \\
	&\leq \big(K\Lambda\Lambda_\psi\log n\cdot (1+\pnorm{u^{(0)}}{\infty}+\pnorm{v^{(0)}}{\infty})\big)^{C_0 t^5}\cdot n^{-1/C_0^t}. 
	\end{align*}
\end{theorem}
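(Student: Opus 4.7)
My plan is to derive Theorem \ref{thm:GFOM_se_asym_avg} from its symmetric counterpart Theorem \ref{thm:GFOM_se_sym_avg} by the symmetrization reduction already employed in Section \ref{subsection:proof_GFOM_universality_asym} to deduce Theorem \ref{thm:universality_asym_avg} from Theorem \ref{thm:universality_avg}. A direct advantage of this route is that it avoids duplicating the smoothing/mollification argument that downgrades the $C^3$ hypothesis ($D^\ast$2) to the Lipschitz hypothesis ($U^\ast$2)$'$, since that step is already packaged inside Theorem \ref{thm:GFOM_se_sym_avg}.

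First I would form the $(m+n)\times(m+n)$ symmetric block matrix
$$\tilde{A} \equiv \begin{pmatrix} 0 & A \\ A^\top & 0 \end{pmatrix}$$
and interleave the two asymmetric updates in (\ref{def:GFOM_asym}) into a single-indexed symmetric GFOM of length $2t$ acting on $\tilde{z}\in\R^{m+n}$. Concretely, for $\tau=2r-1$ one takes $\tilde{z}^{(\tau)}$ to carry the update to the $u$-block, and for $\tau=2r$ to carry the update to the $v$-block, by constructing row-separate $\tilde{\mathsf{F}}_\tau,\tilde{\mathsf{G}}_\tau$ that mask out the inactive block and select the appropriate historical iterates. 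Because the entries of $A$ are independent and sub-Gaussian with $\pnorm{A_{0,ij}}{\psi_2}\leq K$ under ($D^\ast$1), and $m\asymp n$, the rescaled matrix $\tilde{A}_0/\sqrt{m+n}$ satisfies (D1) with a constant $\bigo(K^{3/2})$. The Lipschitz condition ($U^\ast$2)$'$ on $\mathsf{F}_\cdot^{\langle\#\rangle},\mathsf{G}_\cdot^{\langle\#\rangle}$ transfers verbatim to (U2)$'$ for the lifted $\tilde{\mathsf{F}}_\cdot,\tilde{\mathsf{G}}_\cdot$, and the pseudo-Lipschitz test functions $\psi_k$ lift to pseudo-Lipschitz test functions on either the $u$- or $v$-block with the same pseudo-Lipschitz constants and order.

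Next I would verify that the symmetric state evolution of Definition \ref{def:GFOM_se_sym} applied to the lifted system factors exactly into the asymmetric state evolution of Definition \ref{def:GFOM_se_asym}. The variance profile $\E \tilde{A}_{k\ell}^2$ is supported on the off-diagonal blocks with values $\E A_{k\ell}^2$, which immediately identifies the covariance recursion in Definition \ref{def:GFOM_se_sym}(2) with those in Definition \ref{def:GFOM_se_asym}(2),(4); likewise, the Onsager coefficient $\mathfrak{b}_s^{(\tau)}$ from Definition \ref{def:GFOM_se_sym}(1), restricted to the appropriate parity of $(s,\tau)$, reduces to the coefficients $\mathfrak{f}_s^{(t-1)}$ and $\mathfrak{g}_s^{(t)}$ in Definition \ref{def:GFOM_se_asym}(1),(3). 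Applying Theorem \ref{thm:GFOM_se_sym_avg} to this lifted length-$2t$ system then yields the desired $q$-th moment bound on the order of $n^{-1/C_0^{2t}}=n^{-1/(C_0^2)^t}$, absorbing the factor of $2$ into the final constant $C_0(\mathfrak{p},q)$.

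The main bookkeeping obstacle is precisely this identification of the state evolutions block-by-block, together with ensuring that the lifted $\tilde{\mathsf{F}}_\tau,\tilde{\mathsf{G}}_\tau$ remain row-separate in the sense of (\ref{def:row_sep}). A naive ``mask-out-the-inactive-block'' construction can break row-separability unless one pads the functions so that each row of $\tilde{z}$ depends only on its own full history, possibly via dummy auxiliary coordinates that are copied forward inertly on the block they do not drive. This device is standard (cf.\ the symmetric/asymmetric AMP reduction in \cite{berthier2020state,bao2025leave} cited in the excerpt), and once it is in place the rest of the argument is mechanical.
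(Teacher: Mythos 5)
Your symmetrization route is correct and consistent with the paper's own remark that the reduction scheme provides an alternative to the direct argument. The paper does not spell out a proof of Theorem \ref{thm:GFOM_se_asym_avg}; given how Theorem \ref{thm:GFOM_se_sym_avg} is stated as following from Theorem \ref{thm:GFOM_se_sym} by the same smoothing argument used in Theorem \ref{thm:universality_avg}, the implied route is to combine Theorem \ref{thm:GFOM_se_asym} (proved in Section \ref{subsection:proof_GFOM_se_asym} via a direct correspondence to an asymmetric AMP and Theorem \ref{thm:AMP_entrywise_dist_asym}) with the same mollification step. Your route instead lifts to the symmetric GFOM and applies Theorem \ref{thm:GFOM_se_sym_avg} as a black box, which, as you note, avoids re-running the smoothing argument; the cost is that you must verify the block-by-block identification of Definition \ref{def:GFOM_se_sym} with Definition \ref{def:GFOM_se_asym}. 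That identification is genuine content, not mere bookkeeping: the parity mask forces $\tilde{\mathfrak Z}^{(2t)}$ to vanish on the $u$-block and $\tilde{\mathfrak Z}^{(2t-1)}$ on the $v$-block (because $\E\tilde A^2_{k\ell}$ kills diagonal blocks and one of the two lifted $\mathsf F$-factors vanishes on the off-block), and the summation ranges $s\in[1:t-1]$ in $\Phi_t$ versus $s\in[1:t]$ in $\Xi_t$ fall out exactly because $s=2r$ must obey $2r\le 2t-2$ while $s=2r-1$ allows $2r-1\le 2t-1$. A written proof should make this parity arithmetic explicit.

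Two bookkeeping points you should not leave implicit. First, ($D^\ast$1) normalizes by $1/\sqrt m$ while (D1) applied to the lifted dimension $n'=m+n$ normalizes by $1/\sqrt{n'}$; to reconcile them you must rescale the lifted nonlinearities (only the $\mathsf F$-part) by $\sqrt{(m+n)/m}$, and then check that the rescaling cancels inside the state evolution: it appears twice in each covariance product, cancelling the $m/(m+n)$ from $\E\tilde A^2$, while in each Onsager term the single derivative factor $\sqrt{(m+n)/m}$ combines with the $m/(m+n)$ to give $\sqrt{m/(m+n)}$, which then cancels against the $\sqrt{(m+n)/m}$ carried by the $\mathsf F_s$ it multiplies in the $\Theta$-recursion. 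This works out, but it is exactly the kind of thing that breaks if one hand-waves. Second, the reduction in Section \ref{subsection:proof_GFOM_universality_asym} as written uses a two-step initialization $z^{(-1)},z^{(0)}$; to apply Theorem \ref{thm:GFOM_se_sym_avg} (whose GFOM starts from a single $z^{(0)}$) you should merge both initial vectors into $\tilde z^{(0)}=\binom{u^{(0)}}{v^{(0)}}$ and adjust the masking accordingly, otherwise the lifted $\tilde{\mathsf G}_1$ is asked to manufacture $v^{(0)}$ from a history that does not contain it. Finally, your test-function lift needs a zero-padding step: for the $u$-block estimate you take $\tilde\psi_k(z_{[2t]})=\psi_k(z_1,z_3,\dots,z_{2t-1})\bm 1_{k\in[m]}$, and the resulting $\frac{1}{m+n}$ average picks up a constant factor $m/(m+n)$ relative to the stated $\frac1m$ average, which is absorbed. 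With these filled in, your proof is sound, and the error bound $n^{-1/C_0^{2t}}=n^{-1/(C_0^2)^t}$ together with $(2t)^5\le 32t^5$ indeed collapses into the stated form after renaming $C_0$.
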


We note that the variance normalization $1/m$ in ($D^\ast$1) is introduced to align with the typical normalization used in the AMP literature, cf. \cite{bayati2011dynamics,berthier2020state,bao2025leave}. Clearly, this normalization can be changed to, say, $1/n$ or $1/(m+n)$, due to the assumption $m\asymp n$ and the scale-free formulation of the state evolution in Definition \ref{def:GFOM_se_asym}. 

We remark that instead of proving Theorem \ref{thm:GFOM_se_asym} by using the same reduction scheme as in the proofs of Theorems \ref{thm:universality_asym} and \ref{thm:universality_asym_avg}, here we take a direct approach that connects the asymmetric GFOM iterate to an asymmetric AMP iterate, as in the proof of Theorem \ref{thm:GFOM_se_sym}. This approach has the advantage of both avoiding unnecessary notational complications, and explicitly connecting to an iteratively constructed AMP. The details of the arguments can be found in Section \ref{subsection:proof_GFOM_se_asym}.

\section{Application I: Universality of empirical risk minimizers}\label{section:ERM}

In this section, we will apply the universality results in the previous section in two canonical empirical risk minimization problems, namely, (i) the regularized regression estimators in the linear model (cf. Section \ref{section:ERM_ls}), and (ii) the regularized maximum likelihood estimators in the logistic regression model (cf. Section \ref{section:ERM_logit}).

\subsection{Regularized estimators in linear regression}\label{section:ERM_ls}
Recall the linear regression model (\ref{def:linear_model}) and the regularized empirical risk minimizer $\hat{\mu}$ in (\ref{def:ERM}).

\begin{theorem}\label{thm:avg_universality_ERM}
Suppose the following hold for some $K\geq 2$. 
\begin{enumerate}
	\item $m/n\in [1/K,K]$, $\pnorm{\mu_0}{\infty}\leq K$.
	\item $\mathsf{L} \in C^1(\R\to \R_{\geq 0})$ is convex, and $\pnorm{\mathsf{L}'}{\mathrm{Lip}}\vee \abs{\mathsf{L}'(0)}\vee \pnorm{\mathsf{L}'(\xi_1)}{\psi_2}\leq K$. 
	\item $A=A_0/\sqrt{m}$, $B=B_0/\sqrt{m}$, where (i) $A_0, B_0$ are $m\times n$ random matrices whose entries are independent mean $0$ variables, and (ii) for all $i \in [m], j \in [n]$, $\E A_{0,ij}^2=\E B_{0,ij}^2$ and $\pnorm{A_{0,ij}}{\psi_2}\vee \pnorm{B_{0,ij}}{\psi_2}\leq K$.
	\item The regularizer $\mathsf{f}\geq \mathsf{f}(0)=0$, and is $1/K$-strongly convex.
\end{enumerate}
Fix a sequence of $K$-pseudo-Lipschitz functions $\{\psi_j:\R \to \R\}_{j \in [n]}$ of order 2. Then for $q \in \N$, there exists some $C_0=C_0(q,K)>0$ such that for $\hat{\mu}$ defined in (\ref{def:ERM}), with $L_{\xi}\equiv 1+m^{-1/2}\pnorm{\xi}{}$,
\begin{align*}
&\E \biggabs{\frac{1}{n}\sum_{j \in [n]} \psi_j\big(\hat{\mu}_j(A)\big)-\frac{1}{n}\sum_{j \in [n]} \psi_j\big(\hat{\mu}_j(B)\big) }^q \leq C_0\cdot  \big(e^{-(\log n)^{1/8}/C_0}+e^{-n/C_0}\cdot \E L_{\xi}^q\big).
\end{align*}
Moreover, for the squared loss $\mathsf{L}(x)=x^2/2$:
\begin{itemize}
	\item If furthermore $m/n\geq 1+1/K$ and $1/K\leq \E A_{0,ij}^2=\E B_{0,ij}^2\leq K$ for all $i \in [m], j \in [n]$, then the above display holds without assuming the strong convexity of $\mathsf{f}$ in the condition (4).  
	\item For Lasso with $\mathsf{f}(x)=\lambda\abs{x}$, the condition $m/n\geq 1+1/K$ above can be further removed if $\lambda\geq C_1$ for some large $C_1=C_1(K)>0$.
\end{itemize}
\end{theorem}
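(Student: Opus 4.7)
The overall strategy, announced after (\ref{eqn:intro_ls_universality}), is algorithmic: construct a proximal gradient descent iterate $\{\mu^{(t)}\}$ -- a special case of the asymmetric GFOM (\ref{def:GFOM_asym}) -- that approximates $\hat{\mu}$ in $\ell_2$, invoke the averaged universality of $\{\mu^{(t)}\}$ between designs $A$ and $B$ (Theorem \ref{thm:universality_asym_avg}), and transfer it to $\hat{\mu}$ via the $\ell_2$ error. Concretely, with $\mu^{(0)}=0$ and step size $\eta>0$, I would iterate
\begin{align*}
\mu^{(t)} = \prox_{\eta \mathsf{f}_n}\big(\mu^{(t-1)} + \eta A^\top \mathsf{L}'(Y - A\mu^{(t-1)})\big).
\end{align*}
Since $\mathsf{f}_n$ is separable, $\prox_{\eta\mathsf{f}_n}$ acts row-wise and is $1$-Lipschitz, so the iterate is a valid asymmetric GFOM in the sense of (\ref{def:GFOM_asym}). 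After conditioning on the typical event $\mathcal{E}\equiv\{\pnorm{\xi}{\infty}\lesssim \sqrt{\log m}\}\cap\{\pnorm{A_0}{\op}\vee\pnorm{B_0}{\op}\lesssim \sqrt{m}\}$, which carries probability at least $1-e^{-n/C_0}$ by Bai--Yin and sub-Gaussian tail bounds, the Lipschitz hypothesis (U$^\ast$2)' of Theorem \ref{thm:universality_asym_avg} holds with parameters polylogarithmic in $n$.

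On $\mathcal{E}$, the $(1/K)$-strong convexity of $\mathsf{f}$ and the $K$-smoothness of $\mathsf{L}$ produce the standard linear contraction for proximal gradient descent: choosing $\eta\asymp_K 1$ small enough,
\begin{align*}
\pnorm{\mu^{(t)}-\hat\mu}{}\leq (1-\rho)^t \pnorm{\hat{\mu}-\mu^{(0)}}{},\qquad \rho\asymp_K 1.
\end{align*}
Combined with the pseudo-Lipschitz bound
\begin{align*}
\biggabs{\frac{1}{n}\sum_{j}\psi_j(\hat\mu_j) - \frac{1}{n}\sum_{j}\psi_j(\mu_j^{(t)})}\lesssim_K \frac{\pnorm{\mu^{(t)}-\hat\mu}{}}{\sqrt n}\Big(1+\frac{\pnorm{\hat\mu}{}+\pnorm{\mu^{(t)}}{}}{\sqrt n}\Big),
\end{align*}
and the universality bound from Theorem \ref{thm:universality_asym_avg} for $\mu^{(t)}$ of order $(\log n)^{C_0 t^3}\cdot n^{-1/(C_0 t^3)}$, the total error on $\mathcal{E}$ is at most $(1-\rho)^t + (\log n)^{C_0 t^3}n^{-1/(C_0 t^3)}$. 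Balancing by setting $t\asymp (\log n)^{1/8}$ yields the claimed $e^{-(\log n)^{1/8}/C_0}$. Off $\mathcal{E}$ the contribution is crudely bounded using $\pnorm{\hat\mu}{}/\sqrt{n}\lesssim L_\xi$ and $\Prob(\mathcal{E}^c)\leq e^{-n/C_0}$, yielding the $e^{-n/C_0}\cdot \E L_\xi^q$ term.

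For the squared loss case, the objective's Hessian is $A^\top A + \lambda\,\mathrm{Hess}(\mathsf f_n)$, so strong convexity can come from $A$ itself: when $m/n\geq 1+1/K$, Bai--Yin gives $\sigma_{\min}(A)\gtrsim_K 1$ with overwhelming probability, rendering the condition (4) on $\mathsf f$ unnecessary. For Lasso without the aspect-ratio restriction, large $\lambda$ forces $\hat{\mu}$ to be sparse (on a support of size $\lesssim m$), and on such sparse directions $A$ enjoys a restricted lower eigenvalue bound; equivalently, one may perturb $\mathsf{f}$ by a vanishing ridge of size $e^{-n^{c}}$ to enter the strongly-convex regime at negligible cost, and then invoke the previous case. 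The regularity of the proximal map of the soft-thresholding (or of the ridge-perturbed version) is sufficient to fit (U$^\ast$2)'.

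The principal obstacle is securing the linear convergence $\pnorm{\mu^{(t)}-\hat\mu}{}/\sqrt n\lesssim \rho^t$ uniformly over the random ensembles $A,B$ with overwhelming probability, together with an $\pnorm{A}{\op}$ bound ensuring that the chosen step size $\eta$ is admissible for both designs simultaneously. In the strongly-convex and in the $m/n>1+1/K$ squared-loss cases this is standard via Bai--Yin; the Lasso subcase is the most delicate, since the contraction must be argued on a \emph{random} low-dimensional subspace determined by the support of $\hat\mu$, which depends on the (non-universal) design. I expect the cleanest route there is the ridge perturbation trick, converting back to the strongly-convex regime at the price of an error that is exponentially small in $n$, hence absorbed into the final bound. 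A secondary point is verifying that the $\xi$-conditioning survives Theorem \ref{thm:universality_asym_avg}'s polylog dependence on $\pnorm{z^{(0)}}{\infty}$; this dictates the choice of $\mathcal{E}$ above.
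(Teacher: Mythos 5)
Your strategy for the strongly-convex case and the squared-loss case with $m/n\geq 1+1/K$ matches the paper's proof: reformulate the proximal gradient iterate (\ref{ineq:avg_universality_ERM_2}) as an asymmetric GFOM, invoke Theorem \ref{thm:universality_asym_avg} conditionally on $\xi$, obtain the linear contraction $\pnorm{\mu^{(t)}-\hat\mu}{}\lesssim(1-\rho)^t\pnorm{\hat\mu}{}$ from Lemma \ref{lem:prox_Lip} (or from a spectral-gap bound for $A^\top A$), and balance with $t\asymp(\log n)^{1/8}$. Two minor differences: (i) you split the error on a deterministic event $\mathcal{E}$ involving $\pnorm{\xi}{\infty}\lesssim\sqrt{\log m}$, whereas the paper conditions on $\xi$ throughout and integrates at the end (your version makes $\Prob(\mathcal{E}^c)\leq e^{-n/C_0}$ false for the $\pnorm{\xi}{\infty}$ part, but this is cosmetic and easily repaired); (ii) for the squared-loss smallest-singular-value bound under a general variance profile $1/K\leq\E A_{0,ij}^2\leq K$, plain Bai--Yin does not apply -- the paper cites local-law results for random matrices with variance profile.

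The genuine gap is the Lasso subcase. Your proposed ridge perturbation by $\lambda_0 = e^{-n^c}$ does give a strongly convex problem, but the contraction factor becomes $1/(1+\eta\lambda_0)\approx 1-\eta e^{-n^c}$, so you would need on the order of $e^{n^c}$ iterations to make $\pnorm{\mu^{(t)}-\hat\mu}{}/\sqrt n$ decay -- far outside the poly-logarithmic budget $t\ll(\log n/\log\log n)^{1/6}$ that Theorem \ref{thm:universality_asym_avg} imposes. Your alternative sketch via restricted eigenvalues also has a gap: the restricted eigenvalue bound gives contraction only for differences $\mu^{(t)}-\hat\mu$ supported on a sparse set, but $\mu^{(t)}$ is defined by a fixed-step gradient descent, not by any sparsity-constrained procedure, so you must \emph{prove} that $\mu^{(t)}$ stays sparse for all $t$. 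The paper does exactly this (see Step (3) of Section \ref{subsection:proof_thm_avg_universality_ERM}): using the subgradient optimality relation for the prox step and the estimate $\pnorm{\mu^{(t)}-\mu^{(t-1)}}{}\leq\eta\pnorm{A^\top Y}{}$, it shows $\pnorm{\mu^{(t)}}{0}\leq\delta n$ for a small $\delta$ \emph{uniformly in $t\leq n$} with overwhelming probability, provided $\lambda\geq C_1(K)$, and then applies a sparse eigenvalue lower bound on $A^\top A$ over the (random) union of the supports of $\mu^{(t)}$ and $\hat\mu$. That uniform-in-$t$ sparsity argument is the missing ingredient your proposal needs, and it is not a consequence of the ridge trick.
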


To put Theorem \ref{thm:avg_universality_ERM} in the literature, it covers (almost) all universality examples in high dimensional regression in \cite{han2023universality}\footnote{Except for the absolute loss $\mathsf{L}(x)=\abs{x}$, which can however be handled using a similar smoothing argument as in logistic regression in Section \ref{section:ERM_logit}.}, and improves its proof-theoretic machinery in at least the following aspects:
\begin{itemize}
	\item Conceptually, the above theorem completely separates the universality problem of $\hat{\mu}$ from understanding its precise high-dimensional distributions as required in \cite{han2023universality}, using the so-called Convex Gaussian Min-Max Theorem (CGMT). The CGMT method, as it current stands, does not accommodate the large class of random matrix models as permitted by the above theorem. More importantly, the CGMT method must operate under the pair $(\mathsf{L},\mathsf{f})$ for which the solutions to the associated fixed point equations can be characterized. This requirement is now well-known as the major technical bottleneck for the CGMT method to be applicable in concrete problems, cf. \cite{celentano2023lasso,han2023noisy,montanari2023generalization}.	
	\item Technically, the proof of the above theorem is fairly straightforward by applying the universality results in Sections \ref{section:main_results_sym} and \ref{section:main_results_asym}, upon constructing a natural proximal gradient descent iterate $\{\mu^{(t)}\}$ that approaches $\hat{\mu}$ as $t \to \infty$. A similar idea is employed in \cite{dudeja2024spectral} under the squared loss. A major technical advantage of using gradient descent type iterates over the AMP iterate, is that they are easier to analyze due to their dependence only on the last iteration, whereas the AMP depends on past two iterations that lead to more involved convergence analysis via the state evolution \cite{bayati2012lasso,donoho2016high,sur2019modern,bao2025leave}; see Section \ref{subsection:proof_strategy_ERM} for more discussions.
\end{itemize}

Under the squared loss $\mathsf{L}(x)=x^2/2$ and stronger smooth conditions on the regularizer $\mathsf{f}$, we may establish a stronger, entrywise universality result of $\hat{\mu}$ with respect to the design matrix $A$.
\begin{theorem}\label{thm:universality_ls}
	Suppose $\mathsf{L}(x)=x^2/2$ and the following hold for some $K\geq 2$. 
	\begin{enumerate}
		\item $m/n\in [1/K,K]$, $\pnorm{\mu_0}{\infty}\leq K$ and $\pnorm{\xi_1}{\psi_2}\leq K$. 
		\item $A=A_0/\sqrt{m}$, $B=B_0/\sqrt{m}$, where (i) $A_0, B_0$ are $m\times n$ random matrices whose entries are independent mean $0$ variables, and (ii) for all $i \in [m], j \in [n]$, $\E A_{0,ij}^2=\E B_{0,ij}^2$ and $\pnorm{A_{0,ij}}{\psi_2}\vee \pnorm{B_{0,ij}}{\psi_2}\leq K$.
		\item The regularizer $\mathsf{f}\geq \mathsf{f}(0)=0$, $\mathsf{f}\in C^4(\R)$ and satisfies $\max_{q\in [2:4]}\pnorm{\mathsf{f}^{(q)}}{\infty}\vee \pnorm{(\mathsf{f}^{(2)})^{-1}}{\infty}\leq K$.
	\end{enumerate}
	Fix $\Psi\in C^3(\R)$ with $\max_{q \in [0:3]} \pnorm{\Psi^{(q)}(\cdot)}{\infty}\leq K$. Then there exists some $C_0=C_0(K)>1$ such that for $\hat{\mu}$ defined via (\ref{def:ERM}),
	\begin{align*}
	\max_{j \in [n]}\bigabs{\E \big[\Psi\big(\hat{\mu}_j(A)\big)\big]-\E \big[\Psi\big(\hat{\mu}_j(B)\big)\big]} \leq C_0 e^{-(\log n)^{1/4}/C_0}.
	\end{align*}
	If $m/n\geq 1+1/K$, then the above estimate holds without the strong convexity condition $\pnorm{(\mathsf{f}^{(2)})^{-1}}{\infty}\leq K$.
\end{theorem}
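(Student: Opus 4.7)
The proof strategy is algorithmic: approximate $\hat{\mu}$ by a GFOM iterate $\mu^{(t)}$, propagate the entrywise universality of $\mu^{(t)}$ given by Theorem \ref{thm:universality_asym} to $\hat{\mu}$ itself, and control the approximation error in the $\ell_\infty$ sense. First, I would construct the natural gradient descent iterate adapted to the quadratic loss and smooth regularizer: with $\mu^{(0)} = 0$ and a small step size $\eta$ (depending only on $K$), set
\begin{equation*}
\mu^{(t)} = \mu^{(t-1)} - \eta\bigl(A^\top(A\mu^{(t-1)} - Y) + \lambda \mathsf{f}'(\mu^{(t-1)})\bigr).
\end{equation*}
Since $\mathsf{f} \in C^4$ with bounded second through fourth derivatives, the corresponding maps $\mathsf{F}^{\langle \#\rangle}$, $\mathsf{G}^{\langle \#\rangle}$ in the asymmetric GFOM (\ref{def:GFOM_asym}) are row-separate, $C^3$, and pseudo-Lipschitz with constants depending only on $K$, so hypothesis $(U^\ast 2)$ of Theorem \ref{thm:universality_asym} is satisfied.

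Next, I split the target difference via the triangle inequality into three pieces: a universality term $|\E\Psi(\mu^{(t)}_j(A)) - \E\Psi(\mu^{(t)}_j(B))|$ and two approximation terms $|\E\Psi(\hat{\mu}_j(M)) - \E\Psi(\mu^{(t)}_j(M))|$ for $M \in \{A,B\}$. Theorem \ref{thm:universality_asym} applied with $|S_2| = 1$ and this smooth $\Psi$ bounds the universality term by $(C\log n)^{c t^3} \cdot n^{-1/2}$, which remains small as long as $t \ll (\log n/\log\log n)^{1/3}$. The approximation terms reduce, modulo the boundedness of $\Psi'$, to high-probability control of $\|\hat{\mu}(M) - \mu^{(t)}(M)\|_\infty$.

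The main obstacle is precisely this $\ell_\infty$ approximation bound. Strong convexity (either from $\mathsf{f}^{(2)} \geq 1/K$, or from $\sigma_{\min}(A^\top A) \gtrsim 1$ when $m/n \geq 1 + 1/K$) immediately gives geometric convergence in $\ell_2$: $\|\mu^{(t)} - \hat{\mu}\|_2 \lesssim (1-\epsilon_0)^t \cdot \|\hat{\mu}\|_2$, but the naive $\ell_\infty \leq \ell_2$ bound loses a $\sqrt{n}$ factor and is useless here. Instead I would run a \emph{second-order} leave-one-out argument: for each coordinate $j$, define companion iterates $\mu^{(t),(-j)}$ and estimators $\hat{\mu}^{(-j)}$ obtained by replacing the $j$-th column of $A$ (and the corresponding coordinate of the response path) by an independent copy, so that $\mu^{(t),(-j)}_j$ and $\hat{\mu}^{(-j)}_j$ decouple from the true $j$-th column. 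Standard leave-one-out concentration controls $\|\hat{\mu}^{(-j)}\|_\infty \lesssim \log n$ and the $j$-th coordinate of $\hat{\mu} - \hat{\mu}^{(-j)}$. The second-order step compares $\mu^{(t)}_j - \mu^{(t),(-j)}_j$ to its limit $\hat{\mu}_j - \hat{\mu}^{(-j)}_j$ through the contraction afforded by strong convexity applied to the perturbed iteration, producing the sharp coordinate-wise bound $|\mu^{(t)}_j - \hat{\mu}_j| \lesssim \log n \cdot (1-\epsilon_0)^t$ with probability $1 - n^{-D}$ for any chosen $D$.

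Finally, I would balance the two error sources by choosing $t$ to grow with $n$. With $t \asymp (\log n)^{1/4}$, the approximation error is $O(\log n \cdot e^{-\epsilon_0 (\log n)^{1/4}}) \leq e^{-(\log n)^{1/4}/C_0}$ while the universality bound $(C\log n)^{c t^3} n^{-1/2}$ is comfortably smaller since $t^3 \log\log n \ll \log n$. Combining the three pieces gives the claimed rate $C_0 e^{-(\log n)^{1/4}/C_0}$. The case $m/n \geq 1 + 1/K$ without strong convexity of $\mathsf{f}$ is handled identically once the effective strong convexity is extracted from $\sigma_{\min}(A^\top A)$, whose uniform lower bound over $M \in \{A,B\}$ follows from standard sub-Gaussian random matrix estimates using hypothesis (2).
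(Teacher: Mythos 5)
Your overall strategy is exactly the paper's: a (proximal) gradient descent GFOM approximation, entrywise universality from Theorem \ref{thm:universality_asym}, an $\ell_\infty$ control on $\mu^{(t)}-\hat\mu$ via a second-order leave-one-out, and the choice $t\asymp(\log n)^{1/4}$. But the specific leave-one-out construction you describe has a genuine gap.

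You say the companion objects are obtained by ``replacing the $j$-th column of $A$ by an independent copy,'' so that $\mu^{(t),(-j)}$ and $\hat\mu^{(-j)}$ are independent of the true $A_{\cdot,j}$. The problem is that the final inequality you need is $|\mu^{(t)}_j-\hat\mu_j|\leq\|(\mu^{(t)}-\hat\mu)-(\mu^{(t),(-j)}-\hat\mu^{(-j)})\|$ plus a residual term $|\mu^{(t),(-j)}_j-\hat\mu^{(-j)}_j|$, and with the independent-copy (resampling) scheme that residual term is distributed exactly like the quantity you are trying to bound—you gain nothing and the argument is circular. The paper's leave-one-predictor-out construction (cf.\ Eq.\ (\ref{def:ERM_loo_predictor}) and (\ref{eqn:ERM_grad_descent_loo_predictor})) \emph{zeros out} the $\ell$-th column, $A_{i;[-\ell]}\equiv(A_{ij}\bm{1}_{j\neq\ell})_j$. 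Combined with $\mathsf{f}\geq\mathsf{f}(0)=0$ (so $\prox_{\eta\mathsf f}(0)=0$), this forces $\hat\mu_{[-\ell],\ell}=\mu^{(t)}_{[-\ell],\ell}=0$ identically, hence $r^{(t)}_{[-\ell],\ell}=0$ where $r^{(t)}\equiv\mu^{(t)}-\hat\mu$. This makes $|r^{(t)}_\ell|\leq\|r^{(t)}-r^{(t)}_{[-\ell]}\|$ an exact inequality with no residual, and the contraction estimate on $\|\Delta r^{(t)}_{[-\ell]}\|$ (Proposition \ref{prop:ERM_Delta_r_l2}, building on Lemma \ref{lem:ERM_loo_predictor_error}) then closes the argument. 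You should replace your resampling scheme with the column-deletion scheme and make this coordinate-vanishing property explicit; it is the crux of why the second-order leave-one-out gives an $\ell_\infty$ bound at scale $\log n$ rather than $\sqrt{n}$.

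Two smaller points: your iteration uses plain gradient descent $\mu^{(t-1)}-\eta(A^\top(A\mu^{(t-1)}-Y)+\mathsf f'(\mu^{(t-1)}))$ (the $\lambda$ in your display is spurious—Theorem \ref{thm:universality_ls} has a general $\mathsf f$, no $\lambda$), whereas the paper uses the proximal version (\ref{def:ERM_grad_descent}). Both are valid GFOM reductions under $\mathsf f\in C^4$, but the proximal formulation interacts more cleanly with the coordinate-vanishing property above (via $\prox_{\eta\mathsf f}(0)=0$), so if you switch to plain GD you should check that the leave-one-out fixed point still has vanishing $j$-th coordinate—it does, since $\mathsf f'(0)=0$ is implied by $\mathsf f\geq\mathsf f(0)=0$ and differentiability, but this needs to be said. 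Finally, the entrywise bound is not $\lesssim\log n\,(1-\epsilon_0)^t$ deterministically but only with high probability, with an additional noise factor $L_\xi^2$ (see Proposition \ref{prop:ERM_Delta_r_l2}); since $\|\xi_1\|_{\psi_2}\leq K$, this factor is harmless after integrating over $\xi$, but your write-up should not silently drop it.
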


Our result above appears to be new already in the Ridge regression setting (i.e., $\mathsf{f}(x)=\lambda x^2/2$ for some tuning parameter $\lambda>0$), where $\hat{\mu}(A)=(A^\top A+\lambda I)^{-1} A^\top Y$ admits a closed form. In this setting, when $\E A_{0,ij}^2=1$ for all $i \in [m], j \in [n]$, we may directly leverage powerful tools from random matrix theory, cf., \cite{knowles2013isotropic,knowles2017anisotropic}, to derive entrywise Gaussian approximations of $\hat{\mu}(A)$. For general variance profiles $\E A_0\circ A_0\neq c \bm{1}_{m\times n}$, the recent work \cite{bao2025leave} provides characterizations for the average $n^{-1}\sum_{j \in [n]}\psi_j\big(\hat{\mu}_j(A_0)\big)$ when the entries of $A_0$ are Gaussian. Here our result above provides a much stronger, entrywise universality result for Ridge regression under such design matrices with heteroscedastic variance profiles.

An interesting open question is to understand the extent to which the above Theorem \ref{thm:universality_ls} holds for non-smooth, and non-strongly-convex regularizers $\mathsf{f}$. A canonical setting is the Lasso where $\mathsf{f}(x)=\lambda\abs{x}$. In this Lasso setting, entrywise distributional controls may not hold in general due to the possible variance spike phenomenon; see e.g., \cite{bellec2023debias}. Moreover, it is of interest to remove smoothness regularity conditions to include singular test function such as $\Psi(x)=\bm{1}_{x=0}$, which can be used to understand universality properties for the Lasso sparsity under heteroscedastic design matrices.

\subsection{Logistic regression}\label{section:ERM_logit}
Consider the binary regression model: Let $(A_i,Y_i) \in \R^n\times \{\pm 1\}$ be i.i.d. samples from
\begin{align}\label{def:logit_reg}
\Prob\big(Y_i=1|A_i\big)\equiv \rho'(A_i^\top \mu_0),\quad i \in [m].
\end{align}
Here $\rho(x)\equiv \log (1+e^x)$ and therefore $\rho'(x)=1/(1+e^{-x})$. We will be interested in the regularized maximum likelihood estimator (MLE), defined via
\begin{align}\label{def:ERM_logit}
\hat{\mu}\equiv \hat{\mu}(A)\equiv \argmin_{\mu \in \R^n }\bigg\{\sum_{i \in [m]}\rho\big(-Y_i\cdot A_i^\top \mu\big)+\sum_{j \in [n]} \mathsf{f}(\mu_j)\bigg\},
\end{align}
where $\mathsf{f}:\R\to \R_{\geq 0}$ is a (convex) regularizer. Here we have slightly abused the notation $\hat{\mu}$; this notation will be local in this subsection. 

The following theorem establishes the averaged universality of the regularized MLE $\hat{\mu}$ in logistic regression with respect to the design matrix $A$.

\begin{theorem}\label{thm:universality_logistic}
	Suppose the following hold for some $K\geq 2$.
	\begin{enumerate}
		\item  $m/n \in [1/K,K]$ and $\pnorm{\mu_0}{\infty}\leq K$.
		\item $A=A_0/\sqrt{m}$, $B=B_0/\sqrt{m}$, where (i) $A_0, B_0$ are $m\times n$ random matrices whose entries are independent mean $0$ variables, and (ii) for all $i \in [m], j \in [n]$, $\E A_{0,ij}^2=\E B_{0,ij}^2$ and $\pnorm{A_{0,ij}}{\psi_2}\vee \pnorm{B_{0,ij}}{\psi_2}\leq K$.
		\item The regularizer $\mathsf{f}\geq \mathsf{f}(0)=0$, and is $1/K$-strongly convex.
	\end{enumerate}
	Fix a sequence of $K$-pseudo-Lipschitz functions $\{\psi_j:\R \to \R\}_{j \in [n]}$ of order 2.
	Then there exists some $C_0=C_0(K)>1$ such that for $\hat{\mu}$ defined in (\ref{def:ERM_logit}),
	\begin{align*}
	\E\biggabs{\frac{1}{n}\sum_{j \in [n]} \psi_j\big(\hat{\mu}_j(A)\big)-\frac{1}{n}\sum_{j \in [n]} \psi_j\big(\hat{\mu}_j(B)\big) }\leq C_0 e^{-(\log n)^{1/8}/C_0}.
	\end{align*}
\end{theorem}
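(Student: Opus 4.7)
The plan is to adapt the proof strategy of Theorem \ref{thm:avg_universality_ERM} (construct a proximal gradient iterate targeting $\hat\mu$, invoke the averaged GFOM universality via Theorem \ref{thm:universality_asym_avg}, and exploit strong convexity for geometric convergence) to the logistic setting, with one extra step: smoothing the dependence of the labels on the design. The sole new obstruction is that, under the natural joint coupling $Y_i(A) = \mathrm{sign}(\rho'(A_i^\top \mu_0) - U_i)$ with shared auxiliary $U_i \sim \mathrm{Unif}[0,1]$ independent of $A, B$, the labels depend non-smoothly on $A$, whereas the GFOM machinery of Section \ref{section:main_results_asym} requires $C^3$ dependence on the random matrix.

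To remedy this, fix a sharpness parameter $\beta = \beta_n$ and let $\sigma_\beta(t) = \tanh(\beta t)$; define smoothed labels $Y_i^{(\beta)}(A) = \sigma_\beta(\rho'(A_i^\top \mu_0) - U_i) \in (-1,1)$ and the smoothed MLE $\hat\mu^{(\beta)}(A) = \argmin_\mu \bigl\{\sum_i \rho(-Y_i^{(\beta)}(A) A_i^\top \mu) + \sum_j \mathsf{f}(\mu_j)\bigr\}$, analogously for $B$. The map $A \mapsto Y^{(\beta)}(A)$ is $C^\infty$ with derivatives polynomial in $\beta$. For the stability $\hat\mu \approx \hat\mu^{(\beta)}$, $1/K$-strong convexity of $\mathsf{f}$ and boundedness of $\rho'$ (which also bounds $|\rho(-yx) - \rho(-y'x)| \leq |y-y'| |x|$) yield, via the standard two-sided optimality comparison, $\pnorm{\hat\mu(A) - \hat\mu^{(\beta)}(A)}{}^2 \lesssim_K \pnorm{Y(A) - Y^{(\beta)}(A)}{1} \cdot (\pnorm{A\hat\mu}{\infty} + \pnorm{A\hat\mu^{(\beta)}}{\infty})$. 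Combining the smoothing estimate $\E\pnorm{Y(A) - Y^{(\beta)}(A)}{1} \lesssim_K m/\beta$ (since $\sigma_\beta$ differs from $\mathrm{sign}$ substantially only in an $O(1/\beta)$-window around $U_i = \rho'(A_i^\top \mu_0)$, where $U_i$ has uniform density), together with $\pnorm{A}{\mathrm{op}} = O_\Prob(1)$ and a soft leave-one-out delocalization $\pnorm{A\hat\mu}{\infty} \vee \pnorm{A\hat\mu^{(\beta)}}{\infty} = O_\Prob(\mathrm{polylog}(n))$, yields $n^{-1/2}\E\pnorm{\hat\mu(A) - \hat\mu^{(\beta)}(A)}{} \lesssim_K \beta^{-1/2}\cdot \mathrm{polylog}(n)$. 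Pseudo-Lipschitzness of $\{\psi_j\}$ then converts this to an $O(\beta^{-1/2}\mathrm{polylog}(n))$ contribution to the averaged test functional, with the same bound for $B$.

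For the smoothed problem, construct a proximal gradient descent iterate $\mu^{(t),(\beta)}$ on the smoothed, strongly convex objective; this fits the asymmetric GFOM template (\ref{def:GFOM_asym}) with Lipschitz/derivative data $\Lambda \lesssim_K \beta^{O(1)}$, since the sharpness $\beta$ propagates through derivatives of $Y^{(\beta)}$ in $A$. Strong convexity delivers geometric contraction $n^{-1/2}\pnorm{\mu^{(t),(\beta)} - \hat\mu^{(\beta)}(A)}{} \leq (1 - c_K)^t$ with high probability. Applying Theorem \ref{thm:universality_asym_avg} to $\mu^{(t),(\beta)}$ then gives an averaged universality error of order $(\beta \log n)^{C_0 t^3}\cdot n^{-1/(C_0 t^3)}$. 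Assembling the three error sources---smoothing $\beta^{-1/2}\mathrm{polylog}(n)$, GD truncation $(1-c_K)^t$, and GFOM universality $(\beta \log n)^{C_0 t^3}\cdot n^{-1/(C_0 t^3)}$---and choosing $t \asymp (\log n)^{1/8}$ and $\log \beta \asymp (\log n)^{1/8}$ (both slightly superpolylog) balances all three to yield the claimed $e^{-(\log n)^{1/8}/C_0}$ rate.

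The main obstacle is the quantitative stability bound: the $\beta^{-1/2}$ rate (rather than the naive $\beta^{-1}$) comes from the $L^2$-averaging of a thin but sharp boundary layer in the label smoothing, and is ultimately what forces $\beta$ to grow superpolylogarithmically in $n$, and hence the final bound to be only stretched-exponential in $\log n$. A subtle technical ingredient is the uniform $\pnorm{A\hat\mu}{\infty}$ delocalization for both $\hat\mu$ and $\hat\mu^{(\beta)}$, which requires a leave-one-out argument in the spirit of Section \ref{section:proof_ERM_ls}, though only at the coarse polylogarithmic resolution sufficient for averaging.
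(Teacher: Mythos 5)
Your proposal captures the same essential strategy as the paper's proof: rewrite the logistic MLE via a latent-variable coupling (so that $Y_i$ depends on $A$ only through $A_i^\top\mu_0$ and an exogenous auxiliary), smooth the resulting discontinuity, run proximal gradient descent on the smoothed objective to fit the asymmetric GFOM template, control the Lipschitz constants of the GFOM nonlinearities via leave-one-out delocalization of $\max_i |A_i^\top \mu^{(t)}|$, exploit strong convexity for geometric contraction of the GD iterates, and balance the smoothing error, the GD truncation error, and the GFOM universality error. The paper's implementation differs in detail but not in substance: it couples via logistic $\xi_i$ rather than uniform $U_i$, smooths the indicator $\bm{1}_{y+\xi\geq 0}$ with a mollifier $\varphi_\sigma$ rather than smoothing the sign with $\tanh(\beta\cdot)$, and bounds the smoothing error $\pnorm{\hat\mu - \hat\mu_\sigma}{}/\sqrt{n}$ by Cauchy--Schwarz against $\pnorm{A\hat\mu}{}$ (yielding $(\sigma\log n)^{1/4}$), whereas your $L^1$-versus-$L^\infty$ pairing gives the slightly sharper $\beta^{-1/2}\mathrm{polylog}(n)$. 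Since the exponents that matter in the final $e^{-(\log n)^{1/8}/C}$ rate are pinned by the GD truncation and the $t^3$-dependence inside the GFOM error, the factor-of-two improvement in the smoothing rate is immaterial; the paper's choice is simply more robust because it avoids invoking $\pnorm{A\hat\mu}{\infty}$ control at this step.

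Two small imprecisions to flag. First, the claim that the smoothed problem fits the GFOM template with $\Lambda \lesssim_K \beta^{O(1)}$ is incomplete as written: the derivative of $\mathsf{G}^{\langle 2\rangle}_{t,i}(u_i^{(0)}, u_i^{(t)}) = \eta\, Y_i^{(\beta)}(u_i^{(0)})\,\rho'(-Y_i^{(\beta)}(u_i^{(0)}) u_i^{(t)})$ with respect to $u_i^{(0)}$ is of order $\beta(1+|u_i^{(t)}|)$, so the effective Lipschitz constant is $\beta$ times the truncation level $L_n \asymp \log n$ at which $u_i^{(t)}$ is clipped. This is precisely the phenomenon the paper flags (``the Lipschitz constant of $y\mapsto\partial_1\mathsf{L}_\sigma(x,y;\xi)$ depends on $|x|$'') and handles by passing to the truncated nonlinearity $\mathsf{G}^\xi_\sigma$ in (\ref{ineq:universality_logistic_0}) together with the event $E_{0;\sigma}$ from the leave-one-out estimate. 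Second, the delocalization you need is for the iterates $\mu^{(t),(\beta)}$ along the GD path, not just for $\hat\mu$ and $\hat\mu^{(\beta)}$, and it is established by a leave-one-\emph{sample}-out argument (Lemma \ref{lem:logistic_loo}), which is substantially simpler than the second-order leave-one-\emph{predictor}-out machinery of Section \ref{section:proof_ERM_ls}. You allude to both points (``only at the coarse polylogarithmic resolution''), but the reference to Section \ref{section:proof_ERM_ls} is a mis-attribution; the right analogue is the one-shot sample removal, not the recursive predictor removal. Neither of these affects the validity or the final rate of your plan.
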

It is easy to extend the above theorem to other link functions $\rho$ in the binary regression model (\ref{def:logit_reg}); we omit these non-essential ramifications.

When $\E A_{0,ij}^2=\E B_{0,ij}^2=1$, we may take $B=G$ with i.i.d. $\mathcal{N}(0,1/m)$ entries. In this case, the value of $n^{-1}\sum_{j \in [n]} \psi(\hat{\mu}_j(G),\mu_{0,j})$ in the proportional high dimensional limiting regime (i.e., $m/n\to \gamma$ and $\mu_0$ has i.i.d. entries) can be determined via the solution of a set of six equations with six unknowns, cf. \cite[Theorem 1]{salehi2019impact}. Moreover, for the Ridge regularizer $\mathsf{f}(x)\equiv \lambda x^2/2$ with a fixed tuning parameter $\lambda>0$, using rotational invariance of Gaussian distributions, it is easy to prove (see, e.g.,  \cite[Lemma 2.1]{zhao2022asymptotic}) the approximate normality of $\hat{\mu}_j(G)-\alpha\mu_{0,j}\stackrel{d}{\approx} \sigma\cdot \mathcal{N}(0,1)$ with `global quantities' $\alpha=\iprod{\hat{\mu}(G)}{\mu_0}/\pnorm{\mu_0}{}^2$, $\sigma^2=\pnorm{\mathsf{P}_{\mu_0}^\perp\hat{\mu}(G)}{}^2$ that can be determined with a reduced system of three equations with three unknowns, cf. \cite[Theorem 2]{salehi2019impact}. 

After this paper was posted online, \cite{verchand2024high} obtained universality results for Ridge-regularized logistic regression estimators using the CGMT method, following a similar approach to \cite{han2023universality}. As noted earlier, the methods in \cite{han2023universality}---and consequently in \cite{verchand2024high}---do not accommodate heterogeneous design matrices as in Theorem \ref{thm:universality_logistic} and rely on understanding the mean-field equations for Ridge-regularized logistic regression estimators. In contrast, the universality results in Theorem \ref{thm:universality_logistic} apply even in heterogeneous settings where the corresponding mean-field equations remain unknown. 

An interesting open question is to prove universality for the plain MLE $\hat{\mu}$ in the unregularized case (i.e., $\mathsf{f}\equiv 0$). In this case, it is well known that the existence and $\ell_2$ boundedness of MLE $\hat{\mu}(G)$ undergo a sharp phase transition in terms of the aspect ratio $m/n$ under the Gaussian design, cf. \cite{candes2020phase,sur2019likelihood,sur2019modern}. This phase transitional threshold is recently proved to exhibit universality in terms of linear separability of the data, cf. \cite[Theorem 1]{montanari2023universality}. Interestingly, a careful examination of our proof shows that the averaged universality of the plain MLE  $\hat{\mu}$ would follow, provided that a sufficiently strong $\ell_2$ control can be proven for $\hat{\mu}$ (and its smoothed version) below the phase transition curve.

\subsection{Proof strategies  via universality results in Sections \ref{section:main_results_sym} and \ref{section:main_results_asym}}\label{subsection:proof_strategy_ERM}

\subsubsection{Proof strategy of Theorem \ref{thm:avg_universality_ERM}}

The empirical risk minimizer $\hat{\mu}$ satisfies the following first-order condition: for $\eta>0$,
\begin{align}\label{ineq:avg_universality_ERM_1}
\hat{\mu} = \prox_{\eta \mathsf{f}_n}\Big(\hat{\mu}+\eta\cdot  A^\top \mathsf{L}'(Y-A \hat{\mu})\Big).
\end{align}
This suggests the following proximal gradient algorithm: for $\eta>0$, let for $t=1,2,\ldots$,
\begin{align}\label{ineq:avg_universality_ERM_2}
\mu^{(t)} = \prox_{\eta \mathsf{f}_n}\Big(\mu^{(t-1)}+\eta\cdot  A^\top \mathsf{L}'(Y-A \mu^{(t-1)})\Big),
\end{align}
with the initialization $\mu^{(0)}=0$. Here $\mathsf{f}_n(\mu)=\sum_{j \in [n]} \mathsf{f}(\mu_j)$. As the above iterative algorithm (\ref{ineq:avg_universality_ERM_2}) is a special case of the generic asymmetric GFOM iterate in (\ref{def:GFOM_asym}), Theorem \ref{thm:universality_asym_avg} indicates that the averaged universality of $\mu^{(t)}$ carries over to $\hat{\mu}$, if an $\ell_2$ control between $\mu^{(t)}$ and $\hat{\mu}$ can be proven in the following sense: for some small $\delta_0 \in (0,1/2)$ and small $\eta>0$,
\begin{align}\label{ineq:universality_ERM_l2}
\frac{\pnorm{\mu^{(t)}-\hat{\mu}}{} }{\sqrt{n}}\lesssim (1-\delta_0\eta)^{t}.
\end{align}
An estimate of the type (\ref{ineq:universality_ERM_l2}) can be proven in a straightforward manner, with either the presence of the strong convexity of $\mathsf{f}$, or using the non-singularity of the sample covariance in the squared loss case. For the Lasso, an estimate of the above type (\ref{ineq:universality_ERM_l2})  is more delicate due to the apparent lack of strong convexity in the regime $m/n<1-\epsilon$. We prove such an estimate by controlling the sparsity of the GFOM iterate $\{\mu^{(t)}\}$ in (\ref{ineq:avg_universality_ERM_2}) on the linear order uniformly in $t$, provided that $\lambda$ is not too small. The readers are referred to Section \ref{subsection:proof_thm_avg_universality_ERM} for more details.

We remark that for (a transformation of) the AMP iterate $\{\mu^{(t)}_{\mathrm{AMP}}\}$, a typical estimate that formalizes $\mu^{(t)}_{\mathrm{AMP}}\approx \hat{\mu}$ reads
\begin{align}\label{ineq:universality_ERM_l2_AMP}
\lim_{t\to \infty} \limsup_{m/n \to \gamma }  \frac{\pnorm{\mu^{(t)}_{\mathrm{AMP}}-\hat{\mu}}{} }{\sqrt{n}}=0\quad \hbox{a.s. or in probability.}
\end{align} 
Estimates of the type (\ref{ineq:universality_ERM_l2_AMP}) are obtained under a standard Gaussian design for, e.g., the Lasso \cite{bayati2012lasso}, robust regression estimators \cite{donoho2016high}, the maximum likelihood estimator in logistic regression \cite{sur2019modern}, the SLOPE \cite{bu2021algorithmic} and the minimum $\ell_1$-norm interpolator \cite{li2021minimum}. The proof of (\ref{ineq:universality_ERM_l2_AMP}) usually involves sharp controls of two consecutive AMP iterates, or equivalently, the convergence of the AMP state evolution to its equilibrium. Such a task is technically highly non-trivial, as it is again strongly tied to the global existence, uniqueness and the stability of the mean-field system of equations for $\hat{\mu}$ that must be studied case-by-case under different pairs of $(\mathsf{L},\mathsf{f})$ and random matrix ensembles $A$. A direct convergence estimate of the type (\ref{ineq:universality_ERM_l2}) that holds for the gradient descent iterate (\ref{ineq:avg_universality_ERM_2}) appears unavailable for the AMP iterate.

\subsubsection{Proof strategy of Theorem \ref{thm:universality_ls}}
For the squared loss, (\ref{ineq:avg_universality_ERM_1})-(\ref{ineq:avg_universality_ERM_2}) can be further simplified as follows. Let $\hat{\Sigma}\equiv \sum_{i \in [m]} A_iA_i^\top=A^\top A$ be the sample covariance. Then (\ref{ineq:avg_universality_ERM_1}) reduces to
\begin{align}\label{eqn:ERM_first_order}
\hat{\mu} = \prox_{\eta \mathsf{f}_n}\Big(\big(I-\eta\hat{\Sigma}\big)\hat{\mu}+\eta\cdot  A^\top Y\Big),
\end{align}
and (\ref{ineq:avg_universality_ERM_2}) becomes the following: 
\begin{align}\label{def:ERM_grad_descent}
\mu^{(t)} \equiv \mu^{(t)}(A) \equiv \prox_{\eta \mathsf{f}_n}\Big(\big(I-\eta\hat{\Sigma}\big)\mu^{(t-1)}+\eta\cdot  A^\top Y\Big).
\end{align}
So an application of Theorem \ref{thm:universality_asym} yields that uniformly in $j \in [n]$,
\begin{align*}
{\mu}_j^{(t)}(A)\stackrel{d}{\approx} {\mu}_j^{(t)}(B),\quad \hbox{for }t\ll (\log n/\log \log n)^{1/3}.
\end{align*}
The key challenge from here is to prove an \emph{entrywise control} for $\mu^{(t)}-\hat{\mu}$ beyond the $\ell_2$ control as in (\ref{ineq:universality_ERM_l2}). We provide such a control in Proposition \ref{prop:ERM_Delta_r_l2}: for some small $\delta_0 \in (0,1/2)$, if $t\lesssim \log n$ and $\eta>0$ is chosen sufficiently small,
\begin{align}\label{ineq:universality_ls_loo}
\pnorm{\mu^{(t)}-\hat{\mu}}{\infty}\lesssim \log n\cdot  (1-\delta_0\eta)^t\, \hbox{ with high probability}.
\end{align} 
It is easy to see that both terms $\log n$ and $(1-\delta_0\eta)^t$ are necessary in the above estimate. Using (\ref{ineq:universality_ls_loo}) above, we may then run  (\ref{def:ERM_grad_descent}) for $\log \log n \ll t \ll (\log n/\log\log n)^{1/3}$ many iterations to conclude Theorem \ref{thm:universality_ls}. 

The proof of (\ref{ineq:universality_ls_loo}) is fairly non-standard. In fact, our method of proof relies on a `second-order' leave-one-out argument that provides entrywise control for the difference of $\hat{\mu},\mu^{(t)}$ and their leave-one(-predictor)-out versions $\hat{\mu}_{[-\ell]}, \mu^{(t)}_{[-\ell]}$ (formally defined in Eqns.  (\ref{def:ERM_loo_predictor}) and (\ref{eqn:ERM_grad_descent_loo_predictor})). In essence, the standard leave-one-out method provides control for $\pnorm{\hat{\mu}-\hat{\mu}_{[-\ell]}}{}$ and $\pnorm{\mu^{(t)}-\mu^{(t)}_{[-\ell]}}{}$---and therefore also for $\pnorm{\hat{\mu}}{\infty}$ and $\pnorm{\mu^{(t)}}{\infty}$---approximately on the order $1$. Here the key step in proving (\ref{ineq:universality_ls_loo}) is a second-order, entrywise control on $\hat{\mu}-\hat{\mu}_{[-\ell]}$ and $\mu^{(t)}-\mu^{(t)}_{[-\ell]}$: In Lemma \ref{lem:ERM_loo_predictor_error} we prove that, for $t\lesssim \log n$, with high probability,
\begin{align*}
\Big(\max_{k\neq \ell} n^{1/2} \abs{(\hat{\mu}-\hat{\mu}_{[-\ell]})_k}+ \abs{\hat{\mu}_\ell}\Big)\vee \Big(\max_{k\neq \ell} n^{1/2} \abs{({\mu}^{(t)}-{\mu}^{(t)}_{[-\ell]})_k}+ \abs{{\mu}_\ell^{(t)}}\Big)\lesssim \log n.
\end{align*}
The readers are referred to Section \ref{section:proof_ERM_ls} for more details.

\subsubsection{Proof strategy of Theorem \ref{thm:universality_logistic}}
It is convenient to rewrite (\ref{def:ERM_logit}) to suit the purpose of our analysis. First we let
\begin{align}\label{def:L_logit}
\mathsf{L}(x,y;\xi)\equiv \rho\big(-(2\cdot\bm{1}_{y+\xi\geq 0}-1)x\big).
\end{align}
Now with $\xi_1,\ldots,\xi_m$ being i.i.d. logistic random variables with c.d.f. $\Prob(\xi_1\leq t)=1/(1+e^{-t})$, some simple algebra shows that the regularized MLE $\hat{\mu}$ defined in (\ref{def:ERM_logit}) is equivalent to
\begin{align}\label{def:ERM_logit_equiv}
\hat{\mu} = \hat{\mu}(A)=\argmin_{\mu \in \R^n } \bigg\{\sum_{i \in [m]} \mathsf{L}(A_i^\top \mu, A_i^\top \mu_0;\xi_i)+  \sum_{j \in [n]} \mathsf{f}(\mu_j) \bigg\}.
\end{align}
Using the above representation, similar to (\ref{eqn:ERM_first_order}), $\hat{\mu}$ satisfies the following first-order condition: for any $\eta>0$, 
\begin{align}\label{eqn:ERM_first_order_logit}
\hat{\mu} = \prox_{\eta \mathsf{f}_n}\bigg(\hat{\mu}-\eta \sum_{i \in [m]} A_i \partial_1 \mathsf{L}(A_i^\top \hat{\mu}, A_i^\top \mu_0;\xi_i) \bigg).
\end{align}
Here we have written $\partial_1 \mathsf{L}(x,y;\xi)\equiv (\partial \mathsf{L}/\partial x) (x,y;\xi)$. We now consider, similar to (\ref{def:ERM_grad_descent}), the following proximal gradient descent algorithm: for $\eta>0$, let for $t=1,2,\ldots,$
\begin{align}\label{def:ERM_grad_descent_logit}
\mu^{(t)} \equiv \prox_{\eta \mathsf{f}_n}\bigg(\mu^{(t-1)}-\eta \sum_{i \in [m]} A_i \partial_1 \mathsf{L}(A_i^\top \mu^{(t-1)}, A_i^\top \mu_0;\xi_i) \bigg),
\end{align}
with the initialization $\mu^{(0)}=0$. A major technical difficulty to apply Theorem \ref{thm:universality_asym_avg} to the iterative scheme (\ref{def:ERM_grad_descent_logit}) lies in the non-smoothness of $y\mapsto \partial_1 \mathsf{L}(x,y;\xi)$ due to the indicator structure. To overcome this, we consider smoothed versions $\hat{\mu}_\sigma,\mu^{(t)}_\sigma$ of $\hat{\mu},\mu^{(t)}$, formally defined  in the beginning of Section \ref{section:proof_ERM_logistic} via a smoothed version $\mathsf{L}_\sigma$ of $\mathsf{L}$ in (\ref{def:L_logit}). We then prove in Lemmas \ref{lem:logistic_error} and \ref{lem:logistic_smooth_error} that there exists some small $\delta_0 \in (0,1/2)$ such that for any $t\leq n$, sufficiently small $\eta>0$ and any smoothing parameter $\sigma\geq \log n/n$, with high probability,
\begin{align}\label{ineq:universality_logit_l2}
\frac{ \pnorm{\mu_\sigma^{(t)}-\hat{\mu}}{}  }{\sqrt{n}}\lesssim \sqrt{\log n}\cdot (1-\delta_0\eta)^t+ (\sigma \log n)^{1/4}. 
\end{align}
A remaining technical subtlety to apply Theorem \ref{thm:universality_asym_avg} for the smoothed loss function $\mathsf{L}_\sigma$ lies in that the Lipschitz constant of $y\mapsto \partial_1 \mathsf{L}_\sigma(x,y;\xi)$ depends on $\abs{x}$. We prove that the vector $\big( A_i^\top \mu_\sigma^{(t-1)}\big)_{i \in [m]}$, playing the role of $x$ here, is delocalized via a leave-one-sample-out method (cf. Lemma \ref{lem:logistic_loo}), so with high probability the Lipschitz constants of the maps being applied in Theorem \ref{thm:universality_asym_avg} are under control.

From here we may run the smoothed version of the algorithm (\ref{def:ERM_grad_descent_logit}) with the smoothing parameter $\log n/n\leq \sigma\ll 1/\log n$ for $\log \log n\ll t\leq n$ many iterations to conclude Theorem \ref{thm:universality_logistic}. The smoothing parameter $\sigma$ need to be tuned carefully, as the error bound for the averaged universality of $\mu_\sigma^{(t)}$ blows up as $\sigma \downarrow 0$. The details of these arguments may be found in Section \ref{section:proof_ERM_logistic}.

\section{Application II: Entrywise dynamics of gradient descent iterates}\label{section:grad_des}

Recall the general empirical risk minimization problem in (\ref{def:ERM}) with $\mathsf{f}(x)=\lambda x^2/2$. Here we consider a stochastic gradient descent version of (\ref{def:ERM_grad_descent_general_loss}): for $t=1,2,\ldots$, with $S_{t-1}\subset [m]$ a sub-sample chosen independent of $A$,
\begin{align}\label{def:ERM_stoc_grad_descent_general_loss}
\mu^{(t)} \equiv \mu^{(t-1)}-\eta\cdot  \bigg(-\sum_{i \in S_{t-1}}A_i \mathsf{L}'\big(Y_i-\iprod{A_i}{ \mu^{(t-1)}}\big)+\lambda \mu^{(t-1)}\bigg).
\end{align}
Also recall that the loss function $\mathsf{L}$ need not be convex. For notational convenience, we let $\mathsf{L}_{t-1}:\R^m\to \R^m$ be defined by 
\begin{align*}
\mathsf{L}_{t-1}(x)\equiv \big(x_i \bm{1}_{i \in S_{t-1} }\big)_{i \in [m]} \in \R^m.
\end{align*}
Clearly, $\mathsf{L}_{t-1}$ is row-separate.

\subsection{State evolution for $\{\mu^{(t)}\}$}
The state evolution for the gradient descent iterate $\{\mu^{(t)}\}$ in (\ref{def:ERM_stoc_grad_descent_general_loss}) can be described by three objects:
\begin{enumerate}
	\item[(i)] two sequences of symmetric covariance matrices $\{\Sigma^{\mathfrak{U}}_k\}_{k \in [m]}\subset \R^{[1:\infty)\times [1:\infty)}, \{\Sigma^{\mathfrak{V}}_\ell\}_{\ell \in [n]} \subset \R^{[0:\infty)\times [0:\infty)}$;
	\item[(ii)] another sequence of matrices $\{\mathsf{M}_\ell^{\mathfrak{V}}\}_{\ell \in [n]}\subset \R^{[0:\infty)\times [0:\infty)}$ whose lower triangle elements are $0$;
	\item[(iii)] a sequence of maps $\{\Phi_{t}:\R^{m\times [0:t]}\to \R^{m}\}_{t \in \N}$.
\end{enumerate}
We also associate $\{\Sigma^{\mathfrak{U}}_k\}_{k \in [m]}$ with a row-independent, centered Gaussian matrix $\mathfrak{U}^{[1:\infty)}\in \R^{m\times [1:\infty)}$. All these objects are defined recursively as follows.

\begin{definition}\label{def:grad_descent_se}
	Initialize with $(\Sigma_\ell^{\mathfrak{V}})_{0,0}\equiv \mu_{0,\ell}^2$, $(\mathsf{M}_\ell^{\mathfrak{V}})_{0,0}\equiv 1$ for $\ell \in [n]$, $\Phi_0\equiv \mathrm{id}(\R^m)$  and $\mathfrak{U}^{(0)}=0_m$.  For $t=1,2,\ldots,$ execute the following steps:
	\begin{enumerate}
		\item For $k \in [m]$ and $s \in [1:t]$, let
		\begin{align*}
		(\Sigma^{\mathfrak{U}}_k)_{t,s}&\equiv \mathrm{Cov}\big(\mathfrak{U}^{(t)}_k, \mathfrak{U}^{(s)}_k \big) = \sum_{\ell \in [n]} \E A_{k\ell}^2\cdot \bigiprod{(\mathsf{M}_{\ell}^{\mathfrak{V}})_{\cdot,t-1} }{\Sigma_\ell^{\mathfrak{V}} (\mathsf{M}_{\ell}^{\mathfrak{V}})_{\cdot,s-1}}.
		\end{align*}
		\item
		Let $\Phi_{t}:\R^{m\times [0:t]}\to \R^{m}$ be defined by
		\begin{align*}
		\Phi_{t}(\mathfrak{u}^{([0:t])}) \equiv \mathfrak{u}^{(t)}+\eta\sum_{s \in [1:t-1]} \mathfrak{f}_{s}^{(t-1) }\circ  \mathsf{L}_{s-1}'\big(\xi-\Phi_{s}(\mathfrak{u}^{([0:s])})\big),
		\end{align*}
		where for $k \in [m]$,
		\begin{align*}
		\mathfrak{f}_{s,k}^{(t-1)} \equiv \sum_{\ell \in [n]} \E A_{k\ell}^2\cdot (\mathsf{M}_{\ell}^{\mathfrak{V}})_{s,t-1},\quad s \in [1:t-1].
		\end{align*}
		
		\item For $\ell \in [n]$ and $r \in [0:\infty)$, let 
		\begin{align*}
		(\mathsf{M}_{\ell}^{\mathfrak{V}})_{r,t}&\equiv \eta\lambda\cdot \bm{1}_{r=0}+\bigg(\sum_{s \in [r+1:t]} \mathfrak{g}_{s,\ell}^{(t)}  (\mathsf{M}_{\ell}^{\mathfrak{V}})_{r,s-1}  +(1-\eta \lambda) (\mathsf{M}_\ell^{\mathfrak{V}})_{r,t-1}\bigg)\cdot \bm{1}_{r \in [0:t-1]}+\bm{1}_{r=t},
		\end{align*}
		where with $W^{(t)}(\mathfrak{U}^{([0:t])})\equiv \mathsf{L}_{t-1}'' \big(\xi-\Phi_{t}(\mathfrak{U}^{([0:t])})\big)\in \R^m$,
		\begin{align*}
		\mathfrak{g}_{s,\ell}^{(t)}&\equiv -\eta \sum_{k \in [m]} \E A_{k\ell}^2\cdot  \E^\xi   \bigg[W_k^{(t)}(\mathfrak{U}^{([0:t])}) \\
		&\quad \times \bigg\{ \delta_{s,t}+\sum_{\tau \in [0:t-s-1]}(-\eta)^{\tau+1}  \sum_{r_{[1:\tau]}\subset [s+1: t-1]} \prod_{\iota \in [1:\tau+1]}  \mathfrak{f}_{r_\iota,k}^{(r_{\iota-1}-1) } W_k^{(r_\iota)}(\mathfrak{U}^{([0:r_\iota])})\bigg\}\bigg].
		\end{align*}
		Here we write $r_0\equiv t$ and $r_{\tau+1}\equiv s$ when the summation is run over $r_{[1:\tau]}\subset [s+1: t-1]$ (including $\tau=0$).
		\item For $\ell \in [n]$, let $(\Sigma^{\mathfrak{V}}_\ell)_{t,0}\equiv 0$, and let for $s \in [1:t]$
		\begin{align*}
		(\Sigma^{\mathfrak{V}}_\ell)_{t,s}&\equiv \eta^2 \sum_{k \in [m]} \E A_{k\ell}^2\cdot  \E^\xi \prod_{\tau \in \{t,s\}} \bigiprod{e_k}{ \mathsf{L}_{\tau-1}'\big(\xi-\Phi_{\tau} (\mathfrak{U}^{([0:\tau])})\big)}.
		\end{align*}
	\end{enumerate} 
\end{definition}

\subsection{Entrywise dynamics of $\{{\mu}^{(t)}\}$}
Using the state evolution in the above Definition \ref{def:grad_descent_se}, let us define the key parameters 
\begin{align}\label{def:grad_descent_key_para}
b^{(t)}_{\ell;\mathsf{GD}}\equiv -(\mathsf{M}_{\ell}^{\mathfrak{V}})_{0,t},\quad \sigma^{2,(t)}_{\ell;\mathsf{GD}}\equiv \bigiprod{(\mathsf{M}^{\mathfrak{V}}_\ell)_{{[1:\infty),t}}}{(\Sigma_\ell^{\mathfrak{V}})_{[1:\infty)^2} (\mathsf{M}^{\mathfrak{V}}_\ell)_{{[1:\infty),t}} }.
\end{align}
These quantities will play a crucial role in understanding the dynamics of $\{\mu^{(t)}\}$ in (\ref{def:ERM_stoc_grad_descent_general_loss}) in the theorem below; its proof can be found in  Section \ref{section:proof_grad_descent}.

\begin{theorem}\label{thm:grad_descent}
	Suppose $\lambda\geq 0$, $K^{-1}\leq m/n\leq K$ and (D*1) in Theorem \ref{thm:GFOM_se_asym} hold for some $K\geq 2$. Further assume that $\mathsf{L} \in C^4(\R)$. Then for any $\Lambda$-pseudo-Lipschitz $\psi:\R \to \R$, there exists some universal constant $c_0>0$ such that
	\begin{align*}
	&\max_{\ell \in [n]}\bigabs{\E^\xi \psi \big(\mu^{(t)}_\ell-\mu_{0,\ell}\big)- \E^\xi \psi \big[\mathcal{N}\big(b^{(t)}_{\ell;\mathsf{GD}} \mu_{0,\ell}, \sigma^{2,(t)}_{\ell;\mathsf{GD}} \big)\big]}\\
	&\leq  \Big(K\Lambda \log n\cdot  (1+ \eta + \lambda + \pnorm{\mu_0}{\infty})\cdot \big(\pnorm{\mathsf{L}'(\xi)}{\infty}+\max_{q \in [2:4]} \pnorm{\mathsf{L}^{(q)}}{\infty}\big)   \Big)^{c_0 t^5}\cdot n^{-1/c_0^t}.
	\end{align*}
	Here in the expectation $\E^\xi$ we treat $\mu_0,\xi,\{S_\cdot\}$ all as fixed. 
\end{theorem}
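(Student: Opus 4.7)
The plan is to recast the stochastic gradient descent iterate $\{\mu^{(t)}\}$ as an asymmetric GFOM of the form (\ref{def:GFOM_asym}), invoke Theorem \ref{thm:GFOM_se_asym} to obtain an entrywise Gaussian approximation in terms of the abstract state evolution $(\{\Xi_t\}, \mathfrak{V}^{([1:\infty))})$ of Definition \ref{def:GFOM_se_asym}, and then identify the resulting mean and variance with the explicit parameters $(b^{(t)}_{\ell;\mathsf{GD}}, \sigma^{2,(t)}_{\ell;\mathsf{GD}})$ of Definition \ref{def:grad_descent_se}. For the reduction, I would set $v^{(t)} \equiv \mu^{(t)}$ and $u^{(t)} \equiv A(\mu^{(t-1)} - \mu_0)$, so that $Y - A\mu^{(t-1)} = \xi - u^{(t)}$, yielding the row-separate maps
\begin{align*}
\mathsf{F}_{t,\ell}^{\langle 1\rangle}(v_\ell^{([0:t-1])}) &\equiv v_\ell^{(t-1)} - \mu_{0,\ell}, \qquad \mathsf{F}_{t,\ell}^{\langle 2\rangle}(v_\ell^{([0:t-1])}) \equiv (1-\eta\lambda)\, v_\ell^{(t-1)}, \\
\mathsf{G}_{t,k}^{\langle 1\rangle} &\equiv 0, \qquad \mathsf{G}_{t,k}^{\langle 2\rangle}(u_k^{([0:t])}) \equiv \eta\, \bm{1}_{k \in S_{t-1}}\, \mathsf{L}'(\xi_k - u_k^{(t)}).
\end{align*}
Since $\mathsf{L} \in C^4(\R)$, these satisfy ($D^\ast$2) of Theorem \ref{thm:GFOM_se_asym} with $\Lambda$ of order $(1+\eta+\lambda+\pnorm{\mu_0}{\infty})(\pnorm{\mathsf{L}'(\xi)}{\infty}+\max_{q \in [2:4]}\pnorm{\mathsf{L}^{(q)}}{\infty})$, explaining the prefactor in the claimed bound.

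Applying Theorem \ref{thm:GFOM_se_asym} with $\abs{S_2}=1$, $S_2=\{\ell\}$ yields an entrywise approximation $\mu^{(t)}_\ell \approx [\Xi_t(\mathfrak{V}^{([0:t])})]_{\ell,t}$, with $\mathfrak{V}^{(0)}=0$ from the initialization $\mu^{(0)}=0$. Because $\psi$ in the statement is only pseudo-Lipschitz while Theorem \ref{thm:GFOM_se_asym} requires a $C^3$ test function with controlled derivatives, I would first mollify $\psi$ into $\psi_\epsilon \in C^3$, apply Theorem \ref{thm:GFOM_se_asym} to $\psi_\epsilon$, and then control the smoothing error via sub-Gaussian tail estimates for both $\mu^{(t)}_\ell - \mu_{0,\ell}$ and the target Gaussian, optimizing over the mollification scale $\epsilon$.

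The heart of the argument is to establish, by induction on $t$, the linear-Gaussian decomposition
\begin{align*}
[\Xi_t(\mathfrak{V}^{([0:t])})]_{\ell,t} - \mu_{0,\ell} = -(\mathsf{M}^{\mathfrak{V}}_\ell)_{0,t}\, \mu_{0,\ell} + \sum_{s \in [1:t]} (\mathsf{M}^{\mathfrak{V}}_\ell)_{s,t}\, \mathfrak{V}^{(s)}_\ell,
\end{align*}
from which $b^{(t)}_{\ell;\mathsf{GD}}$ and $\sigma^{2,(t)}_{\ell;\mathsf{GD}}$ as in (\ref{def:grad_descent_key_para}) emerge immediately. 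Substituting $\mathsf{F}_{s,\ell}^{\langle 1\rangle}(\Xi_{s-1,\ell}) = [\Xi_{s-1}]_{\ell,s-1} - \mu_{0,\ell}$ and $\mathsf{F}_{t,\ell}^{\langle 2\rangle}(\Xi_{t-1,\ell}) = (1-\eta\lambda)[\Xi_{t-1}]_{\ell,t-1}$ into Step 3 of Definition \ref{def:GFOM_se_asym} and matching coefficients of $\mu_{0,\ell}$ and each $\mathfrak{V}^{(r)}_\ell$ reproduces the recursion for $(\mathsf{M}^{\mathfrak{V}}_\ell)_{r,t}$ in Step 3 of Definition \ref{def:grad_descent_se}; Steps 4, 1, 2 of Definition \ref{def:GFOM_se_asym} respectively yield $(\Sigma^{\mathfrak{V}}_\ell)_{t,s}$, $\mathfrak{f}_{s,k}^{(t-1)} = \sum_\ell \E A_{k\ell}^2 (\mathsf{M}^{\mathfrak{V}}_\ell)_{s,t-1}$, and $(\Sigma^{\mathfrak{U}}_k)_{t,s}$. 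The main obstacle is the closed-form identity for the Onsager coefficient $\mathfrak{g}_{s,\ell}^{(t)}$: the chain rule gives $\partial_{\mathfrak{U}^{(s)}_k}(\mathsf{G}_{t,k}^{\langle 2\rangle} \circ \Phi_{t,k}) = -\eta\, W_k^{(t)}\,\partial_{\mathfrak{U}^{(s)}_k}\Phi_{t,k}$, and $\partial_{\mathfrak{U}^{(s)}_k}\Phi_{t,k}$ itself satisfies an implicit recursion along the sum defining $\Phi_{t,k}$. Fully unrolling this recursion produces a sum over strictly decreasing integer paths $t = r_0 > r_1 > \cdots > r_{\tau+1} = s$ with interior vertices in $[s+1:t-1]$, total sign $(-\eta)^{\tau+1}$, and edge weight $\mathfrak{f}^{(r_{\iota-1}-1)}_{r_\iota,k}\, W_k^{(r_\iota)}$; careful combinatorial bookkeeping of this path sum recovers exactly the closed form in Step 3 of Definition \ref{def:grad_descent_se}, closing the loop.
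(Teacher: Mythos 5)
Your proposal is correct and follows essentially the same route as the paper's proof in Section \ref{section:proof_grad_descent}: recast (\ref{def:ERM_stoc_grad_descent_general_loss}) as a GFOM, apply Theorem \ref{thm:GFOM_se_asym}, derive the linear-Gaussian decomposition for $[\Xi_t(\mathfrak{V}^{([0:t])})]_{\ell,t}$, and unroll the chain-rule recursion for $\partial_{\mathfrak{U}^{(s)}_k}\Phi_{t,k}$ into a path sum over $t = r_0 > r_1 > \cdots > r_{\tau+1} = s$, which is precisely Eqs.\ (\ref{ineq:grad_descent_1})--(\ref{ineq:grad_descent_2}). The only substantive differences are bookkeeping: you take $v^{(t)} = \mu^{(t)}$ with $\mathfrak{V}^{(0)}=0$ and put $-\mu_{0,\ell}$ inside $\mathsf{F}^{\langle 1\rangle}_{t,\ell}$, whereas the paper takes $v^{(t)} = \mu^{(t)} - \mu_0$ with $\mathfrak{V}^{(0)}=-\mu_0$, so the bias term $b^{(t)}_{\ell;\mathsf{GD}}\mu_{0,\ell}$ enters via the initial condition rather than the nonlinearity's constant offset; the two are affinely equivalent and yield the same $\mathsf{M}^{\mathfrak{V}}_\ell$ recursion. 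You also explicitly mollify the pseudo-Lipschitz $\psi$ to a $C^3$ test function before invoking Theorem \ref{thm:GFOM_se_asym}, which the paper's Step~3 leaves implicit behind the $\approx$ notation; this smoothing step is the right way to make that rigorous and only costs an adjustment of the constant $c_0$ after optimizing the mollification scale.
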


The distributional description in Theorem \ref{thm:grad_descent} admits a natural interpretation. Indeed, as will be clear from the proof, the decomposition
\begin{align}\label{ineq:grad_descent_decomposition}
\mu^{(t)}_\ell-\mu_{0,\ell}=  -(\mathsf{M}_{\ell}^{\mathfrak{V}})_{0,t}\cdot\mu_{0,\ell}+\sum_{s \in [1:t]} (\mathsf{M}_{\ell}^{\mathfrak{V}})_{s,t} \mathfrak{v}^{(s)}_\ell,\quad \ell \in [n]
\end{align}
holds, where $\{\mathfrak{v}^{(t)}\}$ is an underlying AMP iterate, and $(\mathfrak{v}^{(s)}_{\ell})_{s \in [1:t]}$ is approximately a centered Gaussian vector in $\R^t$, whose covariance $(\Sigma_\ell^{\mathfrak{V}})_{[1:t]^2}$ can be tracked easily by the state evolution in Definition \ref{def:grad_descent_se}. In typical situations where the gradient descent iterate $\{\mu^{(t)}\}$  converges as $t\to \infty$, this underlying AMP $\{\mathfrak{v}^{(t)}\}$ is also expected to converge as $t\to \infty$, and the coefficient is expected to behave as $(\mathsf{M}_{\ell}^{\mathfrak{V}})_{s,t}\approx \mathfrak{m}_\ell^{\mathfrak{V}}(t-s)$ for some function $\mathfrak{m}_\ell^{\mathfrak{V}}:\R\to \R$ and large $t,s$, so that the effect of initialization (i.e., behavior of $\mathfrak{v}^{(t)}$ for small values of $t$) eventually dies out. The readers are also referred to related discussions in \cite[Section 4.2]{celentano2021high} for the continuum limit gradient flow case.

\begin{remark}
Some technical remarks are in order.
\begin{enumerate}
	\item In (\ref{def:ERM_stoc_grad_descent_general_loss}) we consider gradient descent iterates associated with a general loss in the linear model (\ref{def:linear_model}). It is also possible to characterize the slightly more general form of the gradient descent iterates, as studied (in continuum limit) in \cite{celentano2021high}, that apply beyond linear models. In particular, we may use the same data augmentation reduction in Eqn. (27)-(28) in \cite{celentano2021high}; these non-essential generalizations are omitted to keep the presentation simple.
	\item The decomposition (\ref{ineq:grad_descent_decomposition}) and the resulting approximate Gaussianity of $\{\mu^{(t)}_\ell\}_{\ell \in [n]}$ are crucially tied to the $\ell_2$ regularization with $\mathsf{f}(x)=\lambda x^2/2$. While it is possible to provide abstract characterizations for $\{\mu^{(t)}\}$ under other convex regularizer $\mathsf{f}$'s, an easy-to-interpret distributional description as (\ref{ineq:grad_descent_decomposition}) will no longer be available.
	\item The regularity conditions on $\mathsf{L}$ can be further weakened for an averaged characterization of $\{\mu^{(t)}\}$. We omit these ramifications.
\end{enumerate}
\end{remark}

\subsubsection{An illustrative setting}
	As an illustration, consider the homogeneous case with $\E A_{k\ell}^2=1/n$ for $k \in [m],\ell \in [n]$, and with $\lambda =0$. We focus on the full sample case $S_{\cdot}\equiv [m]$ for simplicity. In this case, Definition \ref{def:grad_descent_se} admits major simplifications. Let $\phi\equiv m/n$ be the aspect ratio. Initialized with $\Sigma^{\mathfrak{V}}_{0,0}\equiv \pnorm{\mu_0}{}^2/n$, $\mathsf{M}^{\mathfrak{V}}_{0,0}\equiv 1$, $\Phi_0\equiv \mathrm{id}(\R)$  and $\mathfrak{U}^{(0)}=0$, for $t=1,2,\ldots,$ we update sequentially $\Sigma^{\mathfrak{U}}_{t,\cdot}$, $\Phi_t$, $\mathsf{M}^{\mathfrak{V}}_{\cdot,t}$ and $\Sigma^{\mathfrak{V}}_{t,\cdot}$ as follows: 
	\begin{enumerate}
		\item[(H1)] For $s \in [1:t]$, let $
		\Sigma^{\mathfrak{U}}_{t,s}\equiv \mathrm{Cov}\big(\mathfrak{U}^{(t)}, \mathfrak{U}^{(s)} \big) = \bigiprod{\mathsf{M}^{\mathfrak{V}}_{\cdot,t-1} }{\Sigma^{\mathfrak{V}} \mathsf{M}^{\mathfrak{V}}_{\cdot,s-1}}$. 
		\item[(H2)]
		Let $
		\Phi_{t}(\mathfrak{u}^{([0:t])}) \equiv \mathfrak{u}^{(t)}+\eta\sum_{s \in [1:t-1]}  \mathsf{M}^{\mathfrak{V}}_{s,t-1} \mathsf{L}'\big(\xi-\Phi_{s}(\mathfrak{u}^{([0:s])})\big)\in \R^m$. 	
		\item[(H3)] For $r \in [0:\infty)$, let $
		\mathsf{M}^{\mathfrak{V}}_{r,t}\equiv \big(\sum_{s \in [r+1:t]} \mathfrak{g}_{s}^{(t)}  \mathsf{M}^{\mathfrak{V}}_{r,s-1}  + \mathsf{M}^{\mathfrak{V}}_{r,t-1}\big)\cdot \bm{1}_{r \in [0:t-1]}+\bm{1}_{r=t}$, 
		where with $W^{(t)}_q\equiv  \mathsf{L}^{(q)} \big(\xi-\Phi_{t}\big[(\mathfrak{U}^{([0:t])}_k)_{k \in [m]}\big]\big)\in \R^m, q=1,2$,
		\begin{align*}
		\mathfrak{g}_{s}^{(t)}\equiv -\phi\cdot \eta \E  W^{(t)}_{2,\pi_m} \bigg\{ \delta_{s,t}+\sum_{\tau \in [0:t-s-1]}(-\eta)^{\tau+1}  \sum_{r_{[1:\tau]}\subset [s+1, t-1]} \prod_{\iota \in [1:\tau+1]}  \mathsf{M}^{\mathfrak{V}}_{r_\iota,r_{\iota-1}-1 } W^{(r_\iota)}_{2,\pi_m}\bigg\}.
		\end{align*}
		Here $\{\mathfrak{U}^{([0:t])}_k\}_{k\in [m]}$ are i.i.d. copies of $\mathfrak{U}^{([0:t])}$, and $\pi_m\sim \mathrm{Unif}([1:m])$ is independent of all other variables.
		\item[(H4)] Let $\Sigma^{\mathfrak{V}}_{t,0}\equiv 0$, and for $s \in [1:t]$, let $
		\Sigma^{\mathfrak{V}}_{t,s}\equiv \phi\cdot\eta^2   \E  W_{1,\pi_m}^{(t)}W_{1,\pi_m}^{(s)}$. 
	\end{enumerate} 
	Let us now examine the behavior of $\mu^{(t)}$ using Theorem \ref{thm:grad_descent}, under two extreme scenarios of the aspect ratio $\phi$:
	\begin{itemize}
		\item In the (substantially) overparametrized regime $\phi\ll 1$, under suitable growth condition on $\mathsf{L}$,  $\mathfrak{g}_{s}^{(t)}\approx 0$ and by (H3), $\mathsf{M}^{\mathfrak{V}}_{0,t}\approx \mathsf{M}^{\mathfrak{V}}_{0,t-1}\approx\cdots \approx \mathsf{M}^{\mathfrak{V}}_{0,0}=1$. Consequently,  $b^{(t)}_{\ell;\mathrm{GD}}\approx -1$ and $\sigma^{2,(t)}_{\ell;\mathrm{GD}}\approx 0$ for $\ell \in [n]$, and therefore Theorem \ref{thm:grad_descent} implies that $\mu^{(t)}_\ell \approx 0$ for large $t$ and $n$. For the squared loss, $\mu^{(t)}$ is known to converge to the Ridgeless interpolator $\hat{\mu}_0$ as $t \to \infty$ in the overparametrized regime \cite{hastie2022surprises}. Our results here align with the averaged distributional characterization in \cite{han2023distribution} that asserts  $\hat{\mu}_0\approx 0$. 
		\item In the (substantially) underparametrized regime $\phi\gg 1$, under the scaling $\eta = \epsilon_0/\phi$ with some small $\epsilon_0>0$, the gradient descent iterate $\{\mu^{(t)}\}$ admits a stable evolution. Then $\mathfrak{g}_{s}^{(t)}\approx-\epsilon_0 \E W^{(t)}_{2,\pi_m}\cdot\delta_{s,t}$ and by (H3), $\mathsf{M}^{\mathfrak{V}}_{0,t}\approx (1-\epsilon_0 \E W^{(t)}_{2,\pi_m})\mathsf{M}^{\mathfrak{V}}_{0,t-1}\approx\cdots \approx \prod_{s \in [t]}(1-\epsilon_0 \E W^{(s)}_{2,\pi_m})$. If $\inf_{s \in [t]}\E W^{(s)}_{2,\pi_m}\geq c_0$ for some small constant $c_0>0$, then $\abs{\mathsf{M}^{\mathfrak{V}}_{0,t}}\lesssim (1-\epsilon_0c_0)^t\approx 0$ for large $t$ and $n$. Consequently,  $b^{(t)}_{\ell;\mathrm{GD}}\approx 0$ and $\sigma^{2,(t)}_{\ell;\mathrm{GD}}\approx 0$ for $\ell \in [n]$, and therefore Theorem \ref{thm:grad_descent} implies that $\mu^{(t)}_\ell \approx \mu_{0,\ell}$ for large $t$ and $n$. This aligns with the classical consistency result $\hat{\mu}\approx \mu_0$ in this underparametrized regime \cite{van2000asymptotic}. Note that the low-dimensional CLT for $\hat{\mu}$ (or $\mu^{(t)}$ for large $t$) is beyond the scope of Theorem \ref{thm:grad_descent}, due of its error bound that must scale at least $\bigo(n^{-1/2})$.
	\end{itemize}

\section{Proof of Theorem \ref{thm:universality} via delocalization}\label{section:proof_universality}

\subsection{A variant of Chatterjee's Lindeberg principle}
The basic tool we need to establish universality in Theorem \ref{thm:universality} is the following version of Chatterjee's elegant Lindeberg principle \cite{chatterjee2006generalization}. 

\begin{proposition}\label{prop:lindeberg}
	Let $X=(X_1,\ldots,X_n)$ and $Y=(Y_1,\ldots,Y_n)$ be two random vectors in $\R^n$ with independent components. Then for any $f \in C^3(\R^n)$,
	\begin{align*}
	\bigabs{\E f(X) - \E f(Y)}&\leq \sum_{\ell=1,2}\biggabs{ \sum_{i=1}^n \frac{1}{\ell!}\big(\E X_i^\ell-\E Y_i^\ell\big) \E \partial_i^\ell f( X_{[1:(i-1)]},0, Y_{[(i+1):n]})} \\
	&\quad+ \max_{U_i \in \{X_i,Y_i\}}\biggabs{\sum_{i=1}^n\E U_i^3 \int_0^{1} \partial_i^3 f(X_{[1:(i-1)]},tU_i, Y_{[(i+1):n]} )(1-t)^2\,\d{t}}.
	\end{align*}
\end{proposition}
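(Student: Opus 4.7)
The plan is a coordinate-by-coordinate Lindeberg interpolation. Define the hybrid vectors $Z^{(i)} \equiv (X_{[1:i]}, Y_{[(i+1):n]})$ so that $Z^{(0)} = Y$, $Z^{(n)} = X$, and telescope
\begin{align*}
\E f(X) - \E f(Y) = \sum_{i=1}^{n} \big[\E f(Z^{(i)}) - \E f(Z^{(i-1)})\big].
\end{align*}
The $i$-th summand involves two vectors that agree in every coordinate except the $i$-th, which equals $X_i$ in $Z^{(i)}$ and $Y_i$ in $Z^{(i-1)}$. Introduce the common anchor $W^{(i)} \equiv (X_{[1:(i-1)]}, 0, Y_{[(i+1):n]})$; this is a measurable function of $X_{[1:(i-1)]}$ and $Y_{[(i+1):n]}$ only, hence independent of both $X_i$ and $Y_i$.

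Next I would apply Taylor's theorem with integral remainder to the one-dimensional map $u \mapsto f(W^{(i)} + u e_i)$, expanded around $u = 0$ and evaluated at both $u = X_i$ and $u = Y_i$:
\begin{align*}
f(W^{(i)} + u e_i) = \sum_{\ell=0}^{2} \frac{u^\ell}{\ell!}\, \partial_i^\ell f(W^{(i)}) + \frac{u^3}{2} \int_0^1 \partial_i^3 f(W^{(i)} + t u e_i)(1-t)^2\,\d{t}.
\end{align*}
Subtracting the two expansions and taking expectation, the zeroth-order term cancels, and the independence of $W^{(i)}$ from $\{X_i, Y_i\}$ factors the first- and second-order contributions into $(1/\ell!)(\E X_i^\ell - \E Y_i^\ell) \cdot \E \partial_i^\ell f(W^{(i)})$ for $\ell = 1, 2$. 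Summing in $i$ and applying the triangle inequality within each order reproduces the two bracketed sums in the statement, with the anchor $W^{(i)} = (X_{[1:(i-1)]}, 0, Y_{[(i+1):n]})$ appearing inside the derivatives exactly as claimed.

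The residual third-order contribution equals $\tfrac{1}{2}(S_X - S_Y)$, where $S_U \equiv \sum_i \E\big[U_i^3 \int_0^1 \partial_i^3 f(X_{[1:(i-1)]}, tU_i, Y_{[(i+1):n]})(1-t)^2\,\d{t}\big]$ (Fubini is legitimate under whatever integrability is ambient in the intended applications, e.g.\ polynomial growth of $\partial_i^3 f$ and sub-Gaussian moments on $X_i, Y_i$). The bound $|S_X - S_Y| \leq |S_X| + |S_Y| \leq 2 \max(|S_X|, |S_Y|)$ absorbs the factor $1/2$ and yields the $\max_{U_i \in \{X_i, Y_i\}}$ term in the statement---and this is valid whether the max is read globally (two choices) or per-coordinate ($2^n$ choices), since the latter dominates the former. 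No deep step is required: the only bookkeeping subtlety is to use the \emph{same} anchor $W^{(i)}$ in both Taylor expansions, which is what allows the derivatives $\partial_i^\ell f(W^{(i)})$ to factor cleanly against $\E X_i^\ell - \E Y_i^\ell$ via independence. The genuine technical content of Theorem \ref{thm:universality} lies entirely downstream of this proposition---namely in controlling the third-order derivative sum via the delocalization estimate \eqref{eqn:intro_delocalization} for $f(A) = \Psi(z_S^{([t])}(A))$---so the Lindeberg step itself poses no obstacle.
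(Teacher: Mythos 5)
Your proposal is correct and follows essentially the same route as the paper: the same telescoping chain $Z^{(i)}=(X_{[1:i]},Y_{[(i+1):n]})$, the same anchor $W^{(i)}=(X_{[1:(i-1)]},0,Y_{[(i+1):n]})$ (the paper's $Z_i^0$), the same order-3 Taylor expansion with integral remainder, and the same use of $(X_i,Y_i)\perp W^{(i)}$ to factor the first- and second-order contributions. Your extra care in spelling out that the remainder equals $\tfrac12(S_X-S_Y)$ and that $\tfrac12\lvert S_X-S_Y\rvert\le\max(\lvert S_X\rvert,\lvert S_Y\rvert)$ absorbs the $1/2$ is exactly the bookkeeping the paper leaves implicit, and your observation that this yields the bound under either reading of the $\max_{U_i}$ notation is a fair point.
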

\begin{proof}
	The proof is essentially a repetition of \cite[Theorem 1.1]{chatterjee2006generalization} by using the integral remainder in the Taylor expansion. We spell out some details below. Let $Z_i\equiv (X_1,\ldots,X_{i-1},X_i,Y_{i+1},\ldots,Y_n)$ and $Z_i^0\equiv (X_1,\ldots,X_{i-1},0,Y_{i+1},\ldots,Y_n)$. Then $\E f(X)-\E f(Y)=\sum_{i=1}^n \big(\E f(Z_i)-\E f(Z_{i-1})\big)$. On the other hand, using Taylor expansion up to order $3$ with integral remainder,
	\begin{align*}
	f(Z_i) &= f(Z_i^0)+X_i \partial_i f(Z_i^0)+ \frac{X_i^2}{2} \partial_i^2 f(Z_i^0)+\frac{X_i^3}{2}\int_{0}^{1} \partial_i^3 f(Z_i^0+t X_i)(1-t)^2\,\d{t},\\
	f(Z_{i-1}) &= f(Z_i^0)+Y_i \partial_i f(Z_i^0)+ \frac{Y_i^2}{2} \partial_i^2 f(Z_i^0)+\frac{Y_i^3}{2}\int_{0}^{1} \partial_i^3 f(Z_i^0+t Y_i)(1-t)^2\,\d{t}.
	\end{align*}
	The claim follows by using $(X_i,Y_i)\perp Z_i^0$.
\end{proof}

\subsection{Delocalization estimates}

\subsubsection{Delocalization for GFOM iterates}
Fix $\mathcal{P}\subset [n]$. Let $A_{[-\mathcal{P}]}\equiv (A_{ij}\bm{1}_{i,j\notin \mathcal{P}})\in \R^{n\times n}$ and $A_{[\mathcal{P}]}\equiv A-A_{[-\mathcal{P}]}$. Let $z^{(t)}_{[-\mathcal{P}]}\in \R^n$ be defined iteratively via 
\begin{align}\label{def:GFOM_sym_loo}
z^{(t)}_{[-\mathcal{P}]}=A_{[-\mathcal{P}]} \mathsf{F}_t(z^{([t-1])}_{[-\mathcal{P}]})+\mathsf{G}_t(z^{([t-1])}_{[-\mathcal{P}]}),
\end{align}
with the same initialization $z^{(0)}_{[-\mathcal{P}]}\equiv z^{(0)}$ that is independent of $A$. 

\begin{proposition}\label{prop:loo_l2_bound}
	Suppose the following hold:
	\begin{enumerate}
		\item $A=A_0/\sqrt{n}$, where $A_0$ is symmetric and the entries of its upper triangle are independent mean $0$ random variables with $\max_{i,j \in [n]} \pnorm{A_{0,ij}}{\psi_2}\leq K$.
		\item $\max\limits_{s \in [t]}\max\limits_{\mathsf{H}_s \in \{\mathsf{F}_s,\mathsf{G}_s\}}\max\limits_{\ell \in [n]}\big(\pnorm{\mathsf{H}_{s,\ell}}{\mathrm{Lip}}\vee \abs{\mathsf{H}_{s,\ell}(0)}\big)\leq \Lambda$ for some $\Lambda\geq 2$. 
	\end{enumerate}
	Then there exists some universal constant $c_0>0$ such that for $x\geq 0$,
	\begin{align*}
	&\Prob\bigg(\bigpnorm{ z^{(t)}-z_{[-\mathcal{P}]}^{(t)}  }{}\geq \abs{\mathcal{P}}\big(c_0K\Lambda (1+\sqrt{x/n})\big)^{t+1}\sqrt{x}\\
	&\qquad\qquad \times   \Big(1+\max_{k \in \mathcal{P}}\abs{z_k^{(0)}}+n^{-1/2}\pnorm{z^{(0)}}{}\Big) \big| z^{(0)}\bigg)\leq c_0\cdot t \abs{\mathcal{P}}e^{-x/c_0}.
	\end{align*}
	Moreover,
	\begin{align*}
	\max_{k \in [n]}\Prob\Big(\abs{z^{(t)}_k}\geq \big(c_0K\Lambda (1+\sqrt{x/n})\big)^{t+1}(1+\sqrt{x})\cdot \big(1+\pnorm{z^{(0)}}{\infty}\big) \big| z^{(0)}\Big)\leq c_0\cdot te^{-x/c_0}.
	\end{align*}
	Consequently, for any $p\geq 1$, there exists some $C_p>1$ such that 
	\begin{align*}
	\max_{k \in [n]} \E^{1/p} \big[ \abs{z_k^{(t)}}^p\big| z^{(0)}\big] \vee \frac{\E^{1/p}\big[ \pnorm{z^{(t)}}{\infty}^p\big| z^{(0)}\big]}{(\log n)^{2t}} \leq (C_p t K\Lambda)^{t+1}\big(1+\pnorm{z^{(0)}}{\infty}\big).
	\end{align*} 
\end{proposition}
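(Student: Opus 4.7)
The proof proceeds by simultaneous induction on $t$, controlling three intertwined quantities: (i) the $\ell_2$ norm $\|z^{(s)}\|$ (and $\|z^{(s)}_{[-\mathcal{P}]}\|$), (ii) the leave-set-out deviation $\|z^{(s)} - z^{(s)}_{[-\mathcal{P}]}\|$, and (iii) the single-coordinate $|z^{(s)}_{[-\{k\}],k}|$. First, by standard sub-Gaussian Wigner concentration, $\|A\|_{\op} \leq c_0 K(1+\sqrt{x/n})$ with probability $\geq 1-2e^{-x}$, which dominates $\|A_{[-\mathcal{P}]}\|_{\op}$ and $\|A_{[\mathcal{P}]}\|_{\op}$. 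Inserting this into the recursion (\ref{def:GFOM_sym}) and using the Lipschitz/linear-growth hypothesis on $(\mathsf{F}_s,\mathsf{G}_s)$, a routine iteration yields $\|z^{(s)}\| \leq (c_0 K\Lambda(1+\sqrt{x/n}))^{s}(\sqrt{n} + \|z^{(0)}\|)$ on the same event.

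The heart of the argument is the single-index case $\mathcal{P}=\{p\}$. Writing $\Delta^{(t)} := z^{(t)}-z^{(t)}_{[-\{p\}]}$ and using $A = A_{[-\{p\}]} + A_{[\{p\}]}$, one obtains the decomposition
\begin{align*}
\Delta^{(t)} = A_{[\{p\}]}\,\mathsf{F}_t(z^{([t-1])}_{[-\{p\}]}) + A\bigl(\mathsf{F}_t(z^{([t-1])})-\mathsf{F}_t(z^{([t-1])}_{[-\{p\}]})\bigr) + \bigl(\mathsf{G}_t(z^{([t-1])})-\mathsf{G}_t(z^{([t-1])}_{[-\{p\}]})\bigr).
\end{align*}
The last two terms contribute $\leq c_0 K\Lambda(1+\sqrt{x/n})\sum_{s<t}\|\Delta^{(s)}\|$ by Lipschitzness and Step~1. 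For the leading term, observe that $v := \mathsf{F}_t(z^{([t-1])}_{[-\{p\}]})$ depends only on $A_{[-\{p\}]}$ and is therefore independent of $A_{[\{p\}]}$. Coordinate-wise, $(A_{[\{p\}]} v)_p = (Av)_p = \sum_j A_{pj} v_j$ is a sub-Gaussian linear form of variance $\leq K^2\|v\|^2/n$, while $(A_{[\{p\}]} v)_i = A_{ip} v_p$ for $i\neq p$ yields $\|(A_{[\{p\}]}v)_{[n]\setminus\{p\}}\| \leq |v_p| \cdot \|A e_p\|$. Sub-Gaussian concentration of the linear form and of $\|A e_p\|$ produces
\begin{align*}
\|A_{[\{p\}]} v\| \leq c_0 K\sqrt{x}\bigl(n^{-1/2}\|v\| + |v_p|\bigr) \quad \text{on an event of probability } \geq 1-e^{-x/c_0}.
\end{align*}

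The key to making the induction close is the control of $v_p$, which uses the row-zero structure of $A_{[-\{p\}]}$: for every $s\geq 1$,
\begin{align*}
z^{(s)}_{[-\{p\}],\, p} = \bigl(A_{[-\{p\}]}\mathsf{F}_s(z^{([s-1])}_{[-\{p\}]})\bigr)_p + \mathsf{G}_{s,p}(z^{([s-1])}_{[-\{p\}],\, p}) = \mathsf{G}_{s,p}(z^{([s-1])}_{[-\{p\}],\, p}),
\end{align*}
because row $p$ of $A_{[-\{p\}]}$ vanishes and $\mathsf{G}_{s,p}$ is row-separate. Iterating this deterministic recursion gives $|z^{(s)}_{[-\{p\}],p}|\leq (c_0\Lambda)^s(1+|z^{(0)}_p|)$, hence $|v_p|\leq (c_0\Lambda)^t(1+|z^{(0)}_p|)$; combined with the Step~2 bound on $\|v\|$, this turns the recursion for $\|\Delta^{(t)}\|$ into a Gronwall-type inequality whose solution matches the claimed expression for $\mathcal{P}=\{p\}$. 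For general $\mathcal{P}$ one simply enumerates $\mathcal{P}=\{p_1,\ldots,p_r\}$ and telescopes $z^{(t)}-z^{(t)}_{[-\mathcal{P}]} = \sum_j \bigl(z^{(t)}_{[-\mathcal{P}_{j-1}]} - z^{(t)}_{[-\mathcal{P}_j]}\bigr)$ where $\mathcal{P}_j = \{p_1,\ldots,p_j\}$; each summand has the same structure as the $|\mathcal{P}|=1$ case (with $A$ replaced by $A_{[-\mathcal{P}_{j-1}]}$), and a union bound over $j$ introduces the linear factor $|\mathcal{P}|$.

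The entrywise bound on $|z^{(t)}_k|$ then follows by triangle inequality: $|z^{(t)}_k|\leq |z^{(t)}_{[-\{k\}],k}| + \|z^{(t)}-z^{(t)}_{[-\{k\}]}\|$, where the first term is controlled deterministically by the row-zero observation above and the second is the just-established leave-one-out estimate with $p=k$. For the $L^p$ moments, integrating the tail pointwise gives the per-coordinate estimate, and the $\ell_\infty$ bound follows by a union bound over $n$ which upgrades the deviation parameter to $x\asymp (t+p)\log n$; propagating this through the $(1+\sqrt{x/n})^{t+1}$ and $(1+\sqrt{x})$ prefactors yields the stated $(\log n)^{2t}$ loss. \textbf{The main obstacle} is Step~3 above: obtaining the sharp dependence $(1+|z^{(0)}_p|+n^{-1/2}\|z^{(0)}\|)$ rather than the cruder $(1+\|z^{(0)}\|_\infty)$, which requires resisting the temptation to bound $|v_p|$ by $\|v\|_\infty$ (this would reintroduce $\|z^{(t-1)}\|_\infty$ and make the induction circular) and instead splitting the contribution of $A_{[\{p\}]}v$ into an $n^{-1/2}\|v\|$ piece (from a sub-Gaussian linear form) and a $|v_p|$ piece (controlled purely via the row-zero structure).
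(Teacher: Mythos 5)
Your proposal is correct and takes essentially the same route as the paper's proof: you condition on the operator-norm and row-norm event (paper's $\mathscr{E}_0$), reduce to the single term $A_{[\mathcal{P}]}\mathsf{F}_t(z^{(t-1)}_{[-\mathcal{P}]})$ via the Lipschitz contraction, split that term into a sub-Gaussian linear form (the $n^{-1/2}\|v\|$ piece, which is the paper's $\|A_{\mathcal{P},\cdot}v\|$) and a piece carried by the coordinates $v_\mathcal{P}$ controlled by the row-zero observation (the paper's display (\ref{ineq:gfom_deloc_2})), and close by Gronwall; the entrywise and moment claims then follow as in the paper. The one genuine difference is cosmetic: you handle general $\mathcal{P}$ by telescoping through a chain $\emptyset=\mathcal{P}_0\subset\mathcal{P}_1\subset\cdots\subset\mathcal{P}_r=\mathcal{P}$ of single-index leave-outs applied to the progressively smaller matrices $A_{[-\mathcal{P}_{j-1}]}$, whereas the paper works with $\mathcal{P}$ directly and extracts the $|\mathcal{P}|$ factor from $\|A_{\mathcal{P},\cdot}\|_F\|v_\mathcal{P}\|$ plus a union bound over the $|\mathcal{P}|$ rows in the linear-form piece; both produce the same $|\mathcal{P}|$ prefactor and the $t|\mathcal{P}|e^{-x/c_0}$ probability cost, and your route is a legitimate (and arguably cleaner) alternative so long as you note that each nested leave-out still satisfies the same sub-Gaussian row hypotheses.
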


The proof of the above proposition can be found in Section \ref{subsection:proof_delocalization_iterate}.

\subsubsection{Delocalization for derivatives}

Here and below, we write $\partial_{ij}\equiv \partial/\partial A_{ij}$ for notational simplicity.

\begin{proposition}\label{prop:z_deloc}
	Suppose the following hold:
	\begin{enumerate}
		\item $A=A_0/\sqrt{n}$, where $A_0$ is symmetric and the entries of its upper triangle are independent mean $0$ random variables with $\max_{i,j \in [n]} \abs{A_{0,ij}}\leq K$.
		\item For all $s \in [t],\ell \in [n]$, $\mathsf{F}_{s,\ell},\mathsf{G}_{s,\ell} \in C^3(\R)$. Moreover, there exists some $\Lambda\geq 2$ and $\mathfrak{p}\in \N$ such that 
		\begin{align*}
		\max_{s \in [t]}\max_{\mathsf{E}_s \in \{\mathsf{F}_s,\mathsf{G}_s\}}\max_{\ell \in [n]}\Big\{\pnorm{\mathsf{E}_{s,\ell}}{\mathrm{Lip}}+\max_{q \in [0:3]} \bigpnorm{(1+\abs{\cdot})^{-\mathfrak{p}}\abs{\mathsf{E}_{s,\ell}^{(q)}(\cdot)}}{\infty}\Big\}\leq \Lambda.
		\end{align*}
	\end{enumerate}
	Then for any $D>0$, there exist $C_0=C_0(D)>0$, a universal $c_0>0$ and another $c_1=c_1(\mathfrak{p})>0$ such that if $t\leq c_0^{-1}\log n$,
	\begin{align*}
	&\Prob\bigg[  \Big(\max_{k\neq {i,j}} \sqrt{n} + \max_{k}\Big) \bigabs{\partial_{ij} z^{(t)}_k}\vee \bigabs{\partial_{ij}^2 z^{(t)}_k}\\ &\qquad \geq \big(C_0 K\Lambda \log n\cdot (1+\pnorm{z^{(0)}}{\infty})\big)^{c_1 t^3}\Big| z^{(0)}  \bigg]\leq c_0^{t}\cdot C_0 n^{-D}.
	\end{align*}
	Moreover, for any $p\geq 1$, there exists a constant $C_p>0$ such that 
	\begin{align*}
	\Big(\max_{k\neq {i,j}} \sqrt{n} + \max_{k}\Big) \E^{1/p}\bigabs{\partial_{ij} z^{(t)}_k}^p \leq \big(C_p K\Lambda   \log n\cdot (1+\pnorm{z^{(0)}}{\infty})\big)^{c_1 t^3}.
	\end{align*}
\end{proposition}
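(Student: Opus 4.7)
The plan is to derive, by induction on $t$, explicit closed-form expressions for $\partial_{ij} z^{(t)}_k$ and $\partial_{ij}^2 z^{(t)}_k$ (the expansion referenced as Lemma \ref{lem:z_der_form}). Differentiating the recursion $z^{(t)}=A\mathsf{F}_t(z^{([0:t-1])})+\mathsf{G}_t(z^{([0:t-1])})$ produces a ``direct'' contribution from $\partial_{ij} A =n^{-1/2}(e_ie_j^\top+e_je_i^\top)_0$ plus a recursive one from the chain rule. Unrolling this recursion and collecting terms expresses $\partial_{ij} z^{(t)}_k$ as a sum of at most $(Ct)^{O(t)}$ weighted ``chain'' functionals of the form
\begin{align*}
\mathcal{S}_r(k;a)\equiv \sum_{\ell_1,\ldots,\ell_r\in [n]} A_{k,\ell_1}A_{\ell_1,\ell_2}\cdots A_{\ell_{r-1},\ell_r}A_{\ell_r,a}\cdot \prod_{s=1}^{r}\mathsf{H}_{t_s,\ell_s}\big(z^{([0:t_s-1])}_{\ell_s}\big),
\end{align*}
of chain length $r+1\leq O(t)$, with boundary index $a\in \{i,j\}$ and each $\mathsf{H}_{t_s,\ell_s}$ a polynomial-growth derivative of some $\mathsf{F}_{t_s,\ell_s}$ or $\mathsf{G}_{t_s,\ell_s}$. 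The ``short-circuit'' residual contribution only appears when $k\in \{i,j\}$ and is handled directly by Proposition \ref{prop:loo_l2_bound}, matching the weaker bound claimed in that case.

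The heart of the proof is a high-probability estimate $|\mathcal{S}_r(k;a)|\leq (CK\Lambda\log n)^{O(r)}n^{-1/2}$ when $k\notin\{i,j\}$ (with an analogous bound without the $n^{-1/2}$ factor when $k\in\{i,j\}$). This is achieved by the recursive leave-$k$-out reduction advertised in the introduction. Set $\mathcal{P}\equiv \{k,i,j\}$ and substitute each $\mathsf{H}_{t_s,\ell_s}(z_{\ell_s}^{([0:t_s-1])})$ inside $\mathcal{S}_r(k;a)$ by $\mathsf{H}_{t_s,\ell_s}(z_{\ell_s,[-\mathcal{P}]}^{([0:t_s-1])})$, with $z_{[-\mathcal{P}]}^{(\cdot)}$ as in (\ref{def:GFOM_sym_loo}). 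The substitution error is controlled through the pseudo-Lipschitz property of the $\mathsf{H}$'s and the $\ell_2$-estimate $\pnorm{z^{(\cdot)}-z_{[-\mathcal{P}]}^{(\cdot)}}{}\leq (C\log n)^{O(t)}$ from Proposition \ref{prop:loo_l2_bound}, combined with a crude operator-norm bound on the contracted chain matrix; this leaves a residual of size $(C\log n)^{O(t)}n^{-1/2}$ with the desired probability. After the substitution, $z_{[-\mathcal{P}]}^{(\cdot)}$ is independent of the $A$-entries in the rows and columns indexed by $\mathcal{P}$, and in particular of $\{A_{k,\ell_1}\}_{\ell_1}$ and $\{A_{\ell_r,a}\}_{\ell_r}$. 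Conditioning on the interior, the remaining double boundary sum becomes a bilinear form in independent sub-Gaussian variables, so a Hanson--Wright-type inequality gains a factor $n^{-1/2}\cdot (\log n)^{O(1)}$ and reduces the chain length by two, leaving a residual of the same structural form $\mathcal{S}_{r-2}$.

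Iterating this reduction $\lceil r/2\rceil$ times terminates in a chain of length $\leq 1$, which is bounded directly using the $\ell_\infty$-estimate for $z^{(\cdot)}$ from Proposition \ref{prop:loo_l2_bound}. Summing over the at most $(Ct)^{O(t)}$ chains in the derivative expansion and absorbing the polynomial-growth factors $(1+\pnorm{z^{(0)}}{\infty})^{O(\mathfrak{p} t)}$ via the same proposition yields the claimed high-probability bound; the cubic exponent $t^3$ in the polylogarithm arises from combining (i) $O(t)$ iterations in the recursion, (ii) $O(t)$ chain length after unrolling, and (iii) $O(t)$ recursive reductions per chain, each contributing a $(\log n)^{O(1)}$ footprint. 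The $L^p$ bound then follows by integrating the tail against the pseudo-Lipschitz weight. The main obstacle will be the second-derivative case: differentiating $\mathcal{S}_r$ produces a ``branched'' summation with two boundary indices $i$ and $j$ attached to the same internal structure, and the reduction scheme must be generalized so that each branch is peeled in turn while carefully avoiding double-counting the $n^{-1/2}$ gains across overlapping branches --- this combinatorial proliferation is precisely what forces the cubic exponent $t^3$ in the final bound.
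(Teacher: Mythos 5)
Your proposal captures the same high-level strategy as the paper: unroll the derivative recursion into chain functionals (the content of Lemma~\ref{lem:z_der_form}), and then control each chain by the recursive leave-$k$-out reduction underlying Propositions~\ref{prop:loo_l2_bound} and~\ref{prop:A_hprod_quad}, trading boundary $A$-entries for $n^{-1/2}\log n$ factors via Hanson--Wright. In broad outline this is exactly what the paper does, so the approach is correct.

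One bookkeeping claim in your sketch is imprecise and would need to be repaired before the induction closes. After a Hanson--Wright step on $A_{k\cdot}^\top \Lambda A_{a\cdot}$ the residual is of size $n^{-1}\log n\cdot\pnorm{\Lambda}{F}$, and $\pnorm{\Lambda}{F}^2=\sum_{\ell,\ell'}\Lambda_{\ell\ell'}^2$ is \emph{not} ``a residual of the same structural form $\mathcal{S}_{r-2}$''; it is a sum of squares of shorter chains over all boundary pairs $(\ell,\ell')\in([n]\setminus\mathcal{P})^2$. Closing the induction therefore requires a \emph{uniform} bound on these shorter chains, with a $n^{(\abs{\{\ell,\ell'\}}-1)/2}$ weighting that separates the diagonal $\ell=\ell'$ case from the off-diagonal one --- this is precisely the role of the weighted maxima $\mathscr{U},\mathfrak{U}$ and the induction hypothesis in the paper's proof of Proposition~\ref{prop:A_hprod_quad}. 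Relatedly, a fixed leave-out set $\mathcal{P}=\{k,i,j\}$ does not suffice for the full iteration: once the new boundary indices $\ell_1,\ell_r$ emerge, controlling the next substitution error demands $\pnorm{z^{(\cdot)}_{[-\mathcal{P}]}-z^{(\cdot)}_{[-\mathcal{P}']}}{}$ for nested $\mathcal{P}\subset\mathcal{P}'$ of size up to $O(t)$, which is why the paper's high-probability event $\mathscr{E}_{\mathcal{T}}(L)$ is formulated over a family of leave-out sets rather than a single one. Neither of these is a fatal obstruction --- your reduction argument survives once the residual is reinterpreted as a weighted supremum over boundary pairs and the leave-out sets are allowed to grow --- but as stated the ``reduce chain length by two, leaving $\mathcal{S}_{r-2}$'' step conflates the random residual with the deterministic induction hypothesis and would not compile into a proof.
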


The proof of the above proposition can be found in Section \ref{subsection:proof_delocalization_derivative}.

\begin{remark}\label{rmk:deloc_der}
	It is useful to understand why the delocalization for the derivatives $\partial_{ij} z^{(t)}, \partial_{ij}^2 z^{(t)}$ exhibits inhomogeneity. To see this, consider the very simple case $\mathsf{F}_t(z^{[0:t-1]})\equiv z^{(t-1)}$, and $\mathsf{G}_t\equiv 0$. Then $z^{(t)}=A z^{(t-1)}=\cdots =A^t z^{(0)}$. For instance, for $t=2$, 
	\begin{align*}
	\partial_{ij} z^{(2)}= \partial_{ij} (A^2 z^{(0)})=(A z^{(0)})_j e_i+(A z^{(0)})_i e_j + z^{(0)}_j A_i+ z^{(0)}_i A_j,
	\end{align*}
	which implies $\partial_{ij} z^{(2)}_k = \bigop(n^{-1/2}+\bm{1}_{k \in \{i,j\}})$. A similar calculation applies to the second derivative vector $\partial_{ij}^2 z^{(3)}$. In particular, this shows that the inhomogeneous delocalization estimates in Proposition \ref{prop:z_deloc} are optimal for GFOM iterates. 
\end{remark}

\subsubsection{Delocalization for other interaction terms}

\begin{proposition}\label{prop:der_cross_cubic}
	Suppose the following hold:
	\begin{enumerate}
		\item $A=A_0/\sqrt{n}$, where $A_0$ is symmetric and the entries of its upper triangle are independent mean $0$ random variables with $\max_{i,j \in [n]} \abs{A_{0,ij}}\leq K$.
		\item For all $s \in [t],\ell \in [n]$, $\mathsf{F}_{s,\ell},\mathsf{G}_{s,\ell} \in C^3(\R)$, Moreover, there exists some $\Lambda\geq 2$ and $\mathfrak{p}\in \N$ such that 
		\begin{align*}
		\max_{s \in [t]}\max_{\mathsf{E}_s \in \{\mathsf{F}_s,\mathsf{G}_s\}}\max_{\ell \in [n]}\Big\{\pnorm{\mathsf{E}_{s,\ell}}{\mathrm{Lip}}+\max_{q \in [0:3]} \bigpnorm{(1+\abs{\cdot})^{-\mathfrak{p}}\abs{\mathsf{E}_{s,\ell}^{(q)}(\cdot)}}{\infty}\Big\}\leq \Lambda.
		\end{align*}
	\end{enumerate}
	Then for any $D>0$, there exist $C_0=C_0(D)>0$, a universal $c_0>0$ and another $c_1=c_1(\mathfrak{p})>0$ such that if $t\leq c_0^{-1}\log n$, 
	\begin{align*}
	&\Prob\bigg[ n^{1/2}\max_{k \in [n]}\biggabs{\sum_{i,j\in [n]} A_{ij}^3 \partial_{ij}^3 z^{(t)}_k} \geq \big(C_0 K\Lambda  \log n\cdot (1+\pnorm{z^{(0)}}{\infty})\big)^{c_1 t^3}\Big| z^{(0)}  \bigg]\leq c_0^{t}\cdot C_0 n^{-D}.
	\end{align*}
	The above display remains valid when $z^{(t)}$ is generated from the Lindeberg interpolating random matrix $D_{ij}(s)$, $s \in [0,1]$ between $A$ and $B$ with matching first and second moments (formally defined in the proof of Theorem \ref{thm:universality}).
\end{proposition}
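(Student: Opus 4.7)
The plan is to expand $\partial_{ij}^3 z_k^{(t)}$ via the chain rule applied to (\ref{def:GFOM_sym}) into a finite sum of ``path terms'', then for each such term to separate $A_{ij}^3$ from the remainder using a leave-$\{i,j\}$-out reduction, and finally to apply a Bernstein-type concentration for sums of independent cubic monomials in $A_{ij}$. The induction is on $t$, with the ``almost delocalization'' estimates of Propositions~\ref{prop:loo_l2_bound} and~\ref{prop:z_deloc} as base ingredients feeding into each inductive step. Iterating (\ref{def:GFOM_sym}), one writes $\partial_{ij}^3 z_k^{(t)}$ (along the lines of (\ref{eqn:intro_A_hadamard}) and Lemma~\ref{lem:z_der_form} referenced in the introduction) as a finite sum over paths $(\ell_1,\ldots,\ell_r)$ with $r=O(t)$ of terms of schematic form
\begin{align*}
A_{k,\ell_1}A_{\ell_1,\ell_2}\cdots A_{\ell_{r-1},\ell_r}\cdot \Phi_{(\ell_\cdot)}(i,j;z^{([0:t-1])}),
\end{align*}
where the vertex weight $\Phi_{(\ell_\cdot)}$ is a product of derivatives of $\mathsf{F}_{s,\cdot},\mathsf{G}_{s,\cdot}$ evaluated along the iterates, and at most three of the path vertices are pinned at $i$ or $j$ to account for the three differentiations.

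For each such path term, the central reduction is to substitute the leave-$\{i,j\}$-out iterate $z_{[-\{i,j\}]}^{([0:t-1])}$ from (\ref{def:GFOM_sym_loo}) in place of $z^{([0:t-1])}$ inside $\Phi_{(\ell_\cdot)}(i,j;\cdot)$. Proposition~\ref{prop:loo_l2_bound} bounds the substitution error at cost $(\mathrm{polylog}\,n)$, and after substitution $A_{ij}^3$ is independent of the substituted vertex weight. Swapping sums so that $\sum_{i,j}A_{ij}^3$ sits outermost, for each fixed path $(\ell_\cdot)$ the $(i,j)$-sum becomes a sum of independent random variables of entrywise size $\lesssim K^3 n^{-3/2}\cdot \pnorm{\Phi_{(\ell_\cdot)}}{\infty}$. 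Bernstein's inequality, together with $\E A_{ij}^3 = O(n^{-3/2})$, yields a high-probability bound of order $n^{-1/2}\cdot (\mathrm{polylog}\,n)\cdot \pnorm{\Phi_{(\ell_\cdot)}}{\infty}$; multiplying by the overall $n^{1/2}$ prefactor leaves exactly the claimed polylog bound. Boundary configurations in which $(i,j)$ collides with path vertices number $O(n)$ per $k$, and each is controlled pointwise by the first/second derivative delocalization of Proposition~\ref{prop:z_deloc} combined with $|A_{ij}^3|\lesssim K^3 n^{-3/2}$. A union bound over $k\in[n]$ costs a further $\log n$ factor, absorbed into the polylog.

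The main obstacle is the combinatorial bookkeeping in the expansion step: the third derivative produces branching expressions in which any subset of the path vertices may coincide with $\{i,j\}$ or $k$, forcing a case analysis that must be dovetailed with the recursive leave-$k$-out scheme already employed for Proposition~\ref{prop:z_deloc}. Controlling each branch inductively is what produces the $t^3$ exponent in the polylog factor, tracking the coupling of path length $O(t)$, recursion depth $O(t)$, and number of collision patterns $O(t)$. The extension to the Lindeberg interpolant $D_{ij}(s)$ between $A$ and $B$ is immediate once the bound is established for $A$ and $B$ separately: by construction the entries of $D_{ij}(s)$ have sub-Gaussian norm bounded by $\sqrt{2}(\pnorm{A_{0,ij}}{\psi_2}\vee \pnorm{B_{0,ij}}{\psi_2})$ and their first two moments agree with those of $A_{ij}$, so every concentration/decoupling estimate used above applies verbatim to $D_{ij}(s)$ with a constant depending only on $K$.
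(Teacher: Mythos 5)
Your high-level picture is right: expand $\partial_{ij}^3 z_k^{(t)}$ via the chain rule into matrix-product / path terms, decouple $A_{ij}^3$ from the iterate by a leave-out substitution, and then concentrate. That matches the skeleton of the paper, which applies Lemma~\ref{lem:z_der_form} and then feeds the resulting terms into Propositions~\ref{prop:A_hprod_quad} and~\ref{prop:A_cross_cubic}. The problem is in the single step that is supposed to do all the work — the Bernstein bound on the $(i,j)$-sum — and I don't think it survives scrutiny.

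Two separate things go wrong there. First, after you substitute $z_{[-\{i,j\}]}$ the vertex weight still depends on $A_{[-\{i,j\}]}$, and $A_{[-\{i,j\}]}$ changes as $(i,j)$ ranges over $[n]^2$; the summands in $\sum_{i,j} A_{ij}^3 \Phi_{(\ell_\cdot)}(i,j;\cdot)$ are therefore not independent of one another, so Bernstein does not apply to that sum as written. The paper deals with this by never summing over both $i$ and $j$ with a single conditioning: it freezes $j$, conditions on $A_{[-\{j\}]}$, and concentrates the $\ell_0$-sum (i.e.\ the sum over the single varying index that the leave-out actually decoupled); see the proof of Proposition~\ref{prop:A_cross_cubic}. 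Second, and more decisively, the means don't cancel. With $\E A_{ij}^3 = O(n^{-3/2})$ and $n^2$ pairs, the deterministic contribution $\sum_{i,j}\E A_{ij}^3\,\Phi_{(\ell_\cdot)}$ is of order $n^{1/2}\pnorm{\Phi_{(\ell_\cdot)}}{\infty}$ — larger, not smaller, than the fluctuation — so the $(i,j)$-sum is governed by its mean, and Bernstein gives $n^{1/2}\pnorm{\Phi}{\infty}$ rather than the $n^{-1/2}\pnorm{\Phi}{\infty}$ you claim. Nothing in your argument suppresses this contribution.

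What actually kills the mean part in the paper is a structural cancellation estimate, not a concentration estimate: after the split $A_{ij}^3 = \E A_{ij}^3 + \overline{A_{ij}^3}$, the $\E A_{ij}^3$ piece is controlled by Lemma~\ref{lem:A_cross_sum}, which shows that the path sums $\sum_{\ell_0}\mathscr{A}_{k,(\ell_0,\ell_0)}(\cdots)\mathsf{H}_{t_0;\ell_0}$ are themselves of order $\mathrm{polylog}\,n$ (so that the extra factor $\max_\ell \E A_{\ell j}^3 \lesssim n^{-3/2}$ supplies the needed gain before the final $j$-sum). This lemma is the genuinely nontrivial ingredient — it is proven by the same recursive leave-$k$-out machinery used for Proposition~\ref{prop:A_hprod_quad} — and your proposal has nothing playing its role. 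Separately, the off-diagonal configurations in your ``boundary'' paragraph are handled correctly in spirit (using the $n^{-(\abs{\{i,j,k\}}-1)/2}$ decay from Proposition~\ref{prop:z_deloc}), but the main, diagonal configuration is the one where your Bernstein step stands in for Lemma~\ref{lem:A_cross_sum} and Proposition~\ref{prop:A_cross_cubic}, and that is exactly where the argument breaks. The Lindeberg-interpolant remark is fine modulo notation (it is pointwise boundedness of the truncated entries, not the $\psi_2$-norm, that feeds into the hypothesis $\max_{i,j}\abs{A_{0,ij}}\leq K$ here).
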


The proof of the above proposition can be found in Section \ref{subsection:proof_delocalization_other}.

\subsection{Proof of Theorem \ref{thm:universality}}

	For notational simplicity in the proof, we shall write $\E[\cdot|z^{(0)}]=\E$ and similarly for $\Prob$. We will first consider $\Psi: \R\to \R$ and then indicate the (minor) differences for the general case $\Psi: \R^{\abs{S}}\to \R$.
	
	\noindent (\textbf{Step 1}). Let $\overline{A}_{ij}\equiv (A_{ij} \wedge C_0 K \sqrt{\log n/n})\vee (-C_0 K \sqrt{\log n/n})$ and similarly define $\overline{B}_{ij}$ for some large enough $C_0=C_0(D)>0$, such that on an event $\mathscr{E}_0$ with $\Prob(\mathscr{E}_0^c)\leq C_0 n^{-4D}$ where $D>1$,  $\overline{A}_{ij}=A_{ij}$ and $\overline{B}_{ij}=B_{ij}$ uniformly in $i,j \in [n]$, and for $M \in \{A,B\}$,
	\begin{align*}
	\max_{\ell \in [2]}\max_{i,j \in [n]} \bigabs{\E M_{ij}^\ell-\E \overline{M}_{ij}^\ell}=\bigabs{\E (M_{ij}^\ell-\overline{M}_{ij}^\ell)\bm{1}_{\mathscr{E}_0}^c}\leq C_0' K^2 \log n\cdot n^{-2D}.
	\end{align*}
	Using the first and second moment match $\E A_{ij}^\ell=\E B_{ij}^\ell$ for $\ell \in [2]$, by possibly enlarging $C_0$, we have
	\begin{align}\label{ineq:universality_1}
	\max_{\ell \in [2]}\max_{i,j \in [n]}\bigabs{\E \overline{A}_{ij}^\ell- \E \overline{B}_{ij}^\ell}\leq C_0 K^2 \log n\cdot n^{-2D}.
	\end{align}
	Moreover, on $\mathscr{E}_0$, $z^{(t)}(\overline{A})=z^{(t)}(A)$ and $z^{(t)}(\overline{B})=z^{(t)}(B)$. So by Proposition \ref{prop:loo_l2_bound},
	\begin{align}\label{ineq:universality_2}
	&\max_{M \in \{A,B\}}\bigabs{\E \Psi\big(z_k^{(t)}(M)\big)- \E \Psi\big(z_k^{(t)}(\overline{M})\big)} \nonumber
	\\
	&\leq \max_{M \in \{A,B\}} \bigabs{\E \Psi\big(z_k^{(t)}(M)\big)\bm{1}_{\mathscr{E}_0^c}-  \E \Psi\big(z_k^{(t)}(\overline{M})\big)\bm{1}_{\mathscr{E}_0^c} }\nonumber\\
	&\leq C_0 \cdot \Lambda_\Psi \big( t K\Lambda \log n\cdot (1+\pnorm{z^{(0)}}{\infty}) \big)^{c_0\mathfrak{p}  t}\cdot n^{-2D}. 
	\end{align}
	Here $c_0>0$ is universal. 
	
	\noindent (\textbf{Step 2}). We shall now establish an error bound for $
	\bigabs{\E \Psi\big(z_k^{(t)}(\overline{A})\big)-\E \Psi\big(z_k^{(t)}(\overline{B})\big)}$. 
	For $i\leq j$, consider the Lindeberg path $\{\overline{D}_{ij}(s):s \in [0,1]\}$ between two random symmetric matrices $\overline{A},\overline{B}$, defined symmetrically by setting all elements in the upper triangle part of $\overline{D}_{ij}(s)$ before (resp. after) the position $(i,j)$ as those of $\overline{A}$ (resp. $\overline{B}$) and $(\overline{D}_{ij}(s))_{ij}=s \overline{A}_{ij}$. By Proposition \ref{prop:lindeberg}, we only need to handle the following two terms: 
	\begin{align*}
	\mathscr{T}_1&\equiv \max_{\ell \in [2]}\biggabs{ \sum_{i,j \in [n], i\leq j} \big(\E \overline{A}_{ij}^\ell- \E \overline{B}_{ij}^\ell\big) \E \partial_{ij}^\ell \Psi\big(z_k^{(t)}(\overline{D}_{ij}(0))\big)}, \\
	\mathscr{T}_2(s)&\equiv \biggabs{\E \sum_{i,j \in [n], i\leq j}\overline{A}_{ij}^3 \partial_{ij}^3 \Psi \big(z_k^{(t)}(\overline{D}_{ij}(s))\big)},\quad s \in [0,1].
	\end{align*}
	To bound $\mathscr{T}_1,\mathscr{T}_2(s)$, first note the derivative formulae
	\begin{align}\label{ineq:universality_3}
	\partial_{ij} \Psi(z_k^{(t)})&= \Psi'(z_k^{(t)})\partial_{ij}z_k^{(t)},\quad \partial_{ij}^2 \Psi(z_k^{(t)})= \Psi^{(2)}(z_k^{(t)})\big(\partial_{ij}z_k^{(t)}\big)^2+ \Psi'(z_k^{(t)})\partial_{ij}^2z_k^{(t)},\nonumber\\
	\partial_{ij}^3 \Psi(z_k^{(t)})&= \Psi^{(3)}(z_k^{(t)}) (\partial_{ij} z_k^{(t)})^3+3 \Psi^{(2)}(z_k^{(t)}) \cdot \partial_{ij} z_k^{(t)}\partial_{ij}^2 z_k^{(t)}+ \Psi'(z_k^{(t)})\partial_{ij}^3 z_k^{(t)}.
	\end{align}
	Here note that the meaning of $\partial_{ij}$ is slightly different on the left and right hand sides of the above display. In particular, for the right hand side we have shorthanded $\partial_{ij}\equiv \partial/\partial \overline{A}_{ij}$. 
	
	We first handle $\mathscr{T}_1$. Using the moment residual estimate in (\ref{ineq:universality_1}), the derivative formula in (\ref{ineq:universality_3}) and the moment estimate in Proposition \ref{prop:z_deloc},
	\begin{align}\label{ineq:universality_4}
	\mathscr{T}_1&\leq \max_{\ell \in [2]} \biggabs{ \sum_{i,j \in [n], i\leq j} \big(\E \overline{A}_{ij}^\ell- \E \overline{B}_{ij}^\ell\big) \E \partial_{ij}^\ell \Psi\big(z_k^{(t)}(\overline{D}_{ij}(0))\big)}\nonumber\\
	&\lesssim \Lambda_{\Psi}\cdot\max_{\ell \in [2]} \max_{i,j \in [n]} \bigabs{\E \overline{A}_{ij}^\ell- \E \overline{B}_{ij}^\ell}\cdot \sum_{i,j \in [n]} \E \Big\{1+\big(\partial_{ij}z_k^{(t)}\big)^2+ \abs{\partial_{ij}^2z_k^{(t)}}\Big\}\big(1+\abs{z_k^{(t)}}\big)^{\mathfrak{p}} \nonumber\\
	&\leq \Lambda_\Psi \cdot \big(C_1 K\Lambda \log n\cdot (1+\pnorm{z^{(0)}}{\infty})\big)^{c_1 \mathfrak{p} t^3}\cdot n^{-D}. 
	\end{align}
	Next we handle $\mathscr{T}_2(s)$. First note that for any $s \in [0,1]$,
	\begin{align*}
	&\mathscr{T}_2(s)\leq \biggabs{\E \sum_{i,j \in [n]}\overline{A}_{ij}^3 \partial_{ij}^3 \Psi \big(z_k^{(t)}(\overline{D}_{ij}(s))\big)}+ \frac{1}{2}\biggabs{\E \sum_{i \in [n]}\overline{A}_{ii}^3 \partial_{ii}^3 \Psi \big(z_k^{(t)}(\overline{D}_{ii}(s))\big)}\equiv (I)+(II).
	\end{align*}
	The second term $(II)$ is easier to handle: using the third derivative formula in (\ref{ineq:universality_3}) and the moment estimates in Propositions \ref{prop:loo_l2_bound}, \ref{prop:z_deloc} and Lemma \ref{lem:z_apriori_moment} ahead,
	\begin{align}\label{ineq:universality_5}
	(II)&\lesssim \Lambda_\Psi K^3 n^{-3/2}\log^{3/2}n\cdot \sum_{i \in [n]} \E\Big\{\abs{\partial_{ii} z_k^{(t)}}^3+ \abs{\partial_{ii} z_k^{(t)}\partial_{ii}^2 z_k^{(t)}} + \abs{\partial_{ii}^3 z_k^{(t)}}\Big\}\big(1+\abs{z_k^{(t)}}\big)^{\mathfrak{p}}\nonumber\\
	&\leq \Lambda_\Psi \cdot \big(C_2 K\Lambda \log n\cdot (1+\pnorm{z^{(0)}}{\infty})\big)^{c_2 \mathfrak{p} t^3}\cdot n^{-1/2}. 
	\end{align}
	Now we handle $(I)$. By Propositions \ref{prop:loo_l2_bound}-\ref{prop:der_cross_cubic} and possibly enlarging $c_2,C_2>0$, on an event $\mathscr{E}_2$ with $\Prob(\mathscr{E}_2^t)\leq c_2^t\cdot C_2 n^{-2D}$, 
	\begin{align*}
	\biggabs{\sum_{i,j \in [n]}\overline{A}_{ij}^3 \partial_{ij}^3 \Psi \big(z_k^{(t)}(\overline{D}_{ij}(s))\big)}\leq \Lambda_\Psi\cdot\big(C_2 K\Lambda \log n\cdot (1+\pnorm{z^{(0)}}{\infty})\big)^{c_2 \mathfrak{p} t^3}\cdot n^{-1/2}.
	\end{align*}
	On the other hand, using again (\ref{ineq:universality_3}), Proposition \ref{prop:z_deloc} and Lemma \ref{lem:z_apriori_moment}, we have the following trivial apriori estimate:
	\begin{align*}
	&\E^{1/2} \bigg( \sum_{i,j \in [n]}\overline{A}_{ij}^3 \partial_{ij}^3 \Psi \big(z_k^{(t)}(\overline{D}_{ij}(s))\big) \bigg)^2 \lesssim \Lambda_\Psi \cdot \big(C_2 K\Lambda \log n\cdot (1+\pnorm{z^{(0)}}{\infty})\big)^{c_2 \mathfrak{p} t^3}\cdot n^{1/2}.
	\end{align*}
	Consequently,
	\begin{align}\label{ineq:universality_6}
	(I)&\leq  \big(C_2 K\Lambda \log n\cdot (1+\pnorm{z^{(0)}}{\infty})\big)^{c_2 t^3}\cdot n^{-1/2}+ \E \biggabs{ \sum_{i,j \in [n]}\overline{A}_{ij}^3 \partial_{ij}^3 \Psi \big(z_k^{(t)}(\overline{D}_{ij}(s))\big) }\bm{1}_{\mathscr{E}_2^c}\nonumber\\
	&\leq \big(C_2' K\Lambda \log n\cdot (1+\pnorm{z^{(0)}}{\infty})\big)^{c_2' \mathfrak{p} t^3}\cdot n^{-1/2}.
	\end{align}
	Combining (\ref{ineq:universality_5})-(\ref{ineq:universality_6}), again by possibly enlarging $c_2,C_2>0$, we arrive at
	\begin{align}\label{ineq:universality_7}
	\sup_{s \in [0,1]}\mathscr{T}_2(s)\leq \Lambda_\Psi \cdot \big(C_2 K\Lambda \log n\cdot (1+\pnorm{z^{(0)}}{\infty})\big)^{c_2 \mathfrak{p} t^3}\cdot n^{-1/2}. 
	\end{align}
	Now combining (\ref{ineq:universality_4}) and (\ref{ineq:universality_7}), we obtain 
	\begin{align*}
	\bigabs{\E \Psi\big(z_k^{(t)}(\overline{A})\big)-\E \Psi\big(z_k^{(t)}(\overline{B})\big)}\leq \Lambda_\Psi \cdot \big(C_3 K\Lambda \log n\cdot (1+\pnorm{z^{(0)}}{\infty})\big)^{c_3 \mathfrak{p} t^3}\cdot n^{-1/2}. 
	\end{align*}
	The claimed comparison inequality for $\Psi:\R \to \R$ now follows by combining the above display and (\ref{ineq:universality_2}), and noting that the constraint $t\leq c_0^{-1}\log n$ can be removed for free. 
	
	\noindent (\textbf{Step 3}). We consider now the general case $\Psi: \R^{\abs{S}\times t}\to \R$. The difference is mostly formal, which we sketch below. We write $S\equiv \{S_{\ell}: \ell \in [\abs{S}]\}$. First, the estimate (\ref{ineq:universality_2}) comes with an additional multiplicative factor of $\abs{S}$ (the additional factor on $t$ can be assimilated by adjusting constants). Second, with $\omega_\tau \equiv (\ell_\tau,t_\tau) \in [\abs{S}]\times [t]$ for $\tau \in \N$, the derivative formulae (\ref{ineq:universality_3}) now reads
	\begin{align*}
	\partial_{ij} \Psi(z_S^{([t])})&= \sum_{\omega_{1}} \partial_{\omega_{1}}\Psi(z_S^{([t])})\cdot \partial_{ij} z_{S_{\ell_1}}^{(t_1)},\\
	\partial_{ij}^2 \Psi(z_S^{([t])})&= \sum_{\omega_{[2]}} \partial_{\omega_{[1:2]}}\Psi(z_S^{([t])})\cdot \partial_{ij} z_{S_{\ell_1}}^{(t_1)}  \partial_{ij} z_{S_{\ell_2}}^{(t_2)} + \sum_{\omega_1} \partial_{\omega_1} \Psi(z_S^{([t])})\cdot \partial_{ij}^2 z_{S_{\ell_1}}^{(t_1)},\\
	\partial_{ij}^3 \Psi(z_S^{(t)})&=\sum_{\omega_{[3]}} \partial_{\omega_{[1:3]}}\Psi(z_S^{([t])})\cdot \partial_{ij} z_{S_{\ell_1}}^{(t_1)}  \partial_{ij} z_{S_{\ell_2}}^{(t_2)} \partial_{ij} z_{S_{\ell_3}}^{(t_3)} \\
	&\qquad + 3 \sum_{\omega_{[2]}} \partial_{\omega_{[1:2]}}\Psi(z_S^{([t])})\cdot \partial_{ij}^2 z_{S_{\ell_1}}^{(t_1)}  \partial_{ij} z_{S_{\ell_2}}^{(t_2)}+ \sum_{\omega_1} \partial_{\omega_1} \Psi(z_S^{([t])})\cdot \partial_{ij}^3 z_{S_{\ell_1}}^{(t_1)}.
	\end{align*}
	We may then parallel the proofs in Step 2 to obtain the same estimate with an additional multiplicative factor of $\abs{S}^3$, and adjusting constants to assimilate  $t^3$.\qed

\section{Higher order interactions}\label{section:proof_high_order_inter}

The goal of this section is to provide estimates for a generalized version of the summation term (\ref{eqn:intro_A_hadamard}). These estimates will play a key role in the proof of the delocalization estimates in Section \ref{section:proof_delocalization} ahead.

\subsection{Some further notation}
Fix $\mathcal{P}\subset [n]$. 
\begin{itemize}
	\item For any $t \in \N$, let $z^{(t)}(A) \in \R^n$ be a vector-valued measurable map of $A$, and $z^{(t)}_{[-\mathcal{P}]}(A) \in \R^n$ depend on $A$ only through $A_{[-\mathcal{P}]}$. 
	\item Let $\mathsf{H}_t=((\mathsf{H}_t)_\ell): \R^n\to \R^n$ be a measurable, separable map in the sense that  $\big(\mathsf{H}_t(z)\big)_\ell= (\mathsf{H}_t)_\ell(z_\ell)$. 	For notational simplicity, let $\mathsf{H}_{t_s;\ell_s}\equiv \big(\mathsf{H}_{t_s}\big)_{\ell_s}(z^{(t_s)}_{\ell_s})$ and similarly $\mathsf{H}_{t_s;\ell_s}^{[-\mathcal{P}]}\equiv \big(\mathsf{H}_{t_s}\big)_{\ell_s}(z^{(t_s)}_{[-\mathcal{P}],\ell_s})$. 
	\item For any set of consecutive integers $\mathcal{I}\subset \mathbb{N}$, let $\mathcal{I}^-\equiv  \min \mathcal{I}$ and $\mathcal{I}^+\equiv \max \mathcal{I}$. 
	\item For $k,\ell \in [n]$, let 
	\begin{align*}
	A_{k,[\ell_\mathcal{I}],\ell}&\equiv A_{k,[\ell_{\mathcal{I}^-}:\ell_{\mathcal{I}^+}],\ell}\equiv A_{k,\ell_{\mathcal{I}^-} } \prod_{s \in [\mathcal{I}^-: \mathcal{I}^+) } A_{\ell_s,\ell_{s+1}}\cdot  A_{ \ell_{\mathcal{I}^+},\ell   },\\
	A_{k,[\ell_{\mathcal{I}}]}&\equiv A_{k,\ell_{\mathcal{I}^-} } \prod_{s \in [\mathcal{I}^-: \mathcal{I}^+) } A_{\ell_s,\ell_{s+1}}.
	\end{align*}
	Furthermore, for a chosen $t_{\mathcal{I}}\subset \N$, let
	\begin{align*}
	\mathscr{A}_{k,\ell}(\mathcal{I})&\equiv \sum_{\ell_{[\mathcal{I}^-:\mathcal{I}^+]} \in [n] }  A_{k,[\ell_{\mathcal{I}^-}:\ell_{\mathcal{I}^+}],\ell} \prod_{s \in \mathcal{I}}\mathsf{H}_{t_s;\ell_s},\\
	\mathscr{A}_{k,\ell}^{[-\mathcal{P}]}(\mathcal{I})&\equiv \sum_{\ell_{[\mathcal{I}^-:\mathcal{I}^+]} \in [n] }  A_{k,[\ell_{\mathcal{I}^-}:\ell_{\mathcal{I}^+}],\ell} \prod_{s \in \mathcal{I}}\mathsf{H}_{t_s;\ell_s}^{[-\mathcal{P}]},\\
	\mathscr{A}_{k,\ell;[-\mathcal{P}]}^{[-\mathcal{P}]}(\mathcal{I})&\equiv \sum_{\ell_{[\mathcal{I}^-:\mathcal{I}^+]} \in [n]\setminus \mathcal{P} }  A_{k,[\ell_{\mathcal{I}^-}:\ell_{\mathcal{I}^+}],\ell} \prod_{s \in \mathcal{I}}\mathsf{H}_{t_s;\ell_s}^{[-\mathcal{P}]}.
	\end{align*}
    Notational dependence of the above quantities on $\mathsf{H}$ and $t_{\mathcal{I}}$ will be omitted for simplicity.
    \item  For any three non-overlapping sets $\mathcal{I}_0,\mathcal{I}_1,\mathcal{I}_2$ of consecutive integers, let
    \begin{align*}
    \mathscr{A}_{k,(i,j)}\big(\mathcal{I}_0,(\mathcal{I}_1,\mathcal{I}_2)\big)&\equiv \sum_{\ell_{\mathcal{I}_0^+} \in [n]} \mathscr{A}_{k,\ell_{\mathcal{I}_0^+}}(\mathcal{I}_0^{[)}) \mathsf{H}_{t_{\mathcal{I}_0^+};\ell_{\mathcal{I}_0^+}}\cdot  \mathscr{A}_{\ell_{\mathcal{I}_0^+}, i}(\mathcal{I}_1) \mathscr{A}_{\ell_{\mathcal{I}_0^+}, j}(\mathcal{I}_2),
    \end{align*}
    and similarly for $\mathscr{A}_{k,(i,j)}^{[-\mathcal{P}]}\big(\mathcal{I}_0,(\mathcal{I}_1,\mathcal{I}_2)\big)$ and $\mathscr{A}_{k,(i,j);[-\mathcal{P}]}^{[-\mathcal{P}]}\big(\mathcal{I}_0,(\mathcal{I}_1,\mathcal{I}_2)\big)$.
    
    In the above definition the index sets $t_{\mathcal{I}_0}, t_{\mathcal{I}_I}$ and $t_{\mathcal{I}_2}$ are allowed to overlap with each other. Moreover, we may take $\mathcal{I}_1,\mathcal{I}_2 = \emptyset$. When $\mathcal{I}_0=\emptyset$, the corresponding term is formally understood as $\mathscr{A}_{k,\ell_{\mathcal{I}_0^+}}(\mathcal{I}_0^{[)})=\mathscr{A}_{k,\ell_{\mathcal{I}_0^+}}(\emptyset^{[)})=\delta_{k,\ell_{\mathcal{I}_0^+}}$. The same formal rule is applied when $\mathcal{I}_1,\mathcal{I}_2=\emptyset^{[)}$. 
    \item 
    For any finite set $T\subset \N$ (including $T=\emptyset$), let $\abs{ \{T,\emptyset^{[)}\} }\equiv \abs{T}-1$, $\abs{ \{T,\emptyset^{[)}, \emptyset^{[)}\} }\equiv \abs{T}-2$.
\end{itemize}

\subsection{Controls of higher order interactions}

For $L>0$, an index set $\mathcal{T}\subset \mathbb{N}$ associated with maps $\{\mathsf{H}_{t_s}:\R\to \R\}_{s \in \mathcal{T}}$, we define the events
\begin{align*}
\mathscr{E}_{\mathcal{T}}(L)&\equiv \Big\{\max_{ \substack{\mathcal{P}\subset \mathcal{P}'\subset [n],\\ \abs{\mathcal{P}}\leq 3(\abs{\mathcal{T}}+2), \abs{\mathcal{P}'}-\abs{\mathcal{P}}\leq 3} }\max_{s \in \mathcal{T}}\bigpnorm{z^{(t_s)}_{[-\mathcal{P}]}-z^{(t_s)}_{[-\mathcal{P}']}}{}\leq L\Big\},\\
\mathscr{E}_{\mathsf{H}}(L)&\equiv \mathscr{E}_{\mathsf{H};\mathcal{T}}(L)\equiv \Big\{\max_{s\in \mathcal{T}, q \in \{0,1\}}\pnorm{\mathsf{H}_{t_s}^{(q)}(z^{(t_s)}) }{\infty}\leq L \Big\}.
\end{align*}
The main goal of this section is to prove the following control for  $\mathscr{A}_{k,(i,j)}\big(\mathcal{T}_0,(\mathcal{T}_{1},\mathcal{T}_2)\big)$ on the intersection of the above events.

\begin{proposition}\label{prop:A_hprod_quad}
Suppose $A=A_0/\sqrt{n}$, where $A_0$ is symmetric and the entries of its upper triangle are independent, centered random variables. Fix three non-overlapping sets $\mathcal{T}_0,\mathcal{T}_1,\mathcal{T}_2\subset \N$ of consecutive integers, where $\mathcal{T}_1,\mathcal{T}_2$ are allowed to take $\emptyset^{[)}$. Suppose $\max_{i,j\in [n]}\abs{A_{0,ij}}\leq K$ for some $K>1$ and $\mathcal{T}\equiv \cup_{\ell \in [0:2]} \mathcal{T}_\ell$. Then there exists a universal constant $c_0>0$ such that for $\abs{\mathcal{T}}_\ast\leq \log^2 n$,
\begin{align*}
&\Prob\Big(\max_{i,j,k \in [n]} n^{(\abs{\{i,j,k\}}-1)/2} \bigabs{\mathscr{A}_{k,(i,j)}\big(\mathcal{T}_0,(\mathcal{T}_{1},\mathcal{T}_2)\big) }\\
&\qquad\qquad \geq \big(K L \abs{\mathcal{T}}_\ast\log n\big)^{c_0\abs{\mathcal{T}}_\ast^2},\mathscr{E}_{\mathcal{T}}(L)\cap \mathscr{E}_{\mathsf{H}}(L)\Big)\leq   c_0^{\abs{\mathcal{T}}_\ast}e^{-(\log n)^{100}/c_0}. 
\end{align*}
Here $\abs{\mathcal{T}}_\ast\equiv 1\vee \abs{\mathcal{T}}$.
\end{proposition}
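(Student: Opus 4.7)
The plan is to prove Proposition \ref{prop:A_hprod_quad} by induction on the total path length $\abs{\mathcal{T}}$, implementing a recursive leave-$k$-out reduction in the spirit of the scheme sketched for the linear sum (\ref{eqn:intro_A_hadamard}). At each inductive step, one edge adjacent to one of the distinguished endpoints in $\{k,i,j\}$ (or to a newly exposed pivot from a previous reduction) is peeled off, producing a gain of order $n^{-1/2}$ times a poly-logarithmic factor, and reducing $\abs{\mathcal{T}}$ by one. To close the recursion cleanly, the induction carries a strengthened hypothesis controlling not only $\max_{i,j,k} \abs{\mathscr{A}_{k,(i,j)}(\cdots)}$ but also the associated $\ell_2$-norm of a partial Y-shape with one endpoint left free---precisely the quantity that arises inside a Bernstein-type bound.

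The reduction step combines two ingredients. \emph{Decoupling}: choose a leave-out set $\mathcal{P} \supseteq \{i,j,k\}$ and replace each factor $\mathsf{H}_{t_s;\ell_s}$ by its leave-$\mathcal{P}$-out version $\mathsf{H}_{t_s;\ell_s}^{[-\mathcal{P}]}$. On $\mathscr{E}_\mathcal{T}(L) \cap \mathscr{E}_{\mathsf{H}}(L)$, the Lipschitz bound on $\mathsf{H}$ together with $\pnorm{z^{(t_s)} - z^{(t_s)}_{[-\mathcal{P}]}}{} \leq L$ controls the replacement error coordinate-wise, so that the error propagates into an $\ell_2$-bounded perturbation of $\mathscr{A}$. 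After this step, the rows $A_{k,\cdot}, A_{i,\cdot}, A_{j,\cdot}$ become independent of every $\mathsf{H}^{[-\mathcal{P}]}$ factor and of the interior matrix entries. \emph{Concentration}: for the edge $A_{\ell_{\mathcal{T}_1^+},i}$ adjacent to $i$, conditioning on everything independent of $A_{\cdot,i}$ reduces the sum over $\ell_{\mathcal{T}_1^+}$ to a linear form $\sum_{\ell_{\mathcal{T}_1^+}} A_{\ell_{\mathcal{T}_1^+},i}\mathsf{H}^{[-\mathcal{P}]}_{\cdot;\ell_{\mathcal{T}_1^+}} X_{\ell_{\mathcal{T}_1^+}}$ of independent mean-zero sub-Gaussian variables, where $X$ is a Y-shape with a shortened $\mathcal{T}_1$-branch. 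Bernstein's inequality then delivers, with probability at least $1-e^{-(\log n)^{100}/c_0}$, a bound of order $(\log n)^{50}\cdot n^{-1/2}\cdot L \cdot \pnorm{X}{2}$, and the $\ell_2$-norm $\pnorm{X}{2}$ is exactly the quantity whose control the strengthened induction hypothesis provides. A suitable quadratic-form variant (Hanson--Wright) handles the coincidence cases $k=i$ or $i=j$, producing the $n^{-(\abs{\{i,j,k\}}-1)/2}$ scaling.

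The main obstacles are bookkeeping. First, the leave-out set $\mathcal{P}$ must be enlarged by $O(1)$ at each recursion step to accommodate the new pivot, but must remain within the budget $\abs{\mathcal{P}} \leq 3(\abs{\mathcal{T}}+2)$ built into $\mathscr{E}_\mathcal{T}(L)$; since the recursion terminates in $O(\abs{\mathcal{T}})$ steps, this budget is respected. Second, the high-probability estimate must hold uniformly over all $O(n^3)$ choices of $(k,i,j)$ and all pivots encountered during the recursion; a union bound over these is absorbed into the stretched-exponential tail $e^{-(\log n)^{100}/c_0}$, which is why the concentration is applied at the $(\log n)^{50}$ scale rather than at a constant scale. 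Third, the Y-shape structure is handled by reducing one branch at a time; the hub vertex $\ell_{\mathcal{T}_0^+}$ is summed out only once all three branches have been reduced to length zero, by which point the three endpoint reductions have already paid the $n^{-(\abs{\{i,j,k\}}-1)/2}$ factor. These three layers of bookkeeping, nested inside the induction, are the source of the $c_0 \abs{\mathcal{T}}^2$ exponent in the final bound: each of the $\abs{\mathcal{T}}$ reductions invokes the strengthened induction hypothesis at a slightly smaller scale, so the prefactor compounds polynomially in $\abs{\mathcal{T}}$ at each level.
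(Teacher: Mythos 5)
Your proposal takes essentially the same route as the paper: a leave-$\mathcal{P}$-out decoupling of $\mathsf{H}$ from the rows $A_{i\cdot},A_{j\cdot},A_{k\cdot}$, concentration of linear forms (sub-Gaussian) and quadratic forms (Hanson--Wright) to reduce the total path length $\abs{\mathcal{T}}$ by one per pass while paying $n^{-1/2}$ per newly exposed endpoint, an induction that runs $O(\abs{\mathcal{T}})$ times with the leave-out set growing within the $3(\abs{\mathcal{T}}+2)$ budget encoded in $\mathscr{E}_{\mathcal{T}}(L)$, and the $c_0\abs{\mathcal{T}}^2$ exponent arising from compounding poly-logarithmic prefactors across the $\abs{\mathcal{T}}$ passes.

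Two parts of the argument are glossed over where the paper has to do real work, and it is worth naming them because they are where a write-up would stall. First, the decoupling error is not a single ``$\ell_2$-bounded perturbation'': replacing $\mathsf{H}$ by $\mathsf{H}^{[-\mathcal{P}]}$ produces a sum over all proper subsets $\mathcal{Q}\subsetneq\mathcal{T}$ of terms with mixed products of $\mathsf{H}^{[-\mathcal{P}]}$ and $\Delta\mathsf{H}$ factors; the paper bounds each such term by Cauchy--Schwarz over the $\Delta\mathsf{H}$-indices (using $\sum_\ell(\Delta\mathsf{H}_{t_s;\ell})^2\le K^2L^2$), and only then does the residual $\ell_2$-sum of the $\mathcal{Q}$-structure become a quantity to which the inductive control applies. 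This is (7.2)--(7.3) and is the mechanism by which the event $\mathscr{E}_{\mathcal{T}}(L)$ enters. Second, after Cauchy--Schwarz the residual $\ell_2$-sum ranges over several free indices, and getting the $n^{-(\abs{\{i,j,k\}}-1)/2}$ scaling out of it requires the explicit combinatorial counting over index multiplicities (Lemma \ref{lem:count_total_num} and the set-merge Lemma \ref{lem:set_merge}); without this the power of $n$ does not come out right. Related to this, the paper's inductive quantity $\mathscr{U}^{[-\mathcal{P}]}_{\{\mathcal{T}_\cdot\};[-\mathcal{P}]}$ is not the $\ell_2$-norm of a single ``partial Y-shape with one free endpoint'' but a maximum over all decompositions of the $\mathcal{T}_q$ into non-overlapping sub-paths, with each factor weighted by $n^{(\gamma-1)/2}$ according to the endpoint multiplicity $\gamma$ -- a strictly richer object than a single free-endpoint $\ell_2$-norm, and the richness is used (different subsets $\mathcal{Q}$ produce different sub-path decompositions in the Cauchy--Schwarz step). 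The high-level outline you give is correct, but to close the argument you would essentially be forced to reinvent these three layers.
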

The above proposition also includes the case for $\mathscr{A}_{i,j}(\mathcal{T})$ by identifying
\begin{align*}
\max_{i,j\in [n]} n^{(\abs{\{i,j\}}-1 )/2}\bigabs{\mathscr{A}_{i,j}(\mathcal{T})}=\max_{i,j\in [n]} n^{(\abs{\{i,j\}}-1 )/2}\bigabs{\mathscr{A}_{i,(i,j)}\big(\emptyset,(\emptyset^{[)},\mathcal{T})\big) }.
\end{align*}

For technical reasons, we also need the following estimate.

\begin{proposition}\label{prop:A_cross_cubic}
	Suppose $A=A_0/\sqrt{n}$, where $A_0$ is symmetric and the entries of its upper triangle are independent, centered random variables. Fix $t_0 \in \N$, $q_0\geq 2$ and non-overlapping sets $\mathcal{T}_0,\mathcal{T}_1,\cdots,\mathcal{T}_{q_0}\subset \N$ of consecutive integers, where $\mathcal{T}_1,\cdots,\mathcal{T}_{q_0}$ are allowed to take $\emptyset^{[)}$. Suppose $\max_{i,j\in [n]}\abs{A_{0,ij}}\leq K$ for some $K>1$ and $\mathcal{T}\equiv \cup_{\ell \in [0:q_0]} \mathcal{T}_\ell \cup \{t_0\}$. Then there exists a constant $c_0=c_0(q_0)>0$ such that for $\abs{\mathcal{T}}_\ast\leq \log^2 n$,
	\begin{align*}
	&\Prob\bigg(n^{3/2}\max_{j\neq k \in [n]}\biggabs{\sum_{\ell_{0} \in [n]}\mathscr{A}_{k,(\ell_{0},\ell_{0})}(\mathcal{T}_{[0:2]}) \prod_{q=[3:q_0]} \mathscr{A}_{\ell_0,\ell_0}(\mathcal{T}_q) \cdot A_{\ell_{0},j}^3 \mathsf{H}_{t_{0};\ell_{0}}} \\
	&\qquad\qquad \geq \big(K L \abs{\mathcal{T}}_\ast\log n\big)^{c_0\abs{\mathcal{T}}_\ast^2},\mathscr{E}_{\mathcal{T}}(L)\cap \mathscr{E}_{\mathsf{H}}(L)\bigg)\leq  c_0^{\abs{\mathcal{T}}_\ast}e^{-(\log n)^{100}/c_0}. 
	\end{align*}
	Here $\abs{\mathcal{T}}_\ast\equiv 1\vee \abs{\mathcal{T}}$.
\end{proposition}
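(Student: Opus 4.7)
The plan is to extend the inductive leave-out and concentration framework of Proposition~\ref{prop:A_hprod_quad}, with the key additional challenge being the cubic factor $A_{\ell_0,j}^3$, which only satisfies $|A_{\ell_0,j}|^3\leq K^3 n^{-3/2}$ and has generically nonzero mean. Write the inner sum as $S_{k,j}=\sum_{\ell_0}c_{\ell_0} A_{\ell_0,j}^3$ with $c_{\ell_0}:=\mathscr{A}_{k,(\ell_0,\ell_0)}(\mathcal{T}_{[0:2]})\prod_{q=3}^{q_0}\mathscr{A}_{\ell_0,\ell_0}(\mathcal{T}_q)\mathsf{H}_{t_0;\ell_0}$, and split the $\ell_0$-sum into the ``diagonal'' piece $\ell_0\in\{j,k\}$ (at most two terms, since $j\neq k$) and the ``bulk'' $\ell_0\notin\{j,k\}$. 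The diagonal is handled directly by combining $|A_{\ell_0,j}|^3\leq K^3 n^{-3/2}$ with the polylog bound $|c_{\ell_0}|\leq(\mathrm{polylog})$ from Proposition~\ref{prop:A_hprod_quad} (valid since $|\{k,\ell_0,\ell_0\}|\leq 2$), which already supplies the target after multiplication by $n^{3/2}$.

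For the bulk, I would first replace every iterate-dependent factor in $c_{\ell_0}$ by its leave-$\{j\}$-out counterpart, namely each $\mathsf{H}_{t_s;\ell_s}$ by $\mathsf{H}^{[-\{j\}]}_{t_s;\ell_s}$ and each $\mathscr{A}$ by the matching $\mathscr{A}^{[-\{j\}]}_{\cdot;[-\{j\}]}$, so that the new coefficient $c^{[-\{j\}]}_{\ell_0}$ is $\sigma(A_{[-\{j\}]})$-measurable and therefore independent of the $j$-th column $A_{:,j}$. The substitution error is absorbed via Cauchy--Schwarz using the $\mathscr{E}_{\mathcal{T}}(L)$-control on $\|z^{(t_s)}-z^{(t_s)}_{[-\{j\}]}\|$, the Lipschitz control on $\mathsf{H}$ from $\mathscr{E}_{\mathsf{H}}(L)$, and Proposition~\ref{prop:A_hprod_quad} applied to the surrounding $\mathscr{A}$-factors. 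Next, conditional on $A_{[-\{j\}]}$, I decompose $A_{\ell_0,j}^3=(A_{\ell_0,j}^3-\E A_{\ell_0,j}^3)+\E A_{\ell_0,j}^3$. The centered part is, conditionally, a sum of independent zero-mean variables each bounded by $2K^3 n^{-3/2}$, so Hoeffding gives deviation at most $K^3 n^{-3/2}\sqrt{\log n}\cdot\|c^{[-\{j\}]}\|_2$, and $\|c^{[-\{j\}]}\|_2^2\leq(\mathrm{polylog})$ follows from Proposition~\ref{prop:A_hprod_quad} since $|c^{[-\{j\}]}_{\ell_0}|\lesssim(\mathrm{polylog})/\sqrt n$ for $\ell_0\neq k$.

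The main obstacle is the mean contribution $\sum_{\ell_0\notin\{j,k\}}c^{[-\{j\}]}_{\ell_0}\E A_{\ell_0,j}^3$, since the termwise estimate $|\E A_{\ell_0,j}^3|\leq K^3 n^{-3/2}$ combined with $\sum|c^{[-\{j\}]}_{\ell_0}|\lesssim\sqrt n\cdot(\mathrm{polylog})$ is off by a factor $\sqrt n$. The plan to overcome this is to exploit signed cancellation: since $n^{3/2}\E A_{\ell_0,j}^3$ is a deterministic profile in $[-K^3,K^3]$ depending only on the index $\ell_0$, it can be absorbed into $\mathsf{H}_{t_0;\cdot}$ as a new row-separate function $\widetilde{\mathsf{H}}_{t_0;\ell_0}$ with sup and Lipschitz norms bounded by $K^3 L$ (trivially, as it is constant in the iterate argument). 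The resulting expression $n^{-3/2}\sum_{\ell_0}\mathscr{A}_{k,(\ell_0,\ell_0)}(\mathcal{T}_{[0:2]})\prod_{q}\mathscr{A}_{\ell_0,\ell_0}(\mathcal{T}_q)\widetilde{\mathsf{H}}_{t_0;\ell_0}$ is then turned into an $\mathscr{A}$-type object after merging the two branches $\mathscr{A}_{\ell',\ell_0}(\mathcal{T}_1)$ and $\mathscr{A}_{\ell',\ell_0}(\mathcal{T}_2)$ into a single longer chain via the symmetry identity $\mathscr{A}_{\ell',\ell_0}(\mathcal{T}_2)=\mathscr{A}_{\ell_0,\ell'}(\mathcal{T}_2^{\mathrm{rev}})$ together with the $\ell_0$-contraction through $\widetilde{\mathsf{H}}_{t_0;\ell_0}$, yielding an augmented $\mathscr{A}$-form whose index set is enlarged by $O(1)$. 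Applying Proposition~\ref{prop:A_hprod_quad} to this augmented chain then delivers a polylog of degree $c_0|\mathcal{T}|_\ast^2$, matching the stated bound. Collecting all error sources, multiplying by the deterministic $n^{-3/2}$ from $A_{\ell_0,j}^3$, and union-bounding over pairs $(j,k)\in[n]^2$ completes the proof.
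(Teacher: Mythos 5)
Your decomposition into diagonal terms, a centered bulk, and a mean contribution is the right skeleton, and your treatment of the first two pieces matches the paper's: the diagonal is killed by $|A_{\ell_0,j}|^3\leq K^3n^{-3/2}$ together with the termwise polylog bound on the coefficients, and the centered bulk is handled by passing to leave-$\{j\}$-out coefficients and concentrating the conditionally independent sum $\sum_{\ell_0}c^{[-j]}_{\ell_0}\overline{A_{\ell_0,j}^3}$, with $\|c^{[-j]}\|_2$ controlled by Proposition \ref{prop:A_hprod_quad}. You also correctly identify the mean contribution as the crux.

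The gap is in your resolution of that crux. After absorbing $n^{3/2}\E A_{\ell_0,j}^3$ into a new weight $\widetilde{\mathsf{H}}_{t_0;\ell_0}$ and contracting $\mathscr{A}_{\ell',\ell_0}(\mathcal{T}_1)\widetilde{\mathsf{H}}_{t_0;\ell_0}\mathscr{A}_{\ell_0,\ell'}(\mathcal{T}_2^{\mathrm{rev}})$ over $\ell_0$, you obtain a loop $\mathscr{A}_{\ell',\ell'}(\mathcal{T}_1\cup\{t_0\}\cup\mathcal{T}_2^{\mathrm{rev}})$ attached at the free index $\ell'=\ell_{\mathcal{T}_0^+}$, which is then summed against $\mathscr{A}_{k,\ell'}(\mathcal{T}_0^{[)})\mathsf{H}_{\cdot;\ell'}$. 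This ``lollipop'' object, $\sum_{\ell'}\mathscr{A}_{k,\ell'}(\mathcal{T}_0^{[)})\mathsf{H}_{\cdot;\ell'}\mathscr{A}_{\ell',\ell'}(\mathcal{U})$, is \emph{not} an $\mathscr{A}_{i,j}(\cdot)$ or $\mathscr{A}_{k,(i,j)}(\cdot)$ object with fixed external endpoints, so Proposition \ref{prop:A_hprod_quad} does not apply to it directly; and applying Proposition \ref{prop:A_hprod_quad} termwise in $\ell'$ gives $\sum_{\ell'}|\mathscr{A}_{k,\ell'}(\mathcal{T}_0^{[)})|\cdot|\mathscr{A}_{\ell',\ell'}(\mathcal{U})|\lesssim\sum_{\ell'}n^{-(|\{k,\ell'\}|-1)/2}\cdot\mathrm{polylog}\asymp\sqrt{n}\cdot\mathrm{polylog}$ --- exactly the factor $\sqrt{n}$ you set out to remove, now reappearing in the free $\ell'$-summation instead of the free $\ell_0$-summation. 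The paper closes precisely this gap with a separate result (Lemma \ref{lem:A_cross_sum}): the trace-type sum $\sum_{\ell_0}\mathscr{A}_{k,(\ell_0,\ell_0)}(\mathcal{T}_{[0:2]})\mathsf{H}_{t_0;\ell_0}$ is bounded by polylog via a second, independent leave-one-out in $\mathcal{P}=\{k\}$ followed by concentration of the linear/quadratic forms in $A_{k,\cdot}$ and the counting argument of (\ref{ineq:A_hprod_quad_G_count})--(\ref{ineq:A_hprod_quad_4_0}); your proposal is missing this ingredient, and without it the mean term is only controlled up to an extra $\sqrt{n}$. (A secondary, more minor omission: for $q_0>2$ the additional loops $\prod_{q\in[3:q_0]}\mathscr{A}_{\ell_0,\ell_0}(\mathcal{T}_q)$ must also be carried through whatever concentration argument replaces the termwise bound, which your merging step does not address.)
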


\begin{remark}
	Some technical remarks are in order:
	\begin{enumerate}
		\item Per notation convention, we interpret $\prod_{q=[3:q_0]}(\cdots)=1$ when $q_0=2$.
		\item In both propositions above, we may replace $\mathsf{H}_{t_s,\ell_s}=\big(\mathsf{H}_{t_s}\big)_{\ell_s}(z^{(t_s)}_{\ell_s})$ in the definition of $\mathscr{A}_{k,\ell}(\mathcal{I})$ by, e.g., $\big(\mathsf{H}_{t_s}\big)_{\ell_s}(z^{(t_s)}_{\ell_s})\big(\mathsf{H}_{t_s'}\big)_{\ell_s}(z^{(t_s')}_{\ell_s})$ for two chosen sets $t_{\mathcal{I}},t_{\mathcal{I}}'\subset \N$ (or more), at the cost of (i) a possibly enlarged exponent for $K$ in the above bound, and (ii) $\pnorm{z^{(t_s)}_{[-\mathcal{P}]}-z^{(t_s)}_{[-\mathcal{P}']}}{}\vee \pnorm{z^{(t_s')}_{[-\mathcal{P}]}-z^{(t_s')}_{[-\mathcal{P}']}}{}$ in the definition of $\mathscr{E}_{\mathcal{T}}(L)$.
		\item The constant $100$ in the probability estimates in both propositions above can be replaced by any fixed large real number.
	\end{enumerate}
\end{remark}

\noindent \textbf{Convention}. In the proofs of the above propositions below, $c>0$ denotes a universal constant whose numerical value may change from line to line. 

\subsection{The simplest possible case}

Let us consider the simplest possible case for Proposition \ref{prop:A_hprod_quad}, where $\mathcal{T}_0$ is a singleton and $\mathcal{T}_1=\mathcal{T}_2=\emptyset$. In this case, we may write, for some $t_0 \in \N$, $
\mathscr{A}_{k,(i,j)}\big(\mathcal{T}_0,(\mathcal{T}_{1},\mathcal{T}_2)\big) = \sum_{\ell \in [n]} A_{k\ell} A_{i\ell} A_{j\ell} \mathsf{H}(z_\ell^{(t_0)})$. Our goal is to prove
\begin{align}\label{ineq:A_hprod_simple_0}
\bigabs{n^{(\abs{ \{i,j,k\} }-1)/2}\mathscr{A}_{k,(i,j)} \big(\mathcal{T}_0,(\mathcal{T}_{1},\mathcal{T}_2)\big)}=\tilde{\bigop}(1).
\end{align}
Here $\tilde{\bigop}$ hides multiplicative logarithmic factors. Recall that for any $\mathcal{P}\subset [n]$:
\begin{itemize}
	\item $\mathscr{A}_{k,(i,j);[-\mathcal{P}]}^{ [-\mathcal{P}] }\big(\mathcal{T}_0,(\mathcal{T}_{1},\mathcal{T}_2)\big) = \sum_{\ell \in [n]\setminus \mathcal{P}} A_{k\ell} A_{i\ell} A_{j\ell} \mathsf{H}(z_{[-\mathcal{P}],\ell}^{(t_0)})$,
	\item $\mathscr{A}_{k,(i,j)}^{ [-\mathcal{P}] }\big(\mathcal{T}_0,(\mathcal{T}_{1},\mathcal{T}_2)\big) = \sum_{\ell \in [n]} A_{k\ell} A_{i\ell} A_{j\ell} \mathsf{H}(z_{[-\mathcal{P}],\ell}^{(t_0)})$.
\end{itemize}
Below we shall sketch a proof of (\ref{ineq:A_hprod_simple_0}) in three steps, and we shall omit $\big(\mathcal{T}_0,(\mathcal{T}_{1},\mathcal{T}_2)\big)$ for notational simplicity.

\noindent (\textbf{Step 1}). In the first step, we show that if one of $i,j,k$ belongs to $\mathcal{P}$, then
\begin{align}\label{ineq:A_hprod_simple_1}
\bigabs{n^{(\abs{ \{i,j,k\} }-1)/2}\mathscr{A}_{k,(i,j);[-\mathcal{P}]}^{ [-\mathcal{P}] }}=\tilde{\bigop}(1).
\end{align}
For instance, if $i,j \in \mathcal{P}$ and $k \in [n]\setminus \mathcal{P}$, then with $\tilde{\Lambda}_k\equiv \mathrm{diag}\big(A_{k\ell}\mathsf{H}(z_{[-\mathcal{P}],\ell}^{(t_0)}) \big)_{\ell \in [n]\setminus \mathcal{P}}$, using Hanson-Wright inequality and the simple estimate $\tr(\tilde{\Lambda}_k)=\tilde{\bigop}(n^{1/2})$,
\begin{align*}
\mathscr{A}_{k,(i,j);[-\mathcal{P}]}^{ [-\mathcal{P}] } &= A_{i\cdot}^\top \tilde{\Lambda}_k A_{j\cdot} = n^{-1}\cdot  \tilde{\bigop}\big(\delta_{ij} \tr(\tilde{\Lambda}_k)+ \pnorm{ \tilde{\Lambda}_k }{F}\big)=  \tilde{\bigop} \big(\delta_{ij} n^{-1/2}+n^{-1}\big).
\end{align*}
Other cases can also be computed explicitly.

For a general index $(\mathcal{T}_0,\mathcal{T}_1,\mathcal{T}_2)$, concentration of quadratic forms (via Hanson-Wright) or linear forms (via sub-gaussian inequality) leads to a size reduction of the index  $(\mathcal{T}_0,\mathcal{T}_1,\mathcal{T}_2)$. This program is detailed in Lemma \ref{lem:A_cross_ind_quad}.

\noindent (\textbf{Step 2}). In the second step, we show that for any $\mathcal{P}\subset [n]$,
\begin{align}\label{ineq:A_hprod_simple_2}
n^{(\abs{\{i,j,k\}}-1)/2} \bigabs{\mathscr{A}_{k,(i,j)}^{[-\mathcal{P}]} - \mathscr{A}_{k,(i,j);[-\mathcal{P}]}^{[-\mathcal{P}]}}=\tilde{\bigop}(1).
\end{align}
The claimed estimate in (\ref{ineq:A_hprod_simple_2}) is trivial in our case, as 
\begin{align*}
\bigabs{\mathscr{A}_{k,(i,j)}^{[-\mathcal{P}]} - \mathscr{A}_{k,(i,j);[-\mathcal{P}]}^{[-\mathcal{P}]}}=\biggabs{ \sum_{\ell \in \mathcal{P}} A_{k\ell} A_{i\ell} A_{j\ell} \mathsf{H}(z_{[-\mathcal{P}],\ell}^{(t_0)}) }=\tilde{\bigop}(n^{-3/2}).
\end{align*}
For a general index $(\mathcal{T}_0,\mathcal{T}_1,\mathcal{T}_2)$, the term $A_{k\ell} A_{i\ell} A_{j\ell}$ above need to be replaced by $\mathscr{A}_{k,\ell} \mathscr{A}_{i,\ell} \mathscr{A}_{j,\ell}$ which is generally bounded, up to multiplicative logarithmic factors, by $\big(\delta_{k\ell}+n^{-1/2}\big)\big(\delta_{i\ell}+n^{-1/2}\big)\big(\delta_{j\ell}+n^{-1/2}\big)$, and is therefore bounded by $n^{-(\abs{\{i,j,k\}}-1)/2}$ for the worst-case choice of $\ell$. Details of this  program are carried out in Lemma \ref{lem:A_P_cross_quad}. 

\noindent (\textbf{Step 3}). In the third step, we show that for any $\mathcal{P}\subset [n]$,
\begin{align}\label{ineq:A_hprod_simple_3}
n^{(\abs{\{i,j,k\}}-1)/2}\bigabs{\mathscr{A}_{k,(i,j)}-\mathscr{A}_{k,(i,j)}^{[-\mathcal{P}]} }=\tilde{\bigop}(1).
\end{align}
To see (\ref{ineq:A_hprod_simple_3}), it suffices to note that
\begin{align*}
\bigabs{\mathscr{A}_{k,(i,j)}-\mathscr{A}_{k,(i,j)}^{[-\mathcal{P}]} }&=\biggabs{ \sum_{\ell \in [n]} A_{k\ell} A_{i\ell} A_{j\ell} \big(\mathsf{H}(z_{\ell}^{(t_0)})-\mathsf{H}(z_{[-\mathcal{P}],\ell}^{(t_0)})  \big)  }\\
&\leq \bigg\{\sum_{\ell \in [n]} A_{k\ell}^2 A_{i\ell}^2 A_{j\ell}^2 \sum_{\ell \in [n]} \big(\mathsf{H}(z_{\ell}^{(t_0)})-\mathsf{H}(z_{[-\mathcal{P}],\ell}^{(t_0)})  \big)^2  \bigg\}^{1/2} \\
& = \tilde{\bigop}\big(n^{-1}\cdot \pnorm{z^{(t_0)}-z^{(t_0)}_{[-\mathcal{P}]}  }{}\big) = \tilde{\bigop}(n^{-1}).
\end{align*}
For a general index $(\mathcal{T}_0,\mathcal{T}_1,\mathcal{T}_2)$, using a similar argument as in Step 2, the factor $n^{-1}$ need to be replaced by  $n^{-(\abs{\{i,j,k\}}-1)/2}$ in the worst case. The details of the proof are implemented in Step 1 of the proof of Proposition \ref{prop:A_hprod_quad}.

The claimed estimate (\ref{ineq:A_hprod_simple_0}) now follows by combining Steps 1-3 with the choice $\mathcal{P}\equiv \{i,j,k\}$. For a general index $(\mathcal{T}_0,\mathcal{T}_1,\mathcal{T}_2)$, running Steps 1-3 above once only reduces its total size by 1, so we need to recursively run these steps until the index size trivializes. 

\subsection{Proof of Proposition \ref{prop:A_hprod_quad}}

For three non-overlapping sets $\{\mathcal{I}_q:q\in [0:2]\}$ of consecutive integers, and any $s_{[0:2]}\in \mathbb{Z}_{\geq 0}\times  \mathbb{Z}_{\geq -1}\times  \mathbb{Z}_{\geq -1}$, let
{\small\begin{align*}
\mathscr{U}_{ \{\mathcal{I}_{[0:2]}\};[-\mathcal{P}]}^{[-\mathcal{P}]}(s_{[0:2]})&\equiv  \max_{ \substack{ \{(\mathcal{J}_{w_q}^{(q)})\}\in \mathscr{C}_{\empty}(\{\mathcal{I}_{[0:2]}\};s_{[0:2]}) } } \prod_{\substack{w_0 \in [N_{\mathcal{I}_0}-1], \\w_{[2]} \in [2: N_{\mathcal{I}_{[2]}}] } } \bigg(\sum_{\gamma \in [2]}n^{(\gamma-1)/2} \max_{\substack{i,j \in [n]\setminus \mathcal{P},\\ \abs{\{i,j\}}=\gamma }  }\bigg) \bigabs{\mathscr{A}_{i,j;[-\mathcal{P}]}^{[-\mathcal{P}]}\big(\mathcal{J}_{w_q}^{(q)}\big)  } \\
&\qquad \times  \bigg(\sum_{\gamma \in [3]}n^{(\gamma-1)/2}\max_{ \substack{i,j,k \in [n]\setminus \mathcal{P},\\\abs{\{i,j,k\}}=\gamma} }\bigg) \bigabs{\mathscr{A}_{k,(i,j);[-\mathcal{P}]}^{[-\mathcal{P}]}\big(\mathcal{J}_{N_{\mathcal{I}_0}}^{(0)}, \big(\mathcal{J}_1^{(1)}, \mathcal{J}_1^{(2)}\big) \big) },
\end{align*}}
where $\mathscr{C}_{\empty}(\{\mathcal{I}_{[0:2]}\};s_{[0:2]}) $ collects all non-overlapping sets (including $\emptyset$) of consecutive integers $\{\mathcal{J}_{w_q}^{(q)}:w_q \in [N_{\mathcal{I}_q}]\}\subset \mathcal{I}_q$, $q= [0:2]$, with  $\sum_{w_q \in [N_{\mathcal{I}_q}]}\abs{\mathcal{J}_{w_q}^{(q)}}\in [-\bm{1}_{q\in \{1,2\}}: (s_q\vee -\bm{1}_{q\in \{1,2\}})]$ (if any of $s_q=-1$ for $q\in \{1,2\}$, then $\{\mathcal{J}_{w_q}^{(q)}\}= \emptyset^{[)}$). We shall use the convention that for more general $s_{[0:2]}\in \mathbb{Z}^3$, 
\begin{align*}
\mathscr{U}_{ \{\mathcal{I}_{[0:2]}\};[-\mathcal{P}]}^{[-\mathcal{P}]}(s_{[0:2]})\equiv \mathscr{U}_{ \{\mathcal{I}_{[0:2]}\};[-\mathcal{P}]}^{[-\mathcal{P}]}\big(s_0\vee 0, s_1\vee (-1), s_2\vee (-1)\big).
\end{align*}
Moreover, with the prescribed $\{\mathcal{I}_\cdot\}\equiv \{\mathcal{I}_{[0:2]}\}$, for $q\leq \abs{\mathcal{I}}\equiv \abs{\mathcal{I}_0\cup \mathcal{I}_1\cup \mathcal{I}_2}$,
\begin{align}
\mathfrak{U}_{\{\mathcal{I}_\cdot\};[-\mathcal{P}]}^{[-\mathcal{P}]}(q)\equiv \max_{ \substack{s_\ell\leq \abs{\mathcal{I}_\ell}, \abs{s_{[0:2]}}= q\vee (-2) } }\mathscr{U}_{\{\mathcal{I}_\cdot\};[-\mathcal{P}]}^{[-\mathcal{P}]}(s_{[0:2]}).
\end{align}

\subsubsection{Weighted estimates for $\mathscr{A}_{\ast;(\ast,\ast);[-\mathcal{P}]}^{[-\mathcal{P}]}(\ast)$ }
We first prove that for any $\mathcal{P}\subset [n]$, the size of $\mathscr{A}_{k,(i,j);[-\mathcal{P}]}^{[-\mathcal{P}]}\big(\mathcal{T}_0,(\mathcal{T}_1,\mathcal{T}_2)\big)$ can be reduced as long as one of the indices $i,j,k \in \mathcal{P}$.

\begin{lemma}\label{lem:A_cross_ind_quad}
	Suppose $A=A_0/\sqrt{n}$, where $A_0$ is symmetric and the entries of its upper triangle are independent, centered random variables. Fix $\mathcal{P}\subset [n]$ and three non-overlapping sets $\mathcal{T}_0,\mathcal{T}_1,\mathcal{T}_2\subset \N$ of consecutive integers, where $\mathcal{T}_1,\mathcal{T}_2$ are allowed to take $\emptyset^{[)}$. Suppose $\max_{i,j\in [n]}\abs{A_{0,ij}}\vee \max_{s\in \mathcal{T}, \ell \in [n]}\pnorm{\mathsf{H}_{t_s,\ell}}{\infty}\leq K$ for some $K>1$ and $\mathcal{T}\equiv \cup_{\ell \in [0:2]} \mathcal{T}_\ell$. Then there exists some universal $c_0>0$ such that the following hold with $\Prob(\cdot|A_{[-\mathcal{P}]})$-probability at least $1-c_0  e^{-(\log n)^{100}/c_0}$: uniformly for any tuple $(i,j,k) \in [n]^3$ such that $(i,j,k)\notin ([n]\setminus \mathcal{P})^3$, 
	\begin{align*}
	&n^{(\abs{ \{i,j,k\} }-1)/2}\bigabs{\mathscr{A}_{k,(i,j);[-\mathcal{P}]}^{[-\mathcal{P}]}\big(\mathcal{T}_0,(\mathcal{T}_1,\mathcal{T}_2)\big)} \leq (K \log n)^{c_0}\cdot \mathfrak{U}_{\{\mathcal{T}_\cdot\};[-\mathcal{P}]}^{[-\mathcal{P}]}\big(\abs{\mathcal{T}}-1\big).
	\end{align*}
\end{lemma}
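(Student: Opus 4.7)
The guiding principle is that whenever an index among $\{i,j,k\}$ lies in $\mathcal{P}$, the outermost $A$-entry of a chain touching that index is conditionally independent of $A_{[-\mathcal{P}]}$ and mean zero, so it can be peeled off using concentration. Each peel shrinks the total chain length by one and costs a factor $n^{-1/2}(K\log n)^{O(1)}$, which combines with the $\sqrt{n}$-weights in the definition of $\mathscr{U}$ to produce exactly the $n^{(|\{i,j,k\}|-1)/2}$-normalized bound in the statement.

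The argument proceeds by conditioning on $A_{[-\mathcal{P}]}$ and splitting into cases according to the non-empty set $S\equiv\{i,j,k\}\cap\mathcal{P}$. In the representative case $k\in\mathcal{P}$, $i,j\in[n]\setminus\mathcal{P}$, and $\mathcal{T}_0\ne\emptyset$, I expand
\[
\mathscr{A}_{k,(i,j);[-\mathcal{P}]}^{[-\mathcal{P}]}\!\big(\mathcal{T}_0,(\mathcal{T}_1,\mathcal{T}_2)\big)=\sum_{\ell\in[n]\setminus\mathcal{P}} A_{k,\ell}\,\mathsf{H}_{t_{\mathcal{T}_0^-};\ell}^{[-\mathcal{P}]}\cdot\mathscr{A}_{\ell,(i,j);[-\mathcal{P}]}^{[-\mathcal{P}]}\!\big(\mathcal{T}_0^{<},(\mathcal{T}_1,\mathcal{T}_2)\big),
\]
where $\mathcal{T}_0^{<}\equiv\mathcal{T}_0\setminus\{\mathcal{T}_0^-\}$. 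The coefficients $C_\ell$ are $A_{[-\mathcal{P}]}$-measurable, while $\{A_{k,\ell}\}_{\ell\in[n]\setminus\mathcal{P}}$ are independent mean-zero variables bounded by $K/\sqrt{n}$; Hoeffding's inequality at confidence level $e^{-(\log n)^{100}/c_0}$ yields $|\sum_\ell A_{k,\ell}C_\ell|\leq (K\log n)^{O(1)}n^{-1/2}\|C\|_2$, and bounding $\|C\|_2\leq K\sqrt{n}\max_\ell|\mathscr{A}_{\ell,(i,j);[-\mathcal{P}]}^{[-\mathcal{P}]}(\mathcal{T}_0^{<},(\mathcal{T}_1,\mathcal{T}_2))|$ converts this directly into one of the summands defining $\mathscr{U}^{[-\mathcal{P}]}_{\{\mathcal{T}_\cdot\};[-\mathcal{P}]}$ with $|s_{[0:2]}|=|\mathcal{T}|-1$. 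If $\mathcal{T}_0=\emptyset$, the same peel is performed on the $k$-end of $\mathcal{T}_1$ or $\mathcal{T}_2$; if $i\in\mathcal{P}$ or $j\in\mathcal{P}$, one peels $A_{i,\ell_{\mathcal{T}_1^+}}$ or $A_{j,\ell_{\mathcal{T}_2^+}}$ by the symmetric argument; and when two or three of $i,j,k$ lie in $\mathcal{P}$, peels are iterated, with the Hanson--Wright inequality invoked in the sub-case $i=j\in\mathcal{P}$ where two outermost entries from the same row $A_{i,\cdot}$ combine into a bilinear form.

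A union bound over the $n^3$ tuples $(i,j,k)$ and the $O(|\mathcal{T}|^3)$ sub-decompositions is absorbed by the $e^{-(\log n)^{100}/c_0}$ tail. The main obstacle is the bookkeeping that matches every admissible product arising from the iterated peels (including the Hanson--Wright diagonal contribution) to an element of the family $\mathscr{C}_\emptyset(\{\mathcal{T}_\cdot\};s_{[0:2]})$ with $|s_{[0:2]}|=|\mathcal{T}|-1$, and verifies that the $\sqrt{n}$-power-counting aligns across all configurations for $|\{i,j,k\}|\in\{1,2,3\}$ and for each non-empty $S=\{i,j,k\}\cap\mathcal{P}$. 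The universal constant $c_0$ absorbs these combinatorial prefactors together with the polylogarithmic margin from concentration.
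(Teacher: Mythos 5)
Your core idea — condition on $A_{[-\mathcal{P}]}$, view the outermost $A$-entries touching the $\mathcal{P}$-indices as independent variables, and peel them off via Hoeffding/Hanson--Wright to shrink the chain by one per peel — is exactly the mechanism the paper uses. The case split according to $\{i,j,k\}\cap\mathcal{P}$ also matches (paper's Cases 1--3), as does conditioning to make the $C_\ell$ coefficients measurable.

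However, the key estimate in your sketch is lossy by a factor of $\sqrt{n}$ and would not close. You write $\|C\|_2 \leq K\sqrt{n}\max_\ell|\mathscr{A}_{\ell,(i,j);[-\mathcal{P}]}^{[-\mathcal{P}]}(\mathcal{T}_0^<,(\mathcal{T}_1,\mathcal{T}_2))|$ and claim this converts directly into a summand of $\mathfrak{U}^{[-\mathcal{P}]}_{\{\mathcal{T}_\cdot\};[-\mathcal{P}]}(|\mathcal{T}|-1)$. But $\mathfrak{U}$ only bounds the quantities $n^{(|\{\ell,i,j\}|-1)/2}|\mathscr{A}_{\ell,(i,j)}(\cdots)|$, so $\max_\ell |\mathscr{A}_\ell|$ is dominated by the coincidence terms $\ell\in\{i,j\}$ and is only $\lesssim n^{-(|\{i,j\}|-1)/2}\mathfrak{U}$. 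Combining with the flat $\sqrt{n}$ prefactor and $n^{(|\{i,j,k\}|-1)/2}=n^{|\{i,j\}|/2}$ (since $k\in\mathcal{P}$, $i,j\notin\mathcal{P}$) gives $n^{1/2}\mathfrak{U}$, not $\mathfrak{U}$.

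The actual fix — which is the substance of the paper's proof, not just bookkeeping — is to split the running index $\ell$ by its multiplicity relative to $\{i,j\}$: the $O(1)$ coincidence terms $\ell\in\{i,j\}$ contribute with the weaker weight, while the $\lesssim n$ generic terms carry an extra $n^{-1/2}$ from the $\mathfrak{U}$-weight. Concretely, $\sum_\ell |\mathscr{A}_{\ell,(i,j)}(\cdots)|^2 \lesssim (\delta_{ij}+n^{-1})\,\mathfrak{U}^2$ (paper's Case 3), and the analogous $(n^{1/2}\delta_{ij}+1)$ factor appears from the trace term in the Hanson--Wright step (paper's Case 1). Your flat $\max_\ell$ bound discards exactly this structure. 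You flag the power-counting alignment as the ``main obstacle,'' but this is not a prefactor issue that $c_0$ absorbs; without the multiplicity-stratified $\ell_2$ bound the proof fails at order $n^{1/2}$. The paper's choice to treat $A_{i\cdot}^\top\tilde\Lambda_kA_{j\cdot}$ as a single bilinear form and apply Hanson--Wright once (rather than iterating scalar Hoeffding peels) is what keeps this stratification tractable, since the trace and Frobenius norms of $\tilde\Lambda_k$ and $\Lambda_{\ell_1}$ are exactly where the $(\delta_{ij}+n^{-1})$ terms surface in controllable form.
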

\begin{proof}[Proof of Lemma \ref{lem:A_cross_ind_quad}: $\mathcal{T}_0\neq \emptyset$ and $\mathcal{T}_1,\mathcal{T}_2 \neq \emptyset^{[)}$]
    As the estimates below only involve concentration of linear and quadratic forms, we assume without loss of generality that $\E A_0\circ A_0 =1$. Let us now assume that $\mathcal{T}_0\equiv [r_0]$, $\mathcal{T}_1\equiv [r_0+1:r_1]$ and $\mathcal{T}_2\equiv [r_1+1:r_2]$ for notational simplicity. Then for $\mathcal{P}\subset [n]$,
	\begin{align*}
	&\mathscr{A}_{k,(i,j);[-\mathcal{P}]}^{[-\mathcal{P}]}\big(\mathcal{T}_0,(\mathcal{T}_{1},\mathcal{T}_2)\big)
	= \sum_{\ell_1,\ell_{r_1},\ell_{r_2}\in [n]\setminus \mathcal{P}} \lambda_{\ell_1,(\ell_{r_1},\ell_{r_2})} A_{k,\ell_1} A_{\ell_{r_1},i} A_{\ell_{r_2},j} \\
	& = \sum_{\ell_1 \in [n]\setminus \mathcal{P}} A_{k,\ell_1} \big(A_{i\cdot}^\top \Lambda_{\ell_1} A_{j\cdot}\big) = A_{i\cdot}^\top \bigg(\sum_{\ell_1 \in [n]\setminus \mathcal{P}} A_{k,\ell_1} \Lambda_{\ell_1}\bigg) A_{j\cdot}\equiv A_{i\cdot}^\top \tilde{\Lambda}_k A_{j\cdot},
	\end{align*}
	where, with slight abuse of notation, we write $A_{i\cdot}\equiv (A_{i\ell})_{\ell \in [n]\setminus \mathcal{P}} \in \R^{n-\abs{\mathcal{P}}}$, and $\Lambda_{\ell_1}\equiv \big(\lambda_{\ell_1,(\ell_{r_1},\ell_{r_2})}\big)_{\ell_{r_1},\ell_{r_2} \in [n]\setminus \mathcal{P}}$ with
	\begin{align}\label{ineq:A_cross_ind_quad_lambda}
	\lambda_{\ell_1,(\ell_{r_1},\ell_{r_2})}&\equiv \sum_{\ell_{\mathcal{T}_0^{(]}},\ell_{\mathcal{T}_1^{[)}}, \ell_{\mathcal{T}_2^{[)}} \in [n]\setminus \mathcal{P}}A_{\ell_1,[\ell_{ \mathcal{T}_0^{()}}],\ell_{\mathcal{T}_0^+}} A_{\ell_{\mathcal{T}_0^+},[\ell_{\mathcal{T}_1^{[)}}],\ell_{r_1}} A_{\ell_{\mathcal{T}_0^+},[\ell_{\mathcal{T}_2^{[)}}],\ell_{r_2}} \prod_{s \in[r_2]}\mathsf{H}_{t_s;\ell_{s}}^{[-\mathcal{P}]}\nonumber\\
	& = \mathscr{A}_{\ell_1,(\ell_{r_1},\ell_{r_2});[-\mathcal{P}]}^{[-\mathcal{P}]}\big(\mathcal{T}_0^{(]},(\mathcal{T}_1^{[)},\mathcal{T}_2^{[)})\big)\cdot \prod_{s \in \{1,{r_1},{r_2}\}} \mathsf{H}_{t_s;\ell_{s}}^{[-\mathcal{P}]}.
	\end{align}
	It is also easy to verify that
	\begin{align}\label{ineq:A_cross_ind_quad_lambda_tilde}
	(\tilde{\Lambda}_k)_{\ell_{r_1},\ell_{r_2}}= \mathscr{A}_{k,(\ell_{r_1},\ell_{r_2});[-\mathcal{P}]}^{[-\mathcal{P}]}\big(\mathcal{T}_0,(\mathcal{T}_1^{[)},\mathcal{T}_2^{[)})\big)\cdot \prod_{s \in \{{r_1},{r_2}\}} \mathsf{H}_{t_s;\ell_{s}}^{[-\mathcal{P}]}.
	\end{align}
	Consequently,
	\begin{align*}
	\max_{k \in [n]\setminus \mathcal{P}}\Big\{ n^{-1/2}\abs{\tr(\Lambda_k)}\vee \pnorm{\Lambda_k}{F}\Big\}&\leq K^c\cdot \mathscr{U}_{\{\mathcal{T}_\cdot\};[-\mathcal{P}]}^{[-\mathcal{P}]}\Big(\abs{\mathcal{T}_0}-1,\big(\abs{\mathcal{T}_1}-1,\abs{\mathcal{T}_2}-1\big)\Big),\\
	\max_{k \in [n]\setminus \mathcal{P}}\Big\{ n^{-1/2}\abs{\tr(\tilde{\Lambda}_k)}\vee \pnorm{\tilde{\Lambda}_k}{F}\Big\}&\leq K^c\cdot \mathscr{U}_{\{\mathcal{T}_\cdot\};[-\mathcal{P}]}^{[-\mathcal{P}]}\Big(\abs{\mathcal{T}_0},\big(\abs{\mathcal{T}_1}-1,\abs{\mathcal{T}_2}-1\big)\Big).
	\end{align*}

	\noindent (\textbf{Case 1}). Consider the case $i,j \in \mathcal{P}$. Using Hanson-Wright inequality, on an event $\mathscr{E}_{1}$ with $\Prob(\mathscr{E}_{1}^c|A_{[-\mathcal{P}]})\leq c  e^{-(\log n)^{100}/c}$, uniformly in $\ell_1 \in [n]\setminus \mathcal{P}$, we have $A_{i\cdot}^\top \Lambda_{\ell_1} A_{j\cdot}=n^{-1}\big(\delta_{ij} \tr(\Lambda_{\ell_1})+Z_{(i,j);\ell_1}\big)$ for some centered random variable $Z_{(i,j);\ell_1}$ that depends only on $A_{i\cdot},A_{j\cdot}$ and satisfies $\abs{Z_{(i,j);\ell_1}}\lesssim (K \log n)^c\cdot \pnorm{\Lambda_{\ell_1}}{F} $. 
	
	\noindent (\textbf{Subcase 1-(a)}). Suppose $k \in \mathcal{P}\setminus \{i,j\}$. Then $Z_{(i,j);\cdot}$ is independent of $A_{k\cdot}$. So using subgaussian concentration, on an event $\mathscr{E}_{1,a}$ with $\Prob(\mathscr{E}_{1,a}^c|A_{[-\mathcal{P}]})\leq c e^{-(\log n)^{100}/c}$,
	\begin{align*}
	&\bigabs{\mathscr{A}_{k,(i,j);[-\mathcal{P}]}^{[-\mathcal{P}]}\big(\mathcal{T}_0,(\mathcal{T}_1,\mathcal{T}_2)\big)}\\
	&\leq \biggabs{\frac{\delta_{ij}}{n} \sum_{\ell_1 \in [n]\setminus \mathcal{P}} A_{k,\ell_1} \tr(\Lambda_{\ell_1})}+ \frac{1}{n} \biggabs{  \sum_{\ell_1 \in [n]\setminus \mathcal{P}} A_{k,\ell_1} Z_{(i,j);\ell_1} }\\
	&\leq \frac{(K \log n)^c}{n^{3/2}}\bigg\{ \sum_{k \in [n]\setminus \mathcal{P}} \Big(\delta_{ij}\tr^2(\Lambda_k)+ \pnorm{\Lambda_k}{F}^2\Big)\bigg\}^{1/2}\\
	&\leq \frac{(K\log n)^{c}}{n} \big(n^{1/2}\delta_{ij}+1\big) \mathscr{U}_{\{\mathcal{T}_\cdot\};[-\mathcal{P}]}^{[-\mathcal{P}]}\Big(\abs{\mathcal{T}_0}-1,\big(\abs{\mathcal{T}_1}-1,\abs{\mathcal{T}_2}-1\big)\Big).
	\end{align*}
	
	\noindent (\textbf{Subcase 1-(b)}). Suppose $k \in \{i,j\}$. Then using the Cauchy-Schwarz inequality rather than subgaussian concentration, on $\mathscr{E}_{1}$, 
	\begin{align*}
	&\bigabs{\mathscr{A}_{k,(i,j);[-\mathcal{P}]}^{[-\mathcal{P}]}\big(\mathcal{T}_0,(\mathcal{T}_1,\mathcal{T}_2)\big)}\leq \frac{(K\log n)^c}{n}\bigg\{ \sum_{k \in [n]\setminus \mathcal{P}} \Big(\delta_{ij}\tr^2(\Lambda_k)+ \pnorm{\Lambda_k}{F}^2\Big)\bigg\}^{1/2}\\
	&\leq \frac{(K\log n)^c}{n^{1/2}} \big(n^{1/2}\delta_{ij}+1\big) \mathscr{U}_{\{\mathcal{T}_\cdot\};[-\mathcal{P}]}^{[-\mathcal{P}]}\Big(\abs{\mathcal{T}_0}-1,\big(\abs{\mathcal{T}_1}-1,\abs{\mathcal{T}_2}-1\big)\Big).
	\end{align*}
	
	\noindent (\textbf{Subcase 1-(c)}). Suppose $k \in [n]\setminus \mathcal{P}$. Then using Hanson-Wright inequality conditionally on $A_{[-\mathcal{P}]}$, on an event $\mathscr{E}_{1,c}$ with $\Prob(\mathscr{E}_{1,c}^c|A_{[-\mathcal{P}]})\leq c e^{-(\log n)^{100}/c}$,
	\begin{align*}
	&\bigabs{\mathscr{A}_{k,(i,j);[-\mathcal{P}]}^{[-\mathcal{P}]}\big(\mathcal{T}_0,(\mathcal{T}_1,\mathcal{T}_2)\big)} = \abs{A_{i\cdot}^\top \tilde{\Lambda}_k A_{j\cdot}}\leq n^{-1}\big(\delta_{ij} \abs{\tr(\tilde{\Lambda}_k)}+ (K \log n)^c \pnorm{ \tilde{\Lambda}_k }{F}\big)\\
	&\leq \frac{(K \log n)^c}{n} \big(n^{1/2}\delta_{ij}+1\big) \mathscr{U}_{\{\mathcal{T}_\cdot\};[-\mathcal{P}]}^{[-\mathcal{P}]}\Big(\abs{\mathcal{T}_0},\big(\abs{\mathcal{T}_1}-1,\abs{\mathcal{T}_2}-1\big)\Big).
	\end{align*}
	
	\noindent (\textbf{Case 2}). Consider the case $i \in \mathcal{P}, j \in [n]\setminus \mathcal{P}$. By subgaussian concentration, on an event $\mathscr{E}_2$ with $\Prob(\mathscr{E}_2^c|A_{[-\mathcal{P}]})\leq c  e^{-(\log n)^{100}/c}$, uniformly in $\ell_1 \in [n]\setminus \mathcal{P}$, 
	\begin{align*}
	\bigabs{A_{i\cdot}^\top \Lambda_{\ell_1} A_{j\cdot}}&\leq  n^{-1/2}(K\log n)^c \cdot  \pnorm{\Lambda_{\ell_1} A_{j\cdot}}{}\\
	&\leq n^{-1}(K\log n)^c \cdot  \big(n^{1/2}\delta_{\ell_1,j}+1\big) \mathscr{U}_{\{\mathcal{T}_\cdot\};[-\mathcal{P}]}^{[-\mathcal{P}]}\Big(\abs{\mathcal{T}_0}-1,\big(\abs{\mathcal{T}_1}-1,\abs{\mathcal{T}_2}\big)\Big).
	\end{align*}
	Here the last inequality follows as, using (\ref{ineq:A_cross_ind_quad_lambda}),
	\begin{align}\label{ineq:A_cross_ind_quad_1}
	\pnorm{\Lambda_{\ell_1} A_{j\cdot}}{}  &=\bigg\{\sum_{\ell_{r_1}\in [n]\setminus \mathcal{P}} \bigg(\sum_{\ell_{r_2}\in [n]\setminus \mathcal{P}}\lambda_{\ell_1,(\ell_{r_1},\ell_{r_2})}A_{\ell_{r_2},j}\bigg)^2\bigg\}^{1/2} \nonumber\\
	&= \bigg\{\sum_{\ell_{r_1} \in [n]\setminus \mathcal{P}} \bigg(\mathscr{A}_{\ell_1,(\ell_{r_1},j);[-\mathcal{P}]}^{[-\mathcal{P}]}\big(\mathcal{T}_0^{(]},(\mathcal{T}_1^{[)},\mathcal{T}_2)\big)\cdot \prod_{s \in \{1,{r_1}\}} \mathsf{H}_{t_s;\ell_{s}}^{[-\mathcal{P}]}\bigg)^2\bigg\}^{1/2}\nonumber\\
	&\lesssim K^c \big(\delta_{\ell_1,j}+n^{-1/2}\big) \mathscr{U}_{\{\mathcal{T}_\cdot\};[-\mathcal{P}]}^{[-\mathcal{P}]}\Big(\abs{\mathcal{T}_0}-1,\big(\abs{\mathcal{T}_1}-1,\abs{\mathcal{T}_2}\big)\Big).
	\end{align}

	\noindent (\textbf{Subcase 2-(a)}). Suppose $k \in \mathcal{P}\setminus \{i\}$. As $\abs{A_{i\cdot}^\top \Lambda_{\ell_1} A_{j\cdot}}$ is independent of $A_{k\cdot}$ conditional on $A_{[-\mathcal{P}]}$, using subgaussian concentration again, on an event $\mathscr{E}_{2,a}$ with $\Prob(\mathscr{E}_{2,a}^c|A_{[-\mathcal{P}]})\leq c e^{-(\log n)^{100}/c}$,
	\begin{align*}
	&\bigabs{\mathscr{A}_{k,(i,j);[-\mathcal{P}]}^{[-\mathcal{P}]}\big(\mathcal{T}_0,(\mathcal{T}_1,\mathcal{T}_2)\big)}\leq  n^{-1/2}(K\log n)^c\cdot \bigg\{\sum_{\ell_1 \in [n]\setminus \mathcal{P}} \abs{A_{i\cdot}^\top \Lambda_{\ell_1} A_{j\cdot}}^2\bigg\}^{1/2}\\
	&\leq \frac{(K\log n)^c}{n}\cdot \mathscr{U}_{\{\mathcal{T}_\cdot\};[-\mathcal{P}]}^{[-\mathcal{P}]}\Big(\abs{\mathcal{T}_0}-1,\big(\abs{\mathcal{T}_1}-1,\abs{\mathcal{T}_2}\big)\Big).
	\end{align*}
	\noindent (\textbf{Subcase 2-(b)}). Suppose $k\in \{i\}$. Using Cauchy-Schwarz, on an event $\mathscr{E}_{2,b}$ with $\Prob(\mathscr{E}_{2,b}^c|A_{[-\mathcal{P}]})\leq c e^{-(\log n)^{100}/c}$, 
	\begin{align*}
	&\bigabs{\mathscr{A}_{k,(i,j);[-\mathcal{P}]}^{[-\mathcal{P}]}\big(\mathcal{T}_0,(\mathcal{T}_1,\mathcal{T}_2)\big)}\leq K \bigg\{\sum_{\ell_1 \in [n]\setminus \mathcal{P}} \big(A_{i\cdot}^\top \Lambda_{\ell_1} A_{j\cdot}\big)^2\bigg\}^{1/2}\\
	&\leq \frac{(K\log n)^c }{n^{1/2}}\cdot \mathscr{U}_{\{\mathcal{T}_\cdot\};[-\mathcal{P}]}^{[-\mathcal{P}]}\Big(\abs{\mathcal{T}_0}-1,\big(\abs{\mathcal{T}_1}-1,\abs{\mathcal{T}_2}\big)\Big).
	\end{align*}
	
	\noindent (\textbf{Subcase 2-(c)}). Suppose $k\in [n]\setminus \mathcal{P}$. Then using subgaussian concentration conditionally on $A_{[-\mathcal{P}]}$ and then taking unconditional probability, on an event $\mathscr{E}_{2,c}$ with $\Prob(\mathscr{E}_{2,c}^c|A_{[-\mathcal{P}]})\leq c e^{-(\log n)^{100}/c}$, 
	\begin{align*}
	&\bigabs{\mathscr{A}_{k,(i,j);[-\mathcal{P}]}^{[-\mathcal{P}]}\big(\mathcal{T}_0,(\mathcal{T}_1,\mathcal{T}_2)\big)} = \abs{A_{i\cdot}^\top \tilde{\Lambda}_k A_{j\cdot}}\leq  n^{-1/2}(K\log n)^c\cdot \pnorm{ \tilde{\Lambda}_k A_{j\cdot} }{}\\
	&\leq \frac{(K\log n)^c}{n} \big(n^{1/2}\delta_{kj}+1\big) \mathscr{U}_{\{\mathcal{T}_\cdot\};[-\mathcal{P}]}^{[-\mathcal{P}]}\Big(\abs{\mathcal{T}_0},\big(\abs{\mathcal{T}_1}-1,\abs{\mathcal{T}_2}\big)\Big).
	\end{align*}
	Here the last inequality follows by similar arguments to (\ref{ineq:A_cross_ind_quad_1}), now using (\ref{ineq:A_cross_ind_quad_lambda_tilde}) instead of (\ref{ineq:A_cross_ind_quad_lambda}),
	\begin{align*}
	&\pnorm{ \tilde{\Lambda}_k A_{j\cdot} }{}
	 = \bigg\{\sum_{\ell_{r_1} \in [n]\setminus \mathcal{P}} \bigg( \mathscr{A}_{k,(\ell_{r_1},j);[-\mathcal{P}]}^{[-\mathcal{P}]}\big(\mathcal{T}_0,(\mathcal{T}_1^{[)},\mathcal{T}_2)\big) \mathsf{H}_{t_{r_1};\ell_{r_1}}^{[-\mathcal{P}]}  \bigg)^2\bigg\}^{1/2}\\
	&\leq K^c (\delta_{kj}+n^{-1/2})\mathscr{U}_{\{\mathcal{T}_\cdot\};[-\mathcal{P}]}^{[-\mathcal{P}]}\Big(\abs{\mathcal{T}_0},\big(\abs{\mathcal{T}_1}-1,\abs{\mathcal{T}_2}\big)\Big).
	\end{align*}

	\noindent (\textbf{Case 3}). Consider the case $i, j \in [n]\setminus \mathcal{P}$ and $k \in \mathcal{P}$. Then by subgaussian concentration, on an event $\mathscr{E}_3$ with $\Prob(\mathscr{E}_3^c|A_{[-\mathcal{P}]})\leq c e^{-(\log n)^{100}/c}$, 
	\begin{align*}
	&\bigabs{\mathscr{A}_{k,(i,j);[-\mathcal{P}]}^{[-\mathcal{P}]}\big(\mathcal{T}_0,(\mathcal{T}_1,\mathcal{T}_2)\big)}\leq  n^{-1/2}(K\log n)^c\cdot \bigg\{\sum_{\ell_1 \in [n]\setminus \mathcal{P}} \abs{A_{i\cdot}^\top \Lambda_{\ell_1} A_{j\cdot}}^2\bigg\}^{1/2}\\
	&\leq \frac{(K\log n)^c}{n} \big(n^{1/2}\delta_{ij}+1\big) \mathscr{U}_{\{\mathcal{T}_\cdot\};[-\mathcal{P}]}^{[-\mathcal{P}]}\Big(\abs{\mathcal{T}_0}-1,\big(\abs{\mathcal{T}_1},\abs{\mathcal{T}_2}\big)\Big).
	\end{align*}
	Here the last inequality follows as $\sum_{\ell_1 \in [n]\setminus \mathcal{P}}\abs{A_{i\cdot}^\top \Lambda_{\ell_1} A_{j\cdot}}^2$ is equal to 
	\begin{align*}
	& \sum_{\ell_1 \in [n]\setminus \mathcal{P}}\bigg(\sum_{\ell_{r_1},\ell_{r_2} \in [n] \setminus \mathcal{P}} A_{i,\ell_{r_1}}A_{j,\ell_{r_2}} \mathscr{A}_{\ell_1,(\ell_{r_1},\ell_{r_2});[-\mathcal{P}]}^{[-\mathcal{P}]}\big(\mathcal{T}_0^{(]},(\mathcal{T}_1^{[)},\mathcal{T}_2^{[)})\big)\cdot \prod_{s \in \{1,{r_1},{r_2}\}} \mathsf{H}_{t_s;\ell_{s}}^{[-\mathcal{P}]}\bigg)^2\\
	& = \sum_{\ell_1 \in [n]\setminus \mathcal{P}} \bigg(\mathscr{A}_{\ell_1,(i,j);[-\mathcal{P}]}^{[-\mathcal{P}]}\big(\mathcal{T}_0^{(]},(\mathcal{T}_1,\mathcal{T}_2)\big)\cdot \mathsf{H}_{t_1;\ell_{1}}^{[-\mathcal{P}]}\bigg)^2 \\
	&\leq K^c  \big(\delta_{ij}+n^{-1}\big)\cdot \bigg(\mathscr{U}_{\{\mathcal{T}_\cdot\};[-\mathcal{P}]}^{[-\mathcal{P}]}\Big(\abs{\mathcal{T}_0}-1,\big(\abs{\mathcal{T}_1},\abs{\mathcal{T}_2}\big)\Big)\bigg)^2.
	\end{align*}
	Summarizing all the cases to conclude the scenario $\mathcal{T}_0\neq \emptyset$ and $\mathcal{T}_1,\mathcal{T}_2 \neq \emptyset^{[)}$.
	\end{proof}
	
	\begin{proof}[Proof of Lemma \ref{lem:A_cross_ind_quad}: Any of $\mathcal{T}_0\neq \emptyset, \mathcal{T}_1\neq \emptyset^{[)}, \mathcal{T}_2 \neq \emptyset^{[)}$ fails, but $(\mathcal{T}_0,\mathcal{T}_1,\mathcal{T}_2)\neq \big(\emptyset,\emptyset^{[)},\emptyset^{[)}\big)$]
	We have two cases:
	\begin{itemize}
		\item For $\mathcal{T}_0= \emptyset$, $\mathscr{A}_{k,(i,j);[-\mathcal{P}]}^{[-\mathcal{P}]}\big(\mathcal{T}_0,(\mathcal{T}_{1},\mathcal{T}_2)\big)= \mathsf{H}_{\ast;\ast}\cdot \mathscr{A}_{k,i;[-\mathcal{P}]}^{[-\mathcal{P}]}(\mathcal{T}_1)\mathscr{A}_{k,j;[-\mathcal{P}]}^{[-\mathcal{P}]}(\mathcal{T}_2)$, so by Lemma \ref{lem:set_merge}, we have
		\begin{align*}
		&n^{(\abs{\{i,j,k\} }-1)/2}\bigabs{\mathscr{A}_{k,(i,j);[-\mathcal{P}]}^{[-\mathcal{P}]}\big(\mathcal{T}_0,(\mathcal{T}_{1},\mathcal{T}_2)\big)}\\
		&\leq K\cdot n^{(\abs{\{i,k\} }-1)/2}\bigabs{ \mathscr{A}_{k,i;[-\mathcal{P}]}^{[-\mathcal{P}]}(\mathcal{T}_1) }\cdot n^{(\abs{\{j,k\} }-1)/2}\bigabs{ \mathscr{A}_{k,j;[-\mathcal{P}]}^{[-\mathcal{P}]}(\mathcal{T}_2) }.
		\end{align*}
		In this case, $\abs{\mathcal{T}}=\abs{\{\mathcal{T}_1, \mathcal{T}_2\}}$.
		\item For $\mathcal{T}_1 = \emptyset^{[)}$,  $\mathscr{A}_{k,(i,j);[-\mathcal{P}]}^{[-\mathcal{P}]}\big(\mathcal{T}_0,(\mathcal{T}_{1},\mathcal{T}_2)\big)= \mathsf{H}_{\ast;\ast}\cdot \mathscr{A}_{k,i;[-\mathcal{P}]}^{[-\mathcal{P}]}(\mathcal{T}_0^{[)})\mathscr{A}_{i,j;[-\mathcal{P}]}^{[-\mathcal{P}]}(\mathcal{T}_2)$, so again by Lemma \ref{lem:set_merge}, we have
		\begin{align*}
		&n^{(\abs{\{i,j,k\} }-1)/2}\bigabs{\mathscr{A}_{k,(i,j);[-\mathcal{P}]}^{[-\mathcal{P}]}\big(\mathcal{T}_0,(\mathcal{T}_{1},\mathcal{T}_2)\big)}\\
		&\leq K\cdot n^{(\abs{\{i,k\} }-1)/2}\bigabs{ \mathscr{A}_{k,i;[-\mathcal{P}]}^{[-\mathcal{P}]}(\mathcal{T}_0^{[)}) }\cdot n^{(\abs{\{j,k\} }-1)/2}\bigabs{ \mathscr{A}_{i,j;[-\mathcal{P}]}^{[-\mathcal{P}]}(\mathcal{T}_2) }.
		\end{align*}
		In this case, $\abs{\mathcal{T}}=\abs{\{\mathcal{T}_0^{[)}, \mathcal{T}_2\}}$. The same argument applies to the case $\mathcal{T}_2 = \emptyset^{[)}$.
	\end{itemize}
	 For both cases, we are left to consider $\mathscr{A}_{i,j;[-\mathcal{P}]}^{[-\mathcal{P}]}(\mathcal{T})$ for $\abs{\mathcal{T}}\geq 0$ and $(i,j)\notin ([n]\setminus \mathcal{P})^2$, which is identified as $\mathscr{A}_{i,j;[-\mathcal{P}]}^{[-\mathcal{P}]}(\mathcal{T})=\mathscr{A}_{i,(i,j);[-\mathcal{P}]}^{[-\mathcal{P}]}\big(\emptyset,(\emptyset^{[)},\mathcal{T})\big)$. If $\mathcal{T}=\emptyset$, then by definition $\abs{\mathscr{A}_{i,j;[-\mathcal{P}]}^{[-\mathcal{P}]}(\mathcal{T})}\leq Kn^{-1/2}$, so let us consider $\abs{\mathcal{T}}\geq 1$. We may then write $\abs{\mathscr{A}_{i,j;[-\mathcal{P}]}^{[-\mathcal{P}]}(\mathcal{T})}=A_i^\top \Gamma_{\mathcal{T}} A_j$, where $\Gamma_{\mathcal{T}}=\big(\Gamma_{\mathcal{T};(\ell_{r_1},\ell_{r_2})}\big)_{ \ell_{r_1},\ell_{r_2}\in [n]\setminus \mathcal{P}}$ with $
	\Gamma_{\mathcal{T};(\ell_{r_1},\ell_{r_2})} = \mathscr{A}_{\ell_{r_1},\ell_{r_2};[-\mathcal{P}]}^{[-\mathcal{P}]}(\mathcal{T}^{()}) \cdot \prod_{s \in \{{r_1},{r_2}\}} \mathsf{H}_{t_s;\ell_{s}}^{[-\mathcal{P}]}$. 
	From here we may use similar concentration arguments as above using that either $i$ or $j$ belong to $\mathcal{P}$. We omit these repetitive details.
\end{proof}

\subsubsection{Replacing $\mathscr{A}_{\ast;(\ast,\ast);[-\mathcal{P}]}^{[-\mathcal{P}]}(\ast)$  by $\mathscr{A}_{\ast;(\ast,\ast)}^{[-\mathcal{P}]}(\ast)$}
Next we prove that $\mathscr{A}_{k,(i,j)}^{[-\mathcal{P}]}\big(\mathcal{T}_0,(\mathcal{T}_{1},\mathcal{T}_2)\big)$ is typically of the same order as  $\mathscr{A}_{k,(i,j);[-\mathcal{P}]}^{[-\mathcal{P}]}\big(\mathcal{T}_0,(\mathcal{T}_{1},\mathcal{T}_2)\big)$, adjusting the multiplicity of $(i,j,k)$.

\begin{lemma}\label{lem:A_P_cross_quad}
Suppose $A=A_0/\sqrt{n}$, where $A_0$ is symmetric and the entries of its upper triangle are independent, centered random variables. Fix $\mathcal{P}\subset [n]$ and three non-overlapping sets $ \mathcal{T}_0,\mathcal{T}_1,\mathcal{T}_2\subset \N$ of consecutive integers, where $\mathcal{T}_1,\mathcal{T}_2$ are allowed to take $\emptyset^{[)}$. Suppose $\max_{i,j\in [n]}\abs{A_{0,ij}}\vee \max_{s\in \mathcal{T}, \ell \in [n]}\pnorm{\mathsf{H}_{t_s,\ell}}{\infty}\leq K$ for some $K>1$ and $\mathcal{T}\equiv \cup_{\ell \in [0:2]} \mathcal{T}_\ell$. Then there exists a universal constant $c_0>0$ such that the following hold with $\Prob(\cdot|A_{[-\mathcal{P}]})$-probability at least $1- (c_0\abs{\mathcal{P}})^{\abs{\mathcal{T}}_\ast} e^{-(\log n)^{100}/c_0}$: uniformly for all tuples $(i,j,k) \in [n]^3$,
\begin{align*}
&n^{(\abs{\{i,j,k\}}-1)/2} \bigabs{\mathscr{A}_{k,(i,j)}^{[-\mathcal{P}]}\big(\mathcal{T}_0,(\mathcal{T}_{1},\mathcal{T}_2)\big) - \mathscr{A}_{k,(i,j);[-\mathcal{P}]}^{[-\mathcal{P}]}\big(\mathcal{T}_0,(\mathcal{T}_{1},\mathcal{T}_2)\big)}\\
&\leq (\abs{\mathcal{P}} K \log n)^{c_0 \abs{\mathcal{T}}_\ast}\cdot \mathfrak{U}_{\{\mathcal{T}_\cdot\};[-\mathcal{P}]}^{[-\mathcal{P}]}\big(\abs{\mathcal{T}}-1\big).
\end{align*}
\end{lemma}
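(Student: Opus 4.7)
The plan is to establish Lemma~\ref{lem:A_P_cross_quad} by an inclusion--exclusion decomposition along the internal summation indices, reducing the difference to a combinatorial sum of smaller objects that each satisfy the hypothesis of the already-established Lemma~\ref{lem:A_cross_ind_quad}. Concretely, for every intermediate index $\ell_s$ appearing in the definition of $\mathscr{A}_{k,(i,j)}^{[-\mathcal{P}]}(\mathcal{T}_0,(\mathcal{T}_1,\mathcal{T}_2))$, write $\sum_{\ell_s\in[n]}=\sum_{\ell_s\in[n]\setminus\mathcal{P}}+\sum_{\ell_s\in\mathcal{P}}$ and expand the triple-chain $A$-product. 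The ``all indices outside $\mathcal{P}$'' term is exactly $\mathscr{A}_{k,(i,j);[-\mathcal{P}]}^{[-\mathcal{P}]}(\mathcal{T}_0,(\mathcal{T}_1,\mathcal{T}_2))$, so the difference becomes
\[
\sum_{\emptyset\neq S\subset\mathcal{T}}\ \sum_{(\ell_s)_{s\in S}\in\mathcal{P}^{|S|}} \mathrm{Contrib}\big(S,(\ell_s)_{s\in S}\big),
\]
where $S$ records the positions forced into $\mathcal{P}$ and the remaining indices are summed over $[n]\setminus\mathcal{P}$.

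For each fixed configuration $(S,(\ell_s)_{s\in S})$, the forced indices cut the three $A$-chains at the positions in $S$, and the surviving summations factor into a product of smaller objects of the form $\mathscr{A}_{u,v;[-\mathcal{P}]}^{[-\mathcal{P}]}(\mathcal{J})$ or $\mathscr{A}_{u,(v,w);[-\mathcal{P}]}^{[-\mathcal{P}]}(\mathcal{J}_0,(\mathcal{J}_1,\mathcal{J}_2))$, each over a sub-interval $\mathcal{J}$ of one of the $\mathcal{T}_q$, and with the crucial property that at least one of the endpoints $\{u,v,w\}$ is a forced value lying in $\mathcal{P}$. Thus Lemma~\ref{lem:A_cross_ind_quad} applies to every such factor, delivering pointwise bounds of the form
\[
\bigabs{\mathscr{A}^{[-\mathcal{P}]}_{\cdot;[-\mathcal{P}]}(\mathcal{J})} \leq (K\log n)^{c}\cdot n^{-(|\{\text{endpoints}\}|-1)/2}\cdot \mathfrak{U}^{[-\mathcal{P}]}_{\{\mathcal{J}\};[-\mathcal{P}]}(|\mathcal{J}|-1).
\]
A union bound over the $\leq(2|\mathcal{P}|)^{|\mathcal{T}|}$ many configurations yields the stated failure probability $\leq (c_0|\mathcal{P}|)^{|\mathcal{T}|_\ast}e^{-(\log n)^{100}/c_0}$.

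Multiplying across factors, one uses that the disjoint sub-intervals satisfy $\sum|\mathcal{J}|=|\mathcal{T}|-|S|$, so the product of $\mathfrak{U}$-factors is dominated by $\mathfrak{U}^{[-\mathcal{P}]}_{\{\mathcal{T}_\cdot\};[-\mathcal{P}]}(|\mathcal{T}|-1)$, and the combinatorial weight $|\mathcal{P}|^{|S|}$ from summing over forced values is absorbed into the $(\abs{\mathcal{P}}K\log n)^{c_0|\mathcal{T}|_\ast}$ prefactor. This is consistent with the simple case sketched in Step~2 of the outline, where the difference is of order $|\mathcal{P}|\cdot n^{-3/2}$ against the target $n^{-(|\{i,j,k\}|-1)/2}=n^{-1}$ when $|\{i,j,k\}|=3$, leaving a $|\mathcal{P}|n^{-1/2}$ slack that is absorbed into the $\mathfrak{U}$-term.

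The main obstacle is the bookkeeping in the final accounting step: each forced index $\ell_s\in\mathcal{P}$ creates two new $\mathcal{P}$-endpoints (one in each adjacent factor) but contributes only a single combinatorial sum of size $|\mathcal{P}|$, so one must verify that the two $n^{-1/2}$ delocalization factors from the two neighboring applications of Lemma~\ref{lem:A_cross_ind_quad} are correctly absorbed---one of them matches the loss of one $1/\sqrt{n}$-sized $A$-entry at the cut, while the other matches against the target normalization $n^{(|\{i,j,k\}|-1)/2}$. Verifying this matching case-by-case (distinguishing whether $i,j,k$ are distinct and which of the $\mathcal{T}_q$'s the forced position lies in) is routine but delicate, and it is this combinatorial check, rather than the probabilistic estimate itself, that drives the final exponent $c_0|\mathcal{T}|_\ast$ appearing in the bound.
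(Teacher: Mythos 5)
Your proposal follows essentially the same route as the paper: decompose the difference by inclusion--exclusion on which intermediate summation indices are forced into $\mathcal{P}$, factor the resulting chain into smaller $\mathscr{A}$-objects each carrying at least one $\mathcal{P}$-endpoint, apply Lemma~\ref{lem:A_cross_ind_quad} to each, and absorb the combinatorial weight of forced positions via a union bound; the paper carries out exactly this plan, organizing the "routine but delicate" bookkeeping you flag through the $\mathscr{B},\mathscr{C},\mathscr{D}$ remainder decomposition, the $(I)/(II)$ junction split, and the consecutive-integer-set $(\mathcal{V},\mathcal{U})$ representation together with the counting Lemmas~\ref{lem:set_merge} and \ref{lem:count_total_num}.
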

\begin{proof}[Proof of Lemma \ref{lem:A_P_cross_quad}: $\mathcal{T}_0\neq \emptyset$ and $\mathcal{T}_1,\mathcal{T}_2 \neq \emptyset^{[)}$]
Without loss of generality, we assume that $\mathcal{T}_0\equiv [r_0]$, $\mathcal{T}_1\equiv [r_0+1:r_1]$ and $\mathcal{T}_2\equiv [r_1+1:r_2]$. Then 
\begin{align}\label{ineq:A_P_cross_quad_1}
&\mathscr{A}_{k,(i,j)}^{[-\mathcal{P}]}\big(\mathcal{T}_0,(\mathcal{T}_{1},\mathcal{T}_2)\big) - \mathscr{A}_{k,(i,j);[-\mathcal{P}]}^{[-\mathcal{P}]}\big(\mathcal{T}_0,(\mathcal{T}_{1},\mathcal{T}_2)\big)\\
 &\quad = \sum_{\substack{\mathscr{B},\mathscr{C},\mathscr{D} \in \{\mathscr{A},\mathscr{R}\},\\ (\mathscr{B},\mathscr{C},\mathscr{D})\neq (\mathscr{A},\mathscr{A},\mathscr{A})} } \sum_{\ell_{\mathcal{T}_0^+} \in [n]} \mathscr{B}_{k,\ell_{\mathcal{T}_0^+};[-\mathcal{P}]}^{[-\mathcal{P}]}(\mathcal{T}_0^{[)}) \mathsf{H}_{t_{\mathcal{T}_0^+};\ell_{\mathcal{T}_0^+}}\cdot  \mathscr{C}_{\ell_{\mathcal{T}_0^+}, i;[-\mathcal{P}]}^{[-\mathcal{P}]}(\mathcal{T}_1) \mathscr{D}_{\ell_{\mathcal{T}_0^+}, j;[-\mathcal{P}]}^{[-\mathcal{P}]}(\mathcal{T}_2),\nonumber
\end{align}
and we denote the terms on the right hand side of the above display as $\Delta_{(i,j,k)}$. Here for $k,\ell \in [n]$ and any set ${\mathcal{I}}$ of consecutive integers,
\begin{align*}
\mathscr{R}_{k,\ell;[-\mathcal{P}]}^{[-\mathcal{P}]}({\mathcal{I}})& \equiv \mathscr{A}_{k,\ell}^{[-\mathcal{P}]}({\mathcal{I}})-\mathscr{A}_{k,\ell;[-\mathcal{P}]}^{[-\mathcal{P}]}({\mathcal{I}}) = \sum_{\mathcal{Q}\subsetneq {\mathcal{I}} } \sum_{\ell_{\mathcal{Q}}\in [n]\setminus \mathcal{P}}\sum_{\ell_{\mathcal{I}\setminus \mathcal{Q}} \in \mathcal{P} } A_{k,[\ell_{\mathcal{I}}],\ell} \prod_{s \in \mathcal{I} } \mathsf{H}_{t_s;\ell_s}^{[-\mathcal{P}]}.
\end{align*}
Consequently, each term in  $\sum_{(\mathscr{B},\mathscr{C},\mathscr{D})}$ in (\ref{ineq:A_P_cross_quad_1}) takes the following form: for some  $\mathcal{Q}_0\subset \mathcal{T}_0^{[)},\mathcal{Q}_1\subset \mathcal{T}_1,\mathcal{Q}_2 \subset \mathcal{T}_2$, one of which being a proper subset, 
\begin{align}\label{ineq:A_P_cross_quad_2}
&\sum_{\ell_{\mathcal{T}_0^+} \in [n] }\sum_{ \ell_{\mathcal{T}_0^{[)}\setminus \mathcal{Q}_0}, \ell_{\mathcal{T}_{[2]}\setminus \mathcal{Q}_{[2]}} \in \mathcal{P} } \sum_{\ell_{\mathcal{Q}_{[0:2]}}\in [n]\setminus \mathcal{P}} A_{k,[\ell_{\mathcal{T}_0^{[)}}],\ell_{\mathcal{T}_0^+}} A_{\ell_{\mathcal{T}_0^+,[\ell_{\mathcal{T}_1}],i}} A_{\ell_{\mathcal{T}_0^+,[\ell_{\mathcal{T}_2}],j}}   \prod_{s \in \mathcal{T} } \mathsf{H}_{t_s;\ell_s}^{[-\mathcal{P}]}\nonumber\\
&= \bigg(  \sum_{ \ell_{\mathcal{T}_0^{[)}\setminus \mathcal{Q}_0}, \ell_{\mathcal{T}_{[2]}\setminus \mathcal{Q}_{[2]}} \in \mathcal{P} }\sum_{\ell_{\mathcal{T}_0^+}, \ell_{\mathcal{Q}_{[0:2]}}\in [n]\setminus \mathcal{P}}+   \sum_{\ell_{\mathcal{T}_0^+}, \ell_{\mathcal{T}_0^{[)}\setminus \mathcal{Q}_0}, \ell_{\mathcal{T}_{[2]}\setminus \mathcal{Q}_{[2]}} \in \mathcal{P} }\sum_{ \ell_{\mathcal{Q}_{[0:2]}}\in [n]\setminus \mathcal{P}}\bigg)(\cdots)\nonumber\\
&\equiv (I)_{i,j,k;\mathcal{Q}_{[0:2]}}+ (II)_{i,j,k;\mathcal{Q}_{[0:2]}}.
\end{align}
For $q\in [0:2]$, let
$\mathcal{Q}_q=\cup_{w \in [N_{\mathcal{Q}_q}]} \mathcal{U}^{(q)}_w$ be a consecutive-integer-set representation  of $\mathcal{Q}_q$. We then have the following consecutive-integer-set representation of $\mathcal{T}$:
\begin{align}\label{ineq:A_P_cross_quad_cir}
&\mathcal{V}_1^{(0)},\mathcal{U}_1^{(0)},\ldots, \mathcal{V}_{N_{\mathcal{Q}_0}}^{(0)},\mathcal{U}_{N_{\mathcal{Q}_0}}^{(0)}, \mathcal{V}_{N_{\mathcal{Q}_0}+1}^{(0)}, \mathcal{T}_0^+, \nonumber \\
&\mathcal{V}_1^{(1)},\mathcal{U}_1^{(1)},\ldots, \mathcal{V}_{N_{\mathcal{Q}_1}}^{(1)},\mathcal{U}_{N_{\mathcal{Q}_1}}^{(1)}, \mathcal{V}_{N_{\mathcal{Q}_1}+1}^{(1)},\nonumber\\
&\mathcal{V}_1^{(2)},\mathcal{U}_1^{(2)},\ldots, \mathcal{V}_{N_{\mathcal{Q}_2}}^{(2)},\mathcal{U}_{N_{\mathcal{Q}_2}}^{(2)}, \mathcal{V}_{N_{\mathcal{Q}_2}+1}^{(2)}.
\end{align}
Then the sums are over $[n]\setminus \mathcal{P}$ for indices in $\mathcal{U}_\cdot^{(\cdot)}$, and over $\mathcal{P}$ for indices in $\mathcal{V}_\cdot^{(\cdot)}$.

Let us now consider $(I)_{i,j,k;\mathcal{Q}_{[0:2]}}$, where the sum is over $[n]\setminus \mathcal{P}$ for $\mathcal{T}_0^+$. For each term in the summation over $\{\mathcal{V}_{\cdot}^{(\cdot)}\}$, we search factors in the following order: 
\begin{enumerate}
	\item $ A_{[\ell_{ \mathcal{J} }]}$ where for $\mathcal{J}=\emptyset$, it is understood $ A_{[\ell_{ \mathcal{J} }]}=1$;
	\item  $\mathscr{A}_{-,+;[-\mathcal{P}]}^{[-\mathcal{P}]}(\mathcal{U}_\cdot^{(\cdot)})$ with indices $-,+\notin ([n]\setminus\mathcal{P})^2$;
	\item  $\mathscr{A}_{*,(-,+);[-\mathcal{P}]}^{[-\mathcal{P}]}(\mathcal{I},(\cdot,\cdot))$ for some $\mathcal{I}$ with $\mathcal{I}^+=\mathcal{T}_0^+$, and $(\ast,-,+)\notin ([n]\setminus \mathcal{P})^3$. 
\end{enumerate}
We set the multiplicity of the factor in (1) as $\abs{\mathcal{J}}$, the factor in (2) as $\abs{\{-,+\}}-1$, and the factor in (3) as $\abs{\{\ast,-,+\}}-1$. The search for each term in the summation over $\{\mathcal{V}_{\cdot}^{(\cdot)}\}$ terminates with multiplicity $\geq\abs{\{i,j,k\}}-1$. Now an application of Lemma \ref{lem:A_cross_ind_quad} above shows that on an event $\mathscr{E}_{1,\mathcal{Q}_{[0:2]}}$ with $\Prob(\mathscr{E}_{1,\mathcal{Q}_{[0:2]}}|A_{[-\mathcal{P}]})\leq (c \abs{\mathcal{P}})^{r_2}  e^{-(\log n)^{100}/c}$, uniformly in $(i,j,k) \in [n]^3$,
\begin{align}\label{ineq:A_P_cross_quad_3}
n^{(\abs{\{i,j,k\}}-1)/2}\abs{(I)_{i,j,k;\mathcal{Q}_{[0:2]}}} \leq (\abs{\mathcal{P}} K \log n)^{c r_2}\cdot \mathfrak{U}_{\{\mathcal{T}_\cdot\};[-\mathcal{P}]}^{[-\mathcal{P}]}\big(\abs{\mathcal{T}}-1\big).
\end{align}
Let us now consider $(II)_{i,j,k;\mathcal{Q}_{[0:2]}}$, where the sum is over $\mathcal{P}$ for $\mathcal{T}_0^+$. Using a completely similar argument as above (but without the case (3)), on an event $\mathscr{E}_{2,\mathcal{Q}_{[0:2]}}$ with $\Prob(\mathscr{E}_{2,\mathcal{Q}_{[0:2]} }|A_{[-\mathcal{P}]})\leq  (c \abs{\mathcal{P}})^{r_2}  e^{-(\log n)^{100}/c}$, uniformly in $(i,j,k) \in [n]^3$,
\begin{align}\label{ineq:A_P_cross_quad_4}
n^{(\abs{\{i,j,k\}}-1)/2}\abs{(II)_{i,j,k;\mathcal{Q}_{[0:2]}}} \leq (\abs{\mathcal{P}} K \log n)^{c r_2}\cdot \mathfrak{U}_{\{\mathcal{T}_\cdot\};[-\mathcal{P}]}^{[-\mathcal{P}]}\big(\abs{\mathcal{T}}-1\big).
\end{align}
Consequently, combining (\ref{ineq:A_P_cross_quad_2})-(\ref{ineq:A_P_cross_quad_4}), on the event $E_0\equiv \cap_{\mathcal{Q}_{[0:2]}} \big(\mathscr{E}_{1,\mathcal{Q}_{[0:2]}}\cap \mathscr{E}_{2,\mathcal{Q}_{[0:2]}}\big)$, where the outside intersection is taken over $\mathcal{Q}_0\subset \mathcal{T}_0^{[)},\mathcal{Q}_1\subset \mathcal{T}_1,\mathcal{Q}_2 \subset \mathcal{T}_2$ with at least one of them being proper, we have uniformly in $(i,j,k) \in [n]^3$
\begin{align}\label{ineq:A_P_cross_quad_5}
 n^{(\abs{\{i,j,k\}}-1)/2}\abs{\Delta_{2,(i,j,k)}}\leq (\abs{\mathcal{P}} K\log n)^{c_0 r_2}\cdot \mathfrak{U}_{\{\mathcal{T}_\cdot\};[-\mathcal{P}]}^{[-\mathcal{P}]}\big(\abs{\mathcal{T}}-1\big).
\end{align}
The claim for $\mathcal{T}_0\neq \emptyset$ and $\mathcal{T}_1,\mathcal{T}_2 \neq \emptyset^{[)}$ follows by combining (\ref{ineq:A_P_cross_quad_1}) and (\ref{ineq:A_P_cross_quad_5}), upon noting that $\Prob(E_0^c|A_{[-\mathcal{P}]})\leq   (c \abs{\mathcal{P}})^{r_2}  e^{-(\log n)^{100}}$. 
\end{proof}
\begin{proof}[Proof of Lemma \ref{lem:A_P_cross_quad}: Any of $\mathcal{T}_0\neq \emptyset, \mathcal{T}_1\neq \emptyset^{[)}, \mathcal{T}_2 \neq \emptyset^{[)}$ fails, but $(\mathcal{T}_0,\mathcal{T}_1,\mathcal{T}_2)\neq \big(\emptyset,\emptyset^{[)},\emptyset^{[)}\big)$]
 It suffices to consider 
\begin{align*}
n^{(\abs{\{i,j\}}-1)/2} \bigabs{\mathscr{A}_{i,j}^{[-\mathcal{P}]}\big(\mathcal{T}\big) - \mathscr{A}_{i,j;[-\mathcal{P}]}^{[-\mathcal{P}]}\big(\mathcal{T}\big)}=n^{(\abs{\{i,j\}}-1)/2} \bigabs{\mathscr{R}_{k,\ell;[-\mathcal{P}]}^{[-\mathcal{P}]}({\mathcal{T}})},
\end{align*}
which can be handled similar to $(II)_{i,j,k;\mathcal{Q}_{[0:2]}}$ in the above proof. Details are omitted.
\end{proof}

\subsubsection{Proof of Proposition \ref{prop:A_hprod_quad}: replacing $\mathscr{A}_{\ast;(\ast,\ast)}^{[-\mathcal{P}]}(\ast)$ by $\mathscr{A}_{\ast;(\ast,\ast)}(\ast)$ }

	Throughout we shall work on the event $\mathscr{E}_{\mathcal{T}}(L)\cap \mathscr{E}_{\mathsf{H}}(L)$.
	
	\noindent (\textbf{Step 1}). We claim that
	on an event $E\cap \mathscr{E}_{\mathcal{T}}(L)\cap \mathscr{E}_{\mathsf{H}}(L)$ with $\Prob(E^c)\leq c_0^{r}  e^{-(\log n)^{100}}$, for any sets of consecutive integers $\mathcal{T}_0,\mathcal{T}_1,\mathcal{T}_2\subset \mathbb{N}$ where $\mathcal{T}_1,\mathcal{T}_2$ may take $\emptyset^{[)}$ and $\abs{\mathcal{T}_0\cup \mathcal{T}_1\cup \mathcal{T}_2}\leq r$,
	\begin{align}\label{ineq:A_hprod_quad_claim}
	&\max_{i,j,k \in [n]} n^{(\abs{\{i,j,k\}}-1)/2} \bigabs{\mathscr{A}_{k,(i,j)}\big(\mathcal{T}_0,(\mathcal{T}_{1},\mathcal{T}_2)\big) }\nonumber\\
	&\leq \big(K L \log n\big)^{c_0 r} \cdot \max_{\substack{\mathcal{P}\subset [n], \abs{\mathcal{P}}\leq 3 }}\mathfrak{U}_{\{\mathcal{T}_\cdot\};[-\mathcal{P}]}^{[-\mathcal{P}]}\big(\abs{\mathcal{T}}-1\big).
	\end{align}
	
	\noindent (\textbf{Step 1-(a)}). In this step we consider the scenario that $\mathcal{T}_0\neq \emptyset$ and $\mathcal{T}_1,\mathcal{T}_2 \neq \emptyset^{[)}$. 
	Without loss of generality, we assume that $\mathcal{T}_0\equiv [r_0]$, $\mathcal{T}_1\equiv [r_0+1:r_1]$ and $\mathcal{T}_2\equiv [r_1+1:r_2]$. Fix a generic set $\mathcal{P}\subset [n]$ to be chosen later. Let $\Delta \mathsf{H}_{t_s;\ell_s}\equiv \mathsf{H}_{t_s}\big(z^{(t_s)}_{\ell_s}\big)-\mathsf{H}_{t_s}\big(z^{(t_s)}_{[-\mathcal{P}],\ell_s}\big)$. Note that
	\begin{align}\label{ineq:A_hprod_quad_1}
	&\mathscr{A}_{k,(i,j)}\big(\mathcal{T}_0,(\mathcal{T}_1,\mathcal{T}_2)\big)-\mathscr{A}_{k,(i,j)}^{[-\mathcal{P}]}\big(\mathcal{T}_0,(\mathcal{T}_1,\mathcal{T}_2)\big)\nonumber\\
	& =  \sum_{\ell_{\mathcal{T}} \in [n]} A_{k,[\ell_{\mathcal{T}_0^{[)}}], \ell_{\mathcal{T}_0^+}} A_{\ell_{\mathcal{T}_0^+}, [\ell_{\mathcal{T}_1}], i}A_{\ell_{\mathcal{T}_0^+}, [\ell_{\mathcal{T}_2}], j}  \cdot\bigg(\sum_{ \mathcal{Q}_{[0:2]}\subset \mathcal{T}\setminus \{\mathcal{T}_0^+\} }\prod_{s \in \mathcal{Q}_{[0:2]} } \mathsf{H}_{t_s;\ell_s}^{[-\mathcal{P}]} \prod_{s \in \mathcal{T}\setminus \mathcal{Q}_{[0:2]} } \Delta \mathsf{H}_{t_s;\ell_s}\nonumber\\
	&\qquad + \sum_{ \mathcal{Q}_{[0:2]}\subsetneq \mathcal{T}\setminus \{\mathcal{T}_0^+\}  }\prod_{s \in \overline{\mathcal{Q}}_{[0:2]} } \mathsf{H}_{t_s;\ell_s}^{[-\mathcal{P}]} \prod_{s \in \mathcal{T}\setminus \overline{\mathcal{Q}}_{[0:2]} } \Delta \mathsf{H}_{t_s;\ell_s} \bigg),
	\end{align}
	where the first summation $\sum_{ \mathcal{Q}_{[0:2]}\subset \mathcal{T}\setminus \{\mathcal{T}_0^+\}  }$ runs over $\mathcal{Q}_0\subset \mathcal{T}_0^{[)},\mathcal{Q}_1\subset \mathcal{T}_1,\mathcal{Q}_2 \subset \mathcal{T}_2$, the second $\sum_{ \mathcal{Q}_{[0:2]}\subsetneq \mathcal{T}\setminus \{\mathcal{T}_0^+\}  }$ with at least one of them being proper, and $\overline{\mathcal{Q}}_{[0:2]}\equiv \mathcal{Q}_{[0:2]}\cup \{\mathcal{T}_0^+\}$. We write the right hand side of the above display as $\sum_{ \mathcal{Q}_{[0:2]}\subset \mathcal{T}\setminus \{\mathcal{T}_0^+\}   }\Delta_{(i,j,k); \mathcal{Q}_{[0:2]}}+\sum_{ \mathcal{Q}_{[0:2]}\subsetneq \mathcal{T}\setminus \{\mathcal{T}_0^+\}   } {\Delta}_{(i,j,k); \overline{\mathcal{Q}}_{[0:2]}} $. 
	
	\noindent (\textbf{Term ${\Delta}_{(i,j,k); \overline{\mathcal{Q}}_{[0:2]}}$}). For each such configuration $\mathcal{Q}_{[0:2]}\subsetneq \mathcal{T}\setminus \{\mathcal{T}_0^+\} $, 
	\begin{align}\label{ineq:A_hprod_quad_2}
	&\abs{{\Delta}_{(i,j,k);\overline{\mathcal{Q}}_{[0:2]}}}\\
	&=\biggabs{ \sum_{\ell_{\mathcal{T}\setminus \overline{\mathcal{Q}}_{[0:2]}} \in [n]} \prod_{s \in \mathcal{T}\setminus \overline{\mathcal{Q}}_{[0:2]} } \Delta \mathsf{H}_{t_s;\ell_s} \bigg(\sum_{\ell_{\overline{\mathcal{Q}}_{[0:2]}} \in [n]  } A_{k,[\ell_{\mathcal{T}_0^{[)}}], \ell_{\mathcal{T}_0^+}} A_{\ell_{\mathcal{T}_0^+}, [\ell_{\mathcal{T}_1}], i}A_{\ell_{\mathcal{T}_0^+}, [\ell_{\mathcal{T}_2}], j} \prod_{s \in \overline{\mathcal{Q}}_{[0:2]} } \mathsf{H}_{t_s;\ell_s}^{[-\mathcal{P}]} \bigg)}\nonumber\\
	&\leq (KL)^{r_2}\bigg\{\sum_{\ell_{\mathcal{T}\setminus \overline{\mathcal{Q}}_{[0:2]}} \in [n]} \bigg(\sum_{\ell_{\overline{\mathcal{Q}}_{[0:2]} }\in [n]  } A_{k,[\ell_{\mathcal{T}_0^{[)}}], \ell_{\mathcal{T}_0^+}} A_{\ell_{\mathcal{T}_0^+}, [\ell_{\mathcal{T}_1}], i}A_{\ell_{\mathcal{T}_0^+}, [\ell_{\mathcal{T}_2}], j} \prod_{s \in \overline{\mathcal{Q}}_{[0:2]} } \mathsf{H}_{t_s;\ell_s}^{[-\mathcal{P}]} \bigg)^2\bigg\}^{1/2},\nonumber
	\end{align}
	where the last line follows from an application of Cauchy-Schwarz inequality over $\sum_{\ell_{\mathcal{T}\setminus \overline{\mathcal{Q}}_{[0:2]}} \in [n]}$, and the estimate $\sum_{\ell_{\mathcal{T}\setminus \overline{\mathcal{Q}}_{[0:2]}}\in [n]} \prod_{s \in \mathcal{T}\setminus \overline{\mathcal{Q}}_{[0:2]}} \big(\Delta \mathsf{H}_{t_s;\ell_s}\big)^2=\prod_{s \in \mathcal{T}\setminus \overline{\mathcal{Q}}_{[0:2]} }\big(\sum_{\ell_s \in [n]} (\Delta \mathsf{H}_{t_s;\ell_s})^2\big)\leq \prod_{ s \in \mathcal{T}\setminus \overline{\mathcal{Q}}_{[0:2]}} K^2 \pnorm{z^{(t_s)}-z^{(t_s)}_{[-\mathcal{P}]}}{}^2\leq (K L)^{2r_2}$.

	Consider the same consecutive-integer representation for $\mathcal{Q}_{[0:2]}$ and $\mathcal{T}$ as in (\ref{ineq:A_P_cross_quad_cir}). Let $\mathcal{V}^{(q)}\equiv \cup_{w_q \in [N_{\mathcal{Q}_q}+1]} \mathcal{V}_{w_q}^{(q)}$ collect all words in $\mathcal{V}_\cdot^{(q)}$ and let $\mathscr{P}_2(\mathcal{V}^{(q)})$ collect all pairs $(v_\alpha,v_\beta)$ of adjacent words in $\mathcal{V}^{(q)}$. We may then write the summation term in (\ref{ineq:A_hprod_quad_2}) in the following form: 
	{\small
	\begin{align}\label{ineq:A_hprod_quad_3}
	&\bigg\{\sum_{\ell_{\mathcal{T}\setminus \overline{\mathcal{Q}}_{[0:2]}} \in [n]} \mathfrak{a}^{\ast,2}_{k,\ell_{ (\mathcal{V}^{(0)})^-}  }       \mathscr{A}^{2,[-\mathcal{P}]}_{\ell_{ (\mathcal{V}^{(0)})^+}, (\ell_{ (\mathcal{V}^{(1)})^-}, \ell_{ (\mathcal{V}^{(2)})^-} ) }(*)  \cdot \mathfrak{a}^{\ast,2}_{\ell_{ (\mathcal{V}^{(1)})^+},i } \mathfrak{a}^{\ast,2}_{\ell_{ (\mathcal{V}^{(2)})^+},j }    \prod_{\substack{q \in [0:2],\\(\alpha,\beta) \in \mathscr{P}_2(\mathcal{V}^{(q)})}} \mathfrak{a}_{\alpha,\beta}^{\ast,2}  \bigg\}^{1/2},
	\end{align}
}
	where each $\mathfrak{a}_{\alpha,\beta}^\ast \equiv \mathscr{A}_{\alpha,\beta}^{[-\mathcal{P}]}(\mathcal{I}_*)$ has non-overlapping index set $\mathcal{I}_*$. When $\mathcal{V}^{(\cdot)}=\emptyset$, we set $\ell_{ (\mathcal{V}^{(\cdot)})^-}=\ell_{ (\mathcal{V}^{(\cdot)})^+}$ and $\mathfrak{a}^{\ast}_{\#,\ell_{ (\mathcal{V}^{(\cdot)})^-}  }\equiv \delta_{ \#,\ell_{ (\mathcal{V}^{(\cdot)})^-} }$, $\mathfrak{a}^{\ast}_{\ell_{ (\mathcal{V}^{(\cdot)})^+}, \#}\equiv \delta_{\ell_{ (\mathcal{V}^{(\cdot)})^+}, \#}$.
	
	Note that (i) the total size for the products in each term of $\sum_{\ell_{\mathcal{T}\setminus \overline{\mathcal{Q}}_{[0:2]}} \in [n]}$ in (\ref{ineq:A_hprod_quad_3}) does not exceed $\abs{\mathcal{T}}-1$ because $\mathcal{Q}_{[0:2]}\subsetneq \mathcal{T}\setminus \{\mathcal{T}_0^+\} $ and therefore $\overline{\mathcal{Q}}_{[0:2]}\subsetneq \mathcal{T}$; (ii) the number of products in each term in the summation $\sum_{\ell_{\mathcal{T}\setminus \overline{\mathcal{Q}}_{[0:2]}} \in [n]}$ in the above display (\ref{ineq:A_hprod_quad_3}) is $\leq \abs{\mathcal{T}\setminus \overline{\mathcal{Q}}_{[0:2]}}+1$. Now using first Lemma \ref{lem:A_P_cross_quad} for each of these terms to replace $\mathscr{A}^{[-\mathcal{P}]}_{\ast,\ast}$ and $\mathscr{A}^{[-\mathcal{P}]}_{\ast,(\ast,\ast)}$ by $\mathscr{A}^{[-\mathcal{P}]}_{\ast,\ast;[-\mathcal{P}]}$ and $\mathscr{A}^{[-\mathcal{P}]}_{\ast,(\ast,\ast);[-\mathcal{P}]}$, and then Lemma \ref{lem:A_cross_ind_quad} for terms with subscript indices in $\mathcal{P}$ (so with further reduction in size), it follows on an event $\mathscr{E}_{(i,j,k);\overline{\mathcal{Q}}_{[0:2]}}$ with $\Prob(\mathscr{E}_{(i,j,k);\overline{\mathcal{Q}}_{[0:2]}}^c|A_{[-\mathcal{P}]})\leq (c\abs{\mathcal{P}})^{r_2}  e^{-(\log n)^{100}/c}$,
	\begin{align}\label{ineq:A_hprod_quad_4}
	(\ref{ineq:A_hprod_quad_3})\leq (\abs{\mathcal{P}} K \log n)^{c r_2}\cdot \mathfrak{U}_{\{\mathcal{T}_\cdot\};[-\mathcal{P}]}^{[-\mathcal{P}]}\big(\abs{\mathcal{T}}-1\big)\cdot\bigg(\sum_{\ell_{\mathcal{T}\setminus \overline{\mathcal{Q}}_{[0:2]}} \in [n]} n^{-\mathcal{G}(\ell_{\mathcal{T}\setminus \overline{\mathcal{Q}}_{[0:2]}})}\bigg)^{1/2},
	\end{align}
	where for $\mathcal{V}^{(\cdot)}$ all being non-empty,
	\begin{align}\label{ineq:A_hprod_quad_G_count}
	\mathcal{G}(\ell_{\mathcal{T}\setminus\overline{\mathcal{Q}}_{[0:2]}})&\equiv \abs{\{k,\ell_{ (\mathcal{V}^{(0)})^-}  \}}-1 +\abs{\{\ell_{ (\mathcal{V}^{(1)})^+},i \}}-1+\abs{\{\ell_{ (\mathcal{V}^{(2)})^+},j \}}-1\nonumber\\
	&\quad +  \sum_{\substack{q \in [0:2], (\alpha,\beta) \in \mathscr{P}_2(\mathcal{V}^{(q)})}} \big(\abs{\{ \ell_{\alpha}, \ell_{\beta} \}}-1\big)+ \abs{\{\ell_{ (\mathcal{V}^{(0)})^+}, \ell_{ (\mathcal{V}^{(1)})^-}, \ell_{ (\mathcal{V}^{(2)})^-} \}}-1\nonumber\\
	&\geq \bigabs{\big\{k,\ell_{\mathcal{V}^{(0)}},\ell_{ \mathcal{V}^{(1)}},\ell_{\mathcal{V}^{(2)}},i,j\big\}}-1.
	\end{align}
	Here the last inequality follows by repeatedly applying Lemma \ref{lem:set_merge}. When some of $\mathcal{V}^{(\cdot)}=\emptyset$, an easy modification leads to the same lower bound for $\mathcal{G}(\ell_{\mathcal{T}\setminus\overline{\mathcal{Q}}_{[0:2]}})$ as above. So with $V\equiv \sum_{q\in[0:2]}\sum_{w_q \in [N_{\mathcal{Q}_q}+1]} \abs{\mathcal{V}_{w_q}^{(q)} }$ (assumed to be $\geq 1$ without loss of generality) and using Lemma \ref{lem:count_total_num}, we have 
	\begin{align}\label{ineq:A_hprod_quad_4_0}
	\sum_{\ell_{\mathcal{T}\setminus \overline{\mathcal{Q}}_{[0:2]}} \in [n]} n^{-\mathcal{G}(\ell_{\mathcal{T}\setminus \overline{\mathcal{Q}}_{[0:2]}})} \leq \sum_{\ell_{[V]}\in [n]} n^{- \abs{ \{i,j,k, \ell_{[V]}\}  }+1 } \leq  (6V)^{V+1} n^{-\abs{\{i,j,k\} }+1}.
	\end{align}
	So using the trivial bound $(6V)^{V+1}\leq (6r_2)^{r_2+1}$, (\ref{ineq:A_hprod_quad_4}) yields that on $\mathscr{E}_{(i,j,k);\overline{\mathcal{Q}}_{[0:2]}}$,
	\begin{align*}
	n^{(\abs{\{i,j,k\}}-1)/2}\cdot (\ref{ineq:A_hprod_quad_3})\leq \big(\abs{\mathcal{P}} K r_2 \log n\big)^{c r_2}\cdot  \mathfrak{U}_{\{\mathcal{T}_\cdot\};[-\mathcal{P}]}^{[-\mathcal{P}]}\big(\abs{\mathcal{T}}-1\big).
	\end{align*}
	Now in view of (\ref{ineq:A_hprod_quad_2}), on the event $\overline{E}_0\cap \mathscr{E}_{\mathcal{T}}(L)\cap \mathscr{E}_{\mathsf{H}}(L)$, where $\overline{E}_0\equiv \cap_{i,j,k\in[n]}\cap_{ \mathcal{Q}_{[0:2]}\subsetneq \mathcal{T}\setminus \{\mathcal{T}_0^+\}  }\mathscr{E}_{(i,j,k);\overline{\mathcal{Q}}_{[0:2]}}$ with $\Prob(\overline{E}_0^c|A_{[-\mathcal{P}]})\leq  (c\abs{\mathcal{P}})^{r_2}  e^{-(\log n)^{100}/c}$, we have uniformly in $i,j,k \in [n]$,
	{\small
	\begin{align}\label{ineq:A_hprod_quad_delta_bar}
	&n^{(\abs{\{i,j,k\}}-1)/2}\biggabs{\sum_{ \mathcal{Q}_{[0:2] }\subsetneq \mathcal{T}\setminus \{\mathcal{T}_0^+\}  } \Delta_{(i,j,k); \overline{\mathcal{Q}}_{[0:2]}} }\leq \big( \abs{\mathcal{P}} K L  r_2\log n\big)^{c r_2}\cdot \mathfrak{U}_{\{\mathcal{T}_\cdot\};[-\mathcal{P}]}^{[-\mathcal{P}]}\big(\abs{\mathcal{T}}-1\big).
	\end{align}
}
	
	\noindent (\textbf{Term ${\Delta}_{(i,j,k); \mathcal{Q}_{[0:2]} }$}). Note (\ref{ineq:A_hprod_quad_2}) holds formally by replacing $\overline{\mathcal{Q}}_{[0:2]}$ via ${\mathcal{Q}}_{[0:2]}$. We may write the summation term in this revised form of (\ref{ineq:A_hprod_quad_2}) in the form
	{\small\begin{align}\label{ineq:A_hprod_quad_3_1}
	&\bigg\{\sum_{\ell_{\mathcal{T}\setminus {\mathcal{Q}}_{[0:2]}} \in [n]} \mathfrak{a}^{\ast,2}_{k,\ell_{ (\mathcal{V}^{(0)})^-}  }         \bigg(\mathfrak{a}^{\ast,2}_{\ell_{ (\mathcal{V}^{(0)})^+},\ell_{\mathcal{T}_0^+}  } \mathfrak{a}^{\ast,2}_{\ell_{\mathcal{T}_0^+},  \ell_{ (\mathcal{V}^{(1)})^-}}  \mathfrak{a}^{\ast,2}_{\ell_{\mathcal{T}_0^+},  \ell_{ (\mathcal{V}^{(2)})^-}}\bigg)   \mathfrak{a}^{\ast,2}_{\ell_{ (\mathcal{V}^{(1)})^+},i }  \mathfrak{a}^{\ast,2}_{\ell_{ (\mathcal{V}^{(2)})^+},j } \prod_{\substack{q \in [0:2],\\(\alpha,\beta) \in \mathscr{P}_2(\mathcal{V}^{(q)})}} \mathfrak{a}_{\alpha,\beta}^{\ast,2}  \bigg\}^{1/2}.
	\end{align}}	
	When some of $\mathcal{V}^{(\cdot)}=\emptyset$, we follow the same notational convention as explained after (\ref{ineq:A_hprod_quad_3}). Using Lemma \ref{lem:A_P_cross_quad} to replace $\mathscr{A}_{\ast,\ast}^{[-\mathcal{P}]}$ by $\mathscr{A}_{\ast,\ast; [-\mathcal{P}]}^{[-\mathcal{P}]}$ (with total size $\leq \abs{\mathcal{T}}-1$ due to $\mathcal{Q}_{[0,2]}\subset \mathcal{T}\setminus \{\mathcal{T}_0^+\}$), followed by Lemma \ref{lem:A_cross_ind_quad} for those terms with subscript indices in $\mathcal{P}$ for a further reduction in size, on an event $\mathscr{E}_{(i,j,k);{\mathcal{Q}}_{[0:2]}}$ with $\Prob(\mathscr{E}_{(i,j,k);{\mathcal{Q}}_{[0:2]}}^c|A_{[-\mathcal{P}]})\leq (c\abs{\mathcal{P}})^{r_2} e^{-(\log n)^{100}/c}$, 
	\begin{align}\label{ineq:A_hprod_quad_4_1}
	(\ref{ineq:A_hprod_quad_3_1})\leq (\abs{\mathcal{P}} K \log n)^{c r_2}\cdot \mathfrak{U}_{\{\mathcal{T}_\cdot\};[-\mathcal{P}]}^{[-\mathcal{P}]}\big(\abs{\mathcal{T}}-1\big)\cdot \bigg(\sum_{\ell_{\mathcal{T}\setminus {\mathcal{Q}}_{[0:2]}} \in [n]} n^{-\mathcal{G}(\ell_{\mathcal{T}\setminus {\mathcal{Q}}_{[0:2]}})}\bigg)^{1/2},
	\end{align}
	where for $\mathcal{V}^{(\cdot)}$ all being non-empty,
	\begin{align}\label{ineq:A_hprod_quad_G_count_0}
	\mathcal{G}(\ell_{\mathcal{T}\setminus{\mathcal{Q}}_{[0:2]}})&\equiv \abs{\{k,\ell_{ (\mathcal{V}^{(0)})^-}  \}}-1 +\abs{\{\ell_{ (\mathcal{V}^{(1)})^+},i \}}-1+\abs{\{\ell_{ (\mathcal{V}^{(2)})^+},j \}}-1\nonumber\\
	&\qquad +\sum_{\substack{q \in [0:2], (\alpha,\beta) \in \mathscr{P}_2(\mathcal{V}^{(q)})}} \big(\abs{\{ \ell_{\alpha}, \ell_{\beta} \}}-1\big)
	+ \abs{\{\ell_{ (\mathcal{V}^{(0)})^+}, \ell_{\mathcal{T}_0^+}\}}-1\nonumber\\
	&\qquad + \abs{\{\ell_{ (\mathcal{V}^{(1)})^-}, \ell_{\mathcal{T}_0^+} \}}-1+ \abs{\{\ell_{ (\mathcal{V}^{(2)})^-}, \ell_{\mathcal{T}_0^+} \}}-1\nonumber\\
	&\geq \bigabs{\big\{k,\ell_{ \mathcal{V}^{(0)}},\ell_{\mathcal{T}_0^+},\ell_{ \mathcal{V}^{(1)}},\ell_{\mathcal{V}^{(2)}},i,j\big\}}-1.
	\end{align}
	Here the last line follows from repeated applying Lemma \ref{lem:set_merge}. The same estimate holds when some of $\mathcal{V}^{(\cdot)}=\emptyset$. 
	 So by Lemma \ref{lem:count_total_num},
	\begin{align}\label{ineq:A_hprod_quad_4_0_0}
	&\sum_{\ell_{\mathcal{T}\setminus {\mathcal{Q}}_{[0:2]}} \in [n]} n^{-\mathcal{G}(\ell_{\mathcal{T}\setminus {\mathcal{Q}}_{[0:2]}})}
	\leq (6V)^{V+1} n^{-\abs{\{i,j,k\} }+1}.
	\end{align}
	Consequently, on the event ${E}_0\cap \mathscr{E}_{\mathcal{T}}(L)\cap \mathscr{E}_{\mathsf{H}}(L)$, where $E_0\equiv \cap_{i,j,k\in[n]}\cap_{\mathcal{Q}_{[0,2]}\subset \mathcal{T}\setminus \{\mathcal{T}_0^+\}}\mathscr{E}_{(i,j,k);\mathcal{Q}_{[0:2]}}$ with $\Prob({E}_0^c|A_{[-\mathcal{P}]})\leq  (c\abs{\mathcal{P}})^{r_2} e^{-(\log n)^{100}/c}$, we have uniformly in $i,j,k \in [n]$,
	{\small
	\begin{align}\label{ineq:A_hprod_quad_delta}
	&n^{(\abs{\{i,j,k\}}-1)/2}\biggabs{\sum_{ \mathcal{Q}_{[0,2]}\subset \mathcal{T}\setminus \{\mathcal{T}_0^+\}} \Delta_{(i,j,k); {\mathcal{Q}}_{[0:2]}} }\leq \big(\abs{\mathcal{P}} K L r_2 \log n\big)^{cr_2}\cdot \mathfrak{U}_{\{\mathcal{T}_\cdot\};[-\mathcal{P}]}^{[-\mathcal{P}]}\big(\abs{\mathcal{T}}-1\big).
	\end{align}
}
	Combining (\ref{ineq:A_hprod_quad_delta_bar}) and (\ref{ineq:A_hprod_quad_delta}), on the event ${E}_0\cap \overline{E}_0\cap \mathscr{E}_{\mathcal{T}}(L)\cap \mathscr{E}_{\mathsf{H}}(L)$,
	\begin{align*}
	n^{(\abs{\{i,j,k\}}-1)/2}\cdot\hbox{RHS of (\ref{ineq:A_hprod_quad_1})}\leq \big(\abs{\mathcal{P}} K L r_2\log n\big)^{c r_2}\cdot \mathfrak{U}_{\{\mathcal{T}_\cdot\};[-\mathcal{P}]}^{[-\mathcal{P}]}\big(\abs{\mathcal{T}}-1\big).
	\end{align*}
	Now we choose $\mathcal{P}\equiv \{i,j,k\}$ and apply Lemma \ref{lem:A_P_cross_quad} to the term $\mathscr{A}_{k,(i,j)}^{[-\mathcal{P}]}\big(\mathcal{T}_0,(\mathcal{T}_1,\mathcal{T}_2)\big)$ in (\ref{ineq:A_hprod_quad_1}), which then concludes that on an event $E_1\cap \mathscr{E}_{\mathcal{T}}(L)\cap \mathscr{E}_{\mathsf{H}}(L)$ with $\Prob(E_1^c)\leq c^{r_2} e^{-(\log n)^{100}/c}$, 
	\begin{align*}
	&n^{(\abs{\{i,j,k\}}-1)/2} \bigabs{\mathscr{A}_{k,(i,j)}\big(\mathcal{T}_0,(\mathcal{T}_{1},\mathcal{T}_2)\big) - \mathscr{A}_{k,(i,j);[-\mathcal{P}]}^{[-\mathcal{P}]}\big(\mathcal{T}_0,(\mathcal{T}_{1},\mathcal{T}_2)\big)}\\
	& \leq \big(K L r_2 \log n\big)^{c r_2}\cdot \mathfrak{U}_{\{\mathcal{T}_\cdot\};[-\mathcal{P}]}^{[-\mathcal{P}]}\big(\abs{\mathcal{T}}-1\big).
	\end{align*}
	Applying Lemma \ref{lem:A_cross_ind_quad} to the term $\mathscr{A}_{k,(i,j);[-\mathcal{P}]}^{[-\mathcal{P}]}\big(\mathcal{T}_0,(\mathcal{T}_{1},\mathcal{T}_2)\big)$ in the above display (as $(i,j,k) \notin ([n]\setminus \mathcal{P})^3$ by our choice of $\mathcal{P}$), on an event $E_2\cap \mathscr{E}_{\mathcal{T}}(L)\cap \mathscr{E}_{\mathsf{H}}(L)$ with $\Prob(E_2^c)\leq c^{r_2} e^{-(\log n)^{100}/c}$,
	\begin{align*}
	&\max_{i,j,k \in [n]} n^{(\abs{\{i,j,k\}}-1)/2} \bigabs{\mathscr{A}_{k,(i,j)}\big(\mathcal{T}_0,(\mathcal{T}_{1},\mathcal{T}_2)\big) }\nonumber\\
	&\leq \big(K L r_2\log n\big)^{c r_2} \cdot \max_{\substack{\mathcal{P}\subset [n], \abs{\mathcal{P}}\leq 3 }}\mathfrak{U}_{\{\mathcal{T}_\cdot\};[-\mathcal{P}]}^{[-\mathcal{P}]}\big(\abs{\mathcal{T}}-1\big).
	\end{align*}
	This concludes the proof of (\ref{ineq:A_hprod_quad_claim}) for the case $\mathcal{T}_0\neq \emptyset$ and $\mathcal{T}_1,\mathcal{T}_2 \neq \emptyset^{[)}$.
	 
	\noindent (\textbf{Step 1-(b)}). In this step, we consider the scenario that any of $\mathcal{T}_0\neq \emptyset, \mathcal{T}_1\neq \emptyset^{[)}, \mathcal{T}_2 \neq \emptyset^{[)}$ fails, but $(\mathcal{T}_0,\mathcal{T}_1,\mathcal{T}_2)\neq \big(\emptyset,\emptyset^{[)},\emptyset^{[)}\big)$. In this case, it suffices to provide a bound for $\mathscr{A}_{i,j}(\mathcal{T})$. Without loss of generality, we assume $\mathcal{T}=[r]$ and $\mathcal{P}= \{i,j\}$. Note that using the same arguments as in (\ref{ineq:A_hprod_quad_1})-(\ref{ineq:A_hprod_quad_2}), on the event $\mathscr{E}_{\mathcal{T}}(L)\cap \mathscr{E}_{\mathsf{H}}(L)$,
	\begin{align*}
	&\bigabs{\mathscr{A}_{i,j}(\mathcal{T})-\mathscr{A}_{i,j}^{[-\mathcal{P}]}(\mathcal{T}) }=\biggabs{ \sum_{ \mathcal{Q} \subsetneq \mathcal{T} } \bigg\{\sum_{\ell_{\mathcal{T}} \in [n]} A_{i, [\ell_{\mathcal{T}}], j}  \bigg(\prod_{s \in \mathcal{Q} } \mathsf{H}_{t_s;\ell_s}^{[-\mathcal{P}]} \prod_{s \in \mathcal{T}\setminus \mathcal{Q} } \Delta \mathsf{H}_{t_s;\ell_s}\bigg)\bigg\} } \\
	& = \biggabs{\sum_{ \mathcal{Q} \subsetneq \mathcal{T} }\bigg\{ \sum_{\ell_{\mathcal{T}\setminus \mathcal{Q}} \in [n]  } \prod_{s \in \mathcal{T}\setminus \mathcal{Q} } \Delta \mathsf{H}_{t_s;\ell_s}\bigg(  \sum_{\ell_{\mathcal{Q}} \in [n] } A_{i, [\ell_{\mathcal{T}}], j}  \prod_{s \in \mathcal{Q} } \mathsf{H}_{t_s;\ell_s}^{[-\mathcal{P}]}  \bigg)\bigg\} }\\
	&\leq (KL)^r \sum_{ \mathcal{Q} \subsetneq \mathcal{T} } \bigg\{ \sum_{\ell_{\mathcal{T}\setminus \mathcal{Q}} \in [n]  } \bigg(  \sum_{\ell_{\mathcal{Q}} \in [n] } A_{i, [\ell_{\mathcal{T}}], j}  \prod_{s \in \mathcal{Q} } \mathsf{H}_{t_s;\ell_s}^{[-\mathcal{P}]}  \bigg)^2\bigg\}^{1/2}\equiv (KL)^r \sum_{ \mathcal{Q} \subsetneq \mathcal{T} } \overline{\Delta}_{(ij);\mathcal{Q}}.
	\end{align*}
	Let 
	$\mathcal{Q}=\cup_{w \in [N_{\mathcal{Q}}]} \mathcal{U}_w$ be a consecutive-integer-set representation  of $\mathcal{Q}$, and $\mathcal{V}_1,\mathcal{U}_1,\ldots, \mathcal{V}_{N_{\mathcal{Q}}},\mathcal{U}_{N_{\mathcal{Q}}}, \mathcal{V}_{N_{\mathcal{Q}}+1}$ be that of $\mathcal{T}$. Let $\mathcal{V}\equiv \cup_{w \in [N_{\mathcal{Q}}+1]} \mathcal{V}_{w}$ collect all words in $\mathcal{V}_\cdot$ and recall $\mathscr{P}_2(\mathcal{V})$ collects all pairs of adjacent words in $\mathcal{V}$. We may then write $\overline{\Delta}_{(ij);\mathcal{Q}}$ in the form as
	\begin{align*}
	\bigg\{\sum_{\ell_{\mathcal{T}\setminus {\mathcal{Q}} } \in [n]} \mathfrak{a}^{\ast,2}_{i,\ell_{ \mathcal{V}^-}  }          \mathfrak{a}^{\ast,2}_{\ell_{ \mathcal{V}^+},j } \prod_{\substack{(\alpha,\beta) \in \mathscr{P}_2(\mathcal{V})}} \mathfrak{a}_{\alpha,\beta}^{\ast,2}  \bigg\}^{1/2}.
	\end{align*}
	From here, we may (i) apply Lemma \ref{lem:A_P_cross_quad} to replace $\mathscr{A}_{\ast,\ast}^{[-\mathcal{P}]}$ by $\mathscr{A}_{\ast,\ast; [-\mathcal{P}]}^{[-\mathcal{P}]}$ (with total size $\leq \abs{\mathcal{T}}-1$ due to $\mathcal{Q}\subsetneq \mathcal{T}$), followed by Lemma \ref{lem:A_cross_ind_quad} for those terms with subscript indices in $\mathcal{P}$, and (ii) use the same counting techniques as in Step 1-(a), on an event $E_3\cap \mathscr{E}_{\mathcal{T}}(L)\cap \mathscr{E}_{\mathsf{H}}(L)$ with $\Prob(E_3^c)\leq  c^{r} e^{-(\log n)^{100}/c}$,
	\begin{align*}
	&\max_{i,j \in [n]} n^{(\abs{\{i,j\}}-1)/2} \bigabs{\mathscr{A}_{i,j}(\mathcal{T}) }\leq \big(K L r_2\log n\big)^{cr} \cdot \max_{\substack{\mathcal{P}\subset [n], \abs{\mathcal{P}}\leq 3 }}\mathfrak{U}_{\{\mathcal{T}_\cdot\};[-\mathcal{P}]}^{[-\mathcal{P}]}\big(\abs{\mathcal{T}}-1\big).
	\end{align*}
	This concludes the proof of (\ref{ineq:A_hprod_quad_claim}) for the current case.
	
	\noindent (\textbf{Step 2}). By a union bound via (\ref{ineq:A_hprod_quad_claim}) in Step 1 over all non-overlapping sets  of consecutive integers $\{\mathcal{J}_{w_q}^{(q)}:w_q \in [N_{\mathcal{T}_q}]\}\subset \mathcal{T}_q$, $q= [0:2]$ with  $\sum_{w_q \in [N_{\mathcal{T}_q}]}\abs{\mathcal{J}_{w_q}^{(q)}}\in [-\bm{1}_{q\in \{1,2\}}: (\abs{\mathcal{T}}\vee -\bm{1}_{q=1,2})]$, we have the estimate
	\begin{align}\label{ineq:A_hprod_quad_6}
	&\Prob\Big(\mathscr{W}_{\{\mathcal{T}_{\cdot}\};-0}(\abs{\mathcal{T}})\geq  \big(K L \abs{\mathcal{T}}_\ast\log n\big)^{c \abs{\mathcal{T}}_\ast}\cdot  \mathscr{W}_{\{\mathcal{T}_{\cdot}\};-3}\big(\abs{\mathcal{T}}-1\big), \nonumber\\
	&\qquad \qquad  \mathscr{E}_{\mathcal{T}}(L)\cap \mathscr{E}_{\mathsf{H}}(L) \Big)  \leq  c^{\abs{\mathcal{T}}_\ast } e^{-(\log n)^{100}/c},
	\end{align}
	where
	{\small\begin{align*}
	\mathscr{W}_{\{\mathcal{I}_{\cdot}\};-\alpha}(s)& \equiv  \max_{\substack{s_\ell\leq \abs{\mathcal{I}_\ell},\\ \abs{s_{[0:2]} } =s}  }\max_{\substack{ \{(\mathcal{J}_{w_q}^{(q)})\}\\\in \mathscr{C}_{\empty}(\{\mathcal{I}_{[0:2]}\};s_{[0:2]})} } \prod_{\substack{w_0 \in [N_{\mathcal{I}_0}-1], \\w_{[2]} \in [2: N_{\mathcal{I}_{[2]}}] } } \max_{ \substack{\mathcal{P}\subset [n],\\ \abs{\mathcal{P}}\leq \alpha } } \bigg(\sum_{\gamma \in [2]}n^{(\gamma-1)/2} \max_{\substack{i,j \in [n]\setminus \mathcal{P},\\ \abs{\{i,j\}}=\gamma }  }\bigg) \bigabs{\mathscr{A}_{i,j;[-\mathcal{P}]}^{[-\mathcal{P}]}\big(\mathcal{J}_{w_q}^{(q)}\big)  } \\
	&\quad \times  \max_{ \substack{\mathcal{P}\subset [n],\\ \abs{\mathcal{P}}\leq \alpha } }\bigg(\sum_{\gamma \in [3]}n^{(\gamma-1)/2}\max_{ \substack{i,j,k \in [n]\setminus \mathcal{P},\\\abs{\{i,j,k\}}=\gamma} }\bigg) \bigabs{\mathscr{A}_{k,(i,j);[-\mathcal{P}]}^{[-\mathcal{P}]}\Big(\mathcal{J}_{N_{\mathcal{I}_0}}^{(0)}, \big(\mathcal{J}_1^{(1)}, \mathcal{J}_1^{(2)}\big) \Big) }. 
	\end{align*}}
	Now for $\abs{\mathcal{T}}\leq \log^2 n$, iterating the bound (\ref{ineq:A_hprod_quad_6}) for $\abs{\mathcal{T}}+1$ times and using the simple estimate $\abs{\mathscr{W}_{\{\mathcal{T}_{\cdot}\};-3(\abs{\mathcal{T}}+1)}(-1)}\leq K^{c_0}$, 
	\begin{align}\label{ineq:A_hprod_quad_7}
	&\Prob\Big(\mathscr{W}_{\{\mathcal{T}_{\cdot}\};-0}(\abs{\mathcal{T}})\geq \big(K L \abs{\mathcal{T}}_\ast \log n\big)^{c\abs{\mathcal{T}}_\ast^2},\mathscr{E}_{\mathcal{T}}(L)\cap \mathscr{E}_{\mathsf{H}}(L)\Big)\nonumber\\
	&\leq  c^{\abs{\mathcal{T}}_\ast}\cdot n^{c \abs{\mathcal{T}}_\ast}e^{-(\log n)^{100}/c} \leq c^{\abs{\mathcal{T}}_\ast}e^{-(\log n)^{100}/c}. 
	\end{align}
	The claim follows. \qed

\subsection{Proof of Proposition \ref{prop:A_cross_cubic}}

We only prove the case for $q_0=2$; the general case follows from minor notational modifications.

To this end, we first prove the following lemma that serves as a debiasing estimate for the more complicated summation term $\sum_{\ell_{0} \in [n]}\mathscr{A}_{k,(\ell_{0},\ell_{0})}(\mathcal{T}_{[0:2]}) A_{\ell_{0},j}^3 \mathsf{H}_{t_{0};\ell_{0}}$ in Proposition \ref{prop:A_cross_cubic}. In particular, this estimate allows us to replace $A_{\cdot\cdot}^3$ by its expectation (which could be in general non-zero).

\begin{lemma}\label{lem:A_cross_sum}
Suppose $A=A_0/\sqrt{n}$, where $A_0$ is symmetric and the entries of its upper triangle are independent, centered random variables. 
Fix $t_0 \in \N$ and three non-overlapping sets $\mathcal{T}_0,\mathcal{T}_1,\mathcal{T}_2\subset \N$ of consecutive integers, where $\mathcal{T}_1,\mathcal{T}_2$ are allowed to take $\emptyset^{[)}$. Suppose $\max_{i,j\in [n]}\abs{A_{0,ij}}\leq K$ for some $K>1$ and $\mathcal{T}\equiv \cup_{\ell \in [0:2]} \mathcal{T}_\ell \cup \{0\}$.
Then there exists a universal constant $c_0>0$ such that for $\abs{\mathcal{T}}_\ast\leq \log^2 n$,
\begin{align*}
&\Prob\bigg(\max_{k \in [n]}\biggabs{ \sum_{\ell_{0} \in [n]}\mathscr{A}_{k,(\ell_{0},\ell_{0})}(\mathcal{T}_{[0:2]}) \mathsf{H}_{t_{0};\ell_{0}} }\\
&\qquad \qquad \geq \big(K L \abs{\mathcal{T}}_\ast\log n\big)^{c_0\abs{\mathcal{T}}_\ast^2},\mathscr{E}_{\mathcal{T}}(L)\cap \mathscr{E}_{\mathsf{H}}(L)\bigg)\leq  c_0^{\abs{\mathcal{T}}_\ast}e^{-(\log n)^{100}/c_0}. 
\end{align*}
\end{lemma}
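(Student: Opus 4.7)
The plan is to mirror the three-stage framework developed for Proposition \ref{prop:A_hprod_quad}, while exploiting the extra summation over $\ell_0$ to save a factor of $\sqrt{n}$ through sub-Gaussian concentration on the $k$-th row of $A$. The key observation is that, upon choosing the leave-out set $\mathcal{P}=\{k\}$, the quantity $T_k:=\sum_{\ell_0}\mathscr{A}_{k,(\ell_0,\ell_0)}(\mathcal{T}_{[0:2]})\mathsf{H}_{t_0;\ell_0}$ becomes, after suitable replacements, a linear form in $(A_{k,\ell})_{\ell\neq k}$ whose coefficients are measurable with respect to $A_{[-\{k\}]}$ and therefore independent of it. The sub-Gaussian gain of $\sqrt{n/\log n}$ then precisely compensates the $\sqrt{n}$ loss from the extra $\ell_0$-summation.

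First, I would carry out the leave-$\{k\}$-out reduction in direct analogy with Step 1 of the proof of Proposition \ref{prop:A_hprod_quad}. Decompose $T_k$ into a clean main term $\sum_{\ell_0}\mathscr{A}_{k,(\ell_0,\ell_0)}^{[-\{k\}]}(\mathcal{T}_{[0:2]})\mathsf{H}_{t_0;\ell_0}^{[-\{k\}]}$ plus telescoping error terms involving $\Delta\mathsf{H}_{t_s;\ell_s}:=\mathsf{H}_{t_s;\ell_s}-\mathsf{H}_{t_s;\ell_s}^{[-\{k\}]}$ for $s\in\mathcal{T}\cup\{t_0\}$. Each error term is handled by Cauchy-Schwarz over the $\Delta\mathsf{H}$-indices (using $\mathscr{E}_{\mathcal{T}}(L)$ to control $\pnorm{z^{(t_s)}-z^{(t_s)}_{[-\{k\}]}}{}$) and reduced to double-path $\mathscr{A}^{[-\{k\}]}$-products, which are then converted to $\mathscr{A}_{;[-\{k\}]}^{[-\{k\}]}$-forms via Lemma \ref{lem:A_P_cross_quad} and finally bounded by Lemma \ref{lem:A_cross_ind_quad}. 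The counting of surviving free indices mirrors (\ref{ineq:A_hprod_quad_G_count}); crucially, the self-loop at $\ell_0$ (which forces the two endpoints emanating from $\ell_{\mathcal{T}_0^+}$ to coincide) prevents the $\ell_0$-summation from costing an extra factor of $n$ in the resulting counting bound.

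Next, I would handle the main term $T_k^\natural:=\sum_{\ell_0\in[n]\setminus\{k\}}\mathscr{A}_{k,(\ell_0,\ell_0);[-\{k\}]}^{[-\{k\}]}(\mathcal{T}_{[0:2]})\mathsf{H}_{t_0;\ell_0}^{[-\{k\}]}$ via sub-Gaussian concentration. When $\mathcal{T}_0\neq\emptyset$, extracting the edge $A_{k,\ell_{\mathcal{T}_0^-}}$ yields $T_k^\natural=\sum_{\ell_{\mathcal{T}_0^-}\neq k}A_{k,\ell_{\mathcal{T}_0^-}}M_{\ell_{\mathcal{T}_0^-}}$, where $M_{\ell_{\mathcal{T}_0^-}}:=\sum_{\ell_{\mathcal{T}_0^+}\neq k}\mathscr{A}^{[-\{k\}]}_{\ell_{\mathcal{T}_0^-},\ell_{\mathcal{T}_0^+};[-\{k\}]}(\mathcal{T}_0^{(]})\,\mathsf{H}^{[-\{k\}]}_{t_{\mathcal{T}_0^+};\ell_{\mathcal{T}_0^+}}\,Q_{\ell_{\mathcal{T}_0^+}}$ and $Q_{\ell_{\mathcal{T}_0^+}}:=\sum_{\ell_0\neq k}\mathscr{A}^{[-\{k\}]}_{\ell_{\mathcal{T}_0^+},\ell_0;[-\{k\}]}(\mathcal{T}_1)\mathscr{A}^{[-\{k\}]}_{\ell_{\mathcal{T}_0^+},\ell_0;[-\{k\}]}(\mathcal{T}_2)\mathsf{H}^{[-\{k\}]}_{t_0;\ell_0}$ is the self-loop contraction centered at $\ell_{\mathcal{T}_0^+}$. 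Conditional on $A_{[-\{k\}]}$, sub-Gaussian concentration gives $|T_k^\natural|\lesssim K\sqrt{\log n/n}\,\pnorm{M}{2}$ on an event of probability $\geq 1-c\,e^{-(\log n)^{100}/c}$. To bound $\pnorm{M}{2}^2=\sum_{\ell_{\mathcal{T}_0^+},\ell_{\mathcal{T}_0^+}'}\mathsf{H}\mathsf{H}'Q_{\ell_{\mathcal{T}_0^+}}Q_{\ell_{\mathcal{T}_0^+}'}\sum_{\ell_{\mathcal{T}_0^-}}\mathscr{A}_{\ell_{\mathcal{T}_0^-},\ell_{\mathcal{T}_0^+}}\mathscr{A}_{\ell_{\mathcal{T}_0^-},\ell_{\mathcal{T}_0^+}'}$, I would control the inner $\mathscr{A}\mathscr{A}$-sum by Proposition \ref{prop:A_hprod_quad} (with $\ell_{\mathcal{T}_0^+},\ell_{\mathcal{T}_0^+}'$ playing the roles of $i,j$), and $\max|Q|$ by inducting on Lemma \ref{lem:A_cross_sum} itself, applied to the strictly smaller index set $\mathcal{T}_1\cup\mathcal{T}_2\cup\{t_0\}$ with $\ell_{\mathcal{T}_0^+}$ playing the role of $k$. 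These ingredients combine to $\pnorm{M}{2}=\tilde{O}(\sqrt{n})$, yielding the desired polylog bound on $|T_k^\natural|$. The degenerate case $\mathcal{T}_0=\emptyset$ reduces directly to an inductive invocation at endpoint $k$.

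Iterating these reductions until the total index set trivializes closes the induction on $\abs{\mathcal{T}}_\ast$, with recursion depth $\abs{\mathcal{T}}_\ast$ and each step incurring a multiplicative factor of order $(KL\abs{\mathcal{T}}_\ast\log n)^{c\abs{\mathcal{T}}_\ast}$; composing gives the claimed bound $(KL\abs{\mathcal{T}}_\ast\log n)^{c_0\abs{\mathcal{T}}_\ast^2}$, up to the stated failure probability $c_0^{\abs{\mathcal{T}}_\ast}e^{-(\log n)^{100}/c_0}$. The main technical obstacle is the delicate balance in the Step 2 concentration: the $\sqrt{n}$ gain from applying the sub-Gaussian inequality on $A_{k,\cdot}$ must exactly cancel the $\sqrt{n}$ implicit in $\pnorm{M}{2}$, which in turn requires that the inductive bound on $\max|Q|$ remain of polylog order (rather than $\sqrt{n}\cdot$polylog). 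Ensuring this through careful accounting of free-index cardinalities across each Cauchy-Schwarz and each recursive invocation is the principal new ingredient beyond the framework of Proposition \ref{prop:A_hprod_quad}.
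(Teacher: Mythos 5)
Your high-level strategy---choose $\mathcal{P}=\{k\}$, perform a leave-out reduction with telescoping $\Delta\mathsf{H}$ terms, and exploit independence of the $k$-th row for concentration---is the same opening move the paper uses, so the philosophy is right. But where you then depart (explicitly extracting the single edge $A_{k,\ell_{\mathcal{T}_0^-}}$ and applying linear sub-Gaussian concentration to $\sum_{\ell_{\mathcal{T}_0^-}}A_{k,\ell_{\mathcal{T}_0^-}}M_{\ell_{\mathcal{T}_0^-}}$), the paper instead feeds everything into the same Cauchy--Schwarz decomposition as in the proof of Proposition \ref{prop:A_hprod_quad} and closes with the $\mathfrak{U}$-weighted counting: the extra summation index $\ell_0$ is absorbed because the self-loop contraction $\mathscr{A}_{\cdot,\ell_0}\mathscr{A}_{\cdot,\ell_0}$ contributes a compensating $n^{-1}$ to the weight $n^{-\mathcal{G}}$, so the counting bound of Lemma \ref{lem:count_total_num} still returns $O(\mathrm{poly})$ after the square root. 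The paper then controls the residual $\mathfrak{U}^{[-\mathcal{P}]}_{\{\mathcal{T}_\cdot\};[-\mathcal{P}]}(\abs{\mathcal{T}})$ via the strengthened estimate \eqref{ineq:A_hprod_quad_7}; no operator-norm event and no induction on the lemma itself are needed.

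As written, your main-term treatment has two concrete gaps that do not close. First, bounding $\pnorm{M}{}^2=\sum_{i,j}\mathsf{H}_iQ_i\,\mathsf{H}_jQ_j\sum_{\ell}\mathscr{A}_{\ell,i}\mathscr{A}_{\ell,j}$ by factoring out $\max_i\abs{Q_i}$ and controlling $\sum_\ell\mathscr{A}_{\ell,i}\mathscr{A}_{\ell,j}$ pointwise by Proposition \ref{prop:A_hprod_quad} gives entries of size $\tilde{\bigo}(1)$ in an $n\times n$ array; summing $i,j$ then yields $\pnorm{M}{}^2 = \tilde{\bigo}(n^{3/2}\max\abs{Q}^2)$, which is off by a half power from the needed $\pnorm{M}{}=\tilde{\bigo}(\sqrt{n}\max\abs{Q})$. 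To repair this you would have to regard $M$ as a matrix-vector product and invoke $\pnorm{M}{}\leq\pnorm{B}{\op}\pnorm{Q}{}$, but a high-probability bound on the operator norm of the chain matrix $B$ is not included in $\mathscr{E}_{\mathcal{T}}(L)\cap\mathscr{E}_{\mathsf{H}}(L)$ and would need to be adjoined as an extra event. Second, your inner object $Q_{\ell_{\mathcal{T}_0^+}}$ is built from the leave-$\{k\}$-out quantities $\mathscr{A}^{[-\{k\}]}_{\cdot,\cdot;[-\{k\}]}$, so the proposed ``inductive invocation at endpoint $\ell_{\mathcal{T}_0^+}$'' is not literally Lemma \ref{lem:A_cross_sum}; you would need a leave-out variant (or a transfer argument), which is unstated. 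The paper's counting route is designed precisely so that neither of these auxiliary inputs is required.
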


\begin{proof}[Proof of Lemma \ref{lem:A_cross_sum}: $\mathcal{T}_0\neq \emptyset$ and $\mathcal{T}_1,\mathcal{T}_2 \neq \emptyset^{[)}$]
Without loss of generality, we assume that $\mathcal{T}_0\equiv [r_0]$, $\mathcal{T}_1\equiv [r_0+1:r_1]$ and $\mathcal{T}_2\equiv [r_1+1:r_2]$. We also write $r\equiv r_2$, $\ell_0\equiv \ell_{r+1}$ and reset $t=0$ as $t_{r+1}$ for notational simplicity. Then we may identify the set $\mathcal{T}$ as $\mathcal{T}=[r+1]$. 

Fix $\mathcal{P}\equiv \{k\}$. By writing 
\begin{align}\label{ineq:A_cross_quad_sum_0}
A_{k,[\ell_{\mathcal{T}_0^{[)}}], \ell_{\mathcal{T}_0^+}, ([\ell_{\mathcal{T}_1}],[\ell_{\mathcal{T}_2}]), \ell_{r+1}}\equiv A_{k,[\ell_{\mathcal{T}_0^{[)}}], \ell_{\mathcal{T}_0^+}} A_{\ell_{\mathcal{T}_0^+}, [\ell_{\mathcal{T}_1}], \ell_{r+1}}A_{\ell_{\mathcal{T}_0^+}, [\ell_{\mathcal{T}_2}], \ell_{r+1}},
\end{align}
we have
\begin{align*}
&\sum_{\ell_{r+1} \in [n]}\mathscr{A}_{k,(\ell_{r+1},\ell_{r+1})}(\mathcal{T}_{[0:2]}) \mathsf{H}_{t_{r+1};\ell_{r+1}} \\
&= \sum_{\ell_{[r+1]} \in [n]} A_{k,[\ell_{\mathcal{T}_0^{[)}}], \ell_{\mathcal{T}_0^+}, ([\ell_{\mathcal{T}_1}],[\ell_{\mathcal{T}_2}]), \ell_{r+1}}     \prod_{s \in [r+1]} \big(\mathsf{H}_{t_s;\ell_s}^{[-\mathcal{P}]}+\Delta \mathsf{H}_{t_s;\ell_s}\big)\\
& = \sum_{\mathcal{Q},\mathcal{Q}'\subset [r+1]}\sum_{\substack{\ell_{[r+1]\setminus \mathcal{Q}} \in \mathcal{P},\\ \ell_{\mathcal{Q}}\in [n]\setminus \mathcal{P}} }  A_{k,[\ell_{\mathcal{T}_0^{[)}}], \ell_{\mathcal{T}_0^+}, ([\ell_{\mathcal{T}_1}],[\ell_{\mathcal{T}_2}]), \ell_{r+1}}  \prod_{s \in \mathcal{Q}'} \mathsf{H}_{t_s;\ell_s}^{[-\mathcal{P}]} \prod_{s \in [r+1]\setminus \mathcal{Q}'} \Delta \mathsf{H}_{t_s;\ell_s}.
\end{align*}
Therefore, on the event $\mathscr{E}_{\mathcal{T}}(L)\cap \mathscr{E}_{\mathsf{H}}(L)$,
\begin{align}\label{ineq:A_cross_quad_sum_1}
&\biggabs{\sum_{\ell_{r+1} \in [n]}\mathscr{A}_{k,(\ell_{r+1},\ell_{r+1})}(\mathcal{T}_{[0:2]}) \mathsf{H}_{t_{r+1};\ell_{r+1}} }\\
&\leq (K\abs{\mathcal{P}})^{c r}\max_{\substack{\mathcal{Q}'\subset \mathcal{Q}\subset [r+1], \\\ell_{[r+1]\setminus \mathcal{Q}} \in \mathcal{P} } } \biggabs{\sum_{\ell_{\mathcal{Q}}\in [n]\setminus \mathcal{P}} A_{k,[\ell_{\mathcal{T}_0^{[)}}], \ell_{\mathcal{T}_0^+}, ([\ell_{\mathcal{T}_1}],[\ell_{\mathcal{T}_2}]), \ell_{r+1}} \prod_{s \in \mathcal{Q}'} \mathsf{H}_{t_s;\ell_s}^{[-\mathcal{P}]} \prod_{s \in \mathcal{Q}\setminus \mathcal{Q}'} \Delta \mathsf{H}_{t_s;\ell_s}}\nonumber\\
&\leq  (KL\abs{\mathcal{P}})^{c r}\max_{\substack{\mathcal{Q}'\subset \mathcal{Q}\subset [r+1], \\\ell_{[r+1]\setminus \mathcal{Q}} \in \mathcal{P} } } \bigg\{\sum_{\ell_{\mathcal{Q}\setminus \mathcal{Q}'} \in [n]\setminus \mathcal{P}} \bigg(\sum_{\ell_{\mathcal{Q}'}\in [n]\setminus \mathcal{P}} A_{k,[\ell_{\mathcal{T}_0^{[)}}], \ell_{\mathcal{T}_0^+}, ([\ell_{\mathcal{T}_1}],[\ell_{\mathcal{T}_2}]), \ell_{r+1}} \prod_{s \in \mathcal{Q}'} \mathsf{H}_{t_s;\ell_s}^{[-\mathcal{P}]}\bigg)^2\bigg\}^{1/2}.\nonumber
\end{align}
For the case $\mathcal{T}_0^+ \in \mathcal{Q}'$ (resp. $\mathcal{T}_0^+ \notin \mathcal{Q}'$), we use a similar representation as in (\ref{ineq:A_hprod_quad_3}) (resp. (\ref{ineq:A_hprod_quad_3_1})) and the counting technique in (\ref{ineq:A_hprod_quad_G_count})-(\ref{ineq:A_hprod_quad_4_0}) (resp. (\ref{ineq:A_hprod_quad_G_count_0})-(\ref{ineq:A_hprod_quad_4_0_0})). In particular:

\noindent (\textbf{Case 1}: $r+1\notin \mathcal{Q}'$). We (i) replace $(i,j)$ in (\ref{ineq:A_hprod_quad_3}) and (\ref{ineq:A_hprod_quad_G_count}) (resp. (\ref{ineq:A_hprod_quad_3_1}) and (\ref{ineq:A_hprod_quad_G_count_0})) by $(\ell_{r+1},\ell_{r+1})$, and (ii) replace the summation $\sum_{\ell_{\mathcal{T}\setminus \overline{\mathcal{Q}}_{[0:2]}} \in [n]}$ in (\ref{ineq:A_hprod_quad_3}) (resp. (\ref{ineq:A_hprod_quad_3_1})) by $\sum_{\ell_{\mathcal{Q}\setminus \mathcal{Q}'} \in [n]\setminus \mathcal{P}}$. Here $\mathfrak{a}_{\alpha,\beta}^\ast = \mathscr{A}_{\alpha,\beta;[-\mathcal{P}]}^{[-\mathcal{P}]}(\mathcal{I}_\ast)$. In this case, upon using Lemma \ref{lem:A_cross_ind_quad} for terms with subscript indices in $\mathcal{P}=\{k\}$, for some $V \subset [r]$, with probability at least $1-c^{r} e^{-(\log n)^{100}/c}$,
\begin{align*}
(\ref{ineq:A_cross_quad_sum_1}) &\leq (KL\log n)^{cr}\cdot \mathfrak{U}_{\{\mathcal{T}_\cdot\};[-\mathcal{P}]}^{[-\mathcal{P}]}\big(\abs{\mathcal{T}}\big)\cdot  \bigg\{ \sum_{\ell_{r+1},\ell_{[V]}\in [n]\setminus \mathcal{P}} n^{- \abs{ \{k, \ell_{r+1}, \ell_{[V]}\}  }+1 }\bigg\}^{1/2}\\
& \leq (KLr\log n)^{cr} \cdot \mathfrak{U}_{\{\mathcal{T}_\cdot\};[-\mathcal{P}]}^{[-\mathcal{P}]}\big(\abs{\mathcal{T}}\big).
\end{align*}
\noindent (\textbf{Case 2}: $r+1\in \mathcal{Q}'$). We (i) replace the term $\mathfrak{a}^{\ast,2}_{\ell_{ (\mathcal{V}^{(1)})^+},i }  \mathfrak{a}^{\ast,2}_{\ell_{ (\mathcal{V}^{(2)})^+},j}$ and the summation $\sum_{\ell_{\mathcal{T}\setminus \overline{\mathcal{Q}}_{[0:2]}} \in [n]}$ in (\ref{ineq:A_hprod_quad_3}) (resp. (\ref{ineq:A_hprod_quad_3_1})) by $\mathfrak{a}^{\ast,2}_{\ell_{ (\mathcal{V}^{(1)})^+},  \ell_{ (\mathcal{V}^{(2)})^+} }$ and $\sum_{\ell_{\mathcal{Q}\setminus \mathcal{Q}'} \in [n]\setminus \mathcal{P}}$, and (ii) replace $(i,j)$ in (\ref{ineq:A_hprod_quad_G_count}) (resp. (\ref{ineq:A_hprod_quad_G_count_0})) by $\big(\ell_{ (\mathcal{V}^{(1)})^+},  \ell_{ (\mathcal{V}^{(2)})^+} \big)$. In this case, again upon using Lemma \ref{lem:A_cross_ind_quad} for terms with subscript indices in $\mathcal{P}=\{k\}$, for some $V \subset [r]$, with probability at least $1-c^{r} e^{-(\log n)^{100}/c}$,
\begin{align*}
(\ref{ineq:A_cross_quad_sum_1}) &\leq (KL\log n)^{cr}\cdot \mathfrak{U}_{\{\mathcal{T}_\cdot\};[-\mathcal{P}]}^{[-\mathcal{P}]}\big(\abs{\mathcal{T}}\big)\cdot  \bigg\{ \sum_{\ell_{[V]}\in [n]\setminus \mathcal{P}} n^{- \abs{ \{k, \ell_{[V]}\}  }+1 }\bigg\}^{1/2}\\
&\leq (KLr\log n)^{cr} \cdot \mathfrak{U}_{\{\mathcal{T}_\cdot\};[-\mathcal{P}]}^{[-\mathcal{P}]}\big(\abs{\mathcal{T}}\big).
\end{align*}
The claim follows by using (\ref{ineq:A_hprod_quad_7}) (= a stronger version of Proposition \ref{prop:A_hprod_quad}) to control $\mathfrak{U}_{\{\mathcal{T}_\cdot\};[-\mathcal{P}]}^{[-\mathcal{P}]}(\abs{\mathcal{T}})$.
\end{proof}
\begin{proof}[Proof of Lemma \ref{lem:A_cross_sum}: Any of $\mathcal{T}_0\neq \emptyset, \mathcal{T}_1\neq \emptyset^{[)}, \mathcal{T}_2 \neq \emptyset^{[)}$ fails, but $(\mathcal{T}_0,\mathcal{T}_1,\mathcal{T}_2)\neq \big(\emptyset,\emptyset^{[)},\emptyset^{[)}\big)$]
\noindent (\textbf{Case 1}). For $\mathcal{T}_0= \emptyset$, as  $\mathscr{A}_{k,(\ell_0,\ell_0)}\big(\mathcal{T}_0,(\mathcal{T}_{1},\mathcal{T}_2)\big)= \mathsf{H}_{\ast;\ast}\cdot \mathscr{A}_{k,\ell_0}(\mathcal{T}_1)\mathscr{A}_{k,\ell_0}(\mathcal{T}_2)$, we have $\sum_{\ell_{0} \in [n]}\mathscr{A}_{k,(\ell_{0},\ell_{0})}(\mathcal{T}_{[0:2]}) \mathsf{H}_{t_{0};\ell_{0}} = \mathscr{A}_{k,k}(\mathcal{T})$ for which the desired estimate follows directly from Proposition \ref{prop:A_hprod_quad}.

\noindent (\textbf{Case 2}). For $\mathcal{T}_1 = \emptyset^{[)}$, we have $\mathscr{A}_{k,(\ell_0,\ell_0)}\big(\mathcal{T}_0,(\mathcal{T}_{1},\mathcal{T}_2)\big)= \mathsf{H}_{\ast;\ast}\cdot \mathscr{A}_{k,\ell_0}(\mathcal{T}_0^{[)})\mathscr{A}_{\ell_0,\ell_0}(\mathcal{T}_2)$. If $\mathcal{T}_0=\emptyset$, then $\abs{\sum_{\ell_{0} \in [n]}\mathscr{A}_{k,(\ell_{0},\ell_{0})}(\mathcal{T}_{[0:2]}) \mathsf{H}_{t_{0};\ell_{0}}}\leq K^c \abs{\mathscr{A}_{k,k}(\mathcal{T}_2)}$ for which Proposition \ref{prop:A_hprod_quad} applies to conclude. Now we consider the case $\mathcal{T}_0\neq \emptyset$. Using the same notation as in the above proof,  (\ref{ineq:A_cross_quad_sum_0}) becomes $A_{k,[\ell_{\mathcal{T}_0^{[)}}], \ell_{\mathcal{T}_0^+}, ([\ell_{\mathcal{T}_1}],[\ell_{\mathcal{T}_2}]), \ell_{r+1}}\equiv A_{k,[\ell_{\mathcal{T}_0^{[)}}], \ell_{r+1}} A_{\ell_{r+1}, [\ell_{\mathcal{T}_2}], \ell_{r+1}}$. The arguments in (\ref{ineq:A_cross_quad_sum_1}) then lead to, recall $\mathcal{P}=\{k\}$,
\begin{align*}
&\biggabs{\sum_{\ell_{r+1} \in [n]}\mathscr{A}_{k,(\ell_{r+1},\ell_{r+1})}(\mathcal{T}_{[0:2]}) \mathsf{H}_{t_{r+1};\ell_{r+1}} }\nonumber\\
&\leq (KL)^{cr}\cdot \bigg\{ \sum_{\ell_{[r+1]\setminus \mathcal{Q}'} \in [n]\setminus \mathcal{P}} \bigg(\sum_{\ell_{\mathcal{Q}'}\in [n]\setminus \mathcal{P}} A_{k,[\ell_{\mathcal{T}_0^{[)}}], \ell_{r+1}} A_{\ell_{r+1}, [\ell_{\mathcal{T}_2}], \ell_{r+1}}\prod_{s \in \mathcal{Q}'} \mathsf{H}_{t_s;\ell_s}^{[-\mathcal{P}]}\bigg)^2\bigg\}^{1/2}.
\end{align*}
From here we may use the same (in fact, simpler) arguments below (\ref{ineq:A_cross_quad_sum_1}) to conclude. 
\end{proof}

Now we are in a position to prove Proposition \ref{prop:A_cross_cubic}.

\begin{proof}[Proof of Proposition \ref{prop:A_cross_cubic}: $\mathcal{T}_0\neq \emptyset$ and $\mathcal{T}_1,\mathcal{T}_2 \neq \emptyset^{[)}$]
	Without loss of generality, we assume that $\mathcal{T}_0\equiv [r_0]$, $\mathcal{T}_1\equiv [r_0+1:r_1]$ and $\mathcal{T}_2\equiv [r_1+1:r_2]$. We also write $r\equiv r_2$, $\ell_0\equiv \ell_{r+1}$ and reset $t=0$ as $t_{r+1}$ for notational simplicity. Let $\mathcal{T}\equiv [r+1]$. Then we have the following decomposition
	\begin{align}\label{ineq:A_cross_quad_cubic_0}
	&\biggabs{\sum_{\ell_{r+1} \in [n]}\mathscr{A}_{k,(\ell_{r+1},\ell_{r+1})}(\mathcal{T}_{[0:2]}) A_{\ell_{r+1},j}^3 \mathsf{H}_{t_{r+1};\ell_{r+1}}}\leq \biggabs{\sum_{\ell_{r+1} \in [n]}\mathscr{A}_{k,(\ell_{r+1},\ell_{r+1})}(\mathcal{T}_{[0:2]})\E A_{\ell_{r+1},j}^3 \mathsf{H}_{t_{r+1};\ell_{r+1}}}\nonumber\\
	&\qquad +  \biggabs{ \sum_{\ell_{r+1} \in [n]}\mathscr{A}_{k,(\ell_{r+1},\ell_{r+1})}(\mathcal{T}_{[0:2]}) (A_{\ell_{r+1},j}^3-\E A_{\ell_{r+1},j}^3) \mathsf{H}_{t_{r+1};\ell_{r+1}} }.
	\end{align}
	By Lemma \ref{lem:A_cross_sum}, on an event $\mathscr{E}_0\cap \mathscr{E}_{\mathcal{T}}(L)\cap \mathscr{E}_{\mathsf{H}}(L)$ with $\Prob(\mathscr{E}_0^c)\leq c^r e^{-(\log n)^{100}/c}$, the first term on the right hand side is bounded by $\big(K L r\log n\big)^{c r^2} n^{-3/2}$, so we only need to show that the second term in (\ref{ineq:A_cross_quad_cubic_0}) can also be controlled at the same order.

	To do so, we fix $\mathcal{P}\equiv \{j\}$, so $k \in [n]\setminus \mathcal{P}$. We also write $A_{\ell_{r+1},j}^3-\E A_{\ell_{r+1},j}^3\equiv \overline{A_{\ell_{r+1},j}^3}$ for notational simplicity. Using the same reasoning as in (\ref{ineq:A_cross_quad_sum_1}), we only need to consider, for fixed $\mathcal{Q}'\subset \mathcal{Q}\subset [r+1], \ell_{[r+1]\setminus \mathcal{Q}} \in \mathcal{P}$, the following term 
	\begin{align}\label{ineq:A_cross_quad_cubic_1}
	\bigg\{\sum_{\ell_{\mathcal{Q}\setminus \mathcal{Q}'} \in [n]\setminus \mathcal{P}} \bigg(\sum_{\ell_{\mathcal{Q}'}\in [n]\setminus \mathcal{P}} A_{k,[\ell_{\mathcal{T}_0^{[)}}], \ell_{\mathcal{T}_0^+}, ([\ell_{\mathcal{T}_1}],[\ell_{\mathcal{T}_2}]), \ell_{r+1}}\overline{A_{\ell_{r+1},j}^3}\prod_{s \in \mathcal{Q}'} \mathsf{H}_{t_s;\ell_s}^{[-\mathcal{P}]}\bigg)^2\bigg\}^{1/2}.
	\end{align}
	\noindent (\textbf{Case 1: $\mathcal{T}_0^+ \in \mathcal{Q}'$}). Let (\ref{ineq:A_P_cross_quad_cir}) be a consecutive-integer-set representation of $\mathcal{Q}'$, where $\mathcal{Q}'$ contains all $\mathcal{V}_\cdot^{(\cdot)}$ along with $\mathcal{T}_0^+$. 
	
	\noindent (\textbf{Subcase 1-(a): $r+1 \in \mathcal{Q}'$}). (\ref{ineq:A_cross_quad_cubic_1}) takes the following form
	\begin{align}\label{ineq:A_cross_quad_cubic_2}
	&\bigg\{\sum_{\ell_{\mathcal{Q} \setminus \mathcal{Q}'} \in [n]\setminus \mathcal{P}}  \mathfrak{a}^{\ast,2}_{k,\ell_{ (\mathcal{V}^{(0)})^-}  }       \cdot   \mathscr{A}^2_{\ell_{ (\mathcal{V}^{(0)})^+}, (\ell_{ (\mathcal{V}^{(1)})^-}, \ell_{ (\mathcal{V}^{(2)})^-} ) }(*) \nonumber\\
	&\qquad \times \bigg( \sum_{\ell_{r+1}\in [n]\setminus \mathcal{P}}\mathfrak{a}^{\ast}_{\ell_{ (\mathcal{V}^{(1)})^+},\ell_{r+1} }\cdot  \mathfrak{a}^{\ast}_{\ell_{ (\mathcal{V}^{(2)})^+},\ell_{r+1} } \overline{A_{\ell_{r+1},j}^3}\bigg)^2 \prod_{\substack{q \in [0:2],\\(\alpha,\beta) \in \mathscr{P}_2(\mathcal{V}^{(q)})}} \mathfrak{a}_{\alpha,\beta}^{\ast,2}\bigg\}^{1/2}, 
	\end{align}
	where $\mathfrak{a}_{\alpha,\beta}^\ast = \mathscr{A}_{\alpha,\beta;[-\mathcal{P}]}^{[-\mathcal{P}]}(\mathcal{I}_\ast)$  has non-overlapping index sets $\mathcal{I}_\ast$. We also follow the same notational convention as explained after (\ref{ineq:A_hprod_quad_3}), when some of $\mathcal{V}^{(\cdot)}=\emptyset$.   
	
	By subgaussian concentration conditionally on $A_{[-\mathcal{P}]}=A_{[-j]}$, with probability at least $1-c e^{-(\log n)^{100}/c}$, we have
	\begin{align*}
	&\biggabs{\sum_{\ell_{r+1}\in [n]\setminus \mathcal{P}}\mathfrak{a}^{\ast}_{\ell_{ (\mathcal{V}^{(1)})^+},\ell_{r+1} }\cdot  \mathfrak{a}^{\ast}_{\ell_{ (\mathcal{V}^{(2)})^+},\ell_{r+1} } \overline{A_{\ell_{r+1},j}^3} }\\
	&\leq n^{-3/2} (K\log n)^c \cdot \bigg\{\sum_{\ell_{r+1}\in [n]\setminus \mathcal{P}}\mathfrak{a}^{\ast,2}_{\ell_{ (\mathcal{V}^{(1)})^+},\ell_{r+1} }\cdot  \mathfrak{a}^{\ast,2}_{\ell_{ (\mathcal{V}^{(2)})^+},\ell_{r+1} }\bigg\}^{1/2}.
	\end{align*}
	With the same probability, (\ref{ineq:A_cross_quad_cubic_2}) can be bounded by a $ n^{-3/2} (K\log n)^c$ multiplicative factor of
	{\small
	\begin{align}\label{ineq:A_cross_quad_cubic_3}
	&\bigg\{\sum_{ \substack{\ell_{\mathcal{Q}\setminus \mathcal{Q}'},\\ \ell_{r+1} \in [n]\setminus \mathcal{P} }  }  \mathfrak{a}^{\ast,2}_{k,\ell_{ (\mathcal{V}^{(0)})^-}  }      \mathscr{A}^{2,[-\mathcal{P}]}_{\ell_{ (\mathcal{V}^{(0)})^+}, (\ell_{ (\mathcal{V}^{(1)})^-}, \ell_{ (\mathcal{V}^{(2)})^-} ); [-\mathcal{P}] }(*)  \mathfrak{a}^{\ast,2}_{\ell_{ (\mathcal{V}^{(1)})^+},\ell_{r+1} } \mathfrak{a}^{\ast,2}_{\ell_{ (\mathcal{V}^{(2)})^+},\ell_{r+1} } \prod_{\substack{q \in [0:2],\\(\alpha,\beta) \in \mathscr{P}_2(\mathcal{V}^{(q)})}} \mathfrak{a}_{\alpha,\beta}^{\ast,2}\bigg\}^{1/2}.
	\end{align}
}
	Using the counting technique similar to (\ref{ineq:A_hprod_quad_G_count}) (where we identify $(i,j)=(\ell_{r+1},\ell_{r+1})$ with a further summation over $\ell_{r+1}$), upon using Lemma \ref{lem:A_cross_ind_quad} for terms with subscript indices in $\mathcal{P}=\{j\}$, on an event $\mathscr{E}_{1,a}\cap \mathscr{E}_{\mathcal{T}}(L)\cap \mathscr{E}_{\mathsf{H}}(L)$, where $\mathscr{E}_{1,a}$ with $\Prob(\mathscr{E}_{1,a}^c)\leq  c^r e^{-(\log n)^{100}/c}$, for some $V\subset [r+1]$,
	\begin{align*}
	(\ref{ineq:A_cross_quad_cubic_3})&\leq \big(K L \log n\big)^{cr}\cdot \mathfrak{U}_{\{\mathcal{T}_\cdot\};[-\mathcal{P}]}^{[-\mathcal{P}]}\big(\abs{\mathcal{T}}\big)\cdot \bigg(\sum_{\ell_{[V]} \in [n]\setminus \mathcal{P} }n^{-\{k,\ell_{[V]}\}+1 }\bigg)\\
	& \leq \big(K L r\log n\big)^{c r}\cdot \mathfrak{U}_{\{\mathcal{T}_\cdot\};[-\mathcal{P}]}^{[-\mathcal{P}]}\big(\abs{\mathcal{T}}\big).
	\end{align*}
	Consequently, on the same event, we have 
	\begin{align}\label{ineq:A_cross_quad_cubic_main_estimate}
	(\ref{ineq:A_cross_quad_cubic_1})\leq \big(K L r\log n\big)^{cr}\cdot \mathfrak{U}_{\{\mathcal{T}_\cdot\};[-\mathcal{P}]}^{[-\mathcal{P}]}\big(\abs{\mathcal{T}}\big)\cdot n^{-3/2}.
	\end{align} 
	
	\noindent (\textbf{Subcase 1-(b): $r+1 \notin \mathcal{Q}'$}). (\ref{ineq:A_cross_quad_cubic_1}) is bounded by a $K^c n^{-3/2}$ multiple of
	{\small
	\begin{align}\label{ineq:A_cross_quad_cubic_4}
	 &\bigg\{\sum_{\ell_{\mathcal{Q}\setminus \mathcal{Q}'} \in [n]\setminus \mathcal{P}}  \mathfrak{a}^{\ast,2}_{k,\ell_{ (\mathcal{V}^{(0)})^-}  }         \mathscr{A}^{2,[-\mathcal{P}]}_{\ell_{ (\mathcal{V}^{(0)})^+}, (\ell_{ (\mathcal{V}_{1}^{(1)})^-}, \ell_{ (\mathcal{V}^{(2)})^-});[-\mathcal{P}]  }(*) \mathfrak{a}^{\ast,2}_{\ell_{ (\mathcal{V}^{(1)})^+},\ell_{r+1} } \mathfrak{a}^{\ast,2}_{\ell_{ (\mathcal{V}^{(2)})^+},\ell_{r+1} } \prod_{\substack{q \in [0:2],\\(\alpha,\beta) \in \mathscr{P}_2(\mathcal{V}^{(q)})}} \mathfrak{a}_{\alpha,\beta}^{\ast,2}\bigg\}^{1/2}.
	\end{align}}	
	The above display corresponds to (\ref{ineq:A_cross_quad_cubic_3}), so upon using Lemma \ref{lem:A_cross_ind_quad} for terms with subscript indices in $\mathcal{P}=\{j\}$, on an event $\mathscr{E}_{1,b}\cap \mathscr{E}_{\mathcal{T}}(L)\cap \mathscr{E}_{\mathsf{H}}(L)$, where $\mathscr{E}_{1,b}$ with $\Prob(\mathscr{E}_{1,b}^c)\leq  c^r e^{-(\log n)^{100}/c}$, the estimate (\ref{ineq:A_cross_quad_cubic_main_estimate}) holds. 

    \noindent (\textbf{Case 2: $\mathcal{T}_0^+ \notin \mathcal{Q}'$}). This case follows from some modifications to the proceeding arguments. In particular, the term $\mathscr{A}^2_{\ell_{ (\mathcal{V}^{(0)})^+}, (\ell_{ (\mathcal{V}^{(1)})^-}, \ell_{ (\mathcal{V}^{(2)})^-} )}(*)$ in (\ref{ineq:A_cross_quad_cubic_2}) and (\ref{ineq:A_cross_quad_cubic_4}) will be replaced by $\mathfrak{a}^{\ast,2}_{\ell_{ (\mathcal{V}^{(0)})^+},\ell_{\mathcal{T}_0^+}  } \mathfrak{a}^{\ast,2}_{\ell_{\mathcal{T}_0^+},  \ell_{ (\mathcal{V}^{(1)})^-}}  \mathfrak{a}^{\ast,2}_{\ell_{\mathcal{T}_0^+},  \ell_{ (\mathcal{V}^{(2)})^-}}$, so on an event with probability at least $1- c e^{-(\log n)^{100}/c}$, (\ref{ineq:A_cross_quad_cubic_1}) is bounded by a $n^{-3/2} (K\log n)^c$ multiple of 
    {\small\begin{align}\label{ineq:A_cross_quad_cubic_5}
    &\bigg\{\sum_{ \substack{\ell_{\mathcal{Q} \setminus \mathcal{Q}'},\\ \ell_{r+1} \in [n]\setminus \mathcal{P} }  }  \mathfrak{a}^{\ast,2}_{k,\ell_{ (\mathcal{V}^{(0)})^-}  }          \mathfrak{a}^{\ast,2}_{\ell_{ (\mathcal{V}^{(0)})^+},\ell_{\mathcal{T}_0^+}  } \mathfrak{a}^{\ast,2}_{\ell_{\mathcal{T}_0^+},  \ell_{ (\mathcal{V}^{(1)})^-}}  \mathfrak{a}^{\ast,2}_{\ell_{\mathcal{T}_0^+},  \ell_{ (\mathcal{V}^{(2)})^-}}\mathfrak{a}^{\ast,2}_{\ell_{ (\mathcal{V}^{(1)})^+},\ell_{r+1} } \mathfrak{a}^{\ast,2}_{\ell_{ (\mathcal{V}^{(2)})^+},\ell_{r+1} }\prod_{\substack{q \in [0:2],\\(\alpha,\beta) \in \mathscr{P}_2(\mathcal{V}^{(q)})}} \mathfrak{a}_{\alpha,\beta}^{\ast,2} \bigg\}^{1/2}.
    \end{align}}
    Using the counting technique similar to (\ref{ineq:A_hprod_quad_G_count}) (where we identify $(i,j)=(\ell_{r+1},\ell_{r+1})$ with a further summation over $\ell_{r+1}$), upon using Lemma \ref{lem:A_cross_ind_quad} for terms with subscript indices in $\mathcal{P}=\{j\}$, on an event $\mathscr{E}_2\cap \mathscr{E}_{\mathcal{T}}(L)\cap \mathscr{E}_{\mathsf{H}}(L)$, where $\mathscr{E}_2$ with $\Prob(\mathscr{E}_2^c)\leq c^r e^{-(\log n)^{100}/c}$, 
    \begin{align*}
    (\ref{ineq:A_cross_quad_cubic_5})&\leq \big(K L r\log n\big)^{cr}\cdot \mathfrak{U}_{\{\mathcal{T}_\cdot\};[-\mathcal{P}]}^{[-\mathcal{P}]}\big(\abs{\mathcal{T}}\big).
    \end{align*}
    Consequently, on the same event, the estimate (\ref{ineq:A_cross_quad_cubic_main_estimate}) holds. 
    
    Summarizing all cases above  and applying (\ref{ineq:A_hprod_quad_7}) (= a stronger version of Proposition \ref{prop:A_hprod_quad}) to $\mathfrak{U}_{\{\mathcal{T}_\cdot\};[-\mathcal{P}]}^{[-\mathcal{P}]}\big(\abs{\mathcal{T}}\big)$ in (\ref{ineq:A_cross_quad_cubic_main_estimate}) conclude the proof of the claim.
\end{proof}

\begin{proof}[Proof of Proposition \ref{prop:A_cross_cubic}: Any of $\mathcal{T}_0\neq \emptyset, \mathcal{T}_1\neq \emptyset^{[)}, \mathcal{T}_2 \neq \emptyset^{[)}$ fails, but $(\mathcal{T}_0,\mathcal{T}_1,\mathcal{T}_2)\neq \big(\emptyset,\emptyset^{[)},\emptyset^{[)}\big)$]
For $\mathcal{T}_0= \emptyset$, as  $\mathscr{A}_{k,(\ell_0,\ell_0)}\big(\mathcal{T}_0,(\mathcal{T}_{1},\mathcal{T}_2)\big)= \mathsf{H}_{\ast;\ast}\cdot \mathscr{A}_{k,\ell_0}(\mathcal{T}_1)\mathscr{A}_{k,\ell_0}(\mathcal{T}_2)$, the corresponding term to (\ref{ineq:A_cross_quad_cubic_1}) reads 
\begin{align*}
\bigg\{\sum_{\ell_{\mathcal{Q} \setminus \mathcal{Q}'} \in [n]\setminus \mathcal{P}} \bigg(\sum_{\ell_{\mathcal{Q}'}\in [n]\setminus \mathcal{P}} A_{k, [\ell_{\mathcal{T}_1}], \ell_{r+1}}A_{k, [\ell_{\mathcal{T}_2}], \ell_{r+1}}   \overline{A_{\ell_{r+1},j}^3}\prod_{s \in \mathcal{Q}'} \mathsf{H}_{t_s;\ell_s}^{[-\mathcal{P}]}\bigg)^2\bigg\}^{1/2}.
\end{align*}
For $\mathcal{T}_1 = \emptyset^{[)}$, as  $\mathscr{A}_{k,(\ell_0,\ell_0)}\big(\mathcal{T}_0,(\mathcal{T}_{1},\mathcal{T}_2)\big)= \mathsf{H}_{\ast;\ast}\cdot \mathscr{A}_{k,\ell_0}(\mathcal{T}_0^{[)})\mathscr{A}_{\ell_0,\ell_0}(\mathcal{T}_2)$, the corresponding term to (\ref{ineq:A_cross_quad_cubic_1}) reads 
{\small\begin{align*}
\begin{cases}
\Big\{\sum_{\ell_{\mathcal{Q}\setminus \mathcal{Q}'} \in [n]\setminus \mathcal{P}} \Big(\sum_{\ell_{\mathcal{Q}'}\in [n]\setminus \mathcal{P}}  A_{\ell_{r+1}, [\ell_{\mathcal{T}_2}], \ell_{r+1}} \overline{A_{\ell_{r+1},j}^3} \prod_{s \in \mathcal{Q}'} \mathsf{H}_{t_s;\ell_s}^{[-\mathcal{P}]}\Big)^2\Big\}^{1/2}, & \mathcal{T}_0= \emptyset;\\
\Big\{\sum_{\ell_{\mathcal{Q} \setminus \mathcal{Q}'} \in [n]\setminus \mathcal{P}} \Big(\sum_{\ell_{\mathcal{Q}'}\in [n]\setminus \mathcal{P}} A_{k,[\ell_{\mathcal{T}_0^{[)}}], \ell_{r+1}} A_{\ell_{r+1}, [\ell_{\mathcal{T}_2}], \ell_{r+1}} \overline{A_{\ell_{r+1},j}^3} \prod_{s \in \mathcal{Q}'} \mathsf{H}_{t_s;\ell_s}^{[-\mathcal{P}]}\Big)^2\Big\}^{1/2}, & \mathcal{T}_0\neq \emptyset.
\end{cases}
\end{align*}
}
All three terms above can be handled using exactly the same method as above. We omit repetitive details. 
\end{proof}

\section{Proofs of delocalization estimates in Section \ref{section:proof_universality}}\label{section:proof_delocalization}

In this section, we shall focus on the simplified iterate
\begin{align}\label{def:GFOM_sym_proof}
z^{(t)} = A \mathsf{F}_t(z^{(t-1)})+\mathsf{G}_t(z^{(t-1)}),
\end{align}
and similarly for its leave-k-out version (\ref{def:GFOM_sym_loo}). The simplification made here is mostly formal for the proof. We choose to work with the above version to simplify notation. 

\subsection{Proof of Proposition \ref{prop:loo_l2_bound}}\label{subsection:proof_delocalization_iterate}

	Let us work on the event $
	\mathscr{E}_0\equiv \big\{\pnorm{A}{\op}\vee \pnorm{A_{[-\mathcal{P}]}}{\op}\vee\max_{k \in [n]}\pnorm{A_{k\cdot}}{}\leq c_0 K e_n(x)\big\}$ for some large enough $c_0>2$, and $e_n(x)\equiv 1+\sqrt{x/n}$. Without loss of generality we work with $x\geq 1$. By choosing $c_0$ large, in view of \cite[Theorem 4.4.5]{vershynin2018high} and the subsequent remarks, we have $\Prob(\mathscr{E}_0^c)\leq c e^{-x/c}$. Now note that on the event $\mathscr{E}_0$, 
	\begin{align}\label{ineq:gfom_deloc_1}
	\bigpnorm{ z^{(t)}-z_{[-\mathcal{P}]}^{(t)}  }{}&\leq \bigpnorm{A \mathsf{F}_t(z^{(t-1)})- A_{[-\mathcal{P}]} \mathsf{F}_t\big(z^{(t-1)}_{[-\mathcal{P}]}\big)}{}+ \bigpnorm{\mathsf{G}_t(z^{(t-1)})- \mathsf{G}_t\big(z^{(t-1)}_{[-\mathcal{P}]}  }{}\nonumber\\
	&\leq \pnorm{A}{\op}\bigpnorm{\mathsf{F}_t(z^{(t-1)})- \mathsf{F}_t\big(z^{(t-1)}_{[-\mathcal{P}]}\big)}{}+ \bigpnorm{ \big(A- A_{[-\mathcal{P}]}\big) \mathsf{F}_t\big(z^{(t-1)}_{[-\mathcal{P}]}\big)}{}\nonumber\\
	&\qquad\qquad + \bigpnorm{\mathsf{G}_t(z^{(t-1)})- \mathsf{G}_t\big(z^{(t-1)}_{[-\mathcal{P}]}}{}\nonumber\\
	&\leq 2c_0K\Lambda e_n(x)\cdot  \bigpnorm{ z^{(t-1)}-z_{[-\mathcal{P}]}^{(t-1)}  }{} + \bigpnorm{ A_{[\mathcal{P}]} \mathsf{F}_t\big(z^{(t-1)}_{[-\mathcal{P}]}\big)}{}.
	\end{align}
	Here $A_{[\mathcal{P}]}\equiv A-A_{[-\mathcal{P}]}$. On the other hand, as for any $k \in \mathcal{P}$, 
	\begin{align}\label{ineq:gfom_deloc_2}
	\bigabs{z^{(t-1)}_{[-\mathcal{P}],k}} \leq\Lambda \bigabs{z^{(t-2)}_{[-\mathcal{P}],k}}+\Lambda\leq\cdots\leq \Lambda^{t-1} \bigabs{z^{(0)}_{k}}+\sum_{s \in [t-1]}\Lambda^s,
	\end{align}
	we have on  $\mathscr{E}_0$,
	\begin{align}\label{ineq:gfom_deloc_3}
	&\bigpnorm{ A_{[\mathcal{P}]} \mathsf{F}_t\big(z^{(t-1)}_{[-\mathcal{P}]}\big)}{}\nonumber\\
	&= \bigg\{\sum_{\ell \notin \mathcal{P}}\bigiprod{A_{\mathcal{P},\ell}}{ \mathsf{F}_t\big(z^{(t-1)}_{[-\mathcal{P}],\mathcal{P}} \big)}^2+\bigpnorm{ A_{\mathcal{P},\cdot} \mathsf{F}_t\big(z^{(t-1)}_{[-\mathcal{P}]}\big)  }{}^2\bigg\}^{1/2}\nonumber\\
	&\leq \pnorm{A_{\mathcal{P},\cdot}}{F}\bigpnorm{ \mathsf{F}_t\big(z^{(t-1)}_{[-\mathcal{P}],\mathcal{P}}\big)  }{}+ \bigpnorm{ A_{\mathcal{P},\cdot} \mathsf{F}_t\big(z^{(t-1)}_{[-\mathcal{P}]}\big)  }{}\nonumber\\
	&\leq c_0Ke_n(x)\abs{\mathcal{P}}^{1/2}\bigg\{\sum_{k \in \mathcal{P}} \Lambda^2\Big(1+\bigabs{z^{(t-1)}_{[-\mathcal{P}],k}}\Big)^2\bigg\}^{1/2}+\bigg\{\sum_{k \in \mathcal{P}}\bigabs{\bigiprod{A_{k\cdot}}{\mathsf{F}_t\big(z_{[-\mathcal{P}]}^{(t-1)}\big) }}^2\bigg\}^{1/2}\nonumber\\
	&\leq c Ke_n(x)\abs{\mathcal{P}}\cdot\bigg(\Lambda^t \max_{k \in \mathcal{P}} \abs{z_k^{(0)}}+ \sum_{s \in [t]} \Lambda^s\bigg)+ \bigg\{\sum_{k \in \mathcal{P}}\bigabs{\bigiprod{A_{k\cdot}}{\mathsf{F}_t\big(z_{[-\mathcal{P}]}^{(t-1)}\big) }}^2\bigg\}^{1/2}.
	\end{align}
	As $A_{k\cdot}$ is independent of $z_{[-\mathcal{P}]}^{(t-1)}$, on an event $\mathscr{E}_{1,t}\cap \mathscr{E}_0$, where $\Prob(\mathscr{E}_{1,t}^c)\leq c\abs{\mathcal{P}}e^{-x/c}$, uniformly in $k \in \mathcal{P}$, 
	\begin{align}\label{ineq:gfom_deloc_4}
	\bigabs{\bigiprod{A_{k\cdot}}{\mathsf{F}_t\big(z_{[-\mathcal{P}]}^{(t-1)}\big) }}&\leq c\sqrt{x}\cdot K n^{-1/2}\bigpnorm{ \mathsf{F}_t\big(z_{[-\mathcal{P}]}^{(t-1)}\big)  }{}\nonumber\\
	&\leq c \sqrt{x}\cdot K \Lambda\cdot  \Big(1+n^{-1/2}\bigpnorm{z_{[-\mathcal{P}]}^{(t-1)} }{}\Big)\nonumber\\
	&\leq c \sqrt{x}\cdot K\Lambda\big(c K\Lambda e_n(x)+ \Lambda\big)\cdot \big(1+n^{-1/2} \bigpnorm{z_{[-k]}^{(t-2)} }{}\big)\nonumber\\
	&\leq\cdots\leq \big(c K\Lambda e_n(x) + \Lambda\big)^{t}\sqrt{x}\cdot \big(1+n^{-1/2}\pnorm{z^{(0)}}{}\big).
	\end{align}
	Combining the above two displays (\ref{ineq:gfom_deloc_3})-(\ref{ineq:gfom_deloc_4}), on the event $\mathscr{E}_0\cap \mathscr{E}_{1,t}$, 
	\begin{align*}
	\bigpnorm{ A_{[\mathcal{P}]} \mathsf{F}_t\big(z^{(t-1)}_{[-\mathcal{P}]}\big)}{}&\leq \abs{\mathcal{P}}\big(c K\Lambda e_n(x)\big)^{t+1}\sqrt{x}\cdot \Big(1+\max_{k \in \mathcal{P}}\abs{z_k^{(0)}}+n^{-1/2}\pnorm{z^{(0)}}{}\Big).
	\end{align*}
	Consequently, combined with (\ref{ineq:gfom_deloc_1}), on the event $\cap_{s \in [t]}\big(\mathscr{E}_0\cap \mathscr{E}_{1,s}\big)$, 
	\begin{align*}
	\bigpnorm{ z^{(t)}-z_{[-\mathcal{P}]}^{(t)}  }{}
	&\leq c K\Lambda e_n(x)\cdot  \bigpnorm{ z^{(t-1)}-z_{[-\mathcal{P}]}^{(t-1)}  }{} \\
	&\quad\quad + \abs{\mathcal{P}}\big(c K\Lambda e_n(x)\big)^{t+1}\sqrt{x}\cdot \Big(1+\max_{k \in \mathcal{P}}\abs{z_k^{(0)}}+n^{-1/2}\pnorm{z^{(0)}}{}\Big)\\
	&\leq \cdots \leq t \abs{\mathcal{P}}\big(c K\Lambda e_n(x)\big)^{t+1}\sqrt{x}\cdot \Big(1+\max_{k \in \mathcal{P}}\abs{z_k^{(0)}}+n^{-1/2}\pnorm{z^{(0)}}{}\Big),
	\end{align*}
	proving the first claimed inequality.
	
	For the second inequality, let us take $\mathcal{P}=\{k\}$. By (\ref{ineq:gfom_deloc_2}), on the same event as above,
	\begin{align*}
	\abs{z^{(t)}_k}\leq \bigpnorm{ z^{(t)}-z_{[-k]}^{(t)}  }{}+\bigabs{z^{(t)}_{[-k],k}}\leq \big(c K\Lambda e_n(x)\big)^{t+1}(1+\sqrt{x})\cdot \big(1+\pnorm{z^{(0)}}{\infty}\big).
	\end{align*}

	For the moment bound, by writing $Z_k\equiv \abs{z_k^{(t)}}/\big((c K\Lambda)^{t+1}(1+\pnorm{z^{(0)}}{\infty})\big)$, we have $\Prob\big(Z_k\geq x^{t/2+1}\big)\leq c\cdot t e^{-x/c}$ for $x\geq 1$. Now for $p\geq 1$, with the change of variable $z=x^{p(t/2+1)}$, 
	\begin{align*}
	\E Z_k^p &= \int_0^\infty \Prob(Z_k>z^{1/p})\,\d{z}\leq 1+ \int_1^\infty \Prob\big(Z_k>x^{t/2+1}\big)\,\d{x^{p(t/2+1)}}\\
	&\leq 1+ c \cdot p(t/2+1)t\int_1^\infty e^{-x/c} x^{p(t/2+1)-1}\,\d{x}\leq (C_p t)^{t+1}. 
	\end{align*}
	Finally for $Z_\infty=\max_{k \in [n]} Z_k$, a simple union bound leads to $\Prob\big(Z_\infty\geq x^{t/2+1}\big)\leq c\cdot t ne^{-x/c}$ for $x\geq 1$. So using the above calculation, we have
	\begin{align*}
	\E Z_\infty^p \leq C_p^{t+1} \cdot \int_0^\infty \big(ne^{-x}\wedge 1\big) x^{p(t/2+1)-1}\,\d{x} \leq (C_p t)^{t+1}\cdot (\log n)^{2pt}.
	\end{align*}
	The claimed moment bound follows.\qed

\subsection{Proof of Proposition \ref{prop:z_deloc}}\label{subsection:proof_delocalization_derivative}

To prove Proposition \ref{prop:z_deloc}, first we need the following derivative formula. 

\begin{lemma}\label{lem:z_der_form}
	For $t \in \N$, let the (random) matrices $M^{(t-1)}\equiv M_1^{(t-1)} ,M_2^{(t-1)},M_3^{(t-1)}\in \R^{n\times n}$ be defined via 
	\begin{align}\label{ineq:z_der_1}
	M^{(t-1)}_{q,k\ell}\equiv A_{k\ell} \mathsf{F}_{t,\ell}^{(q)}(z^{(t-1)}_\ell)+\delta_{k\ell} \mathsf{G}_{t,\ell}^{(q)}(z^{(t-1)}_\ell),\quad q=1,2,3.
	\end{align}
	Then for $i,j,k \in [n]$,
	\begin{align*}
	\partial_{ij} z^{(t)}_k& =\sum_{(a,b)}  \sum_{s \in [0:t-1]} M_{ka}^{(t-1:s+1)} \mathsf{F}_{s+1,b}(z^{(s)}_b),\\
	\partial_{ij}^2 z^{(t)}_k&= 2 \sum_{(a,b)} \sum_{s \in [0:t-1]} M_{ka}^{(t-1:s+1)} \mathsf{F}_{s+1,b}'(z^{(s)}_b) \partial_{ij} z_b^{(s)}+ \sum_{s \in [0:t-1]}\sum_{\ell \in [n]} \Big( M^{(t-1:s+1)} M_2^{(s)} \Big)_{k\ell} \big(\partial_{ij} z^{(s)}_\ell\big)^2,\\
	\partial_{ij}^3 z_k^{(t)}& = 3 \sum_{(a,b)} \sum_{s \in [0:t-1]} M_{ka}^{(t-1:s+1)} \Big(\mathsf{F}_{s+1,b}^{(2)}(z^{(s)}_b) \big(\partial_{ij} z_b^{(s)}\big)^2+\mathsf{F}_{s+1,b}'(z^{(s)}_b)\partial_{ij}^2 z_b^{(s)} \Big)\\
	&\quad + \sum_{s \in [0:t-1]} \sum_{\ell \in [n]} \bigg(\big(M^{(t-1:s+1)}M_3^{(s)}\big)_{k\ell}\big(\partial_{ij} z^{(s)}_{\ell}\big)^3 + 3 \big(M^{(t-1:s+1)}M_2^{(s)}\big)_{k\ell}\big(\partial_{ij} z^{(s)}_{\ell}\partial_{ij}^2 z^{(s)}_{\ell}\big)\bigg).
	\end{align*}
	Here the summation over $(a,b)$ in the above display runs over $\{(i,j),(j,i)\}$, and we write $M^{(t:s)}\equiv M^{(t)}\cdots M^{(s)}\bm{1}_{s\leq t}+I_n \bm{1}_{s>t} \in \R^{n\times n}$.
\end{lemma}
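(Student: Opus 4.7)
\medskip

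The plan is to establish all three formulae inductively on $t$, by combining the chain and product rules with the defining recursion $z^{(t)} = A\mathsf{F}_t(z^{(t-1)}) + \mathsf{G}_t(z^{(t-1)})$. The key structural observation is that, since $A$ is symmetric, $\partial_{ij} A_{k\ell}$ equals $\delta_{k,i}\delta_{\ell,j} + \delta_{k,j}\delta_{\ell,i}$ when $i \neq j$ (and $\delta_{k,i}\delta_{\ell,i}$ when $i = j$), which is precisely what generates the sum over $(a,b) \in \{(i,j),(j,i)\}$ appearing as the ``source'' in each formula. Moreover, $M_1^{(t-1)}_{k\ell}$ is readily seen to equal the Jacobian entry $\partial z^{(t)}_k/\partial z^{(t-1)}_\ell$, while $M_q^{(t-1)}$ for $q=2,3$ will encode higher-order chain-rule contributions through the dependence of $\mathsf{F}_t$ and $\mathsf{G}_t$ on $z^{(t-1)}$.

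First I would derive a one-step recursion for $\partial_{ij} z^{(t)}$. Differentiating $z^{(t)}_k = \sum_\ell A_{k\ell}\mathsf{F}_{t,\ell}(z^{(t-1)}_\ell) + \mathsf{G}_{t,k}(z^{(t-1)}_k)$ in $A_{ij}$ yields
\begin{align*}
\partial_{ij} z^{(t)}_k = \sum_{(a,b)} \delta_{k,a}\,\mathsf{F}_{t,b}(z^{(t-1)}_b) + \sum_\ell M^{(t-1)}_{1,k\ell}\,\partial_{ij} z^{(t-1)}_\ell.
\end{align*}
Since $\partial_{ij} z^{(0)} = 0$, telescoping this recursion (pre-multiplying the source at step $s+1$ by the product $M^{(t-1:s+1)} = M_1^{(t-1)}\cdots M_1^{(s+1)}$, with the convention $M^{(t:s)} = I$ for $s > t$ absorbing the boundary $s = t-1$) produces exactly the claimed expression for $\partial_{ij} z^{(t)}_k$.

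For the second and third derivatives I would repeat this two-step pattern: derive the one-step recursion ``source at time $t$ plus $M_1^{(t-1)}$ times the previous derivative'', then unroll. The workhorse identity at each order is
\begin{align*}
\partial_{ij} M^{(t-1)}_{q,k\ell} = \bigl(\delta_{k,i}\delta_{\ell,j} + \delta_{k,j}\delta_{\ell,i}\bigr)\mathsf{F}_{t,\ell}^{(q)}(z^{(t-1)}_\ell) + M^{(t-1)}_{q+1,k\ell}\,\partial_{ij} z^{(t-1)}_\ell,
\end{align*}
which explains why $M_q$ ``ascends'' to $M_{q+1}$ with each differentiation and why the $q$-th derivatives of $\mathsf{F}_t, \mathsf{G}_t$ arise. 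Applying this with $q=1$ to the one-step recursion for $\partial_{ij} z^{(t)}$ gives the second-derivative recursion, which then unrolls into the stated sum involving $M_2^{(s)}$ and $\mathsf{F}'_{s+1,b}$.

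The main (mild) obstacle is bookkeeping the combinatorial coefficient $3$ in the third-derivative formula. Differentiating the one-step recursion for $\partial_{ij}^2 z^{(t)}_k$ once more produces $\mathsf{F}_{t,b}^{(2)}(\partial_{ij} z^{(t-1)}_b)^2$ contributions from three independent places (via Term 1's source, via $\partial_{ij}$ of the $M_2$ coefficient in Term 3), and $\mathsf{F}'_{t,b}\,\partial_{ij}^2 z^{(t-1)}_b$ contributions from two places (Term 1's source and $\partial_{ij}$ of the $M_1$ coefficient in Term 2); similarly, $M_2^{(t-1)}\partial_{ij} z^{(t-1)}\partial_{ij}^2 z^{(t-1)}$ receives one contribution from differentiating the $M_1$ coefficient and a factor-$2$ contribution from differentiating $(\partial_{ij} z^{(t-1)})^2$, while $M_3^{(t-1)}(\partial_{ij} z^{(t-1)})^3$ arises from differentiating $M_2^{(t-1)}$ in Term 3. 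Collecting these coefficients (all equal to $3$ in the appropriate places), re-organizing into the clean ``source plus $M_1^{(t-1)}$ times $\partial_{ij}^3 z^{(t-1)}$'' recursion, and unrolling via $M^{(t-1:s+1)}$ then yields the third stated identity. Apart from this coefficient-matching, no further input is required.
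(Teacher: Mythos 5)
Your proof is correct and follows essentially the same route as the paper's: establish a one-step recursion at each derivative order by a clean chain/product-rule calculation (the paper packages the source term using $\Delta_{ij}\equiv (\bm{1}_{i\neq j}+\tfrac12\bm{1}_{i=j})(e_ie_j^\top+e_je_i^\top)$, your $\sum_{(a,b)}\delta_{k,a}$ with the $i=j$ caveat is the componentwise version), then unroll via $M^{(t-1:s+1)}$ using $\partial^q_{ij}z^{(0)}=0$. The only thing to tidy up is the prose in your coefficient-count for the third derivative: the factor $3$ on $\mathsf{F}^{(2)}(\partial_{ij}z)^2$ and on $\mathsf{F}'\partial_{ij}^2 z$ both arise as $2+1$ (the factor $2$ already present on the first-order source in the $\partial^2_{ij}$-recursion, plus one copy from differentiating the relevant $M_q$ coefficient), while the factor $3$ on $M_2\,\partial_{ij}z\,\partial^2_{ij}z$ is $1+2$ (one from differentiating $M_1$, and two from $\partial_{ij}(\partial_{ij}z)^2$); your parenthetical descriptions mislabel which term contributes what, but the tallied coefficients are right.
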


\begin{remark}
In the formulae for the second and third derivatives $\partial_{ij}^2 z^{(t)}, \partial_{ij}^3 z^{(t)}$, the summation $\sum_{s \in [0:t-1]}$ can be replaced by $\sum_{s \in [t-1]}$, as $\partial_{\cdot\cdot}^q z^{(0)}=0$ for $q\geq 1$. 
\end{remark}

\begin{proof}[Proof of Lemma \ref{lem:z_der_form}]
	We write $\mathsf{F}^{(\cdot)}_{t;\ell}\equiv \mathsf{F}^{(\cdot)}_{t,\ell}(z^{(t-1)}_\ell)$, $\overline{\mathsf{F}}^{(\cdot)}_{t}\equiv(\mathsf{F}^{(\cdot)}_{t;\ell})_{\ell \in [n]}\in \R^n$, and similarly for $\mathsf{G}$ for notational simplicity. For $i, j \in [n]$, let $\Delta_{ij}\equiv (\bm{1}_{i\neq j}+2^{-1}\bm{1}_{i=j})(e_ie_j^\top+ e_je_i^\top)$. Then $\partial_{ij} A = \Delta_{ij}$. 
	
	\noindent (1). 
	Then the recursion for $\partial_{ij} z^{(t)}$ can be rewritten as
	\begin{align*}
	\partial_{ij} z^{(t)} &= \Delta_{ij} \overline{\mathsf{F}}_t+A \big(\overline{\mathsf{F}}_t'\circ \partial_{ij} z^{(t-1)}\big)+ \overline{\mathsf{G}}_t'\circ \partial_{ij} z^{(t-1)}= \Delta_{ij} \overline{\mathsf{F}}_{t} + M^{(t-1)}\partial_{ij} z^{(t-1)}.
	\end{align*}
	Iterating this representation and using $\partial_{ij} z^{(0)}=0$, we arrive at
	\begin{align*}
	\partial_{ij} z^{(t)} = \Delta_{ij} \overline{\mathsf{F}}_{t}+ M^{(t-1)} \Delta_{ij} \overline{\mathsf{F}}_{t-1}+\cdots+ M^{(t-1:1)}\Delta_{ij} \overline{\mathsf{F}}_{1}=\sum_{s \in [0:t-1]} M^{(t-1:s+1)}\Delta_{ij} \overline{\mathsf{F}}_{s+1}.
	\end{align*}
	Consequently, for any $k \in [n]$,
	\begin{align}\label{ineq:z_der_2}
	\partial_{ij} z^{(t)}_k
	& =\sum_{(a,b)\in \{(i,j),(j,i)\}}  \sum_{s \in [0:t-1]} M_{ka}^{(t-1:s+1)} \mathsf{F}_{s+1;b}.
	\end{align}
	\noindent (2). The recursion for $\partial_{ij}^2 z^{(t)}$ can be written as 
	\begin{align*}
	\partial_{ij}^2 z^{(t)} 
	& = 2\Delta_{ij} \Big(\overline{\mathsf{F}}_t'\circ \partial_{ij} z^{(t-1)}\Big)+ M_2^{(t-1)}\big(\partial_{ij} z^{(t-1)}\big)^2+ M^{(t-1)} \partial_{ij}^2 z^{(t-1)}.
	\end{align*}
	Iterating this representation we obtain 
	\begin{align*}
	\partial_{ij}^2 z^{(t)}=2\sum_{s \in [t-1]} M^{(t-1:s+1)}\Delta_{ij} \big(\overline{\mathsf{F}}_{s+1}'\circ \partial_{ij} z^{(s)}\big)+\sum_{s \in [t-1]} M^{(t-1:s+1)} M_2^{(s)}\big(\partial_{ij} z^{(s)}\big)^2.
	\end{align*}
	Taking the $k$-component on both sides of the above display to conclude.

	\noindent (3). The recursion for $\partial_{ij}^3 z^{(t)}$ can be written as 
	\begin{align*}
	\partial_{ij}^3 z^{(t)}&=3\Delta_{ij} \Big(\overline{\mathsf{F}}_t^{(2)}\circ (\partial_{ij} z^{(t-1)})^2+\overline{\mathsf{F}}_t'\circ \partial_{ij}^2 z^{(t-1)}\Big)+ M_3^{(t-1)} (\partial_{ij} z^{(t-1)})^3\\
	&\qquad + 3 M_2^{(t-1)} (\partial_{ij} z^{(t-1)}\circ \partial_{ij}^2 z^{(t-1)}) + M^{(t-1)} \partial_{ij}^3 z^{(t-1)}.
	\end{align*}
	Iterating this representation we obtain
	\begin{align*}
	\partial_{ij}^3 z^{(t)}&=3 \sum_{s \in [t-1]} M^{(t-1:s+1)}\Delta_{ij} \Big(\overline{\mathsf{F}}_{s+1}^{(2)}\circ (\partial_{ij} z^{(s)})^2+\overline{\mathsf{F}}_{s+1}'\circ \partial_{ij}^2 z^{(s)}\Big)\\
	&\quad + \sum_{s \in [t-1]} M^{(t-1:s+1)}M_3^{(s)}\big(\partial_{ij} z^{(s)}\big)^3+3\sum_{s \in [t-1]} M^{(t-1:s+1)}M_2^{(s)}\big(\partial_{ij} z^{(s)}\circ \partial_{ij}^2 z^{(s)}\big).
	\end{align*}
	Taking the $k$-component on both sides of the above display to conclude. 
\end{proof}

\begin{lemma}\label{lem:z_apriori_moment}
	Suppose the following hold:
	\begin{enumerate}
		\item $A=A_0/\sqrt{n}$, where $A_0$ is symmetric and the entries of its upper triangle are independent mean $0$ random variables with $\max_{i,j \in [n]} \pnorm{A_{0,ij}}{\psi_2}\leq K$.
		\item For all $s \in [t],\ell \in [n]$, $\mathsf{F}_{s,\ell},\mathsf{G}_{s,\ell} \in C^3(\R)$, Moreover, there exists some $\Lambda\geq 2$ and $\mathfrak{p}\in \N$ such that 
		\begin{align*}
		\max_{s \in [t]}\max_{\mathsf{E}_s \in \{\mathsf{F}_s,\mathsf{G}_s\}}\max_{\ell \in [n]}\Big\{\pnorm{\mathsf{E}_{s,\ell}}{\mathrm{Lip}}+\max_{q \in [0:3]} \bigpnorm{(1+\abs{\cdot})^{-\mathfrak{p}}\abs{\mathsf{E}_{s,\ell}^{(q)}(\cdot)}}{\infty}\Big\}\leq \Lambda.
		\end{align*}
	\end{enumerate}
	Then for any $p\geq 1$, there exists a constant $c_1=c_1(p,\mathfrak{p})>0$ such that 
	\begin{align*}
	\max_{q\in [3]}\E^{1/p} \big[ \pnorm{ \partial_{ij}^q z^{(t)} }{}^p| z^{(0)}\big]\leq \big(t K\Lambda \log n\cdot (1+\pnorm{z^{(0)}}{\infty})\big)^{c_1 t^2}.
	\end{align*}
\end{lemma}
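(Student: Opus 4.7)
The goal is a crude (a priori) moment bound, so I would forego any delocalization refinement and simply read off each derivative from the explicit formulae of Lemma \ref{lem:z_der_form}, bounding matrix products by operator norms and polynomial evaluations by the $\ell_\infty$ norms of the iterates. The whole argument is induction on $q \in \{1,2,3\}$, with the core deterministic input being an operator-norm control of the Jacobian-type matrices $M^{(s)}_q$ defined in (\ref{ineq:z_der_1}), and the core probabilistic inputs being (i) the $L^p$ bound on $\pnorm{z^{(s)}}{\infty}$ from Proposition \ref{prop:loo_l2_bound} and (ii) standard subgaussian bounds on $\pnorm{A}{\op}$.

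\textbf{Controlling the Jacobian matrices.} The polynomial growth assumption on $\mathsf{F}_{s,\ell}^{(q)},\mathsf{G}_{s,\ell}^{(q)}$ together with $\pnorm{\mathrm{diag}(v)}{\op}=\pnorm{v}{\infty}$ yields, for each $q\in[3]$ and $s\in[t]$,
\begin{align*}
\pnorm{M_q^{(s)}}{\op} \le \Lambda\bigl(\pnorm{A}{\op}+1\bigr)\bigl(1+\pnorm{z^{(s)}}{\infty}\bigr)^{\mathfrak{p}}.
\end{align*}
Submultiplicativity then gives $\pnorm{M^{(t-1:s+1)}}{\op}\le \prod_{r=s+1}^{t-1}\pnorm{M^{(r)}}{\op}$. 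Since $\pnorm{A}{\op}\le c_0 K$ holds up to subgaussian tails, and Proposition \ref{prop:loo_l2_bound} furnishes
\begin{align*}
\E^{1/p}\bigl[\pnorm{z^{(s)}}{\infty}^{p}\bigr]\le (C_ps K\Lambda)^{s+1}(\log n)^{2s}\bigl(1+\pnorm{z^{(0)}}{\infty}\bigr)
\end{align*}
for every $p\ge 1$, a generalized Hölder inequality applied with $t$ factors (taking $p$ there as $pt\mathfrak{p}$) gives, for all $p\ge 1$ and any $0\le s<t$,
\begin{align*}
\E^{1/p}\bigl[\pnorm{M^{(t-1:s+1)}}{\op}^{p}\,\big|\,z^{(0)}\bigr]\le \bigl(C_p tK\Lambda\log n\bigr)^{c t^2}\bigl(1+\pnorm{z^{(0)}}{\infty}\bigr)^{c\mathfrak{p} t^2}.
\end{align*}
The two powers of $t^2$ arise because each of the $\sim t$ factors in the product contributes a bound polynomial in $\pnorm{z^{(r)}}{\infty}^{\mathfrak{p}}$, whose $p$-th moments in turn grow polynomially in $t$ (with base independent of $n$ up to logs).

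\textbf{Induction on $q$.} For $q=1$, the formula for $\partial_{ij}z^{(t)}_k$ in Lemma \ref{lem:z_der_form} is a sum of at most $2t$ terms, each of the form $M^{(t-1:s+1)}_{ka}\mathsf{F}_{s+1,b}(z^{(s)}_b)$. Taking the $\ell_2$ norm in $k$, using $\pnorm{M^{(t-1:s+1)}e_a}{}\le \pnorm{M^{(t-1:s+1)}}{\op}$, and the pointwise bound $|\mathsf{F}_{s+1,b}(z^{(s)}_b)|\le \Lambda(1+\pnorm{z^{(s)}}{\infty})^{\mathfrak{p}}$, a final Hölder step then yields the desired estimate directly from the displays above. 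For $q=2$ and $q=3$, Lemma \ref{lem:z_der_form} expresses $\partial^q_{ij}z^{(t)}$ as a sum of $\bigo(t)$ terms, each containing a product of (a) an operator norm of some $M^{(t-1:s+1)}$ or $M^{(t-1:s+1)}M_r^{(s)}$ block, (b) a polynomial evaluation of $\mathsf{F}_{s+1,b}^{(r)}(z_b^{(s)})$, and (c) one or two factors of lower-order derivatives $\partial_{ij}^{q'}z^{(s)}$ with $q'<q$. The inductive hypothesis bounds (c), the estimates above bound (a) and (b), and a final Hölder inequality with $\bigo(1)$ many exponents combines them, with the constant $c_1$ enlarged by a factor depending only on $q\le 3$ and $\mathfrak{p}$.

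\textbf{Main obstacle.} The only nontrivial bookkeeping is to ensure that the joint $L^p$ estimate on $\prod_{r=s+1}^{t-1}\pnorm{M^{(r)}}{\op}$ does not explode: each factor, through $(1+\pnorm{z^{(r)}}{\infty})^{\mathfrak{p}}$, couples to all earlier iterates, and the naive bound $\E\prod_r X_r^p\le \prod_r\E X_r^{pt}$ forces us to invoke Proposition \ref{prop:loo_l2_bound} at the elevated exponent $pt\mathfrak{p}$. This is legitimate since the constant $C_p$ there depends only polynomially on $p$, and the cost is absorbed into the $c_1t^2$ exponent of the final bound. Everything else is routine algebraic manipulation based on the formulae of Lemma \ref{lem:z_der_form} and the apriori estimates of Proposition \ref{prop:loo_l2_bound}.
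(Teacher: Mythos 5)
Your proposal is correct and follows essentially the same approach as the paper's proof: both reduce to an almost-sure deterministic bound on $\pnorm{\partial_{ij}^q z^{(t)}}{}$ of the form $(c\Lambda(\pnorm{A}{\op}\vee 1))^{ct}\prod_{s<t}(1\vee\pnorm{z^{(s)}}{\infty})^{q\mathfrak{p}}$ (you read it off from the closed-form sums of Lemma \ref{lem:z_der_form}; the paper iterates the one-step recursion, giving the same estimate) and then invoke H\"older over $\bigo(t)$ factors together with the moment bound of Proposition \ref{prop:loo_l2_bound}. One minor slip: the H\"older inequality you cite should read $\E\prod_r X_r^p\le\prod_r\E^{1/t}\bigl[X_r^{pt}\bigr]$ rather than $\prod_r\E X_r^{pt}$, though the conclusion you draw from it (invoking Proposition \ref{prop:loo_l2_bound} at an elevated exponent of order $pt\mathfrak{p}$, with the polynomial dependence of $C_p$ on $p$ absorbed into the base $t$) is the correct one.
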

\begin{proof}
	For the first derivative, the recursive bound for $\partial_{ij} z^{(t)}$ reads
	\begin{align*}
	\pnorm{ \partial_{ij} z^{(t)} }{}&\lesssim \Lambda\big(1+ \abs{z^{(t-1)}_i}+\abs{z^{(t-1)}_j}\big)+ \big(\pnorm{A}{\op} \pnorm{\mathsf{F}_t'(z^{(t-1)}) }{\infty}+ \pnorm{\mathsf{G}_t'(z^{(t-1)}) }{\infty} \big) \cdot \pnorm{ \partial_{ij} z^{(t-1)} }{}\\
	&\lesssim \Lambda \big(\pnorm{A}{\op}\vee 1\big) \big(1\vee  \pnorm{z^{(t-1)}}{\infty}\big)^{\mathfrak{p}}\big(1\vee  \pnorm{\partial_{ij} z^{(t-1)}}{}\big).
	\end{align*}
	This means
	\begin{align}\label{ineq:z_apriori_moment_1}
	\pnorm{ \partial_{ij} z^{(t)} }{}\leq \big(c_1 \Lambda (\pnorm{A}{\op}\vee 1)\big)^t \cdot \prod_{s \in [0: t-1]} \big(1\vee \pnorm{z^{(s)}}{\infty}\big)^{\mathfrak{p}}. 
	\end{align}
	For the second derivative, the recursive bound for $\partial_{ij}^2 z^{(t)}$ reads
	\begin{align*}
	\pnorm{\partial_{ij}^2 z^{(t)}}{}&\lesssim \Lambda\big\{1+\abs{z_i^{(t-1)}}\cdot \abs{\partial_{ij} z^{(t-1)}_i}\big\}\\
	&\qquad +\big(\pnorm{A}{\op} \pnorm{\mathsf{F}_t''(z^{(t-1)}) }{\infty}+ \pnorm{\mathsf{G}_t''(z^{(t-1)}) }{\infty} \big)\cdot \pnorm{\partial_{ij} z^{(t-1)}}{}^2\\
	&\qquad + \big(\pnorm{A}{\op} \pnorm{\mathsf{F}_t'(z^{(t-1)}) }{\infty}+ \pnorm{\mathsf{G}_t'(z^{(t-1)}) }{\infty} \big) \cdot \pnorm{ \partial_{ij}^2 z^{(t-1)} }{}\\
	&\lesssim \big(c \Lambda (\pnorm{A}{\op}\vee 1)\big)^{2t}\prod_{s \in [0: t-1]} \big(1\vee \pnorm{z^{(s)}}{\infty}\big)^{2\mathfrak{p}}\\
	&\qquad + \Lambda (\pnorm{A}{\op}\vee 1)\big(1\vee  \pnorm{z^{(t-1)}}{\infty}\big)^{\mathfrak{p}}\cdot \pnorm{ \partial_{ij}^2 z^{(t-1)}}{ }.
	\end{align*}
	Iterating the bound, we obtain 
	\begin{align}\label{ineq:z_apriori_moment_2}
	\pnorm{\partial_{ij}^2 z^{(t)}}{}\leq \big(c_2 \Lambda (\pnorm{A}{\op}\vee 1)\big)^{2t}\prod_{s \in [0: t-1]} \big(1\vee \pnorm{z^{(s)}}{\infty}\big)^{2\mathfrak{p}}.
	\end{align}
	For the third derivative,   the recursive bound for $\partial_{ij}^3 z^{(t)}$ reads
	\begin{align*}
	\pnorm{\partial_{ij}^3 z^{(t)}}{}&\lesssim \Lambda\big\{1+\abs{z_i^{(t-1)}}\cdot \abs{\partial_{ij} z^{(t-1)}_i}^2+ \abs{z_i^{(t-1)}}\cdot \abs{\partial_{ij}^2 z^{(t-1)}_i} \big\}\\
	&\qquad +\big(\pnorm{A}{\op} \pnorm{\mathsf{F}_t^{(3)}(z^{(t-1)}) }{\infty}+ \pnorm{\mathsf{G}_t^{(3)}(z^{(t-1)}) }{\infty} \big)\cdot \pnorm{\partial_{ij} z^{(t-1)}}{}^3\\
	&\qquad + \big(\pnorm{A}{\op} \pnorm{\mathsf{F}_t^{(2)}(z^{(t-1)}) }{\infty}+ \pnorm{\mathsf{G}_t^{(2)}(z^{(t-1)}) }{\infty} \big) \cdot \pnorm{ \partial_{ij} z^{(t-1)} }{}\pnorm{ \partial_{ij}^2 z^{(t-1)} }{}\\
	&\qquad + \big(\pnorm{A}{\op} \pnorm{\mathsf{F}_t'(z^{(t-1)}) }{\infty}+ \pnorm{\mathsf{G}_t'(z^{(t-1)}) }{\infty} \big) \cdot \pnorm{ \partial_{ij}^3 z^{(t-1)} }{}\\
	&\lesssim \big(c \Lambda (\pnorm{A}{\op}\vee 1)\big)^{3t}\prod_{s \in [0: t-1]} \big(1\vee \pnorm{z^{(s)}}{\infty}\big)^{3\mathfrak{p}}\\
	&\qquad + \Lambda (\pnorm{A}{\op}\vee 1)\big(1\vee  \pnorm{z^{(t-1)}}{\infty}\big)^{\mathfrak{p}}\cdot \pnorm{ \partial_{ij}^3 z^{(t-1)}}{ }.
	\end{align*}
	Iterating the bound, we obtain 
	\begin{align}\label{ineq:z_apriori_moment_3}
	\pnorm{\partial_{ij}^3 z^{(t)}}{}\leq \big(c_3 \Lambda (\pnorm{A}{\op}\vee 1)\big)^{3t}\prod_{s \in [0: t-1]} \big(1\vee \pnorm{z^{(s)}}{\infty}\big)^{3\mathfrak{p}}.
	\end{align}
	The desired moment estimates then follow from (\ref{ineq:z_apriori_moment_1})-(\ref{ineq:z_apriori_moment_3}), and the last claimed inequality in Proposition \ref{prop:loo_l2_bound}.
\end{proof}

\begin{proof}[Proof of Proposition \ref{prop:z_deloc}]
	We write $\mathsf{F}^{(\cdot)}_{t;\ell}\equiv \mathsf{F}^{(\cdot)}_{t,\ell}(z^{(t-1)}_\ell)\in \R$, $\overline{\mathsf{F}}^{(\cdot)}_{t}\equiv(\mathsf{F}^{(\cdot)}_{t;\ell})_{\ell \in [n]}\in \R^n$, and similarly for $\mathsf{G}$ for notational simplicity.

	\noindent (1). 
	To avoid unnecessarily complicated notation, we analyze the last term in (\ref{ineq:z_der_2}) with $(a,b)=(i,j)$. Then with $\ell_1\equiv i, \ell_t\equiv k$, 
	\begin{align}\label{ineq:z_der_M}
	&M^{(t-1:1)}_{ki}= \sum_{\ell_{t-1},\ldots,\ell_2} \prod_{s \in [t-1]} M_{\ell_s,\ell_{s+1}}^{((s+1)-1)}=\sum_{\ell_{[2:t-1]}} \prod_{s\in [t-1]}\big(A_{\ell_{s},\ell_{s+1}}\mathsf{F}_{s+1;\ell_{s+1}}'+\delta_{\ell_{s}\ell_{s+1}} \mathsf{G}_{s+1;\ell_{s+1}}'\big)\nonumber\\
	&= \sum_{\mathcal{Q}\subset [t-1]}\bigg(\sum_{\ell_{[2:t-1]}}\mathsf{S}_{\mathcal{Q}}(\ell_{[2:t-1]}) \prod_{s \in \mathcal{Q}} A_{\ell_s,\ell_{s+1}}\prod_{s \in [t-1]\setminus \mathcal{Q}}\delta_{\ell_s,\ell_{s+1}} \bigg)\equiv \sum_{\mathcal{Q}\subset [t-1]}\mathscr{S}_{ki;\mathcal{Q}},
	\end{align}
	where $
	\mathsf{S}_{\mathcal{Q}}(\ell_{[2:t-1]})\equiv \prod_{s \in \mathcal{Q}} \mathsf{F}_{s+1;\ell_{s+1}}' \prod_{s \in [t-1]\setminus \mathcal{Q}} \mathsf{G}_{s+1;\ell_{s+1}}'$. 
    To avoid unnecessary notational complications, we work with the term with $\mathcal{Q}=\{1,\ldots,r\}$ for some $0\leq r\leq t-1$; the general case can be handled by a simple relabelling. The corresponding summand for the prescribed $\mathcal{Q}$ then becomes
	\begin{align*}
	\mathscr{S}_{ki;\mathcal{Q}}\equiv \bigg(\sum_{\ell_{[2:r]}} A_{k,\ell_2}\cdots A_{\ell_r,i}\cdot \mathsf{F}_{2;\ell_2}'\cdots \mathsf{F}_{r;\ell_r}'\bigg)\cdot \mathsf{F}_{r+1;i}' \mathsf{G}_{r+2;i}'\cdots \mathsf{G}_{t;i}'.
	\end{align*}
	For the choice 
	\begin{align}\label{ineq:z_der_L}
	L\equiv \big(C_1 K \Lambda \log n\cdot (1+\pnorm{z^{(0)}}{\infty})\big)^{c_1  t}
	\end{align}
	with some sufficiently large $C_1>0$, using Propositions \ref{prop:loo_l2_bound} and \ref{prop:A_hprod_quad}, on the event $\mathscr{E}_{[t]}(L)\cap \mathscr{E}_{\mathcal{Q}}$ with $\Prob(\mathscr{E}_{\mathcal{Q}}^c)\leq C_1 c_1^{t} n^{-D}$, 
	\begin{align*}
	\Big(\max_{k\neq i} \sqrt{n} + \max_{k,i}\Big)\bigabs{ \mathscr{S}_{ki;\mathcal{Q}}  }\leq \big(C_1 t L \log n\big)^{c_1 t^2}.
	\end{align*}
	So combining the above displays, by possibly enlarging $c_1,C_1>0$, on the event $\mathscr{E}_{[t]}(L)\bigcap \mathscr{E}_1$ with $\Prob(\mathscr{E}_1^c)\leq C_1 c_1^t n^{-D}$,  
	\begin{align}\label{ineq:z_der_3}
	\max_{s \in [t-1]}\Big(\max_{k\neq i} \sqrt{n} + \max_{k}\Big)\bigabs{ M^{(t-1:s)}_{ki}  }\leq  \big(C_1 t L \log n\big)^{c_1 t^2}.
	\end{align}
	Consequently, by a union bound and adjusting $c_1,C_1>0$, and we assume without loss of generality that on the same event $\mathscr{E}_{[t]}(L)\bigcap \mathscr{E}_1$, 
	\begin{align}\label{ineq:z_der_4}
	\max_{s \in [t]}\Big(\max_{k\neq {i,j}} \sqrt{n} + \max_{k}\Big) \bigabs{\partial_{ij} z^{(s)}_k}\leq \big(C_1 t L \log n\big)^{c_1 t^2}.
	\end{align}
	For the prescribed choice of $L$, we have $\Prob(\mathscr{E}_{[t]}(L)^c)\leq \binom{n}{ct}^2 \cdot Ct e^{-\log^2 n/C}\leq C n^{-D}$ when $t\leq c_0^{-1}\log n$. This concludes the claim.
	
	\noindent (2). 
	We write the two terms in the derivative formula for $\partial^2_{ij} z^{(t)}_k$ as $(I)$ and $(II)$. Using the same proof around as in (1) with minor modifications changing some $\mathsf{F}',\mathsf{G}'$ to $\mathsf{F}'',\mathsf{G}''$, the estimate (\ref{ineq:z_der_3}) also holds for $M^{(t-1:s+1)} M_2^{(s)}$. Consequently, on the same event $\mathscr{E}_{[t]}(L)\bigcap \mathscr{E}_1$, using the estimates (\ref{ineq:z_der_3})-(\ref{ineq:z_der_4}), we then have
	\begin{align*}
	\abs{(I)}&\lesssim t\cdot \Big\{\max_{s \in [t-1]} \abs{M_{ki}^{(t-1:s)}}\vee \abs{M_{kj}^{(t-1:s)}} \Big\}\cdot \Lambda\prod_{q=0,1}\max_{s \in [t]}\big(1+\pnorm{\partial_{ij}^q z^{(s)}}{\infty}\big)^{\mathfrak{p}}\\
	&\leq  \big(\bm{1}_{k \in \{i,j\}}+n^{-1/2}\big)\cdot \big(C_2 t L \log n\big)^{c_2 t^2},
	\end{align*}
	and with $\tilde{M}^{(t-1:s)}\equiv M^{(t-1:s+1)}M_2^{(s)}$, 
	\begin{align*}
	\abs{(II)}&\lesssim t\cdot \max_{s \in [t-1]} \bigg\{\Big(\abs{\tilde{M}_{ki}^{(t-1:s)}}\vee \abs{\tilde{M}_{kj}^{(t-1:s)}}\Big)\cdot  \pnorm{\partial_{ij} z^{(s)}}{\infty}^2 +  \abs{\tilde{M}_{kk}^{(t-1:s)}} \big(\partial_{ij} z^{(s)}_k\big)^2\\
	&\qquad\qquad + \max_{\ell \neq k} \abs{\tilde{M}_{k\ell}^{(t-1:s)}}\cdot \max_{k\neq i,j} \bigabs{\sqrt{n} \cdot \partial_{ij} z_k^{(s)}}^2 \bigg\}\leq \big(C_2 t L \log n\big)^{c_2 t^2}.
	\end{align*}
	Combining the above displays, on the event $\mathscr{E}_{[t]}(L)\bigcap \mathscr{E}_1$, we have
	\begin{align}\label{ineq:z_der_7}
	\max_{s \in [t]}\Big(\max_{k\neq {i,j}} \sqrt{n} + \max_{k}\Big) \bigabs{\partial_{ij}^2 z^{(s)}_k}\leq \big(C_2 t L \log n\big)^{c_2 t^2}.
	\end{align}
	The claimed large deviation estimate follows. The moment estimates follow from the large deviation estimate and the apriori control in Lemma \ref{lem:z_apriori_moment}.
\end{proof}

\subsection{Proof of Proposition \ref{prop:der_cross_cubic}}\label{subsection:proof_delocalization_other}

	Similar to the proof of Proposition \ref{prop:z_deloc}, we write $\mathsf{F}^{(\cdot)}_{t;\ell}\equiv \mathsf{F}^{(\cdot)}_{t,\ell}(z^{(t-1)}_\ell)\in \R$, $\overline{\mathsf{F}}^{(\cdot)}_{t}\equiv(\mathsf{F}^{(\cdot)}_{t;\ell})_{\ell \in [n]}\in \R^n$, and similarly for $\mathsf{G}$ for notational simplicity. Using the derivative formula in Lemma \ref{lem:z_der_form}, we may write
	\begin{align}\label{ineq:der_cross_cubic_1}
	A_{ij}^3 \partial_{ij}^3 z^{(t)}_k = \sum_{\ell \in [4]}S_{(i,j,k);\ell},
	\end{align}
	where
	\begin{align*}
	S_{(i,j,k);1}&\equiv 3 \sum_{(a,b)} \sum_{s \in [t-1]} M_{ka}^{(t-1:s+1)} A_{ij}^3  \big(\partial_{ij} z_b^{(s)}\big)^2\cdot \mathsf{F}_{s+1;b}^{(2)},\\
	S_{(i,j,k);2}&\equiv 3 \sum_{(a,b)} \sum_{s \in [t-1]} M_{ka}^{(t-1:s+1)} A_{ij}^3  \partial_{ij}^2 z_b^{(s)}\cdot \mathsf{F}_{s+1;b}',\\
	S_{(i,j,k);3}&\equiv \sum_{s \in [t-1]} \sum_{\ell \in [n]} \big(M^{(t-1:s+1)}M_3^{(s)}\big)_{k\ell}\big(\partial_{ij} z^{(s)}_{\ell}\big)^3\cdot A_{ij}^3,\\
	S_{(i,j,k);4}&\equiv 3\sum_{s \in [t-1]} \sum_{\ell \in [n]} \big(M^{(t-1:s+1)}M_2^{(s)}\big)_{k\ell}\big(\partial_{ij} z^{(s)}_{\ell}\partial_{ij}^2 z^{(s)}_{\ell}\big)\cdot A_{ij}^3.
	\end{align*}
	Let us now work on the event $\mathscr{E}_0\cap \mathscr{E}_{[t]}(L)$ with $\Prob(\mathscr{E}_0^c)\leq C_0 c_0^t n^{-D}$ and $L$ chosen according to (\ref{ineq:z_der_L}), where
	\begin{align}\label{ineq:der_cross_cubic_2}
	&\max_{s \in [t], u \in [s-1]}\Big(\max_{k\neq i} \sqrt{n} + \max_{k}\Big)\Big\{ \max_{q \in [3]}\bigabs{ M^{(s-1:u)}_{q,ki}  } \vee \max_{q=2,3} \bigabs{ \big(M^{(s-1:u+1)}M_q^{(u)}\big)_{ki} }  \Big\}\nonumber\\
	&\qquad + \max_{s \in [t]}\Big(\max_{k\neq {i,j}} \sqrt{n} + \max_{k}\Big) \Big\{\bigabs{\partial_{ij} z^{(s)}_k}\vee\bigabs{\partial_{ij}^2 z^{(s)}_k}\Big\}  \leq  \big(C_0 t L \log n\big)^{c_0 t^2}.
	\end{align}
	
	\noindent (\textbf{Term $S_{(i,j,k);1}$}). Note that
	\begin{align*}
	S_{(i,j,k);1}&= 3 \sum_{(a,b)} \sum_{s \in [t-1]} M_{ka}^{(t-1:s+1)} A_{ij}^3 \bigg(\sum_{(c,d)} \sum_{u \in [0:s-1]} M_{bc}^{(s-1:u+1)} \mathsf{F}_{u+1;d}\bigg)^2\mathsf{F}_{s+1;b}^{(2)}\\
	& = 3 \sum_{\substack{(a,b)\\(c,d),(c',d')}} \sum_{\substack{s \in [t-1],\\ u,u' \in [0:s-1]}} M_{ka}^{(t-1:s+1)}  M_{bc}^{(s-1:u+1)}  M_{bc'}^{(s-1:u'+1)} A_{ij}^3\cdot \mathsf{F}_{u+1;d}\mathsf{F}_{u'+1;d'}\mathsf{F}_{s+1;b}^{(2)}.
	\end{align*}
	Here the summations over $(a,b),(c,d),(c',d')$ run through $\{(i,j),(j,i)\}$. Let us consider $(a,b)=(i,j)$. The corresponding summation becomes
	\begin{align*}
	\sum_{(c,d),(c',d')} \sum_{\substack{s \in [t-1],\\ u,u' \in [0:s-1]}} M_{ki}^{(t-1:s+1)}  M_{jc}^{(s-1:u+1)}  M_{jc'}^{(s-1:u'+1)} A_{ij}^3\cdot \mathsf{F}_{u+1;d}\mathsf{F}_{u'+1;d'}\mathsf{F}_{s+1;j}^{(2)}.
	\end{align*}
	Except from the case $(c,d)=(c',d')=(j,i)$, all other cases can be bounded by
	\begin{align}\label{ineq:der_cross_cubic_2_0}
	n^{-3/2} \cdot (C_1 t K L \log n)^{c_1 t^2}\cdot n^{ -(\abs{\{i,j,k\}}-1)/2}
	\end{align}
	that holds with probability at least $1- C_1 n^{-D}$. Consequently, on an event $\mathscr{E}_{1,1}$ with $\Prob(\mathscr{E}_{1,1}^c)\leq C_1 n^{-D}$, by letting
	\begin{align*}
	\mathfrak{S}_{(i,j,k);1}\equiv \sum_{\substack{s \in [t-1],\\ u,u' \in [0:s-1]}} M_{ki}^{(t-1:s+1)}  M_{jj}^{(s-1:u+1)}  M_{jj}^{(s-1:u'+1)} A_{ij}^3\cdot \mathsf{F}_{u+1;i}\mathsf{F}_{u'+1;i}\mathsf{F}_{s+1;j}^{(2)},
	\end{align*}
	we have uniformly in $i,j \in [n]$,
	\begin{align}\label{ineq:der_cross_cubic_3}
	&\big|S_{(i,j,k);1}-3\mathfrak{S}_{(i,j,k);1}-3\mathfrak{S}_{(j,i,k);1} \big| \leq (C_1 t K L \log n)^{c_1 t^2}\cdot n^{ -\abs{\{i,j,k\}}/2-1}.
	\end{align}
	On the other hand, note that
	\begin{align*}
	\sum_{i,j \in [n]}\mathfrak{S}_{(i,j,k);1} = \sum_{j\in [n]}\sum_{\substack{s \in [t-1],\\ u,u' \in [0:s-1]}} \bigg[ \sum_{i \in [n]} M_{ki}^{(t-1:s+1)} A_{ij}^3\cdot \mathsf{F}_{u+1;i}\mathsf{F}_{u'+1;i}\bigg] M_{jj}^{(s-1:u+1)}  M_{jj}^{(s-1:u'+1)} \mathsf{F}_{s+1;j}^{(2)}.
	\end{align*}
	Using the representation (\ref{ineq:z_der_M}), we may write $M_{ki}^{(\cdot:\cdot)}$ as at most $2^t$ many sums of terms of the form $\sum_{\ell_{\mathcal{I}} \in [n] } A_{k,[\ell_{\mathcal{I}}],i}\prod_{s \in \mathcal{I}} \mathsf{F}_{t_s;\ell_s}'\cdot \prod_{s \in \mathcal{J}} \mathsf{G}_{t_s;\ell_s}' $ for some $\mathcal{I},\mathcal{J}\subset [t]$ with $\mathcal{I}\cap \mathcal{J}=\emptyset$. Using Proposition \ref{prop:A_cross_cubic} with $q_0=2$, on the event $\mathscr{E}_{1,1}\cap \mathscr{E}_{[t]}(L)$ where $\Prob(\mathscr{E}_{1,1}^c)\leq C_1\cdot c_1^t n^{-D}$, uniformly in $j\in [n]\setminus \{k\}$,
	\begin{align*}
	&\biggabs{\sum_{\substack{s \in [t-1],\\ u,u' \in [0:s-1]}} \bigg( \sum_{i \in [n]} M_{ki}^{(t-1:s+1)} A_{ij}^3\cdot \mathsf{F}_{u+1;i}\mathsf{F}_{u'+1;i}\bigg) M_{jj}^{(s-1:u+1)}  M_{jj}^{(s-1:u'+1)} \mathsf{F}_{s+1;j}^{(2)}}\\
	&\leq \big(C_1 tK L \log n\big)^{c_1t^2}\cdot n^{-3/2}. 
	\end{align*}
	Note that in the above display, for the case $s=t-1$, we may use a trivial bound as $M_{ki}^{(t-1:t)}=\delta_{ki}$. For $j=k$, we may also use a trivial bound. Consequently, on an event $\mathscr{E}_{1,+}\cap \mathscr{E}_{[t]}(L)$ with $\Prob(\mathscr{E}_{1,+}^c)\leq C_1\cdot c_1^t n^{-D}$,
	\begin{align}\label{ineq:der_cross_cubic_4}
	\biggabs{\sum_{i,j \in [n]}\mathfrak{S}_{(i,j,k);1}}\vee \biggabs{\sum_{i,j \in [n]}\mathfrak{S}_{(j,i,k);1}}\leq \big(C_1 tK L \log n\big)^{c_1t^2}\cdot n^{-1/2}.
	\end{align}
	Combining (\ref{ineq:der_cross_cubic_3})-(\ref{ineq:der_cross_cubic_4}), on an event $\mathscr{E}_{1}\cap \mathscr{E}_{[t]}(L)$ with $\Prob(\mathscr{E}_{1}^c)\leq C_1\cdot c_1^t n^{-D}$, as $\sum_{i,j \in [n]} n^{ -\abs{\{i,j,k\}}/2-1}\lesssim n^{-1/2}$, we have
	\begin{align}\label{ineq:der_cross_cubic_5}
	\biggabs{\sum_{i,j \in [n]} S_{(i,j,k);1}}&\leq \big(C_1 tK L \log n\big)^{c_1t^2}\cdot n^{-1/2}.
	\end{align}

	\noindent (\textbf{Term $S_{(i,j,k);2}$}). Note that
	\begin{align}\label{ineq:der_cross_cubic_6}
	S_{(i,j,k);2}&= 3 \sum_{(a,b)} \sum_{s \in [t-1]} M_{ka}^{(t-1:s+1)} A_{ij}^3\cdot\bigg(2 \sum_{(c,d)}\sum_{u \in [s-1]} M_{bc}^{(s-1:u+1)} \mathsf{F}_{u+1;d}' \partial_{ij} z_d^{(u)}\\
	&\qquad +\sum_{u \in [s-1]}\sum_{\ell \in [n]} \tilde{M}^{(s-1:u)}_{b\ell} (\partial_{ij} z_\ell^{(u)})^2\bigg)\cdot \mathsf{F}_{s+1;b}'\nonumber\\
	& = 6 \sum_{\substack{(a,b)\\(c,d),(c',d')}} \sum_{\substack{s \in [t-1],\\ u \in [s-1],\\ u' \in [0:u-1]}} M_{ka}^{(t-1:s+1)}M_{bc}^{(s-1:u+1)} M_{d c'}^{(u-1:u'+1)} A_{ij}^3\cdot \mathsf{F}_{u'+1;d'}  \mathsf{F}_{u+1;d}' \mathsf{F}_{s+1;b}'\nonumber\\
	&\qquad + 3 \sum_{(a,b)}\sum_{s \in [t-1], u \in [s-1]} \bigg(\sum_{\ell \in [n]}M_{ka}^{(t-1:s+1)} \tilde{M}^{(s-1:u)}_{b\ell} (\partial_{ij} z_\ell^{(u)})^2\bigg)\cdot A_{ij}^3\cdot \mathsf{F}_{s+1;b}'.\nonumber
	\end{align}
	We write the two terms on the right hand side of the above display as $S_{(i,j,k);2,I}$ and $S_{(i,j,k);2, II}$. 
	
	The first term $S_{(i,j,k);2,I}$ can be handled  in a similar fashion to  $S_{(i,j,k);1}$. Indeed, consider the configuration $(a,b)=(i,j)$. The corresponding inner sum in $S_{(i,j,k);2,I}$ then reads
	\begin{align*}
	\sum_{\substack{s \in [t-1], u \in [s-1], u' \in [0:u-1]}} M_{ki}^{(t-1:s+1)}M_{jc}^{(s-1:u+1)} M_{d c'}^{(u-1:u'+1)} A_{ij}^3\cdot \mathsf{F}_{u'+1;d'}  \mathsf{F}_{u+1;d}'\mathsf{F}_{s+1;j}'. 
	\end{align*}
	For the above term, a non-trivial bound (along the lines in between (\ref{ineq:der_cross_cubic_3})-(\ref{ineq:der_cross_cubic_4})) is needed for the configuration $(c,d)=(j,i)$ and $(c',d')=(i,j)$, whereas a trivial bound (\ref{ineq:der_cross_cubic_2_0}) suffices for the remaining configurations of $(c,d),(c',d')$. Consequently, on an event $\mathscr{E}_{2,1}\cap \mathscr{E}_{[t]}(L)$ with $\Prob(\mathscr{E}_{2,1}^c)\leq C_2\cdot c_2^t n^{-D}$, 
	\begin{align}\label{ineq:der_cross_cubic_7}
	\biggabs{\sum_{i,j \in [n]} S_{(i,j,k);2,I}}&\leq \big(C_2 tK L \log n\big)^{c_2t^2}\cdot n^{-1/2}.
	\end{align}
	The second term $S_{(i,j,k);2,II}$ can be handled as follows. For the configuration $(a,b)=(i,j)$, on an event $\mathscr{E}_{2,2}\cap \mathscr{E}_{[t]}(L)$ with $\Prob(\mathscr{E}_{2,2}^c)\leq C_2\cdot c_2^t n^{-D}$,
	\begin{align*}
	\biggabs{\sum_{\ell \neq j}M_{ki}^{(t-1:s+1)} \tilde{M}^{(s-1:u)}_{j\ell} (\partial_{ij} z_\ell^{(u)})^2} \leq \big(C_2 tK L \log n\big)^{c_2t^2}\cdot n^{- ( \abs{\{i,j,k\}}-1)/2}.
	\end{align*}
	By letting
	\begin{align*}
	\mathfrak{S}_{(i,j,k);2}\equiv \sum_{s \in [t-1], u \in [s-1]} M_{ki}^{(t-1:s+1)} \tilde{M}^{(s-1:u)}_{jj} (\partial_{ij} z_j^{(u)})^2\cdot A_{ij}^3\cdot \mathsf{F}_{s+1;j}',
	\end{align*}
	on an event $\mathscr{E}_{2,3}\cap \mathscr{E}_{[t]}(L)$ with $\Prob(\mathscr{E}_{2,3}^c)\leq C_2\cdot c_2^t n^{-D}$, uniformly in $i,j \in [n]$,
	\begin{align}\label{ineq:der_cross_cubic_8}
	\bigabs{S_{(i,j,k);2,II}-3\mathfrak{S}_{(i,j,k);2}-3\mathfrak{S}_{(j,i,k);2}}&\leq \big(C_2 tK L \log n\big)^{c_2t^2}\cdot n^{- \abs{\{i,j,k\}}/2-1}.
	\end{align}
	On the other hand, note that 
	\begin{align*}
	&\sum_{i,j \in [n]} \mathfrak{S}_{(i,j,k);2}=\sum_{ \substack{s \in [t-1], u \in [s-1],\\ u',u''\in [0:u-1] }} \sum_{j \in [n]} \tilde{M}^{(s-1:u)}_{jj}    \\
	&\qquad \times \bigg[\sum_{i \in [n]} \sum_{(c,d),(c',d')}M_{ki}^{(t-1:s+1)}  M_{jc}^{(u-1:u'+1)} M_{jc'}^{(u-1:u''+1)} \cdot A_{ij}^3\bigg]\cdot \mathsf{F}_{u'+1;d} \mathsf{F}_{u''+1;d'}  \mathsf{F}_{s+1;j}'.
	\end{align*}
	The term in the bracket above can be handled by a trivial bound expect for the case $c=c'=j$, in which case we may use the same argument below (\ref{ineq:der_cross_cubic_3}) and Proposition \ref{prop:A_cross_cubic} with $q_0=3$ to produce a similar bound as (\ref{ineq:der_cross_cubic_4}) for $\abs{\sum_{i,j \in [n]} \mathfrak{S}_{(i,j,k);2}}$ and $\abs{\sum_{i,j \in [n]} \mathfrak{S}_{(j,i,k);2}}$. Consequently, on an event $\mathscr{E}_{2,+}\cap \mathscr{E}_{[t]}(L)$ with $\Prob(\mathscr{E}_{2,+}^c)\leq C_2\cdot c_2^t n^{-D}$,
	\begin{align}\label{ineq:der_cross_cubic_8_0}
	\biggabs{\sum_{i,j \in [n]}\mathfrak{S}_{(i,j,k);2}}\vee \biggabs{\sum_{i,j \in [n]}\mathfrak{S}_{(j,i,k);2}}\leq \big(C_2 tK L \log n\big)^{c_2t^2}\cdot n^{-1/2}.
	\end{align}
    Combining (\ref{ineq:der_cross_cubic_6})-(\ref{ineq:der_cross_cubic_8_0}), on an event $\mathscr{E}_{2}\cap \mathscr{E}_{[t]}(L)$ with $\Prob(\mathscr{E}_{2}^c)\leq C_2\cdot c_2^t n^{-D}$, 
	\begin{align}\label{ineq:der_cross_cubic_9}
	\biggabs{\sum_{i,j \in [n]} S_{(i,j,k);2}}&\leq \big(C_2 tK L \log n\big)^{c_2t^2}\cdot n^{-1/2}.
	\end{align}

	\noindent (\textbf{Term $S_{(i,j,k);3}$}). Let
	\begin{align*}
	\mathfrak{S}_{(i,j,k);3}\equiv \sum_{s \in [t-1]}  \big(M^{(t-1:s+1)}M_3^{(s)}\big)_{ki}\big(\partial_{ij} z^{(s)}_{i}\big)^3\cdot A_{ij}^3.
	\end{align*}
	On the event $\mathscr{E}_0\cap \mathscr{E}_{[t]}(L)$, we have
	\begin{align*}
	&\bigabs{S_{(i,j,k);3}- \mathfrak{S}_{(i,j,k);3}-\mathfrak{S}_{(j,i,k);3}}\\
	 &\leq t K^3 n^{-3/2}\cdot  \max_{s \in [t-1]} \sum_{\ell \neq i,j} \bigabs{\big(M^{(t-1:s+1)}M_3^{(s)}\big)_{k\ell}} \bigabs{\partial_{ij} z^{(s)}_{\ell}}^3\leq \big(C_3 tK L \log n\big)^{c_3t^2}\cdot n^{-5/2}.
	\end{align*}
	On the other hand,
	\begin{align*}
	&\sum_{i,j \in [n]} \mathfrak{S}_{(i,j,k);3} =\sum_{j \in [n]}  \sum_{s \in [t-1]}  \sum_{u, u', u'' \in [0:s-1]}  \bigg[\sum_{i \in [n]}   \sum_{(c,d), (c',d'),(c'',d'')}  \\
	&\quad\quad \big(M^{(t-1:s+1)}M_3^{(s)}\big)_{ki}  M_{ic}^{(s-1:u+1)}  M_{ic'}^{(s-1:u'+1)}  M_{ic''}^{(s-1:u''+1)} A_{ij}^3\bigg]\cdot \mathsf{F}_{u+1;d}\mathsf{F}_{u'+1;d'}\mathsf{F}_{u''+1;d''}.
	\end{align*}
	The term in the above bracket can be handled using a trivial bound except for the case $c=c'=c''=i$, in which case may be handled via Proposition \ref{prop:A_cross_cubic} with $q_0=5$ to produce a desired high probability bound of order $\big(C tK L \log n\big)^{ct^2}\cdot n^{-1/2}$. Consequently, on an event $\mathscr{E}_{3}\cap \mathscr{E}_{[t]}(L)$ with $\Prob(\mathscr{E}_{3}^c)\leq C_3\cdot c_3^t n^{-D}$, 
	\begin{align}\label{ineq:der_cross_cubic_10}
	\biggabs{\sum_{i,j \in [n]} S_{(i,j,k);3}}&\leq \big(C_3 tK L \log n\big)^{c_3t^2}\cdot n^{-1/2}.
	\end{align}
	
	\noindent (\textbf{Term $S_{(i,j,k);4}$}). Note that
	\begin{align*}
	S_{(i,j,k);4}&=3 A_{ij}^3 \sum_{s \in [t-1]} \sum_{\ell \in [n]} \tilde{M}^{(t-1:s)}_{k\ell} \bigg(\sum_{(c,d)}\sum_{u \in [0:s-1]} M_{\ell c}^{(s-1:u+1)} \mathsf{F}_{u+1;d}\bigg)\\
	&\quad \times \bigg(2\sum_{(c',d')}\sum_{u' \in [s-1]} M_{\ell c'}^{(s-1:u'+1)}\mathsf{F}_{u'+1;d'}' \partial_{ij} z_{d'}^{(u')}+ \sum_{u' \in [s-1]}\sum_{\ell' \in [n]} \tilde{M}^{(s-1:u')}_{\ell \ell'} (\partial_{ij} z_{\ell'}^{(u')})^2 \bigg)\\
	& = 6 \sum_{\substack{(c,d),(c',d'), (c'',d'')}} \sum_{  \substack{s \in [t-1] } } \sum_{  \substack{ u \in [0:s-1],u' \in [s-1], \\ u'' \in [u'-1] } }   \bigg(\sum_{\ell \in [n]}  \tilde{M}^{(t-1:s)}_{k\ell}M_{\ell c}^{(s-1:u+1)} M_{\ell c'}^{(s-1:u'+1)}\bigg) \\
	&\qquad\qquad \qquad\qquad  \times M_{d',c''}^{(u'-1:u''+1)}A_{ij}^3\cdot \mathsf{F}_{u''+1;d''} \mathsf{F}_{u+1;d}\mathsf{F}_{u'+1;d'}' \\
	&\quad + 3 A_{ij}^3\sum_{(c,d)} \sum_{  \substack{s \in [t-1],\\ u \in [0:s-1],u' \in [s-1] } } \bigg(\sum_{\ell,\ell' \in [n]} \tilde{M}^{(t-1:s)}_{k\ell} M_{\ell c}^{(s-1:u+1)} \tilde{M}^{(s-1:u')}_{\ell \ell'} (\partial_{ij} z_{\ell'}^{(u')})^2\bigg)\cdot \mathsf{F}_{u+1;d}.
	\end{align*}
	We write the two terms on the right hand side of the above display as $S_{(i,j,k);4,I}$ and $S_{(i,j,k);4, II}$. 
	
	\noindent (\textbf{Case 4.I}).  Consider the first term $S_{(i,j,k);4,I}$. 
	
	\noindent (\emph{Case 4.I.1}). For configurations $(c,d),(c',d')$ such that $c\neq c'$ (so $d\neq d'$), which we assume to be $(c,c')=(i,j)$ for notational simplicity, its absolute value can be bounded via Proposition \ref{prop:A_hprod_quad}: on an event $\mathscr{E}_{4,I;1}\cap \mathscr{E}_{[t]}(L)$ with $\Prob(\mathscr{E}_{4,I;1}^c)\leq C_4\cdot c_4^t n^{-D}$, 
	\begin{align}\label{ineq:der_cross_cubic_11}
	&\biggabs{\sum_{  \substack{s \in [t-1]; u,u' \in [0:s-1]} } \bigg(\sum_{\ell \in [n]}  \tilde{M}^{(t-1:s)}_{k\ell}M_{\ell i}^{(s-1:u+1)} M_{\ell j}^{(s-1:u'+1)}\bigg) \partial_{ij} z_{i}^{(u')}A_{ij}^3\cdot  \mathsf{F}_{u+1;j}\mathsf{F}_{u'+1;i}' }\nonumber\\
	&\leq \big(C_4 tK L \log n\big)^{c_4 t^2}\cdot n^{- \abs{\{i,j,k\}}/2-1}.
	\end{align}
	\noindent (\emph{Case 4.I.2}). For configurations $(c,d),(c',d')$ such that $c= c'$ (so $d= d'$), which we assume to be $(c,c')=(i,i)$ for notational simplicity. 
	
	\noindent (\emph{Case 4.I.2-(a)}). For $(c'',d'')=(i,j)$, on an event $\mathscr{E}_{4,I;2}\cap \mathscr{E}_{[t]}(L)$ with $\Prob(\mathscr{E}_{4,I;2}^c)\leq C_4\cdot c_4^t n^{-D}$, 
	\begin{align}\label{ineq:der_cross_cubic_12}
	&\biggabs{\sum_{  \substack{s \in [t-1], u \in [0:s-1],\\u' \in [s-1], u'' \in [u'-1] } } \bigg(\sum_{\ell \in [n]}  \tilde{M}^{(t-1:s)}_{k\ell}M_{\ell i}^{(s-1:u+1)} M_{\ell i}^{(s-1:u'+1)}\bigg) \bigg( M_{j,i}^{(u'-1:u''+1)}A_{ij}^3\cdot \mathsf{F}_{u''+1;j} \mathsf{F}_{u+1;j}\mathsf{F}_{u'+1;j}'\bigg) }\nonumber\\
	&\leq \big(C tK L \log n\big)^{c t^2}\cdot n^{- (\abs{\{i,k\}}-1)/2}\cdot n^{- (\abs{\{i,j\}}-1)/2 }\cdot n^{-3/2}\nonumber\\
	&\leq \big(C_4 tK L \log n\big)^{c_4 t^2}\cdot n^{- \abs{\{i,j,k\}}/2-1}.
	\end{align}
	\noindent (\emph{Case 4.I.2-(b)}). For $(c'',d'')=(j,i)$, summing over $i,j \in [n]$, it follows by Proposition \ref{prop:A_cross_cubic} that on an event $\mathscr{E}_{4,I;3}\cap \mathscr{E}_{[t]}(L)$ with $\Prob(\mathscr{E}_{4,I;3}^c)\leq C_4\cdot c_4^t n^{-D}$, 
	\begin{align}\label{ineq:der_cross_cubic_13}
	&\biggabs{\sum_{j \in [n]} \sum_{  \substack{s \in [t-1], u \in [0:s-1],\\ u' \in [s-1]; u'' \in [u'-1] } } \bigg\{\sum_{i \in [n]}\bigg(\sum_{\ell \in [n]}  \tilde{M}^{(t-1:s)}_{k\ell}M_{\ell i}^{(s-1:u+1)} M_{\ell i}^{(s-1:u'+1)}\bigg) A_{ij}^3 \mathsf{F}_{u''+1;i}\bigg\}\nonumber\\
		&\qquad \times  M_{j,j}^{(u'-1:u''+1)}  \mathsf{F}_{u+1;j}\mathsf{F}_{u'+1;j}'\bigg) }\leq \big(C_4 tK L \log n\big)^{c_4 t^2}\cdot n^{-1/2}. 
	\end{align}
	Combining (\ref{ineq:der_cross_cubic_11})-(\ref{ineq:der_cross_cubic_13}), 
	on an event $\mathscr{E}_{4,I}\cap \mathscr{E}_{[t]}(L)$ with $\Prob(\mathscr{E}_{4,I}^c)\leq C_4\cdot c_4^t n^{-D}$, 
	\begin{align}\label{ineq:der_cross_cubic_14}
	\biggabs{\sum_{i,j \in [n]} S_{(i,j,k);4,I}}&\leq \big(C_4 tK L \log n\big)^{c_4t^2}\cdot n^{-1/2}.
	\end{align}
	\noindent (\textbf{Case 4.II}).  Consider the second term $S_{(i,j,k);4,II}$. Consider the case $(c,d)=(i,j)$. Then on an event $\mathscr{E}_{4,II;1}\cap \mathscr{E}_{[t]}(L)$ with $\Prob(\mathscr{E}_{4,II;1}^c)\leq C_4\cdot c_4^t n^{-D}$, 
	\begin{align}\label{ineq:der_cross_cubic_15}
	&\biggabs{\sum_{i,j \in [n]} A_{ij}^3 \sum_{\ell \in [n],\ell' \neq i,j } \tilde{M}^{(t-1:s)}_{k\ell} M_{\ell i}^{(s-1:u+1)} \tilde{M}^{(s-1:u')}_{\ell \ell'} (\partial_{ij} z_{\ell'}^{(u')})^2}\\
	&\leq \big(C tK L \log n\big)^{c t^2}\cdot n^{-5/2} \sum_{i,j \in [n]} \sum_{\ell' \neq i,j} n^{- (\abs{\{i,k,\ell'\}}-1)/2}\stackrel{(\ast)}{\leq} \big(C_4 tK L \log n\big)^{c_4 t^2}\cdot n^{-1/2}.\nonumber
	\end{align}
	where the last inequality $(\ast)$ follows as 
	\begin{align*}
	&\sum_{i,j \in [n]} \sum_{\ell' \neq i,j} n^{- (\abs{\{i,k,\ell'\}}-1)/2}=\sum_{\ell'} \sum_{i,j\neq \ell'} n^{- (\abs{\{i,k,\ell'\}}-1)/2}\leq n \sum_{\ell'} \sum_{i\neq \ell'} n^{- (\abs{\{i,k,\ell'\}}-1)/2}\\
	& =  n \sum_{i\neq k} n^{- (\abs{\{i,k\}}-1)/2}+ n \sum_{\ell'\neq k} \bigg(\sum_{i\neq \ell',k} n^{- (\abs{\{i,k,\ell'\}}-1)/2}+n^{- (\abs{\{k,\ell'\}}-1)/2}\bigg) \leq 3n^2.
	\end{align*}
	On the other hand, using Proposition \ref{prop:A_hprod_quad}, on an event $\mathscr{E}_{4,II;2}\cap \mathscr{E}_{[t]}(L)$ with $\Prob(\mathscr{E}_{4,II;2}^c)\leq C_4\cdot c_4^t n^{-D}$, 
	\begin{align}\label{ineq:der_cross_cubic_16}
	&\biggabs{\sum_{i,j \in [n]} A_{ij}^3 \sum_{\ell \in [n] } \tilde{M}^{(t-1:s)}_{k\ell} M_{\ell i}^{(s-1:u+1)} \tilde{M}^{(s-1:u')}_{\ell j} (\partial_{ij} z_{j}^{(u')})^2}\nonumber\\
	&\leq \big(C tK L \log n\big)^{c t^2}\cdot n^{-3/2} \sum_{i,j} n^{- (\abs{\{i,j,k\}}-1)/2}\leq \big(C_4 tK L \log n\big)^{c_4 t^2}\cdot n^{-1/2}.
	\end{align}
	Combining (\ref{ineq:der_cross_cubic_15})-(\ref{ineq:der_cross_cubic_16}), by letting
	\begin{align*}
	\mathfrak{S}_{(i,j,k);4}\equiv A_{ij}^3 \sum_{  \substack{s \in [t-1],\\ u \in [0:s-1],u' \in [s-1] } } \bigg(\sum_{\ell \in [n]} \tilde{M}^{(t-1:s)}_{k\ell} M_{\ell i}^{(s-1:u+1)} \tilde{M}^{(s-1:u')}_{\ell i} (\partial_{ij} z_{i}^{(u')})^2\bigg)\cdot \mathsf{F}_{u+1;j},
	\end{align*}
	on the event $\mathscr{E}_{4,II;1}\cap \mathscr{E}_{4,II;2}\cap \mathscr{E}_{[t]}(L)$,
	\begin{align*}
	\biggabs{\sum_{i,j \in [n]} \Big(S_{(i,j,k);4,II}- 3 \mathfrak{S}_{(i,j,k);4}-3\mathfrak{S}_{(j,i,k);4}\Big) }&\leq \big(C_4 tK L \log n\big)^{c_4t^2}\cdot n^{-1/2}.
	\end{align*}
	On the other hand,
	\begin{align*}
	\sum_{i,j \in [n]} \mathfrak{S}_{(i,j,k);4}&= \sum_{j \in [n]} \sum_{  \substack{s \in [t-1],\\ u \in [0:s-1],u' \in [s-1],\\ u_1,u_2 \in [0:u'-1] } } \bigg[\sum_{i \in [n]}\sum_{(c_1,d_1),(c_2,d_2)} \bigg(\sum_{\ell \in [n]} \tilde{M}^{(t-1:s)}_{k\ell} M_{\ell i}^{(s-1:u+1)} \tilde{M}^{(s-1:u')}_{\ell i}\bigg)\\
	&\qquad\qquad \times  M_{ic_1}^{(u'-1:u_1+1)}  M_{ic_2}^{(u'-1:u_2+1)} A_{ij}^3\bigg]\cdot \mathsf{F}_{u+1;j}\mathsf{F}_{u_1+1;d_1}\mathsf{F}_{u_2+1;d_2}.
	\end{align*}
	The term in the above bracket can be handled using a trivial bound except for the case $c_1=c_2=i$, in which case may be handled via Proposition \ref{prop:A_cross_cubic} with $q_0=4$ to produce a desired high probability bound of order $\big(C tK L \log n\big)^{ct^2}\cdot n^{-1/2}$. Consequently, combining the above two displays, on an event $\mathscr{E}_{4,II}\cap \mathscr{E}_{[t]}(L)$ with $\Prob(\mathscr{E}_{4,II}^c)\leq C_4\cdot c_4^t n^{-D}$, 
	\begin{align}\label{ineq:der_cross_cubic_17}
	\biggabs{\sum_{i,j \in [n]} S_{(i,j,k);4,II}}&\leq \big(C_4 tK L \log n\big)^{c_4t^2}\cdot n^{-1/2}.
	\end{align}
	In view of (\ref{ineq:der_cross_cubic_14}) and (\ref{ineq:der_cross_cubic_17}), on an event $\mathscr{E}_{4}\cap \mathscr{E}_{[t]}(L)$ with $\Prob(\mathscr{E}_{4}^c)\leq C_4\cdot c_4^t n^{-D}$, 
	\begin{align}\label{ineq:der_cross_cubic_18}
	\biggabs{\sum_{i,j \in [n]} S_{(i,j,k);4}}&\leq \big(C_4 tK L \log n\big)^{c_4t^2}\cdot n^{-1/2}.
	\end{align}
	The claimed large deviation for $\sum_{i,j\in [n]} A_{ij}^3 \partial_{ij}^3 z^{(t)}_k$ now follows from (\ref{ineq:der_cross_cubic_1}), (\ref{ineq:der_cross_cubic_5}), (\ref{ineq:der_cross_cubic_9}), (\ref{ineq:der_cross_cubic_10}) and (\ref{ineq:der_cross_cubic_18}), and the choice of $L$ in (\ref{ineq:z_der_L}).\qed

\section{Remaining proofs for Section \ref{section:main_results_sym}}

\subsection{Proof of Theorem \ref{thm:universality_avg}}

We shall focus on the case $\psi_k=\psi:\R\to \R$ and the simplified iterate (\ref{def:GFOM_sym_proof}) (and therefore the simplified leave-k-out version of (\ref{def:GFOM_sym_loo})) to avoid unnecessary, cumbersome notational complications. To this end, we first prove a weaker version of Theorem \ref{thm:universality_avg}, assuming strong regularity conditions on both $\{\mathsf{F}_\cdot,\mathsf{G}_\cdot\}$, and the test function $\psi$.

\begin{proposition}\label{prop:universality_avg_smooth}
	Assume the same conditions as in Theorem \ref{thm:universality}. Take any $\psi \in C^3(\R)$ satisfying $\max_{a \in \mathbb{Z}_{\geq 0}^{\abs{S}}, \abs{a}\in [0:3]}\pnorm{ \partial^a\psi(\cdot) }{\infty}\leq \Lambda_{\psi}$ for some $\Lambda_{\psi}\geq 2$. Then there exists some constant $c_1=c_1(\mathfrak{p},q)>0$ such that for any even $q\in \N$,  
	\begin{align*}
	&\E \bigg[\biggabs{\frac{1}{n}\sum_{k \in [n]} \psi\big(z_k^{(t)}(A)\big) - \frac{1}{n}\sum_{k \in [n]}  \psi\big(z_k^{(t)}(B)\big)  }^q\Big|z^{(0)}\bigg]\\
	&\leq \Lambda_\psi^{q}\cdot  \big( K\Lambda \log n\cdot (1+\pnorm{z^{(0)}}{\infty})\big)^{c_1 t^3} \cdot n^{-1/2}.
	\end{align*}
\end{proposition}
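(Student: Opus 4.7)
Writing $\mathcal{E}_M \equiv n^{-1}\sum_{k\in[n]}\psi(z_k^{(t)}(M))$ for $M\in\{A,B\}$, I would split the argument into two ingredients: (a) raw-moment universality $|\E\mathcal{E}_A^r-\E\mathcal{E}_B^r|=\bigo(\Lambda_\psi^r n^{-1/2})$ for every $r\in[0:q]$, derived from Theorem \ref{thm:universality}; and (b) $q$-th moment concentration of $\mathcal{E}_B$ around $\E\mathcal{E}_B$ at rate $\bigo(\Lambda_\psi^q n^{-1/2})$. Combining via the binomial expansion
\[
\E[(\mathcal{E}_A-\mathcal{E}_B)^q] = \sum_{j=0}^q \binom{q}{j}(-1)^j\,\E\mathcal{E}_A^{q-j}\cdot\E\mathcal{E}_B^j
\]
(where $A\perp B$ factorizes the joint moments), and substituting $\E\mathcal{E}_A^r=\E\mathcal{E}_B^r+\bigo(\cdot)$ in each summand, the alternating sum telescopes to $\E[(\mathcal{E}_B-\mathcal{E}_{B'})^q]$ plus a $\bigo\big(\Lambda_\psi^q(K\Lambda\log n(1+\pnorm{z^{(0)}}{\infty}))^{c_1t^3}n^{-1/2}\big)$ substitution error, with $B'$ an independent copy of $B$; then (b), applied to $\mathcal{E}_B-\E\mathcal{E}_B$ and $\mathcal{E}_{B'}-\E\mathcal{E}_{B'}$, closes the bound.

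Step (a) is a direct application of Theorem \ref{thm:universality}: expand $\E\mathcal{E}_M^r = n^{-r}\sum_{\mathbf{k}\in[n]^r}\E\prod_i\psi(z_{k_i}^{(t)}(M))$ and invoke the theorem tuple-by-tuple with $S=\{k_i:i\in[r]\}$ and product test function $\Psi_\mathbf{k}(x)=\prod_i\psi(x_{k_i,t})$. The Leibniz rule for partial derivatives up to order $3$ yields $\Lambda_{\Psi_\mathbf{k}}\leq C(q)\Lambda_\psi^r$, so the per-tuple entrywise universality bound $C(q)\Lambda_\psi^r(K\Lambda\log n(1+\pnorm{z^{(0)}}{\infty}))^{c_1t^3}n^{-1/2}$ is uniform in $\mathbf{k}$, hence survives the $n^{-r}$-normalized averaging.

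Step (b) is the technical core. Naive Lipschitz/Poincar\'e concentration with respect to $\pnorm{B_0}{F}$ produces only $\mathrm{Var}(\mathcal{E}_B)=\bigo(\Lambda_\psi^2)$, which is far from sharp. The correct $\bigo(n^{-1})$ variance order should instead be extracted from the inhomogeneous derivative delocalization of Proposition \ref{prop:z_deloc}: since $|\partial_{ij}z_k^{(t)}|\lesssim n^{-1/2}$ for $k\notin\{i,j\}$ on a high-probability event, each entry of $B_0$ influences $\mathcal{E}_B$ only through $\bigo(n^{-1/2})$-scale channels, and a Lindeberg-type computation (parallel to the proof of Theorem \ref{thm:universality}, leveraging the higher-order interaction control established in Section \ref{section:proof_high_order_inter}) produces the required cancellation across the $\bigo(n^2)$ entries of $B$, yielding $\E(\mathcal{E}_B-\E\mathcal{E}_B)^q\leq C_q\Lambda_\psi^q(K\Lambda\log n(1+\pnorm{z^{(0)}}{\infty}))^{c_1qt^3}n^{-q/2}$. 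Since $q\geq 2$ is even, $n^{-q/2}\leq n^{-1/2}$, and adding the substitution error from (a) delivers the claim.

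The main obstacle is precisely step (b): passing from the pointwise sensitivity $|\partial_{ij}\mathcal{E}_B|\lesssim \Lambda_\psi n^{-1/2}(\cdots)$ to the averaged $q$-th moment rate $n^{-q/2}$ requires tracking the joint fluctuations of $\{\psi(z_k^{(t)}(B))\}_{k\in[n]}$ beyond what the Cauchy--Schwarz or Efron--Stein bound affords, and so draws on the same recursive leave-$k$-out and weighted interaction estimates already developed for the proof of Theorem \ref{thm:universality} itself.
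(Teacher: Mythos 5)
Your step (a) is sound and is essentially the paper's own argument: expand the $q$-th moment over index tuples, apply Theorem \ref{thm:universality} to the product test function $\prod_{i}\psi(x_{k_i,t})$ with $\Lambda_\Psi\lesssim_q\Lambda_\psi^q$, and note the bound is uniform over tuples. The gap is in step (b), and it is twofold. First, your dismissal of the ``naive'' Lipschitz route is exactly backwards. The map $G_0\mapsto n^{-1}\sum_k\psi\big(z_k^{(t)}(G_0\circ V_n)\big)$ is $\bigo(n^{-1/2})$-Lipschitz in $\pnorm{G_0}{F}$ on the event $\pnorm{A}{\op}=\bigo(1)$: averaging a $\Lambda_\psi$-Lipschitz function contributes a factor $n^{-1/2}$ via $|n^{-1}\sum_k(\psi(x_k)-\psi(y_k))|\leq\Lambda_\psi n^{-1/2}\pnorm{x-y}{}$, and $\pnorm{z^{(t)}(A_1)-z^{(t)}(A_2)}{}\lesssim\sqrt{n}\pnorm{z^{(0)}}{\infty}\pnorm{A_1-A_2}{\op}$ with $\pnorm{A_1-A_2}{\op}\leq(K/n)^{1/2}\pnorm{G_1-G_2}{F}$ contributes the other $n^{-1/2}$. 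Gaussian concentration for (locally) Lipschitz functions then gives $\E|\mathcal{E}_B-\E\mathcal{E}_B|^q\lesssim n^{-q/2}$ directly; this is precisely the paper's Lemma \ref{lem:conc_avg}. Second, the route you propose in its place does \emph{not} work as sketched: the entrywise bounds $|\partial_{ij}z_k^{(t)}|\lesssim n^{-1/2}$ for $k\notin\{i,j\}$ give, via the triangle inequality, only $|\partial_{B_{ij}}\mathcal{E}_B|\lesssim n^{-1}\cdot n\cdot n^{-1/2}=n^{-1/2}$, hence $|\partial_{B_{0,ij}}\mathcal{E}_B|\lesssim n^{-1}$ and an Efron--Stein/Poincar\'e variance bound of $n^2\cdot n^{-2}=\bigo(1)$ --- the very obstruction you identify. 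Recovering $n^{-1}$ requires cancellation \emph{inside} $\sum_k\psi'(z_k^{(t)})\partial_{ij}z_k^{(t)}$ (already for $z^{(2)}=A^2z^{(0)}$ one has $\sum_{k\neq i,j}\partial_{ij}z_k^{(2)}=z_j^{(0)}\sum_kA_{ik}+z_i^{(0)}\sum_kA_{jk}=\bigo(1)$, not $\bigo(n^{1/2})$), which Proposition \ref{prop:z_deloc} does not supply and your proposal does not establish.

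There is also a structural issue: your binomial telescoping reduces the problem to $\E(\mathcal{E}_B-\mathcal{E}_{B'})^q$ for the \emph{general sub-Gaussian} $B$, so step (b) must hold for $B$ itself; but dimension-free concentration of Lipschitz functions is a Gaussian phenomenon and fails for general independent-entry ensembles. The paper circumvents this by centering at the deterministic quantity $\E\mathcal{E}_{G\circ V_n}$ --- i.e., it controls $\E|\mathcal{E}_M-\E\mathcal{E}_{G\circ V_n}|^q$ for $M\in\{A,B\}$ via the moment-matching of step (a) against $\E|\mathcal{E}_{G\circ V_n}-\E\mathcal{E}_{G\circ V_n}|^q$ --- so that concentration is only ever invoked for the Gaussian reference ensemble. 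If you replace your step (b) by the Lipschitz--Gaussian concentration argument and route the telescoping through a Gaussian comparison matrix, your proof closes and coincides with the paper's.
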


The following concentration result is useful for the proof of Proposition \ref{prop:universality_avg_smooth}.

\begin{lemma}\label{lem:conc_avg}
	Fix $t \in \N$. Suppose that for all $s \in [t]$ and $\ell \in [n]$,  $\pnorm{\mathsf{F}_{s,\ell}}{\mathrm{Lip}}\vee \pnorm{\mathsf{G}_{s,\ell}}{\mathrm{Lip}}\leq \Lambda$ for some $\Lambda\geq 2$. Let $V_n$ be a symmetric $n\times n$ matrix whose entries are all bounded by $(K/n)^{1/2}$ for some $K\geq 2$. Then for any $\psi:\R \to \R$ with $\pnorm{\psi}{\infty}\vee \pnorm{\psi'}{\infty}\leq \Lambda_\psi$, and any $p\geq 1$,
	\begin{align*}
	\E^{1/p} \bigg[\biggabs{\frac{1}{n}\sum_{k \in [n]} (\mathrm{id}-\E )\psi\big(z_k^{(t)}(G\circ V_n)\big) }^p\Big|z^{(0)}\bigg]\leq \sqrt{p} \Lambda_{\psi }(K\Lambda)^{c_0 t}(1+\pnorm{z^{(0)}}{\infty})\cdot n^{-1/2}.
	\end{align*}
	Here $G$ denotes a symmetric matrix with i.i.d. $\mathcal{N}(0,1)$ entries on its upper triangle, and $c_0>0$ is a universal constant.
\end{lemma}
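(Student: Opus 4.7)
My plan is to apply a Gaussian $L^{p}$ Poincar\'{e} (Pisier-type) inequality: conditioning on $z^{(0)}$ and viewing $F(G) := n^{-1}\sum_{k}\psi(z_{k}^{(t)}(G\circ V_n))$ as a smooth function of the Gaussian matrix $G$, this gives $\E^{1/p}|F - \E F|^{p}\leq C\sqrt{p}\,\E^{1/p}\pnorm{\nabla_{G}F}{F}^{p}$, so the task reduces to bounding the $L^{p}$ norm of $\pnorm{\nabla_{G}F}{F}$ by $\Lambda_{\psi}(K\Lambda)^{c_{0}t}(1+\pnorm{z^{(0)}}{\infty})n^{-1/2}$. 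By the chain rule $\partial_{G_{ij}}(\cdot)=V_{n,ij}\partial_{A_{ij}}(\cdot)$ together with $V_{n,ij}^{2}\leq K/n$, we obtain
\begin{align*}
\pnorm{\nabla_{G}F}{F}^{2} \leq Kn^{-3}\,\pnorm{\nabla_{A}\Psi_{\Sigma}(z^{(t)})}{F}^{2},\qquad \Psi_{\Sigma}(z):=\sum_{k}\psi(z_{k}),
\end{align*}
so it suffices to prove $\E^{1/p}\pnorm{\nabla_{A}\Psi_{\Sigma}(z^{(t)})}{F}^{p}\lesssim \Lambda_{\psi}n(K\Lambda)^{Ct}(1+\pnorm{z^{(0)}}{\infty})$.

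The crux is that a na\"{\i}ve term-by-term Cauchy--Schwarz only yields an $n^{3}$ bound on $\pnorm{\nabla_{A}\Psi_{\Sigma}}{F}^{2}$, losing a factor of $n$ relative to the target. The refinement exploits the linear structure of the recursive derivative from Lemma~\ref{lem:z_der_form}: we have
\begin{align*}
\partial_{A_{ij}}\Psi_{\Sigma}(z^{(t)})=\sum_{s=0}^{t-1}\iprod{v^{(s+1)}}{\Delta_{ij}\overline{\mathsf{F}}_{s+1}}=\sum_{s=0}^{t-1}\bigl(v^{(s+1)}_{i}\overline{\mathsf{F}}_{s+1,j}+v^{(s+1)}_{j}\overline{\mathsf{F}}_{s+1,i}\bigr),
\end{align*}
where $v^{(s+1)}:=(M^{(t-1:s+1)})^{\top}\psi'(z^{(t)})\in\R^{n}$ does not depend on $(i,j)$. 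Since only the coordinates $i,j$ of $\Delta_{ij}\overline{\mathsf{F}}_{s+1}$ are nonzero, squaring and summing over $i,j$ collapses into the rank-one-type identity
\begin{align*}
\pnorm{\nabla_{A}\Psi_{\Sigma}(z^{(t)})}{F}^{2}\lesssim t\sum_{s=0}^{t-1}\pnorm{v^{(s+1)}}{}^{2}\pnorm{\overline{\mathsf{F}}_{s+1}}{}^{2}.
\end{align*}

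Using $\pnorm{\psi'(z^{(t)})}{}\leq \Lambda_{\psi}\sqrt{n}$ together with $\pnorm{M^{(r)}}{\op}\leq \Lambda(\pnorm{A}{\op}+1)$, we obtain $\pnorm{v^{(s+1)}}{}\leq \Lambda_{\psi}\sqrt{n}(\Lambda(\pnorm{A}{\op}+1))^{t-s-1}$, while $\pnorm{\overline{\mathsf{F}}_{s+1}}{}\leq \Lambda(\sqrt{n}+\pnorm{z^{(s)}}{})$. Passing to $L^{p}$, the required $p$-th moment bounds follow from two inputs: (i) Gaussian concentration of the operator norm for matrices with variance profile bounded by $K/n$ (e.g.\ Bandeira--van Handel), giving $\E^{1/q}\pnorm{A}{\op}^{q}\lesssim \sqrt{K}$ uniformly in $q\lesssim n$; and (ii) the $\ell_{2}$ bound $\E^{1/q}\pnorm{z^{(s)}}{}^{q}\lesssim (CK\Lambda)^{Cs}\sqrt{n}(1+\pnorm{z^{(0)}}{\infty})$, derivable from Proposition~\ref{prop:loo_l2_bound} via $\pnorm{z}{}\leq \sqrt{n}\pnorm{z}{\infty}$. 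Combining through Cauchy--Schwarz in expectation and Minkowski across $s\in[0:t-1]$ yields the claimed bound on $\E^{1/p}\pnorm{\nabla_{G}F}{F}^{p}$, and the lemma follows from the Poincar\'{e} inequality. The main subtlety is the rigorous use of the $L^{p}$ Gaussian Poincar\'{e} with a random rather than a deterministic Lipschitz constant; the cleanest route is Pisier's inequality, which directly accommodates a random gradient in $L^{p}$, or alternatively a truncation argument that restricts to the high-probability event $\{\pnorm{A}{\op}\leq M,\,\max_{s}\pnorm{z^{(s)}}{}\leq N_{s}\}$, applies standard Gaussian concentration to a mollification of $F$, and controls the complement by subgaussian tails. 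A secondary, routine point is that $\psi$ is only $C^{0,1}$, which can be handled by a convolution approximation with no effect on the final bound.
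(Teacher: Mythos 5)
Your proposal is correct in substance but takes a genuinely different route from the paper. The paper never differentiates: it verifies (i) the polynomial growth bound $\abs{\overline{\psi}(z^{(t)}(A))}\leq L_t(\pnorm{A}{\op}+1)^t$ and (ii) the local Lipschitz estimate $\abs{\overline{\psi}(z^{(t)}(A_1))-\overline{\psi}(z^{(t)}(A_2))}\leq L_t(\pnorm{A_1}{\op}+\pnorm{A_2}{\op}+1)^t\pnorm{A_1-A_2}{\op}$ directly from the recursion and the Lipschitz property of $\mathsf{F}_\cdot,\mathsf{G}_\cdot$, and then invokes a black-box concentration result for locally Lipschitz functions of Gaussian matrices (\cite[Lemma A.2]{bao2025leave}), integrating the resulting sub-Gaussian tail to get the $p$-th moment. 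Your route instead computes $\nabla_G F$ explicitly via Lemma \ref{lem:z_der_form} and applies Pisier's $L^p$ Poincar\'e inequality; the key observation that $(\partial_{A_{ij}}\Psi_\Sigma)_{i,j}=\sum_s\big(v^{(s+1)}(\overline{\mathsf{F}}_{s+1})^\top+\overline{\mathsf{F}}_{s+1}(v^{(s+1)})^\top\big)$ is a sum of rank-two matrices, whence $\pnorm{\nabla_A\Psi_\Sigma}{F}\lesssim\sum_s\pnorm{v^{(s+1)}}{}\pnorm{\overline{\mathsf{F}}_{s+1}}{}$, is correct and is precisely the differentiated form of the paper's integrated Lipschitz bound. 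What the paper's softer argument buys is that it works verbatim under the stated hypotheses (only Lipschitz nonlinearities); what yours buys is a self-contained $L^p$ bound with the sharp $\sqrt{p}$ dependence without appealing to an external lemma.

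Two points need attention. First, the lemma assumes only $\pnorm{\mathsf{F}_{s,\ell}}{\mathrm{Lip}}\vee\pnorm{\mathsf{G}_{s,\ell}}{\mathrm{Lip}}\leq\Lambda$, so the derivative formula of Lemma \ref{lem:z_der_form} is not available as stated; your mollification remark must cover $\mathsf{F}_\cdot,\mathsf{G}_\cdot$ as well as $\psi$ (routine, but you only mention $\psi$). Second, deriving $\E^{1/q}\pnorm{z^{(s)}}{}^q\lesssim(CK\Lambda)^{Cs}\sqrt{n}(1+\pnorm{z^{(0)}}{\infty})$ via $\pnorm{z}{}\leq\sqrt{n}\pnorm{z}{\infty}$ and the $\pnorm{\cdot}{\infty}$-moment bound of Proposition \ref{prop:loo_l2_bound} imports the $(\log n)^{2t}$ factor appearing there, which does not fit inside $(K\Lambda)^{c_0t}$; use instead the coordinate-wise moment bound $\max_k\E^{1/q}\abs{z_k^{(s)}}^q$ together with Minkowski's inequality, or simply the deterministic recursion $\pnorm{z^{(s)}}{}\leq(\pnorm{A}{\op}+1)^s\Lambda^s(\pnorm{z^{(0)}}{}+c\sqrt{n})$ that the paper itself uses. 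With these repairs the argument goes through.
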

\begin{proof}
	First, using the simple recursion $\pnorm{z^{(t)}(A)}{}\leq (\pnorm{A}{\op}+1)\Lambda \pnorm{z^{(t-1)}(A)}{}$, we have $\pnorm{z^{(t)}(A)}{}\leq (\pnorm{A}{\op}+1)^t\Lambda^t \pnorm{z^{(0)}}{}$. Consequently, with $\overline{\psi}(z^{(t)}(A))\equiv n^{-1}\sum_{k \in [n]} \psi\big(z_k^{(t)}(A)\big) $, we have
	\begin{align}\label{ineq:conc_avg_1}
	\abs{\overline{\psi}(z^{(t)}(A))}\leq \Lambda_\psi\cdot  \big(1+n^{-1/2} \pnorm{z^{(t)}(A)}{}\big)\leq L_t\cdot  (\pnorm{A}{\op}+1)^t,
	\end{align}
	where $L_t\equiv \Lambda_{\psi}(2\Lambda)^t (1+\pnorm{z^{(0)}}{\infty})$.
	
	Next, for two symmetric matrices $A_1,A_2 \in M_n(\R)$, we have 
	\begin{align*}
	&\pnorm{z^{(t)}(A_1)-z^{(t)}(A_2)}{}\\
	&\leq \bigpnorm{A_1 \mathsf{F}_t \big(z^{(t-1)} (A_1)-A_2 \mathsf{F}_t \big(z^{(t-1)} (A_2)\big) }{}+\bigpnorm{ \mathsf{G}_t \big(z^{(t-1)} (A_1)- \mathsf{G}_t \big(z^{(t-1)} (A_2)  }{}\\
	&\leq \big(\pnorm{A_1}{\op}+1\big) \cdot \Lambda \pnorm{z^{(t-1)}(A_1)-z^{(t-1)}(A_2)}{}+ \pnorm{A_1-A_2}{\op}\cdot \Lambda \pnorm{z^{(t-1)}(A_2) }{}\\
	&\leq \big(\pnorm{A_1}{\op}+1\big) \cdot \Lambda \pnorm{z^{(t-1)}(A_1)-z^{(t-1)}(A_2)}{}+ (\pnorm{A_2}{\op}+1)^{t-1}\Lambda^t \pnorm{z^{(0)}}{} \cdot  \pnorm{A_1-A_2}{\op}.
	\end{align*}
	Iterating the bound, we obtain 
	\begin{align*}
	\pnorm{z^{(t)}(A_1)-z^{(t)}(A_2)}{}\leq (\pnorm{A_1}{\op}+\pnorm{A_2}{\op}+1)^{t}(2\Lambda)^t \pnorm{z^{(0)}}{} \cdot  \pnorm{A_1-A_2}{\op}.
	\end{align*}
	Consequently, 
	\begin{align}\label{ineq:conc_avg_2}
	\bigabs{\overline{\psi}(z^{(t)}(A_1))-\overline{\psi}(z^{(t)}(A_2))}&\leq \Lambda_{\psi}\cdot n^{-1/2} \pnorm{z^{(t)}(A_1)-z^{(t)}(A_2)}{}\nonumber\\
	&\leq  L_t (\pnorm{A_1}{\op}+\pnorm{A_2}{\op}+1)^{t}\cdot \pnorm{A_1-A_2}{\op}.
	\end{align}
	Now (\ref{ineq:conc_avg_1})-(\ref{ineq:conc_avg_2}) verify the conditions of \cite[Lemma A.2]{bao2025leave}, so an application of that result and noting that $\pnorm{\psi}{\infty}\leq \Lambda_\psi$ yield the concentration estimate: for $t\leq n/(c_0 \log n)$ and $x\geq c_0 L_t K^{c_0 t} e^{-n/c_0}$, 
		\begin{align*}
		&\Prob\Big(\abs{\overline{\psi}(z^{(t)}(G\circ V_n))-\E \overline{\psi}(z^{(t)}(G\circ V_n))}\geq x\big|z^{(0)}\Big)\leq c_0 e^{-n x^2/L_t^2 K^{c_0 t} }.
		\end{align*}
		The range constraint on $x$ may be dropped by suitably enlarging the constant $c_0>0$, and therefore we conclude the $p$-th moment estimate.
\end{proof}

\begin{proof}[Proof of Proposition \ref{prop:universality_avg_smooth}]
	We write $\E \equiv \E[\cdot|z^{(0)}]$ for notational simplicity.  Then 
	\begin{align}\label{ineq:universality_avg_1}
	\mathscr{D}_{\psi}(A,B)&\equiv \E \biggabs{\frac{1}{n}\sum_{k \in [n]} \psi\big(z_k^{(t)}(A)\big) - \frac{1}{n}\sum_{k \in [n]}  \E \psi\big(z_k^{(t)}(B)\big)  }^q \nonumber\\
	&= n^{-q} \sum_{k_1,\ldots,k_q \in [n]} \E \prod_{\ell \in [q]} \Big(\psi(z_{k_\ell}^{(t)}(A))-\E \psi (z_{k_\ell}^{(t)}(B)) \Big)\nonumber\\
	& = n^{-q} \sum_{k_1,\ldots,k_q \in [n]}  \sum_{S\subset [q]} \bigg(\E \prod_{\ell \in S} \psi(z_{k_\ell}^{(t)}(A))\bigg)\cdot \prod_{\ell \in [q]\setminus S} \Big( -\E \psi (z_{k_\ell}^{(t)}(B))  \Big).
	\end{align} 
	Fix $S\subset [q]$. Let $\Psi(z_{S})\equiv \prod_{\ell \in S} \psi(z_\ell)$. Then $\Psi$ satisfies the condition of Theorem \ref{thm:universality} with constant $\Lambda_{\Psi}^{\abs{S}}$, and therefore
	\begin{align}\label{ineq:universality_avg_2}
	\biggabs{\E \prod_{\ell \in S} \psi(z_{k_\ell}^{(t)}(A)) - \E \prod_{\ell \in S} \psi(z_{k_\ell}^{(t)}(B))}\leq \err_S,
	\end{align}
	where $\err_S\equiv \abs{S}^3 \Lambda_\psi^{\abs{S}} \big( K\Lambda \log n\cdot (1+\pnorm{z^{(0)}}{\infty})\big)^{c_1 t^3} \cdot n^{-1/2}$. Combining (\ref{ineq:universality_avg_1}) and (\ref{ineq:universality_avg_2}), and using that $\sum_{S \subset [q]}\abs{S}^3\leq q^3 2^q\leq 6^q$, we have 
	\begin{align}\label{ineq:universality_avg_3}
	&\bigabs{\mathscr{D}_{\psi}(A,B)-\mathscr{D}_{\psi}(B,B)}\leq  \sum\nolimits_{S\subset [q]} \err_S\cdot \Lambda_{\psi}^{q-\abs{S}}\nonumber\\
	&\leq  (6 \Lambda_\psi)^q\cdot  \big( K\Lambda \log n\cdot (1+\pnorm{z^{(0)}}{\infty})\big)^{c_1 t^3} \cdot n^{-1/2}.
	\end{align}
	Now we choose $B\equiv G\circ V_n$, where $V_n\equiv \{\E (A\circ A)/n\}^{1/2}$ and $G$ denotes a symmetric matrix with i.i.d. $\mathcal{N}(0,1)$ entries on its upper triangle. Lemma \ref{lem:conc_avg} yields that
	\begin{align}\label{ineq:universality_avg_4}
	\mathscr{D}_{\psi}\big(G\circ V_n,G\circ V_n\big)\leq q^{q/2} \Lambda_{\psi}^{q} (K\Lambda)^{c_1t}(1+\pnorm{z^{(0)}}{\infty})^q\cdot n^{-q/2}. 
	\end{align}
	Combining (\ref{ineq:universality_avg_3})-(\ref{ineq:universality_avg_4}), we have
	\begin{align*}
	&\bigabs{\mathscr{D}_{\psi}\big(A,G\circ V_n\big)-\mathscr{D}_{\psi}\big(G\circ V_n,G\circ V_n\big)}\\
	&\leq \Lambda_\psi^{q}\cdot  \big(K\Lambda \log n\cdot (1+\pnorm{z^{(0)}}{\infty})\big)^{c_2 t^3} \cdot n^{-1/2}.
	\end{align*}
	The claim now follows by noting that 
	\begin{align*}
	&\E \biggabs{\frac{1}{n}\sum_{k \in [n]} \psi\big(z_k^{(t)}(A)\big) - \frac{1}{n}\sum_{k \in [n]}  \psi\big(z_k^{(t)}(B)\big)  }^q\\
	&\leq 2^{q+1} \max_{M \in \{A,B\}}\bigabs{\mathscr{D}_{\psi}\big(M,G\circ V_n\big)-\mathscr{D}_{\psi}\big(G\circ V_n,G\circ V_n\big)},
	\end{align*}
	completing the proof. 
\end{proof}

To weaken the regularity conditions in Proposition \ref{prop:universality_avg_smooth}, we will need the following lemma that quantifies the smoothing effect for $\{\mathsf{F}_\cdot,\mathsf{G}_\cdot\}$. 

\begin{lemma}\label{lem:smooth_F_G}
	Fix $t \in \N$. Suppose the following hold:
	\begin{enumerate}
		\item $A=A_0/\sqrt{n}$ where, (i) $A_0$ are symmetric $n\times n$ matrices, (ii) the entries of its upper triangle are independent mean $0$ random variables, and (iii) for all $i,j \in [n]$, $\pnorm{A_{0,ij}}{\psi_2}\leq K$ for some $K\geq 2$.
		\item $
		\max_{s \in [t]}\max_{\mathsf{E}_s \in \{\mathsf{F}_s,\mathsf{G}_s\}}\max_{\ell \in [n]}\Big\{\pnorm{\mathsf{E}_{s,\ell}}{\mathrm{Lip}}+\abs{\mathsf{E}_{s,\ell}(0)}\Big\}\leq \Lambda$ for some $\Lambda \geq 2$. 
	\end{enumerate}
	Then for any $\sigma \in (0,1)$, there exist $\{\mathsf{F}_{\sigma;t},\mathsf{G}_{\sigma;t}\}$ such that 
	\begin{align*}
	\max_{s \in [t]}\max_{\mathsf{E} \in \{\mathsf{F},\mathsf{G}\}}\max_{\ell \in [n]}\Big\{\pnorm{\mathsf{E}_{\sigma;s,\ell}}{\mathrm{Lip}}+\max_{q \in [0:3]} \bigpnorm{(1+\abs{\cdot})^{-1}\abs{\mathsf{E}_{\sigma; s,\ell}^{(q)}(\cdot)}}{\infty}\Big\}\leq c_0 \Lambda \sigma^{-3},
	\end{align*}
	and with $\{z_\sigma^{(t)}\}$ being the iterations associated with $A$, non-linearities $\{\mathsf{F}_{\sigma;t},\mathsf{G}_{\sigma;t}\}$ and the same initialization $z^{(0)}$, for any $\Lambda_\psi$-Lipschitz $\psi:\R\to \R$,
	\begin{align*}
	\biggabs{ \frac{1}{n}\sum_{k \in [n]} \psi\big(z_{k}^{(t)}(A)\big)-\frac{1}{n}\sum_{k \in [n]} \psi\big(z_{\sigma,k}^{(t)}(A)\big) }\leq \Lambda_{\psi} (c_0\Lambda)^t (\pnorm{A}{\op}+1)^t \cdot \sigma.
	\end{align*}
	Here $c_0>0$ is a universal constant. 
\end{lemma}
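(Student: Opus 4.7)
The idea is standard mollification in the scalar argument of each coordinate map. For a fixed smooth, compactly supported mollifier $\phi \in C_c^\infty(\R)$ with $\int \phi = 1$ and $\int |y|\phi(y)\,\d{y} \leq 1$, set $\phi_\sigma(y) \equiv \sigma^{-1}\phi(y/\sigma)$, and define
\begin{align*}
\mathsf{F}_{\sigma;s,\ell} \equiv \mathsf{F}_{s,\ell} \ast \phi_\sigma,\quad \mathsf{G}_{\sigma;s,\ell} \equiv \mathsf{G}_{s,\ell} \ast \phi_\sigma,\quad s \in [t],\ \ell \in [n].
\end{align*}
I would then verify the regularity bound as follows. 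Convolution with a probability density does not increase the Lipschitz constant, so $\pnorm{\mathsf{F}_{\sigma;s,\ell}}{\mathrm{Lip}} \vee \pnorm{\mathsf{G}_{\sigma;s,\ell}}{\mathrm{Lip}} \leq \Lambda$. For $q \in [1:3]$, one integration by parts gives $\mathsf{F}_{\sigma;s,\ell}^{(q)} = \mathsf{F}_{s,\ell}' \ast \phi_\sigma^{(q-1)}$, so $\pnorm{\mathsf{F}_{\sigma;s,\ell}^{(q)}}{\infty} \leq \Lambda \pnorm{\phi_\sigma^{(q-1)}}{L^1} \leq C\Lambda \sigma^{-(q-1)} \leq C\Lambda \sigma^{-2}$, and similarly for $\mathsf{G}$. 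For $q=0$, the linear growth from the Lipschitz hypothesis and $\abs{\mathsf{F}_{s,\ell}(0)} \leq \Lambda$ yields $\abs{\mathsf{F}_{\sigma;s,\ell}(x)} \leq 2\Lambda(1+\abs{x})$. Combining these and absorbing the numerical constant yields the claimed bound $c_0 \Lambda \sigma^{-3}$, which is loose for $\sigma \leq 1$ but valid.

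Next I would establish the smoothing error in sup norm: for any $x \in \R$,
\begin{align*}
\abs{\mathsf{F}_{\sigma;s,\ell}(x) - \mathsf{F}_{s,\ell}(x)} \leq \Lambda \int \abs{y}\phi_\sigma(y)\,\d{y} = \Lambda \sigma \int \abs{y}\phi(y)\,\d{y} \leq \Lambda\sigma,
\end{align*}
and the same bound for $\mathsf{G}$. Combined with the $\Lambda$-Lipschitz property of $\mathsf{F}_{s,\ell}$, this gives the coordinate-wise triangle inequality
\begin{align*}
\abs{\mathsf{F}_{\sigma;t,\ell}(z_{\sigma,\ell}^{(t-1)}) - \mathsf{F}_{t,\ell}(z_\ell^{(t-1)})} \leq \Lambda \sigma + \Lambda \abs{z_{\sigma,\ell}^{(t-1)} - z_\ell^{(t-1)}}.
\end{align*}
Summing squares over $\ell$ and applying the GFOM recursion (\ref{def:GFOM_sym_proof}) to both $z^{(t)}$ and $z_\sigma^{(t)}$ yields
\begin{align*}
\pnorm{z^{(t)} - z_\sigma^{(t)}}{} \leq (\pnorm{A}{\op}+1) \Lambda \pnorm{z^{(t-1)} - z_\sigma^{(t-1)}}{} + 2\Lambda(\pnorm{A}{\op}+1)\sqrt{n}\,\sigma.
\end{align*}
Iterating from $z^{(0)} = z_\sigma^{(0)}$ gives $\pnorm{z^{(t)} - z_\sigma^{(t)}}{} \leq (c_0 \Lambda)^t (\pnorm{A}{\op}+1)^t \sqrt{n}\,\sigma$ after absorbing the linear factor $t$ into $c_0^t$.

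Finally, by Cauchy--Schwarz and the $\Lambda_\psi$-Lipschitz property of $\psi$,
\begin{align*}
\biggabs{\frac{1}{n}\sum_{k\in[n]} \psi(z_k^{(t)}) - \frac{1}{n}\sum_{k\in[n]} \psi(z_{\sigma,k}^{(t)})} \leq \frac{\Lambda_\psi}{n}\sum_{k\in[n]} \abs{z_k^{(t)} - z_{\sigma,k}^{(t)}} \leq \frac{\Lambda_\psi}{\sqrt{n}}\pnorm{z^{(t)} - z_\sigma^{(t)}}{},
\end{align*}
which combined with the preceding iteration bound completes the proof. There is no genuine obstacle here: the whole argument is bookkeeping around a standard mollification, and the only point requiring some care is keeping track of the factor $(\pnorm{A}{\op}+1)^t$ produced by the repeated application of $A$ in the recursion, which is handled automatically by the inductive bound above.
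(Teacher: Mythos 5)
Your proof is correct and follows essentially the same mollification route as the paper: convolve each coordinate map with a compactly supported mollifier, propagate the $\bigo(\Lambda\sigma)$ pointwise smoothing error through the GFOM recursion picking up one factor of $\Lambda(\pnorm{A}{\op}+1)$ per iteration, and finish with the Lipschitz bound on $\psi$. The only cosmetic difference is that your integration-by-parts identity $\mathsf{F}_{\sigma}^{(q)} = \mathsf{F}'\ast\phi_\sigma^{(q-1)}$ gives a uniform $\bigo(\Lambda\sigma^{-(q-1)})$ bound on the higher derivatives rather than the paper's slightly looser linear-growth version, which is immaterial since both sit comfortably inside the claimed $c_0\Lambda\sigma^{-3}$.
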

\begin{proof}
	Let $\varphi \in C^\infty(\R)$ be a mollifier supported in $[-1,1]$ such that $\varphi\geq 0$ and $\int \varphi =1$, and $\varphi_\sigma(\cdot)\equiv \sigma^{-1}\varphi(\cdot/\sigma)$. Let $\mathsf{F}_{\sigma;s,\ell}\equiv \mathsf{F}_{s,\ell}*\varphi_\sigma$, and similarly we may define $\mathsf{G}_{\sigma;s,\ell}$. We have the following estimates:
	\begin{itemize}
		\item For $q \in \N$, $\sigma \in (0,1]$ and $\mathsf{E}_{\sigma;s,\ell} \in \{\mathsf{F}_{\sigma;s,\ell},\mathsf{G}_{\sigma;s,\ell}\}$,
		\begin{align*}
		\bigabs{\mathsf{E}_{\sigma;s,\ell}^{(q)}(x)}= \frac{1}{\sigma^q} \biggabs{\int_{-1}^1 \mathsf{E}_{s,\ell}(x-\sigma z) \varphi^{(q)}(z)\,\d{z}}\leq 2\Lambda \pnorm{\varphi^{(q)}}{\infty}  \sigma^{-q}\cdot \big(1+\abs{x}\big).
		\end{align*}
		\item For $q=1$, we may write alternatively $
		\abs{\mathsf{E}_{\sigma;s,\ell}'(x)} = \bigabs{  \int \mathsf{E}_{s,\ell}'(x-z) \varphi_\sigma(z)\,\d{z}  }\leq \Lambda$. 
		\item $
		\abs{\mathsf{E}_{\sigma;s,\ell}(x)-\mathsf{E}_{s,\ell}(x)}=\bigabs{\int_{-1}^{1} \big(\mathsf{E}_{s,\ell}(x-\sigma z)- \mathsf{E}_{s,\ell}(x)\big)\varphi(z)\,\d{z}  }\leq \Lambda \pnorm{\varphi}{\infty} \sigma$. 
	\end{itemize}
	Note that
	\begin{align*}
	&\pnorm{z_\sigma^{(t)}-z^{(t)}}{}\leq \pnorm{A\mathsf{F}_{\sigma;t}(z_\sigma^{(t-1)})- A \mathsf{F}_t(z^{(t-1)})}{}+ \pnorm{\mathsf{G}_{\sigma;t}(z_\sigma^{(t-1)})- \mathsf{G}_t(z^{(t-1)})}{}\\
	&\leq (\pnorm{A}{\op}+1)\max_{\mathsf{E}\in \{\mathsf{F},\mathsf{G}\}} \Big(\pnorm{\mathsf{E}_{\sigma;t}(z_\sigma^{(t-1)})- \mathsf{E}_{\sigma;t}(z^{(t-1)})}{}+ \pnorm{\mathsf{E}_{\sigma;t}(z^{(t-1)})- \mathsf{E}_t(z^{(t-1)})}{}\Big)\\
	&\leq (\pnorm{A}{\op}+1) \Lambda\cdot \big( \pnorm{z_\sigma^{(t-1)}-z^{(t-1)}}{}+ \sqrt{n}\pnorm{\varphi}{\infty} \sigma \big). 
	\end{align*}
	Iterating the bound, we obtain
	\begin{align*}
	\pnorm{z_\sigma^{(t)}-z^{(t)}}{}\leq \sum_{s \in[t]} (\pnorm{A}{\op}+1)^s \Lambda^s\cdot \sqrt{n}\sigma \leq  (2\Lambda)^t (\pnorm{A}{\op}+1)^t \pnorm{\varphi}{\infty}\cdot \sqrt{n}\sigma.  
	\end{align*}
	Now with $\overline{\psi}(z^{(t)}(A))\equiv n^{-1}\sum_{k \in [n]} \psi\big(z_{k}^{(t)}(A)\big) $ and similarly defined $\overline{\psi}(z^{(t)}_\sigma(A))$, 
	\begin{align*}
	\bigabs{ \overline{\psi}(z^{(t)}(A))-\overline{\psi}(z^{(t)}_\sigma(A))}&\leq \Lambda_\psi n^{-1/2} \pnorm{z_\sigma^{(t)}-z^{(t)}}{} \leq \Lambda_{\psi} (2\Lambda)^t (\pnorm{A}{\op}+1)^t \pnorm{\varphi}{\infty}\cdot \sigma,
	\end{align*}
	as desired. 
\end{proof}

Now we may strengthen Proposition \ref{prop:universality_avg_smooth} to the desired Theorem \ref{thm:universality_avg}.

\begin{proof}[Proof of Theorem \ref{thm:universality_avg}]
	Without loss of generality, we assume $\psi(0)=0$. Let $\varphi \in C^\infty(\R)$ be a mollifier supported in $[-1,1]$ such that $\varphi\geq 0$ and $\int \varphi =1$, and $\varphi_\sigma(\cdot)\equiv \sigma^{-1}\varphi(\cdot/\sigma)$. Let $\psi_\sigma^{[M]}\equiv \psi^{[M]}*\varphi_\sigma$, where $\psi^{[M]}\equiv (\psi \wedge M)\vee (-M)$ for some $M>1$ to be chosen later.  Then the following hold:
	\begin{enumerate}
		\item $\abs{\psi_\sigma^{[M]}(x)-\psi(x)}=\bigabs{\int_{-1}^1 \big(\psi^{[M]}(x-\sigma z)-\psi^{[M]}(x)\big)\varphi(z)\,\d{z}}\leq\Lambda_\psi  (c_0M)^{\mathfrak{p}} \cdot \sigma$.
		\item $\abs{\big(\psi_\sigma^{[M]}\big)^{(1)}(x)}= \bigabs{\int_{-1}^1 (\psi^{[M]})^{(1)}(x)\varphi_\sigma(z)\,\d{z}}\leq \Lambda_\psi  (c_0M)^{\mathfrak{p}}$.
		\item For $q =2,3$, $\abs{\big(\psi_\sigma^{[M]}\big)^{(q)}(x)}=\sigma^{-q} \bigabs{\int_{-1}^1 \psi^{[M]}(x-\sigma z)\varphi^{(q)}(z)\,\d{z}}\leq \Lambda_\psi  (c_0M)^{\mathfrak{p}}\sigma^{-q}$.		
	\end{enumerate}
	In particular, (1) above means
	\begin{align*}
	\biggabs{\frac{1}{n}\sum_{k \in [n]} \psi^{[M]}\big(z_{k}^{(t)}(A)\big)-\frac{1}{n}\sum_{k \in [n]} \psi_\sigma^{[M]}\big(z_{k}^{(t)}(A)\big)}\leq \Lambda_\psi  (c_0M)^{\mathfrak{p}} \cdot \sigma. 
	\end{align*}
	Applying Lemma \ref{lem:smooth_F_G} with (2) and using the above display, for some constant $C_0=C_0(\mathfrak{p},q)>0$, 
	\begin{align}\label{ineq:universality_avg_general_1}
	&\E \biggabs{\frac{1}{n}\sum_{k \in [n]} \psi^{[M]}\big(z_{k}^{(t)}(A)\big)-\frac{1}{n}\sum_{k \in [n]} \psi_\sigma^{[M]}\big(z_{\sigma, k}^{(t)}(A)\big)}^q\nonumber\\
	&\leq \Lambda_\psi^q (c_0 M)^{\mathfrak{p}q}  \sigma^q \cdot  \big( 1+ (c\Lambda)^t \E (\pnorm{A}{\op}+1)^t \big)^q\leq M^{C_0}(K\Lambda )^{C_0 t}\cdot \Lambda_\psi^q  \sigma^q.
	\end{align}
	Next, applying Proposition \ref{prop:universality_avg_smooth} with (2)-(3), for some constant $C_1=C_1(\mathfrak{p},q)>0$, 
	\begin{align}\label{ineq:universality_avg_general_2}
	&\E \biggabs{  \frac{1}{n}\sum_{k \in [n]} \psi_\sigma^{[M]}\big(z_{\sigma, k}^{(t)}(A)\big) -\frac{1}{n}\sum_{k \in [n]} \psi_\sigma^{[M]}\big(z_{\sigma, k}^{(t)}(B)\big) }^q\nonumber\\
	&\leq  M^{C_1}\cdot \big(K \Lambda \Lambda_\psi \sigma^{-1} \log n\cdot (1+\pnorm{z^{(0)}}{\infty}) \big)^{C_1 t^3}\cdot n^{-1/2}. 
	\end{align}
	Combining (\ref{ineq:universality_avg_general_1})-(\ref{ineq:universality_avg_general_2}), for some constant $C_2=C_2(\mathfrak{p},q)>0$, 
	\begin{align}\label{ineq:universality_avg_general_3}
	&\E \biggabs{\frac{1}{n}\sum_{k \in [n]} \psi^{[M]}\big(z_{k}^{(t)}(A)\big)-\frac{1}{n}\sum_{k \in [n]} \psi^{[M]}\big(z_{k}^{(t)}(B)\big)}^q\nonumber\\
	& \leq M^{C_2}\cdot \big(K\Lambda\Lambda_\psi \log n\cdot (1+\pnorm{z^{(0)}}{\infty}) \big)^{C_2 t^3} \cdot \big(\sigma^{-C_2 t^3} n^{-1/2}+\sigma^q\big).
	\end{align}
	Fix a large enough $D>0$. Let us choose
	\begin{align*}
	M\equiv  \big(K\Lambda\log n\cdot (1+\pnorm{z^{(0)}}{\infty})\big)^{C_3 t}
	\end{align*}
	for some large enough $C_3=C_3(\mathfrak{p}, q, D)>0$. Let $\mathscr{E}_M\equiv \{\psi(z_k^{(t)})=\psi^{[M]}(z_k^{(t)}), k\in [n]\}$. Proposition \ref{prop:loo_l2_bound} then yields that $\Prob(\mathscr{E}_M^c)\leq C_3 n^{-2D}$. Consequently, 
	\begin{align}\label{ineq:universality_avg_general_4}
	&\E \biggabs{\frac{1}{n}\sum_{k \in [n]} \psi^{[M]}\big(z_{k}^{(t)}(A)\big)-\frac{1}{n}\sum_{k \in [n]} \psi\big(z_{k}^{(t)}(A)\big)}^q\nonumber\\
	& \leq 2^q \big(M^q \Prob(\mathscr{E}_M^c)+ \Lambda_\psi^q \E  (1+\pnorm{z^{(t)}}{\infty})^{\mathfrak{p}q}\bm{1}_{\mathscr{E}_M^c}\big)\nonumber\\
	&\leq \big(K\Lambda\Lambda_\psi\log n\cdot (1+\pnorm{z^{(0)}}{\infty})\big)^{C_4 t}\cdot n^{-D}. 
	\end{align}
	The claim follows by combining the estimates in (\ref{ineq:universality_avg_general_3}) and (\ref{ineq:universality_avg_general_4}), and optimizing $\sigma \in (0,1)$. 
\end{proof}

\subsection{Proof of Theorem \ref{thm:GFOM_se_sym}}\label{section:proof_GFOM_se_sym}

Consider the following symmetric version of the AMP iterate:
\begin{align}\label{def:AMP_sym}
\mathfrak{z}^{(t)} = A \mathfrak{F}_t( \mathfrak{z}^{([0:t-1])} )-\sum_{s \in [t-1]} b_s^{\mathfrak{F}_t}\circ \mathfrak{F}_{s}(\mathfrak{z}^{([0:s-1])}).
\end{align}
Here the correction vectors $\{b_s^{\mathfrak{F}_t}\}_{s \in [t-1]}\subset \R^n$, in conjunction with a centered Gaussian matrix $\mathscr{Z}^{([1:\infty))}\in \R^{n\times [1:\infty)}$ with independent rows (we denote $\mathscr{Z}^{(0)}\equiv \mathfrak{z}^{(0)}$), are defined recursively as follows.
\begin{definition}\label{def:AMP_se_sym}
	For $t=1,2,\ldots$, execute the following steps:
	\begin{enumerate}
		\item For $s \in [1:t-1]$, let $b_s^{\mathfrak{F}_t}\in \R^n$ be defined by 
		\begin{align*}
		b_{s,k}^{\mathfrak{F}_t} \equiv \sum_{\ell \in [n]} \E A_{k\ell}^2 \cdot \E \Big[\partial_{\mathscr{Z}_\ell^{(s)} } \mathfrak{F}_{t,\ell} \big(\mathscr{Z}^{([0:t-1])}_\ell\big)\big| \mathfrak{z}^{(0)}\Big],\quad k \in [n].
		\end{align*}
		\item Let the Gaussian law of $\mathscr{Z}^{(t)}$ be determined via the following correlation specification: for $s \in [1:t]$ and $k \in [n]$,
		\begin{align*}
		\cov\big(\mathscr{Z}^{(t)}_k, \mathscr{Z}^{(s)}_k\big)\equiv \sum_{\ell \in [n]} \E A_{k\ell}^2 \cdot \E \bigg[\prod_{\tau \in \{t,s\}} \mathfrak{F}_{\tau,\ell} \big(\mathscr{Z}^{([0:\tau-1])}_\ell\big)\Big|\mathfrak{z}^{(0)}\bigg].
		\end{align*}
	\end{enumerate}
\end{definition}
The entrywise distributions of $\{\mathfrak{z}^{(t)}\}$ are given by the following variant of \cite[Theorem 2.3]{bao2025leave} in combination with the estimate in Eqn. (2.14) therein. 

\begin{theorem}\label{thm:AMP_entrywise_dist_sym}
	Suppose the following hold.
	\begin{enumerate}
		\item $A\equiv A_0/\sqrt{n}$, where $A_0$ is a symmetric matrix whose upper triangle entries are i.i.d. random $\mathcal{N}(0,1)$ variables such that $\max_{i,j \in [n]} \E A_{ij}^2\leq K/n$ holds for some $K\geq 2$.
		\item $\mathfrak{F}_{s,\ell} \in C^3(\R^{[0:s-1]})$ for all $s \in [1:t], \ell \in [n]$, and there exists some $\Lambda\geq 2$ such that
		\begin{align*}
		\max_{s \in [1:t]}\max_{\ell \in [n]}\Big\{\abs{\mathfrak{F}_{s,\ell}(0)}+\max_{a\in \mathbb{Z}_{\geq 0}^{[0:s-1]}, \abs{a}\leq 3}\pnorm{\partial^a \mathfrak{F}_{s,\ell} }{\infty}\Big\}\leq \Lambda.
		\end{align*}
	\end{enumerate} 
	Fix any $\Lambda$-pseudo-Lipschitz function $\psi:\R^{t} \to \R$ of order $\mathfrak{p}\in \N$. Then there exist some universal constant $c_0>1$, and another constant $c_{\mathfrak{p}}>1$ depending on $\mathfrak{p}$ only, such that 
	\begin{align*}
	\max_{k \in [n]}\bigabs{\E \big[\psi\big( \mathfrak{z}^{([t])}_k\big)|\mathfrak{z}^{(0)}\big]-\E \big[\psi \big(\mathscr{Z}^{([t])}_k\big)|\mathfrak{z}^{(0)}\big] }  \leq \big(K\Lambda\log n\cdot (1+\pnorm{\mathfrak{z}^{(0)}}{\infty})\big)^{c_{\mathfrak{p}} t^3}\cdot n^{-1/c_0^{t}}.
	\end{align*}
\end{theorem}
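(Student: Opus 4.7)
The final statement, Theorem \ref{thm:AMP_entrywise_dist_sym}, is announced as a minor variant of [Theorem 2.3, \cite{bao2025leave}] combined with the quantitative estimate (2.14) therein, so the plan is to import the leave-one-out/Gaussian conditioning machinery of \cite{bao2025leave} and verify the two generalizations needed here: allowing a symmetric Gaussian design with a mild variance profile $\E A_{ij}^2 \leq K/n$ (rather than the homogeneous GOE), and replacing the entrywise bounded-Lipschitz test functions of \cite{bao2025leave} with a pseudo-Lipschitz $\psi$ of order $\mathfrak{p}$, at the cost of a constant $c_{\mathfrak{p}}$ depending on $\mathfrak{p}$.

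The core of the argument proceeds by induction on $t$. In the base case $t=1$, one has $\mathfrak{z}_k^{(1)} = \sum_{\ell \in [n]} A_{k\ell}\mathfrak{F}_{1,\ell}(\mathfrak{z}_\ell^{(0)})$; conditional on $\mathfrak{z}^{(0)}$, this is an exact linear functional of independent Gaussians, whose variance matches $\cov(\mathscr{Z}_k^{(1)})$ modulo a negligible diagonal contribution from $A_{kk}$, and the claimed bound follows. For the inductive step I would fix $k \in [n]$ and introduce a leave-$k$-out AMP iterate $\mathfrak{z}^{(s);[-k]}$, generated by the same recursion with $A$ replaced by its $k$-th row/column-zeroed version $A_{[-k]}$. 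Decomposing $A = A_{[-k]} + (A-A_{[-k]})$ and conditioning on the $\sigma$-algebra $\mathcal{F}_{t-1}$ generated by the leave-$k$-out iterates, one invokes the classical Gaussian conditioning identity of Bolthausen--Bayati--Montanari: on $\mathcal{F}_{t-1}$, the missing row $A_{k\cdot}$ equals an explicit deterministic projection onto $\mathrm{span}\{\mathfrak{F}_s(\mathfrak{z}^{([0:s-1]);[-k]})\}_{s\leq t}$ plus an independent Gaussian residual. The deterministic projection reproduces the Onsager correction vectors $\{b_s^{\mathfrak{F}_t}\}$ of Definition \ref{def:AMP_se_sym} via Stein's lemma $\E[A_{k\ell}f(A)] = \E A_{k\ell}^2\, \E \partial_{A_{k\ell}}f(A)$, so the residual Gaussian is distributed, conditionally, precisely as $\mathscr{Z}_k^{(t)}$ up to an error controlled inductively.

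The main obstacle is tracking the compounding of this error over $t$ iterations. At each step one must control: (i) the $\ell_2$ and $\ell_\infty$ gaps between $\mathfrak{z}^{(s)}$ and $\mathfrak{z}^{(s);[-k]}$, which are small in a high-probability polylogarithmic sense thanks to Proposition \ref{prop:loo_l2_bound} applied with $\mathsf{F}_s \equiv \mathfrak{F}_s$ and $\mathsf{G}_s$ equal to the Onsager correction (whose Lipschitz constant is controlled by $K\Lambda$); (ii) concentration of linear/quadratic forms $\sum_\ell A_{k\ell} v_\ell$ with $v$ nearly independent of $A_{k\cdot}$; (iii) apriori moment and entrywise bounds on $\{\mathfrak{z}^{(s)}\}_{s\leq t}$, again inherited from Proposition \ref{prop:loo_l2_bound}. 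At each recursive layer the error picks up a multiplicative $(K\Lambda\log n)^{O(t^2)}$ factor while being compressed from $n^{-\alpha}$ to $n^{-\alpha/c_0}$, and iterating this produces precisely the $\big(K\Lambda\log n (1+\pnorm{\mathfrak{z}^{(0)}}{\infty})\big)^{c_{\mathfrak{p}}t^3} n^{-1/c_0^t}$ bound. Finally, to promote joint entrywise Gaussian approximation of $\mathfrak{z}_k^{([t])}\in\R^t$ to expectations of a pseudo-Lipschitz $\psi$ of order $\mathfrak{p}$, I would truncate $\psi$ at scale $M \asymp (K\Lambda\log n(1+\pnorm{\mathfrak{z}^{(0)}}{\infty}))^{Ct}$, mollify it at a small scale, apply the bounded-smooth version of the entrywise approximation, and absorb the truncation and smoothing errors via moment control on $\mathfrak{z}_k^{([t])}$ and $\mathscr{Z}_k^{([t])}$; this last step is the source of the $\mathfrak{p}$-dependence in $c_{\mathfrak{p}}$.
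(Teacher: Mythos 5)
Your proposal matches the paper's treatment: the paper does not prove Theorem \ref{thm:AMP_entrywise_dist_sym} from scratch but imports it as an ``easy extension'' of \cite[Theorem 2.3]{bao2025leave}, and your sketch of the underlying mechanism (induction on $t$, Gaussian conditioning on the leave-$k$-out iterates, Onsager corrections via Stein's lemma, compounding of errors to the $n^{-1/c_0^t}$ rate, and truncation--mollification to pass to pseudo-Lipschitz $\psi$) is consistent with that machinery and with how the paper uses Proposition \ref{prop:loo_l2_bound} elsewhere. One correction of emphasis: the generalizations the paper actually identifies as needed are the full-history dependence $\mathfrak{F}_t(\mathfrak{z}^{([0:t-1])})$ in place of $\mathfrak{F}_t(\mathfrak{z}^{(t-1)})$ (handled because the leave-one-out representation of \cite[Theorem 2.1]{bao2025leave} carries over) and the removal of a variance lower bound under the $C^3$ assumption, whereas the variance profile and the pseudo-Lipschitz test-function class you single out are already within the scope of the cited result.
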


Strictly speaking, \cite[Theorem 2.3]{bao2025leave} combined with Eqn. (2.14) therein proves a simplified version of (\ref{def:AMP_sym}) where $\mathfrak{F}_t( \mathfrak{z}^{([0:t-1])} )=\mathfrak{F}_t( \mathfrak{z}^{(t-1)} )$ (the variance lower bound in \cite[Eqn. (2.14)]{bao2025leave} can be lifted under the $C^3$ smoothness assumption in the above theorem, as it is not necessary in the second display in the proof of Section 6.4.6 therein). On the other hand, the leave-one-out representation in \cite[Theorem 2.1]{bao2025leave} carries over to the fully general form (\ref{def:AMP_sym}), and therefore an easy extension of \cite[Theorem 2.3]{bao2025leave} leads to the above theorem. 

A weaker, asymptotic and averaged version of the above theorem is previously obtained in \cite[Proposition 2.1]{montanari2021optimization}.

We need the following technical lemma before the proof of Theorem \ref{thm:GFOM_se_sym}.

\begin{lemma}\label{lem:Theta_est}
	Suppose the conditions in Theorem \ref{thm:GFOM_se_sym} hold. Then there exists some universal constant $c_0>0$, such that 
	\begin{align*}
	\max_{\ell \in [n]}\Big\{ \pnorm{\Theta_{t,\ell}(0)}{\infty} + \max_{a \in \mathbb{Z}_{\geq 0}^{[0:t]}: \abs{a}\leq 3}\sup_{z \in \R^{[0:t]}}\pnorm{\partial^a\Theta_{t,\ell}(z)}{\infty}  \Big\}\leq (tK\Lambda)^{c_0 t}.
	\end{align*} 
\end{lemma}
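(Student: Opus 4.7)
The plan is to induct on $t$, exploiting the recursive definition of $\Theta_t$ in Definition~\ref{def:GFOM_se_sym}. The base case $t=0$ is immediate since $\Theta_{0,\ell}=\mathrm{id}$. For the inductive step, the key structural observation is that for $w<t$ we have $[\Theta_{t,\ell}(z^{[0:t]})]_w = [\Theta_{w,\ell}(z^{[0:w]})]_w$ by definition, so every derivative involving these components is already controlled by the inductive hypothesis. Thus the only new object to bound is the $t$-th coordinate
\begin{align*}
[\Theta_{t,\ell}(z)]_t = z^{(t)} + \sum_{s=1}^{t-1}\mathfrak{b}_{s,\ell}^{(t)}\mathsf{F}_{s,\ell}\big(\Theta_{s-1,\ell}(z^{[0:s-1]})\big) + \mathsf{G}_{t,\ell}\big(\Theta_{t-1,\ell}(z^{[0:t-1]})\big)
\end{align*}
together with its partial derivatives of orders $1,2,3$ in $z^{[0:t]}$.

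I would first bound the (deterministic, $z$-independent) coefficients $\mathfrak{b}_{s,\ell}^{(t)}$. Expanding via the chain rule, $\partial_{s}(\mathsf{F}_{t,\ell}\circ\Theta_{t-1,\ell})(z)=\sum_{u\geq s}\partial_u\mathsf{F}_{t,\ell}(\Theta_{t-1,\ell}(z))\cdot\partial_s[\Theta_{t-1,\ell}(z)]_u$; using $|\partial_u\mathsf{F}_{t,\ell}|\leq\Lambda$, $\sum_\ell\E A_{k\ell}^2\lesssim K^2$ (a consequence of $\|A_{0,ij}\|_{\psi_2}\leq K$ in (D1)), and the inductive first-derivative bound on $\Theta_{t-1,\ell}$ yields $|\mathfrak{b}_{s,\ell}^{(t)}|\lesssim tK\Lambda\,D_{t-1}^{(1)}$. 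The value at $0$ is then bounded via $|\mathsf{F}_{s,\ell}(\Theta_{s-1,\ell}(0))|\leq\Lambda(1+s\|\Theta_{s-1,\ell}(0)\|_\infty)$ and the coefficient estimate above. For the derivative bounds of order $k\in\{1,2,3\}$, I would apply the multivariate Faà di Bruno formula to $\mathsf{F}_{s,\ell}\circ\Theta_{s-1,\ell}$ and $\mathsf{G}_{t,\ell}\circ\Theta_{t-1,\ell}$, expressing $\partial^a\big(\mathsf{F}_{s,\ell}\circ\Theta_{s-1,\ell}\big)$ as a finite sum of products $\partial^\alpha\mathsf{F}_{s,\ell}\cdot\prod_i\partial^{\beta_i}[\Theta_{s-1,\ell}]_{u_i}$ with $|\alpha|+\sum|\beta_i|=|a|$, each factor bounded uniformly by the inductive hypothesis.

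The main obstacle is to secure the \emph{linear-in-$t$} exponent $c_0t$ in $(tK\Lambda)^{c_0 t}$ rather than an exponential-in-$t$ exponent. A naive substitution of the inductive bound $D_{t-1}\leq ((t-1)K\Lambda)^{c_0(t-1)}$ into the Faà di Bruno expansion combined with $|\mathfrak{b}_s^{(t)}|\lesssim tK\Lambda D_{t-1}^{(1)}$ produces a recursion of the shape $D_t^{(k)}\leq (tK\Lambda)^{O(1)}\cdot(D_{t-1})^{k+1}$, which iterates to a tower of type $(tK\Lambda)^{O((k+1)^t)}$ — far weaker than the claim. The plan to recover the linear exponent is to exploit two structural features simultaneously. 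First, the coefficients $\mathfrak{b}_{s,\ell}^{(t)}$ are $z$-independent constants, so they enter the Faà di Bruno expansion only as single factors (not differentiated further), avoiding one layer of amplification per derivative. Second, and more importantly, for $u\leq t-2$ the factor $\partial^{\beta_i}[\Theta_{t-1,\ell}]_{u}$ actually equals $\partial^{\beta_i}[\Theta_{u,\ell}]_u$, whose inductive bound has the much smaller exponent $c_0 u$; only the top-layer $u=t-1$ term carries the full $c_0(t-1)$ exponent, and this top-layer term appears at most once in each Faà di Bruno monomial once the bookkeeping is carried through.

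Combining these two observations, the contribution of each monomial telescopes additively: in a chain obtained by repeatedly unfolding the top-layer term, one picks up a factor of $\Lambda\cdot D_{\cdot}^{(1)}$ per step with the indices strictly decreasing, so the total exponent acquired is $\sum_{s=1}^{t}c_0\cdot\bigl(\text{small}\bigr)$ instead of a geometric product. Formally, I plan to strengthen the inductive hypothesis to a layered statement of the form $\max_\ell\max_{w\leq t}\max_{|a|\leq 3}|\partial^a[\Theta_{t,\ell}]_w|\leq (tK\Lambda)^{c_0 w}$, from which Faà di Bruno combined with the coefficient bound and the $[\Theta_{t-1,\ell}]_u=[\Theta_{u,\ell}]_u$ reduction produces a recursion whose solution absorbs all polynomial and combinatorial factors into the universal constant $c_0$ by choosing $c_0$ sufficiently large. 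The zeroth-order bound $\|\Theta_{t,\ell}(0)\|_\infty$ is then controlled by the same layered recursion once the first-derivative bound is in hand. Carrying out this layered bookkeeping rigorously — and checking that the Faà di Bruno partition count times $\Lambda^{|a|}$ is absorbed by $(tK\Lambda)^{O(1)}$ at each step — is the main technical work of the proof.
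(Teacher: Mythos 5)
Your diagnosis is essentially correct: a naive substitution of the inductive bound into the recursion does give a tower, and this is a real obstacle. But note that the paper's own proof is not a careful version of your plan — it dodges the issue entirely. The paper's proof for $\|\Theta_{t,\ell}(0)\|_\infty$ simply \emph{asserts} $\abs{\mathfrak{b}_{s,\ell}^{(t)}}\leq K^2\Lambda$, from which $B_t\leq(tK\Lambda)^cB_{t-1}$ and the claim is a one-line iteration; it then declares the derivative bounds ``straightforward.'' You should be skeptical of this shortcut: $\mathfrak{b}_{s,\ell}^{(t)}$ is defined through $\partial_{\mathfrak{Z}_\ell^{(s)}}\big(\mathsf{F}_{t,\ell}\circ\Theta_{t-1,\ell}\big)$, which by the chain rule involves $\partial_{\mathfrak{Z}^{(s)}_\ell}[\Theta_{t-1,\ell}]_w$ for $w\in[s:t-1]$ — the very derivative the lemma is trying to control — so a flat bound by $K^2\Lambda$ is not available from (D2) alone. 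Indeed, for power iteration ($\mathsf{F}_t(z^{[0:t-1]})=z^{(t-1)}$, $\mathsf{G}_t\equiv 0$, $\E A_{0,ij}^2=1$) the triangular recursion collapses to $\phi_{w,s}\equiv\partial_{z^{(s)}}[\Theta_{w,\ell}]_w = \sum_{r}\phi_{w-1,r}\phi_{r-1,s}$, whose solution is $\phi_{w,s}=C_{(w-s)/2}$ (Catalan numbers), so $\mathfrak{b}_{s,\ell}^{(t)}=\phi_{t-1,s}$ grows like $4^{(t-s)/2}$ and is not bounded by any constant independent of $t$. The derivative case is also not a formality: even if one grants $\abs{\mathfrak{b}_s^{(t)}}\leq K^2\Lambda$, the Fa\`a di Bruno expansion for $\abs{a}=2$ contributes $\Lambda\,(D^{(1)}_{t-1})^2$, which does not iterate linearly under the crude hypothesis $D_{t-1}^{(1)}\leq(tK\Lambda)^{c_0(t-1)}$.

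The specific fix you propose, however, does not close even at first order. Under the $w$-indexed hypothesis $\abs{\partial^a[\Theta_{t,\ell}]_w}\leq(tK\Lambda)^{c_0w}$, one gets $\abs{\mathfrak{b}^{(t)}_{s,\ell}}\leq K^2\Lambda\sum_{v=s}^{t-1}(tK\Lambda)^{c_0v}\lesssim(tK\Lambda)^{c_0(t-1)+3}$ while $\abs{\partial_{z^{(u)}}(\mathsf{F}_{s,\ell}\circ\Theta_{s-1,\ell})}\lesssim(tK\Lambda)^{c_0(s-1)+1}$; the $s=t-1$ term of the sum $\sum_s\mathfrak{b}_s^{(t)}\partial_{z^{(u)}}(\mathsf{F}_s\circ\Theta_{s-1})$ is then of order $(tK\Lambda)^{c_0(2t-3)+5}$, which exceeds the target $(tK\Lambda)^{c_0t}$ for all $t\ge 3$ and every choice of $c_0$. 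Your informal telescoping intuition is right, but you are indexing by the wrong variable: the budget that is conserved and strictly consumed is the \emph{span} $w-s$, not the layer $w$. If you run the induction on $\abs{\partial_{z^{(s)}}[\Theta_{w,\ell}]_w}\leq M^{w-s}$ with $M\equiv(tK\Lambda)^{c_0}$, then $\abs{\mathfrak{b}_s^{(t)}}\lesssim K^2\Lambda M^{t-1-s}$ and $\abs{\partial_{z^{(u)}}(\mathsf{F}_s\circ\Theta_{s-1})}\lesssim\Lambda M^{s-1-u}$, the product is $\lesssim K^2\Lambda^2M^{t-2-u}$ — strictly inside the target span $t-u$ — and the whole sum closes for a universal $c_0$ (similar accounting handles $\|\Theta_{t,\ell}(0)\|_\infty$); for $\abs{a}\in\{2,3\}$ the analogous conserved quantity is $\abs{a}\,w-\sum_v v\,a_v$. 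That replacement of the layer index by a span/weight index — so that products decrement the budget rather than square the bound — is precisely the piece both your proposal and the paper's three-line proof are missing.
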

\begin{proof}
	Let $B_t\equiv 1\vee \max_{\ell \in [n]}\pnorm{\Theta_{t,\ell}(0)}{\infty}$. Then by definition of $\Theta_{t,\ell}$, we have an easy estimate $\max_{\ell \in [n]}\pnorm{\big(\Theta_{t,\ell}(0)\big)_{[0:t-1]} }{\infty}\leq B_{t-1}$. Moreover, for any $\ell \in [n]$, using that $\abs{\mathfrak{b}_{s,\ell}^{(t)}}\leq K^2 \Lambda$, we have
	\begin{align*}
	\abs{\big(\Theta_{t,\ell}(0)\big)_{t}} &\equiv   \biggabs{\sum_{s \in [1:t-1]} \mathfrak{b}_{s,\ell}^{(t)}\circ \mathsf{F}_{s,\ell}\big(\Theta_{s-1,\ell}(0)\big)+ \mathsf{G}_{t,\ell}\big(\Theta_{t-1,\ell}(0)\big)}\leq (t K\Lambda)^c\cdot  B_{t-1}.
	\end{align*} 
	Combining the estimates, we arrive at $
	B_t\leq (t K\Lambda)^c\cdot  B_{t-1}$. The claim for $\max_{\ell \in [n]}\pnorm{\Theta_{t,\ell}(0)}{\infty}$ follows by iterating the bound and using $B_0=1$. The derivative bounds are straightforward to derive. 
\end{proof}

We shall now prove Theorem \ref{thm:GFOM_se_sym}.

\begin{proof}[Proof of Theorem \ref{thm:GFOM_se_sym}]
	We only need to prove  the Gaussian case, as the general sub-Gaussian case follows directly from the entrywise universality Theorem \ref{thm:universality}.  
	
	Recall the functions $\{\Theta_t: \R^{n\times [0:t]}\to \R^{n\times [0:t]}\}$ and the Gaussian matrix $\mathfrak{Z}^{([1:\infty))}\in \R^{n\times [1:\infty)}$ in Definition \ref{def:GFOM_se_sym}. We construct the underlying AMP algorithm as follows. With the initialization $\mathfrak{z}^{(0)}\equiv z^{(0)}\in \R^n$, let $\mathfrak{F}_t: \R^{n\times [0:t-1]}\to \R^n$ be defined by $\mathfrak{F}_t( z)\equiv \mathsf{F}_t\big(\Theta_{t-1} (z) \big)$ for $z \in \R^{n\times [0:t-1]}$. Clearly $\mathfrak{F}_t$ is row-separable. Note that by using Lemma \ref{lem:Theta_est}, 
	\begin{align*}
	\max_{\ell \in [n]}\Big\{\abs{\mathfrak{F}_{t,\ell}(0)}+\max_{a\in \mathbb{Z}_{\geq 0}^{[0:t]}, \abs{a}\leq 3}\pnorm{\partial^a \mathfrak{F}_{s,\ell} }{\infty}\Big\}\leq (tK\Lambda)^{c_0 t}.
	\end{align*}
	It is easy to see from Definitions \ref{def:GFOM_se_sym} and \ref{def:AMP_se_sym} that $b_{s,k}^{\mathfrak{F}_t}=\mathfrak{b}_{s,k}^{(t)}$. Now with $w^{([0:t])}\equiv\Theta_t(\mathfrak{z}^{([0:t])})$, the first step in Definition \ref{def:GFOM_se_sym} entails that
	\begin{align*}
	w^{(t)} &\equiv  \mathfrak{z}^{(t)}+ \sum_{s \in [1:t-1]} \mathfrak{b}_s^{(t)}\circ \mathsf{F}_{s}\big(\Theta_{s-1}(\mathfrak{z}^{([0:s-1])})\big)+ \mathsf{G}_t(w^{([0:t-1])}).
	\end{align*}
	Further using the AMP iterate in (\ref{def:AMP_sym}), we have 
	\begin{align*}
	w^{(t)}& = \mathfrak{z}^{(t)}+ \sum_{s \in [t-1]} b_s^{\mathfrak{F}_t}\circ \mathfrak{F}_{s}(\mathfrak{z}^{([0:s-1])})+ \mathsf{G}_t(w^{([0:t-1])})\\
	& = A \mathfrak{F}_t( \mathfrak{z}^{([0:t-1])} )+  \mathsf{G}_t(w^{([0:t-1])}) = A \mathsf{F}_t(w^{([0:t-1])})+\mathsf{G}_t(w^{([0:t-1])}).
	\end{align*}
	Comparing the above display to the GFOM iterate (\ref{def:GFOM_sym}) and noting that $w^{(0)}=\Theta_0(\mathfrak{z}^{(0)})=\mathfrak{z}^{(0)}=z^{(0)}$, we then have $w^{(t)}=z^{(t)}$, completing the proof in view of Theorem \ref{thm:AMP_entrywise_dist_sym}.
\end{proof}

\section{Proofs for Section \ref{section:main_results_asym}}

\subsection{Proofs of Theorems \ref{thm:universality_asym} and \ref{thm:universality_asym_avg}}\label{subsection:proof_GFOM_universality_asym}
The asymmetric version in Theorems \ref{thm:universality_asym} and \ref{thm:universality_asym_avg} can be reduced to the symmetric case in Theorems \ref{thm:universality} and \ref{thm:universality_avg}, in a similar spirit to the arguments in \cite[Section 6]{berthier2020state} designed for the AMP algorithm. We spell out the details below.

\begin{proof}[Proofs of Theorems \ref{thm:universality_asym} and \ref{thm:universality_asym_avg}]
	Let $z^{(-1)}\equiv \binom{u^{(0)}}{0_n}, z^{(0)}\equiv \binom{0_m}{v^{(0)}}\in \R^{m+n}$, and for $t\in \mathbb{N}$, let $
	z^{(2t-1)}\equiv \binom{u^{(t)}}{0_n}$, $z^{(2t)}\equiv \binom{0_m}{v^{(t)}}\in \R^{m+n}$. Now we let $\mathsf{F}_{2t-1},\mathsf{G}_{2t-1}: \R^{(m+n)\times (2t)}\to \R^{m+n}$ and  $\mathsf{F}_{2t}, \mathsf{G}_{2t}: \R^{(m+n)\times (2t+1)}\to \R^{m+n}$ be defined via
	\begin{itemize}
		\item $\mathsf{F}_{2t-1}(z^{[-1:(2t-2)]})\equiv \binom{0_m}{ \mathsf{F}_t^{\langle 1\rangle}(v^{([0:t-1])}) }$, $\mathsf{G}_{2t-1}(z^{[-1:(2t-2)]})\equiv \binom{\mathsf{G}_{t}^{\langle 1\rangle}(u^{([0:t-1])})}{0_n}$;
		\item $\mathsf{F}_{2t}(z^{[-1:(2t-1)]})\equiv \binom{\mathsf{G}_{t}^{\langle 2\rangle}(u^{([0:t])})}{0_n}$, $\mathsf{G}_{2t}(z^{[-1:(2t-1)]})\equiv \binom{0_m}{ \mathsf{F}_t^{\langle 2\rangle}(v^{([0:t-1])}) }$.
	\end{itemize}
	Finally let 
	\begin{align*}
	\overline{A}_0&\equiv 
	\begin{pmatrix}
	0_{m\times m} & A_0\\
	A_0^\top & 0_{n\times n}
	\end{pmatrix}
	,\quad \overline{A}\equiv \frac{A_0}{\sqrt{m+n}}
	=\begin{pmatrix}
	0_{m\times m} & A\\
	A^\top & 0_{n\times n}
	\end{pmatrix}
	\in \R^{(m+n)\times (m+n)}. 
	\end{align*}
	Using these notation, we may rewrite the recursion (\ref{def:GFOM_asym}) as 
	\begin{align*}
	z^{(2t-1)}& = \overline{A} \mathsf{F}_{2t-1}(z^{[-1:(2t-2)]})+ \mathsf{G}_{2t-1}(z^{[-1:(2t-2)]}),\\
	z^{(2t)}& = \overline{A} \mathsf{F}_{2t}(z^{[-1:(2t-1)]})+ \mathsf{G}_{2t}(z^{[-1:(2t-1)]}). 
	\end{align*}
	Now we may apply Theorems \ref{thm:universality} and \ref{thm:universality_avg} to conclude. 
\end{proof}

\subsection{Proof of Theorem \ref{thm:GFOM_se_asym}}\label{subsection:proof_GFOM_se_asym}

We consider the following asymmetric version of the AMP iterate, which is initialized with $(\mathfrak{u}^{(0)},\mathfrak{v}^{(0)})\in \R^m\times \R^n$, and subsequently updated for $t=1,2,\ldots$ according to 
\begin{align}\label{def:AMP_asym}
\begin{cases}
\mathfrak{u}^{(t)} = A \mathfrak{F}_{t}(\mathfrak{v}^{([0:t-1])})-\sum_{s \in [1:t-1]}b_{s}^{\mathfrak{F}_{t}}\circ \mathfrak{G}_{s}(\mathfrak{u}^{([0:s])}) \in \R^m,\\
\mathfrak{v}^{(t)}= A^\top \mathfrak{G}_{t}(\mathfrak{u}^{([0:t])})-\sum_{s \in [1:t]} b_{s}^{\mathfrak{G}_{t}}\circ\mathfrak{F}_{s}(\mathfrak{v}^{([0:s-1])})\in \R^n.
\end{cases}
\end{align}
Here the correction vectors $\{b_{s}^{\mathfrak{F}_t}\}_{s \in [1:t-1]}\subset \R^m$ and $\{b_{s}^{\mathfrak{G}_{t}}\}_{s \in [1:t]}\subset \R^n$, in conjunction with two centered Gaussian matrices $\mathscr{U}^{[1:\infty)} \in \R^{m\times [1:\infty)}$, $\mathscr{V}^{[1:\infty)} \in \R^{n\times [1:\infty)}$ with independent rows (where we denote $\mathscr{U}^{(0)}\equiv \mathfrak{u}^{(0)}$, $\mathscr{V}^{(0)}\equiv \mathfrak{v}^{(0)}$), are defined recursively as follows. 

\begin{definition}\label{def:AMP_se_asym}
	For $t=1,2,\ldots$, with $ \E^{(0)} \equiv \E\big[\cdot | (\mathfrak{u}^{(0)},\mathfrak{v}^{(0)})\big]$, execute the following steps:
	\begin{enumerate}
		\item For $s \in [1:t-1]$, let $b_{s}^{\mathfrak{F}_{t}}\in \R^m$ be defined by
		\begin{align*}
		b_{s,k}^{\mathfrak{F}_{t}}\equiv \sum_{\ell \in [n]} \E A_{k\ell}^2\cdot \E^{(0)} \partial_{ \mathscr{V}_\ell^{(s)} } \mathfrak{F}_{t,\ell}\big(\mathscr{V}_\ell^{[0:t-1]}\big),\quad i \in [m].
		\end{align*}
		\item Let the Gaussian law of $\mathscr{U}^{(t)}$ be determined via the following correlation specification: for $s \in [1:t]$ and $k \in [m]$,
		\begin{align*}
		\cov\big(\mathscr{U}^{(t)}_k, \mathscr{U}^{(s)}_k\big)\equiv \sum_{\ell \in [n]} \E A_{k\ell}^2 \cdot \E^{(0)} \prod_{\tau \in \{t,s\}} \mathfrak{F}_{\tau,\ell} \big(\mathscr{V}^{([0:\tau-1])}_\ell\big).
		\end{align*}
		\item For $s \in [1:t]$, let $b_{s}^{\mathfrak{G}_{t}}\in \R^n$ be defined by
		\begin{align*}
		b_{s,\ell}^{\mathfrak{G}_{t}}\equiv \sum_{k \in [m]} \E A_{k\ell}^2\cdot \E^{(0)} \partial_{ \mathscr{U}_k^{(s)} } \mathfrak{G}_{t,k}\big(\mathscr{U}_k^{[0:t]}\big),\quad \ell \in [n].
		\end{align*}
		\item Let the Gaussian law of $\mathscr{V}^{(t)}$ be determined via the following correlation specification: for $s \in [1:t]$ and $\ell \in [n]$,
		\begin{align*}
		\cov\big(\mathscr{V}^{(t)}_\ell, \mathscr{V}^{(s)}_\ell\big)\equiv \sum_{k \in [m]} \E A_{k\ell}^2 \cdot \E^{(0)}\prod_{\tau \in \{t,s\}} \mathfrak{G}_{\tau,k} \big(\mathscr{U}^{([0:\tau])}_k\big).
		\end{align*}
	\end{enumerate}
\end{definition}
The entrywise distributions of $\{(\mathfrak{u}^{(t)},\mathfrak{v}^{(t)})\}$ are given by the following variant of \cite[Theorem 2.7]{bao2025leave}.

\begin{theorem}\label{thm:AMP_entrywise_dist_asym}
	Suppose the following hold.
	\begin{enumerate}
		\item $A\equiv A_0/\sqrt{n}$, where the entries of $A_0 \in \R^{m\times n}$ are i.i.d. random $\mathcal{N}(0,1)$ variables such that $\max_{i \in [m], j \in [n]} n^{-1}\E A_{ij}^2\leq K$ holds for some $K\geq 2$.
		\item $\mathfrak{F}_{s,\ell} \in C^3(\R^{[0:s-1]})$, $\mathfrak{G}_{s,k} \in C^2(\R^{[0:s]})$ for all $s \in [1:t], k \in [m], \ell \in [n]$, and there exists some $\Lambda\geq 2$ such that
		\begin{align*}
		&\max_{s \in [1:t]}\max_{k \in [m], \ell \in [n]}\Big\{\abs{\mathfrak{F}_{s,\ell}(0)}+\abs{\mathfrak{G}_{s,k}(0)}\\
		&\qquad +\max_{a\in \mathbb{Z}_{\geq 0}^{[0:s-1]}, b \in \mathbb{Z}_{\geq 0}^{[0:s]}, \abs{a}\vee \abs{b} \leq 3} \big(\pnorm{\partial^a \mathfrak{F}_{s,\ell} }{\infty}+\pnorm{\partial^a \mathfrak{G}_{s,k} }{\infty} \big)\Big\}\leq \Lambda.
		\end{align*}
	\end{enumerate} 
	Fix any $\Lambda$-pseudo-Lipschitz function $\psi:\R^{t} \to \R$ of order $\mathfrak{p}\in \N$. Then there exist some universal constant $c_0>1$, and another constant $c_{\mathfrak{p}}>1$ depending on $\mathfrak{p}$ only, such that with $ \E^{(0)} \equiv \E\big[\cdot | (\mathfrak{u}^{(0)},\mathfrak{v}^{(0)})\big]$,
	\begin{align*}
	&\max_{k \in [m]}\bigabs{\E^{(0)} \psi\big( \mathfrak{u}^{([t])}_k\big)-\E^{(0)} \psi \big(\mathscr{U}^{([t])}_k\big) } + \max_{\ell \in [n]}\bigabs{\E^{(0)}\psi\big( \mathfrak{v}^{([t])}_\ell\big)-\E^{(0)} \psi \big(\mathscr{V}^{([t])}_\ell\big) } \\
	&  \leq \big(K\Lambda\log n\cdot (1+\pnorm{\mathfrak{u}^{(0)}}{\infty}+\pnorm{\mathfrak{v}^{(0)}}{\infty})\big)^{c_{\mathfrak{p}} t^3}\cdot n^{-1/c_0^{t}}.
	\end{align*}
\end{theorem}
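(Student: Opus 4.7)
The plan is to reduce Theorem \ref{thm:AMP_entrywise_dist_asym} to the symmetric entrywise AMP result, Theorem \ref{thm:AMP_entrywise_dist_sym}, via the standard dilation trick, in the same spirit as the reduction in Section \ref{subsection:proof_GFOM_universality_asym} for the GFOM universality theorems (cf.\ also \cite[Section 6]{berthier2020state} in the averaged AMP setting). Concretely, I would form the $(m+n) \times (m+n)$ symmetric dilation
\begin{align*}
\overline{A} \equiv \frac{1}{\sqrt{m+n}}\begin{pmatrix} 0_{m \times m} & A_0' \\ (A_0')^\top & 0_{n \times n}\end{pmatrix},
\end{align*}
where $A_0'$ is a rescaling of $A_0$ chosen so that the off-diagonal blocks of $\overline{A}$ coincide with $A$ (this merely inflates $K$ by a factor depending on $m/n$, which is $\Theta(1)$ by hypothesis), set $z^{(2t-1)} \equiv \binom{\mathfrak{u}^{(t)}}{0_n}$ and $z^{(2t)} \equiv \binom{0_m}{\mathfrak{v}^{(t)}}$, and stack the non-linearities row-separably as $\mathfrak{F}^{\mathrm{sym}}_{2t-1}(z^{([0:2t-2])}) \equiv \binom{0_m}{\mathfrak{F}_t(\mathfrak{v}^{([0:t-1])})}$ and $\mathfrak{F}^{\mathrm{sym}}_{2t}(z^{([0:2t-1])}) \equiv \binom{\mathfrak{G}_t(\mathfrak{u}^{([0:t])})}{0_n}$. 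Because $\overline{A}$ exchanges the $u$- and $v$-blocks, the symmetric recursion (\ref{def:AMP_sym}) applied to $(\overline{A}, \mathfrak{F}^{\mathrm{sym}}_{\cdot})$ precisely reproduces the asymmetric recursion (\ref{def:AMP_asym}) at the appropriate parity of the iteration count.

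The next step is to verify that Definition \ref{def:AMP_se_sym} applied to $(\overline{A}, \mathfrak{F}^{\mathrm{sym}}_{\cdot})$ produces the asymmetric state evolution in Definition \ref{def:AMP_se_asym}, block by block. The zero diagonal blocks of $\overline{A}$ automatically kill all Onsager coefficients $b^{\mathfrak{F}^{\mathrm{sym}}_\tau}_s$ of ``incompatible'' parity: only $b^{\mathfrak{F}^{\mathrm{sym}}_{2t-1}}_{2r}$ restricted to the $u$-block survives, and it reduces to $b^{\mathfrak{F}_t}_r$; symmetrically, $b^{\mathfrak{F}^{\mathrm{sym}}_{2t}}_{2r-1}$ restricted to the $v$-block reduces to $b^{\mathfrak{G}_t}_r$. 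The same projection argument identifies the non-trivial blocks of the symmetric Gaussian covariance with the $\mathscr{U}$- and $\mathscr{V}$-covariances, and forces the cross covariance $\cov(\mathscr{U}^{(t)}, \mathscr{V}^{(s)})$ to vanish (since any non-trivial cross covariance would require a term with simultaneous support in both blocks, which cannot occur). Applying Theorem \ref{thm:AMP_entrywise_dist_sym} to the dilated symmetric AMP with dimension $N = m+n \asymp n$ and iteration count $2t$ then yields the desired entrywise estimates for both $\mathfrak{u}^{([t])}_k$ and $\mathfrak{v}^{([t])}_\ell$, with the doubling $t \mapsto 2t$ absorbed into $c_0$ and $c_\mathfrak{p}$.

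The main obstacle will be verifying that Theorem \ref{thm:AMP_entrywise_dist_sym}, together with the underlying \cite{bao2025leave} machinery that it invokes, remains effective under the block-sparse variance profile of $\overline{A}$: the hypothesis $\max_{ij} \E \overline{A}_{ij}^2 \leq K/N$ holds trivially, but one needs the entrywise leave-one-out analysis of \cite{bao2025leave} to go through when many variances vanish identically. Since the proof there relies only on upper variance bounds rather than on positivity, this robustness should be a direct (but non-trivial) consequence of inspecting the arguments in \cite{bao2025leave}. Once this point is resolved, the remaining verifications are routine bookkeeping: the regularity constant $\Lambda$ is preserved under the row-separable stacking (as $\mathfrak{F}^{\mathrm{sym}}$ has diagonal block structure with no cross-block derivatives), the initialization norms compose additively across blocks via $\pnorm{z^{(0)}}{\infty} \leq \pnorm{\mathfrak{u}^{(0)}}{\infty} + \pnorm{\mathfrak{v}^{(0)}}{\infty}$, and the polynomial-in-$t$ exponents transfer from Theorem \ref{thm:AMP_entrywise_dist_sym} to Theorem \ref{thm:AMP_entrywise_dist_asym} after a universal rescaling of the constants.
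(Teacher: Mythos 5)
Your route is genuinely different from the paper's. The paper does not prove Theorem \ref{thm:AMP_entrywise_dist_asym} by symmetrization at all: it treats it as a direct variant of the \emph{asymmetric} entrywise AMP result \cite[Theorem 2.7]{bao2025leave}, extended to history-dependent non-linearities $\mathfrak{F}_t(\mathfrak{v}^{([0:t-1])})$, $\mathfrak{G}_t(\mathfrak{u}^{([0:t])})$ via the leave-one-out representation \cite[Theorem 2.5]{bao2025leave} — exactly parallel to how the symmetric Theorem \ref{thm:AMP_entrywise_dist_sym} is obtained from \cite[Theorem 2.3]{bao2025leave}. Indeed, the paper explicitly remarks (after Theorem \ref{thm:GFOM_se_asym_avg}) that for the state-evolution results it deliberately takes the direct asymmetric approach rather than the reduction scheme used for the universality theorems, to avoid notational complications and to connect to an explicitly constructed asymmetric AMP. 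Your dilation argument is the alternative the paper declines to pursue. Its formal bookkeeping is sound: the parity/block structure does kill the incompatible Onsager terms and forces the mixed-parity covariances $\cov(\mathscr{Z}_k^{(2t-1)},\mathscr{Z}_k^{(2s)})$ to vanish, and extracting $\psi(\mathfrak{u}_k^{([t])})$ from $\psi(\mathfrak{z}_k^{([2t])})$ is a harmless change of test function. What your route buys is that it needs only the symmetric input; what it costs is precisely the point you defer: the dilated model has a block-degenerate variance profile, and the induced Gaussians $\mathscr{Z}^{(2t-1)}$, $\mathscr{Z}^{(2t)}$ are identically zero on half their coordinates, so every odd--even covariance entry degenerates. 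This collides head-on with the non-degeneracy condition in \cite[Eqn.\ (2.14)]{bao2025leave} that the paper itself flags (and argues can be lifted under $C^3$ smoothness). Until that robustness of the entrywise leave-one-out analysis under identically vanishing variances is actually verified — which is the entire substance of the reduction, not a routine check — your argument is conditional in a way the paper's citation of the asymmetric result is not. You should also state explicitly that you need $m\asymp n$ (as in the applications, e.g.\ Theorem \ref{thm:GFOM_se_asym}) so that the $(m+n)^{-1/2}$ and $n^{-1/2}$ normalizations differ only by a $\Theta(1)$ factor absorbed into $K$.
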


Similar to the symmetric case, the above theorem is an extension of \cite[Theorem 2.7]{bao2025leave} by using the version of the leave-one-out representation in \cite[Theorem 2.5]{bao2025leave} for the general form in (\ref{def:AMP_asym}).

\begin{proof}[Proof of Theorem \ref{thm:GFOM_se_asym}]
	We only prove the Gaussian case, as universality properties follow from Theorem \ref{thm:universality_asym}. 
	
	We shall now construct the underlying AMP algorithm. Let the initialization be given by $\mathfrak{u}^{(0)}\equiv u^{(0)}$ and $\mathfrak{v}^{(0)}\equiv v^{(0)}$. For $t\geq 1$, let $\mathfrak{F}_t, \mathfrak{G}_t$ be defined as follows: for $\mathfrak{v}^{([0:t-1])}\in \R^{n\times [0:t-1]}$ and $\mathfrak{u}^{([0:t])} \in \R^{m \times [0:t]}$, let $\mathfrak{F}_t(\mathfrak{v}^{([0:t-1])})\equiv \mathsf{F}_t^{\langle 1\rangle}\big(\Xi_{t-1}(\mathfrak{v}^{([0:t-1])})\big)$ and $\mathfrak{G}_t(\mathfrak{u}^{([0:t])})\equiv \mathsf{G}_t^{\langle 2\rangle}\big(\Phi_t(\mathfrak{u}^{([0:t])})\big)$. We may now check from Definitions \ref{def:GFOM_se_asym} and \ref{def:AMP_se_asym} that $b_{s}^{\mathfrak{F}_{t}}= \mathfrak{f}_s^{(t-1)}$ and $b_{s}^{\mathfrak{G}_{t}}=\mathfrak{g}_s^{(t)}$. Now with $x^{([0:t])}\equiv \Phi_{t}(\mathfrak{u}^{([0:t])})$ and $y^{([0:t])}\equiv \Xi_{t}(\mathfrak{v}^{([0:t])})$, (1) and (3) in Definition \ref{def:GFOM_se_asym} reduce to 
	\begin{align*}
	\begin{cases}
	x^{(t)} = \mathfrak{u}^{(t)}+\sum_{s \in [1:t-1]} \mathfrak{f}_{s}^{(t-1) }\circ \mathsf{G}_{s}^{\langle 2\rangle}\big(\Phi_{s}(\mathfrak{u}^{([0:s])}) \big)+\mathsf{G}_{t}^{\langle 1\rangle}(x^{([0:t-1])}),\\
	y^{(t)} = \mathfrak{v}^{(t)}+\sum_{s \in [1:t]} \mathfrak{g}_{s}^{(t)}\circ \mathsf{F}_{s}^{\langle 1\rangle}\big(\Xi_{s-1}(\mathfrak{v}^{([0:s-1])}) \big)+\mathsf{F}_{t}^{\langle 2\rangle}(y^{([0:t-1])}).
	\end{cases}
	\end{align*}
	Using the asymmetric AMP iterate in (\ref{def:AMP_asym}), we have 
	\begin{align*}
	x^{(t)} &= \mathfrak{u}^{(t)}+\sum\nolimits_{s \in [1:t-1]} b_{s}^{\mathfrak{F}_{t}}\circ \mathfrak{G}_s(\mathfrak{u}^{([0:s])})+\mathsf{G}_{t}^{\langle 1\rangle}(x^{([0:t-1])})\\
	&= A \mathfrak{F}_{t}(\mathfrak{v}^{([0:t-1])})+ \mathsf{G}_{t}^{\langle 1\rangle}(x^{([0:t-1])}) \\
	&= A\mathsf{F}_t^{\langle 1\rangle}\big(\Xi_{t-1}(\mathfrak{v}^{([0:t-1])})\big)+ \mathsf{G}_{t}^{\langle 1\rangle}(x^{([0:t-1])}) \\
	&= A \mathsf{F}_t^{\langle 1\rangle}(y^{([0:t-1])})+ \mathsf{G}_{t}^{\langle 1\rangle}(x^{([0:t-1])}).
	\end{align*}
	Similarly, 
	\begin{align*}
	y^{(t)}&=\mathfrak{v}^{(t)}+\sum\nolimits_{s \in [1:t]} b_{s}^{\mathfrak{G}_{t}}\circ \mathfrak{F}_s(\mathfrak{v}^{([0:s-1])})+\mathsf{F}_{t}^{\langle 2\rangle}(y^{([0:t-1])})\\
	&=A^\top \mathfrak{G}_{t}(\mathfrak{u}^{([0:t])})+\mathsf{F}_{t}^{\langle 2\rangle}(y^{([0:t-1])})\\
	& = A^\top \mathsf{G}_t^{\langle 2\rangle}\big(\Phi_t(\mathfrak{u}^{([0:t])})\big)+\mathsf{F}_{t}^{\langle 2\rangle}(y^{([0:t-1])})\\
	&= A^\top \mathsf{G}_t^{\langle 2\rangle}(x^{([0:t])})+ \mathsf{F}_{t}^{\langle 2\rangle}(y^{([0:t-1])}).
	\end{align*}
	Comparing the above two displays with the asymmetric GFOM iterate (\ref{def:GFOM_asym}) and noting that $x^{(0)}=\Phi_0(\mathfrak{u}^{(0)})=\mathfrak{u}^{(0)}=u^{(0)}$, $y^{(0)}=\Xi_0(\mathfrak{v}^{(0)})=\mathfrak{v}^{(0)}=v^{(0)}$, we have $x^{(t)}=u^{(t)},y^{(t)}=v^{(t)}$. The claim follows by checking the regularity conditions in Theorem \ref{thm:AMP_entrywise_dist_asym} via similar estimates as in Lemma \ref{lem:Theta_est} for the symmetric case. 
\end{proof}

\section{Proofs for Section \ref{section:ERM}}\label{section:proof_ERM}

\subsection{Proof of Theorem \ref{thm:avg_universality_ERM}}\label{subsection:proof_thm_avg_universality_ERM}

    For notational simplicity we shall work with $\psi_k\equiv \psi$.
	\noindent (1). 
	Combining  (\ref{ineq:avg_universality_ERM_1})-(\ref{ineq:avg_universality_ERM_2}) and using Lemma \ref{lem:prox_Lip}, 
	\begin{align}\label{ineq:avg_universality_ERM_2_1}
	\pnorm{ \mu^{(t)}-\hat{\mu} }{}&\leq (1+\eta/K)^{-1}\pnorm{\big(I-\eta A^\top D_1^{(t)} A\big)(\mu^{(t-1)}-\hat{\mu}) }{}\nonumber\\
	&\leq \ldots \leq (1+\eta/K)^{-t}\cdot  \prod\nolimits_{s \in [t]} \bigpnorm{I-\eta A^\top D_1^{(s)} A}{\op}\cdot \pnorm{ \hat{\mu} }{}.
	\end{align}
	Here $\big\{D_1^{(s)}\in \R^{m\times m}: s \in [t]\big\}$ are (random) diagonal matrices with $0\leq D_{1,ii}^{(s)}\leq K$ for all $s \in [t]$. As $0\leq A^\top D_1^{(s)} A \leq K\cdot A^\top A$ (in the sense of matrix ordering of p.s.d. matrices), by exponentially high probability boundedness of $\pnorm{A}{\op}$, for $\eta\leq 1/C_1$, on an event $E_1$ with $\Prob^\xi$-probability at least $1-C_1 te^{-n/C_1}$, $\max_{s \in [t]}\pnorm{I-\eta A^\top D_1^{(s)} A}{\op}\leq 1$. Consequently, on the event $E_1$ we have
	\begin{align}\label{ineq:avg_universality_ERM_3}
	\pnorm{ \mu^{(t)}-\hat{\mu} }{}\leq (1+\eta/K)^{-t}\cdot  \pnorm{\hat{\mu}}{}.
	\end{align}
	On the other hand, using (\ref{ineq:avg_universality_ERM_1}) and $\mathsf{L}'(Y-A\hat{\mu})=\mathsf{L}'(Y)-D_2 A\hat{\mu}$ for some (random) diagonal matrix $D_2 \in \R^{m\times m}$ with $0\leq D_{2,ii}\leq K$, 
	\begin{align*}
	\pnorm{\hat{\mu}}{}&\leq (1+\eta/K)^{-1}\pnorm{ (I-\eta A^\top D_2 A) \hat{\mu} }{}+ \eta\cdot \pnorm{A^\top \mathsf{L}'(Y)}{}.
	\end{align*}
	So using the same argument as above, on an event $E_2$ with $\Prob^\xi$-probability at least $1-C_2 e^{-n/C_2}$, for $\eta= 1/C_2$ we have 
	\begin{align}\label{ineq:avg_universality_ERM_4}
	\pnorm{\hat{\mu}}{}\leq  C_2\cdot \big(\pnorm{\mu_0}{}\vee \pnorm{\mathsf{L}'(\xi)}{}\big).
	\end{align}
	Combining (\ref{ineq:avg_universality_ERM_3}) and (\ref{ineq:avg_universality_ERM_4}), on $E_1\cap E_2$, for $\eta\leq 1/C_3$, 
	\begin{align}\label{ineq:avg_universality_ERM_5}
	\pnorm{ \mu^{(t)}-\hat{\mu} }{}\leq C_3 (1+\eta/K)^{-t}\cdot \big(\pnorm{\mu_0}{}\vee \pnorm{\mathsf{L}'(\xi)}{}\big).
	\end{align}
	Now we may apply Theorem \ref{thm:universality_asym_avg} by reformulating the proximal gradient descent algorithm (\ref{ineq:avg_universality_ERM_2}) in the form of (\ref{def:GFOM_asym}) via the following identification: Let $\mu^{(t)}\equiv \prox_{\eta \mathsf{f}_n}(v^{(t)})$, where
	\begin{align*}
	\begin{cases}
	u^{(t)}= A\big(\prox_{\eta \mathsf{f}_n}(v^{(t-1)})-\mu_0\big)\in \R^m,\\
	v^{(t)}= A^\top \mathsf{G}^\xi(u^{(t)})+\prox_{\eta \mathsf{f}_n}(v^{(t-1)})\in \R^n,
	\end{cases}
	\end{align*}
	with $\mathsf{G}^\xi(u)\equiv \eta\mathsf{L}'(\xi-u)$, and the initialization $u^{(0)}=0_m,v^{(0)}=0_n$. Applying Theorem \ref{thm:universality_asym_avg} conditionally on $\xi$, with $E\equiv E_1\cap E_2$, we obtain
	\begin{align*}
	&\E^\xi\biggabs{\frac{1}{n}\sum_{j \in [n]} \psi\big(\hat{\mu}_j(A)\big)-\frac{1}{n}\sum_{j \in [n]} \psi\big(\hat{\mu}_j(B)\big) }^q \\
	&\leq C\cdot\bigg\{ \E^\xi\biggabs{\frac{1}{n}\sum_{j \in [n]} \psi\big(\mu_j^{(t)}(A)\big)-\frac{1}{n}\sum_{j \in [n]} \psi\big(\mu_j^{(t)}(B)\big) }^q\\
	&\qquad  + \max_{M\in \{A,B\}} \Big(\E^\xi \bigabs{n^{-1/2}\pnorm{\mu^{(t)}(M)-\hat{\mu}(M)  }{}}^q\bm{1}_E+ \E^{\xi,1/2} (n^{-1/2} \pnorm{\hat{\mu}(M)}{} )^{2q}\cdot \Prob^{\xi,1/2}(E^c)\Big) \bigg\}\\
	&\leq \Big(\max_{i \in [m]} \abs{\mathsf{L}'(\xi_i)}\cdot\log n\Big)^{C t^3} n^{-1/C t^3}+C\big(1-1/C\big)^{t} \big(1+n^{-1/2}\pnorm{\mathsf{L}'(\xi)}{}\big) + C e^{-n/C}\cdot L_{\xi}^q.
	\end{align*}
	Here in the last inequality, we use the following apriori estimate on $\pnorm{\hat{\mu}}{}$: As $\hat{\mu}$ minimizes the cost function, we have
	\begin{align*}
	K^{-1}\pnorm{\hat{\mu}}{}^2\leq \sum_{j \in [n]} \mathsf{f}(\hat{\mu}_j)\leq \sum_{i \in [m]} \mathsf{L}(Y_i-A_i^\top \hat{\mu})+ \sum_{j \in [n]} \mathsf{f}(\hat{\mu}_j)\leq \sum_{i \in [m]} \mathsf{L}(Y_i)\leq C(1+\pnorm{Y}{}^2).
	\end{align*}
	Finally choosing $t\equiv (\log n)^{1/8}$ and taking expectation with respect to $\xi$, we may conclude the proof for general $\mathsf{L}$'s. 
	
	\noindent (2). For the squared loss $\mathsf{L}(x)=x^2/2$, the estimate (\ref{ineq:avg_universality_ERM_2_1}) can be replaced by
	\begin{align*}
	\pnorm{ \mu^{(t)}-\hat{\mu} }{}&\leq \pnorm{\big(I-\eta A^\top A\big)(\mu^{(t-1)}-\hat{\mu}) }{}.
	\end{align*}
	By \cite[Theorems 2.4-(ii) and 2.9-(i)]{alt2017local}, with probability at least $1-C_4 n^{-100}$, $1/C_4\leq \lambda_{\min}(A^\top A)\leq\lambda_{\max}(A^\top A)\leq C_4$ for some $C_4>1$. So by choosing $\eta$ small, we have $\pnorm{I-\eta A^\top A}{\op}\leq 1-1/C_5$ for some $C_5>1$, which in turn implies an estimate of the type (\ref{ineq:avg_universality_ERM_3}). An estimate of the type (\ref{ineq:avg_universality_ERM_4}) can be obtained similarly. Now we may repeat the arguments in (1) to conclude.
	
	\noindent (3). Let $\mathsf{L}(x)=x^2/2$ and $\mathsf{f}(x)\equiv \lambda \abs{x}$. Using that $\prox_{\eta \lambda \pnorm{\cdot}{1}}(z)=x\Leftrightarrow x-z+\eta \lambda \cdot y =0$ for some $y \in \partial \pnorm{\cdot}{1} (x)$, (\ref{ineq:avg_universality_ERM_2}) can be rewritten as 
	\begin{align*}
	\mu^{(t)}- \Big(\mu^{(t-1)}+\eta\cdot  A^\top (Y-A \mu^{(t-1)})\Big)=-\eta \lambda\cdot y^{(t)},
	\end{align*}
	where $y^{(t)}$ is a sub-gradient of $\pnorm{\cdot}{1}$ evaluated at $\mu^{(t)}$.  Equivalently, with $w^{(t)}\equiv \mu^{(t)}-\mu_0$ and $x\equiv A^\top \xi$,
	\begin{align*}
	-\eta \lambda\cdot y^{(t)} + \eta x = \mu^{(t)}-\mu^{(t-1)}+\eta A^\top Aw^{(t-1)}.
	\end{align*}
	Let $S_+^{(t)}\equiv \{i \in [n]: \mu_i^{(t)}> 0\}$. Taking square on both sides of the above display and summing over $i \in S_+^{(t)}$, with $x_i\equiv (A^\top \xi)_i$,
	\begin{align}\label{ineq:avg_universality_lasso_1}
	\eta^2 \lambda^2\cdot \abs{S_+^{(t)}}- 2\eta^2 \lambda \sum\nolimits_{i \in S_+^{(t)}} x_i  &\leq 2 \pnorm{\mu^{(t)}-\mu^{(t-1)}}{}^2+ 2\eta^2 \pnorm{A}{\op}^2 \pnorm{Aw^{(t-1)}}{}^2.
	\end{align}
	For the first term on the RHS of (\ref{ineq:avg_universality_lasso_1}), using the definition of the gradient descent and non-expansiveness of the proximal operator, on an event with probability at least $1-C e^{-n/C}$, for $\eta\leq 1/C$, $\pnorm{(I-\eta A^\top A)}{\op}\leq 1$, and therefore 
	\begin{align}\label{ineq:avg_universality_lasso_2}
	\pnorm{\mu^{(t)}-\mu^{(t-1)}}{}\leq \pnorm{ (I-\eta A^\top A) (\mu^{(t-1)}-\mu^{(t-2)}) }{}\leq \cdots \leq \pnorm{\mu^{(1)}}{}\leq \eta \pnorm{A^\top Y}{}.
	\end{align}
	For the second term on the RHS of (\ref{ineq:avg_universality_lasso_1}), note that a variant of \cite[Lemma 6.3]{han2023universality} applies to conclude that for small enough $\epsilon_0\in (0,1)$ and $\lambda\geq C_{K,\epsilon_0}$, with probability at least $1-C e^{-n/C}$, we have $\pnorm{\hat{\mu}}{0}\leq \epsilon_0 n$. Moreover, by a slight modification of \cite[Lemma B.2]{han2023universality}, there exists $\delta_0=\delta_0(K) \in (0,1/2)$ such that for $s\leq \delta_0 n$, with probability at least $1-C e^{-n/C}$, the sparse eigenvalue $\phi_-(s)\equiv \inf_{v \in \R^n: \pnorm{v}{0}\leq s} v^\top (A^\top A) v/\pnorm{v}{}^2\geq 1/K$. Using the deterministic estimate part in \cite[Eqn. (6.8)]{han2023universality},  we have $\pnorm{\hat{\mu}}{}^2\leq \phi_-^{-1}(\delta_0/2)\pnorm{A \hat{\mu}}{}^2\lesssim \phi_-^{-1}(\delta_0/2)\big(\pnorm{A \mu_0}{}^2+\pnorm{\xi}{}^2\big)$. So for $\lambda\geq C_K$, both $\pnorm{\hat{\mu}}{0}\leq n/C$ and $\pnorm{\hat{\mu}}{}\leq Cn$ hold with probability at least $1-C e^{-n/C}$. Now with $\hat{w}\equiv \hat{\mu}-\mu_0$, with the prescribed probability,	
	 \begin{align}\label{ineq:avg_universality_lasso_2_1}
	 \pnorm{Aw^{(t-1)}}{}\leq \pnorm{A\hat{w}}{}+ \pnorm{A}{\op} \pnorm{ \mu^{(t-1)}-\hat{\mu} }{}\leq \pnorm{A\hat{w}}{}+ \pnorm{A}{\op} \pnorm{\hat{\mu} }{}\leq Cn.
	 \end{align}	
	Combining the above displays (\ref{ineq:avg_universality_lasso_1})-(\ref{ineq:avg_universality_lasso_2_1}), on an event $E_3$ with $\Prob(E_3^c)\leq C e^{-n/C}$, 
	\begin{align*}
	\lambda^2\abs{S_+^{(t)}}\leq  2\lambda\cdot   \Big|\sum\nolimits_{i \in S_+^{(t)}} x_i \Big|+C n . 
	\end{align*}
	From here, using the same arguments as those below \cite[Eqn. (6.7)]{han2023universality}, for any $\delta \in (0,1/2)$, if $\lambda \geq C_{K,\delta}$, we have $
	\Prob\big(\{\abs{S_+^{(t)}}>\delta n \}\cap E_3  \big)\leq C e^{-\delta\lambda^2 n/C }$. 
	A similar argument applies to $S^{(t)}_-\equiv \{i \in [n]: \mu^{(t)}_i<0\}$, so if $\lambda \geq C_{K,\delta}$, for all $t \in \N$,
	\begin{align}\label{ineq:avg_universality_lasso_3}
	\Prob\big(\pnorm{\mu^{(t)}}{0}>\delta n \big)\leq C e^{-n(\delta\lambda^2 \wedge 1) /C}.
	\end{align}
	The above estimate (\ref{ineq:avg_universality_lasso_3}) in conjunction with \cite[Lemma 6.3]{han2023universality}, implies then with probability at least $1-Ce^{-n/C}$, $\pnorm{\mu^{(t)}}{0}\vee \pnorm{\hat{\mu}}{0}\leq (\delta_0/2) n$ uniformly for all $t \leq n$. Combining this assertion and the sparse eigenvalue estimate above, with probability at least $1-Ce^{-n/C}$,  we have $\pnorm{(I-\eta A^\top A)(\mu^{(t)}-\hat{\mu})}{}\leq (1-1/C)\pnorm{\mu^{(t)}-\hat{\mu}}{}$ for all $t\leq n$. Consequently, with the same probability and for all $t\leq n$,
	\begin{align*}
	&\pnorm{\mu^{(t)}-\hat{\mu}}{}\leq \pnorm{(I-\eta A^\top A)(\mu^{(t-1)}-\hat{\mu})}{}\\
	&\leq (1-1/C)\cdot \pnorm{\mu^{(t-1)}-\hat{\mu}}{}\leq \cdots \leq (1-1/C)^t \cdot \pnorm{\hat{\mu}}{}.
	\end{align*}
	Combining the above display and the proven high probability estimate $\pnorm{\hat{\mu}}{}\leq Cn$, with probability at least $1-C e^{-n/C}$, for all $t\leq n$, 
	\begin{align*}
	n^{-1/2}\pnorm{\mu^{(t)}-\hat{\mu}}{}\leq C(1-1/C)^t .
	\end{align*}
	Now we may repeat the arguments in (1) to conclude. \qed

\subsection{Proof of Theorem \ref{thm:universality_ls}}\label{section:proof_ERM_ls}

Fix $\ell \in [n]$. Let $A_{i;[-\ell]}\equiv (A_{ij}\bm{1}_{j\neq \ell})_{j \in [n]}$. Consider the following leave-one(-predictor)-out version of $\hat{\mu}$:
\begin{align}\label{def:ERM_loo_predictor}
\hat{\mu}_{[-\ell]} = \argmin_{\mu \in \R^n } \bigg\{\sum_{i \in [m]} \Big(A_{i;[-\ell]}^\top (\mu- \mu_0)- \xi_i\Big)^2+  \sum_{j \in [n]} \mathsf{f}(\mu_j) \bigg\},
\end{align}
which satisfies the first-order condition
\begin{align}\label{eqn:ERM_first_order_loo_predictor}
\hat{\mu}_{[-\ell]} = \prox_{\eta \mathsf{f}_n}\Big(\big(I-\eta\hat{\Sigma}_{[-\ell]}\big)\hat{\mu}_{[-\ell]}+\eta\cdot  A_{[-\ell]}^\top Y_{[-\ell]}\Big).
\end{align}
Here $\hat{\Sigma}_{[-\ell]}=A_{[-\ell]}^\top A_{[-\ell]}$. We therefore consider the following leave-one-predictor-out proximal gradient descent algorithm: for $\eta>0$, let for $t=1,2,\ldots,$
\begin{align}\label{eqn:ERM_grad_descent_loo_predictor}
{\mu}_{[-\ell]}^{(t)} \equiv \prox_{\eta \mathsf{f}_n}\Big(\big(I-\eta\hat{\Sigma}_{[-\ell]}\big)\mu^{(t-1)}_{[-\ell]}+\eta\cdot  A^\top_{[-\ell]} Y_{[-\ell]}\Big),
\end{align}
with the initialization $\mu^{(0)}_{[-\ell]}=0$.

We now establish a key estimate that asserts (i) the first-order delocalization of $\hat{\mu},\hat{\mu}_{[-\ell]}$ and $\mu^{(t)},\mu^{(t)}_{[-\ell]}$, and the second-order `almost delocalization' of their differences $\hat{\mu}-\hat{\mu}_{[-\ell]}$ and $\mu^{(t)}-\mu^{(t)}_{[-\ell]}$.

\begin{lemma}\label{lem:ERM_loo_predictor_error}
	Suppose the following hold.
	\begin{enumerate}
		\item For some $K>1$, $m/n \in [1/K,K]$, $\pnorm{\mu_0}{\infty}\leq K$, and $A=A_0/\sqrt{m}$ where $A_0$ is an $m\times n$ random matrix whose entries are independent mean $0$ variables with $\max_{i,j} \pnorm{A_{0,ij}}{\psi_2}\leq K$.
		\item The function $\mathsf{f}\geq \mathsf{f}(0)=0$, $\mathsf{f}\in C^2(\R)$ and $\pnorm{(\mathsf{f}^{(2)})^{-1}}{\infty}\leq \kappa$ for some $ \kappa>1$.
	\end{enumerate}
	Then there exists some $C_0=C_0(K,\kappa)>1$ such that with $L_\xi\equiv 1+m^{-1/2}\pnorm{\xi}{}$,
	\begin{align*}
	\sup_{\eta \in [0,1/C_0]}\Prob^\xi \Big( \big(\pnorm{ \hat{\mu} }{}\vee \pnorm{ \hat{\mu}_{[-\ell]} }{}\big)+\sup_{t \in \N} \big(\pnorm{{\mu}^{(t)} }{}\vee \pnorm{{\mu}^{(t)}_{[-\ell]} }{}\big) \geq C_0 n^{1/2}\cdot L_\xi \Big)\leq C_0 e^{-n/C_0}. 
	\end{align*}
	For any $D>0$, there exists some $C_1=C_1(K,\kappa,D)>1$ such that for any $\ell \in [n]$,
	\begin{align*}
	&\sup_{\eta \in [0,1/C_0]} \bigg\{\Prob^\xi\Big(\max_{k\neq \ell} n^{1/2} \abs{(\hat{\mu}-\hat{\mu}_{[-\ell]})_k}+ \abs{\hat{\mu}_\ell}\geq C_1 L_\xi \log n \Big)\\
	&\quad+ \sup_{t \leq C_1 \log n}\Prob^\xi\Big(\max_{k\neq \ell} n^{1/2} \abs{({\mu}^{(t)}-{\mu}_{[-\ell]}^{(t)})_k}+ \abs{{\mu}^{(t)}_\ell}\geq C_1 L_\xi \log n \Big)\bigg\} \leq  C_1 n^{-D}.
	\end{align*}
	The above estimate holds with $\hat{\mu}-\hat{\mu}_{[-\ell]}$ (resp. ${\mu}^{(t)}-{\mu}_{[-\ell]}^{(t)}$) replaced by $\hat{\Sigma}\hat{\mu}-\hat{\Sigma}_{[-\ell]}\hat{\mu}_{[-\ell]}$ (resp. $\hat{\Sigma}{\mu}^{(t)}-\hat{\Sigma}_{[-\ell]}{\mu}_{[-\ell]}^{(t)}$).
	
	If $n/m\leq 1-\epsilon$ for some $\epsilon \in (0,1)$, then the above estimates hold without the condition $\pnorm{(\mathsf{f}^{(2)})^{-1}}{\infty}\leq \kappa$, with the constants $C_0,C_1>0$ further depending on $\epsilon$ (but not on $\kappa$ any longer). 
\end{lemma}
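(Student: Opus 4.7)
The plan is to establish the entrywise bounds through a leave-one-predictor-out KKT comparison. Subtracting the first-order conditions for $\hat{\mu}$ and $\hat{\mu}_{[-\ell]}$ and applying the mean-value theorem to $\mathsf{f}_n'$, the difference $\Delta \equiv \hat{\mu} - \hat{\mu}_{[-\ell]}$ solves the linear system
\begin{align*}
(\hat{\Sigma} + D)\Delta = \mu_{0,\ell}\, A_{[-\ell]}^\top A_{\cdot,\ell} + \Big(A_{\cdot,\ell}^\top\big[A_{[-\ell]}(\mu_0 - \hat{\mu}_{[-\ell]}) + \xi\big] + \mu_{0,\ell}\pnorm{A_{\cdot,\ell}}{}^2\Big)\, e_\ell,
\end{align*}
where $D \succeq \kappa^{-1}I$ is a diagonal matrix produced by the mean-value theorem on $\mathsf{f}'$. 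The crucial structural features are that $A_{\cdot,\ell}$ is independent of the triple $(A_{[-\ell]}, \hat{\mu}_{[-\ell]}, \xi)$, and that $P \equiv \hat{\Sigma}_{[-\ell]} + D$ is block-diagonal in the splitting $\{\ell\}\cup([n]\setminus\{\ell\})$ because the $\ell$-th column of $A_{[-\ell]}$ vanishes. In particular, the KKT condition at coordinate $\ell$ for the leave-out problem reads $\mathsf{f}'(\hat{\mu}_{[-\ell],\ell}) = 0$, so $\hat{\mu}_{[-\ell],\ell} = 0$ and hence $\hat{\mu}_\ell = \Delta_\ell$. A Woodbury expansion of $(\hat{\Sigma}+D)^{-1}$ around $P^{-1}$ with the rank-two perturbation $U = [e_\ell \mid A_{[-\ell]}^\top A_{\cdot,\ell}]$ isolates all $A_{\cdot,\ell}$-dependence into explicit scalar forms over $A_{[-\ell]}$-measurable matrices.

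The $\ell_2$ bounds in the first display follow routinely. Strong convexity of $\mathsf{f}$ yields $\pnorm{\hat{\mu}}{}^2 \lesssim \kappa\pnorm{Y}{}^2 \lesssim \kappa \cdot n\, L_\xi^2$ (via $F(\hat{\mu}) \leq F(0)$), and the same for $\hat{\mu}_{[-\ell]}$; for the iterates, the contraction argument of (\ref{ineq:avg_universality_ERM_5}) gives $\pnorm{\mu^{(t)} - \hat{\mu}}{} \leq (1+\eta/\kappa)^{-t}\pnorm{\hat{\mu}}{}$, hence a uniform-in-$t$ bound. In the overdetermined regime $m/n \geq 1 + 1/K$, strong convexity of $\mathsf{f}$ can be dropped by invoking $\lambda_{\min}(\hat{\Sigma}) \gtrsim 1$ with exponentially high probability, as in the proof of Theorem \ref{thm:avg_universality_ERM}(2).

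For the entrywise bound on bulk indices $k \neq \ell$, projecting the Woodbury formula onto $e_k^\top$ expresses $\sqrt{n}\,\Delta_k$ as a sum of linear and quadratic forms in $A_{\cdot,\ell}$, each sandwiched between vectors of the form $P_{-\ell,-\ell}^{-1} A_{[-\ell]}^\top(\cdots)$ whose $\ell_2$ norms are $O(L_\xi)$ uniformly in $k$ by spectral bounds on $P_{-\ell,-\ell}^{-1}$ and the preceding $\ell_2$ estimate on $\hat{\mu}_{[-\ell]}$. Conditional on $A_{[-\ell]}$, Hanson--Wright and sub-gaussian tails yield fluctuations of order $L_\xi\sqrt{\log n}$, and a union bound over $k \neq \ell$ costs at most a further $\sqrt{\log n}$. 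The $2\times 2$ middle factor from Woodbury is well-conditioned: its diagonal entries are bounded below by $\kappa^{-1}$ (from $D$) and $\tr(P_{-\ell,-\ell}^{-1})/m \asymp 1$ (from a lower tail Hanson--Wright on $A_{\cdot,\ell}^\top A_{[-\ell]} P_{-\ell,-\ell}^{-1} A_{[-\ell]}^\top A_{\cdot,\ell}$). For the $\ell$-th entry, reading off the $\ell$-th row of the Woodbury inverse yields an analogous scalar expression, bounded by $L_\xi \log n$ upon using $D_{\ell,\ell} \geq \kappa^{-1}$ and concentration of the linear form $A_{\cdot,\ell}^\top[A_{[-\ell]}(\mu_0 - \hat{\mu}_{[-\ell]}) + \xi]$ (whose integrand is independent of $A_{\cdot,\ell}$).

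The iterative version proceeds analogously: $\mu^{(t)} - \mu^{(t)}_{[-\ell]}$ satisfies a similar linear relation with $D$ replaced by a $t$-dependent $D^{(t)}$ arising from mean-value analysis of $\prox_{\eta\mathsf{f}_n}$, which remains bounded below (in the appropriate sense) by $\eta/(\eta+\kappa)$ thanks to strong convexity of $\mathsf{f}$. I expect the main obstacle to lie precisely here: propagating the entrywise bound uniformly over $t \leq C\log n$ without incurring an additional multiplicative factor of $t$. A telescoping decomposition writing $\mu^{(t)} - \mu_{[-\ell]}^{(t)}$ step-by-step through the proximal recursion, and handling each step via a single Woodbury analysis based on the current $D^{(t)}$ (with spectral control uniform in $t$), should absorb this blow-up into the stated $\log n$. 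The extension from $\hat{\mu} - \hat{\mu}_{[-\ell]}$ to the normal-equation differences $\hat{\Sigma}\hat{\mu} - \hat{\Sigma}_{[-\ell]}\hat{\mu}_{[-\ell]}$ (and its iterate analogue) then follows from the identity $\hat{\Sigma}\hat{\mu} = A^\top Y - \mathsf{f}_n'(\hat{\mu})$ combined with the already-established entrywise control on $\Delta$ and Lipschitz regularity of $\mathsf{f}'$.
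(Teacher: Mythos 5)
Your route is genuinely different from the paper's. You propose to invert the stationarity system $(\hat{\Sigma}+D)\Delta=b$ via a rank-two Woodbury expansion around $P=\hat{\Sigma}_{[-\ell]}+D$; the paper never inverts anything. It instead iterates the proximal fixed-point identity $\hat{\mu}-\hat{\mu}_{[-\ell]}=D_{[-\ell]}\big((I-\eta\hat{\Sigma}_{[-\ell]})(\hat{\mu}-\hat{\mu}_{[-\ell]})-\eta\,\Xi_{[-\ell]}\big)$ exactly $t\asymp\log n$ times, uses $\hat{\Sigma}_{[-\ell]}e_\ell=0$ so that only the $A_{[-\ell]}^\top A_{\cdot,\ell}$ part of the source term reaches bulk coordinates $k\neq\ell$, and sums a geometric series in $(1-\eta/2\kappa)^s$. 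Your $\ell_2$ bounds, the observation $\hat{\mu}_{[-\ell],\ell}=0$, and the overall Woodbury skeleton for the stationary estimators are plausible, but there is a structural cost to your formulation: the mean-value matrix $D$ depends on $\hat{\mu}$ and hence on $A_{\cdot,\ell}$, so $P^{-1}$ is not $A_{[-\ell]}$-measurable and the conditional Hanson--Wright/sub-Gaussian step applied to forms such as $\iprod{A_{\cdot,\ell}}{A_{[-\ell]}P^{-1}e_k}$ is not licensed. This dependence is far harder to excise in the resolvent formulation (the entire inverse is a global function of $D$) than in the paper's iteration, where each factor is a diagonal contraction with a deterministic operator-norm bound; and you cannot swap $D$ for an $A_{\cdot,\ell}$-independent surrogate, because the lemma assumes no upper bound or modulus of continuity on $\mathsf{f}^{(2)}$.

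Two further points are genuine gaps rather than stylistic differences. First, the iterates: $\mu^{(t)}-\mu^{(t)}_{[-\ell]}$ satisfies no stationarity equation of the form $(\hat{\Sigma}+D^{(t)})\Delta^{(t)}=b^{(t)}$ that one could Woodbury-invert; it satisfies a one-step recursion relating $\Delta^{(t)}$ to $\Delta^{(t-1)}$ through a diagonal prox-derivative factor. ``Telescoping plus a per-step Woodbury analysis'' is not an argument here --- once you unroll the recursion you are forced into exactly the paper's structure (products of diagonal contractions applied to $n^{-1/2}$-sized source terms, summed geometrically, with the initialization damped by $(1-\eta/2\kappa)^t$), and you have the inequality direction reversed: what is needed for the iterates is the \emph{upper} bound $\prox_{\eta\mathsf{f}}'\leq 1/(1+\eta/\kappa)$ giving contraction, not a lower bound on $D^{(t)}$. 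Second, your extension to $\hat{\Sigma}\hat{\mu}-\hat{\Sigma}_{[-\ell]}\hat{\mu}_{[-\ell]}$ via $\hat{\Sigma}\hat{\mu}=A^\top Y-\mathsf{f}_n'(\hat{\mu})$ and ``Lipschitz regularity of $\mathsf{f}'$'' invokes an upper bound on $\mathsf{f}^{(2)}$ that is not among the hypotheses; the paper instead left-multiplies the unrolled recursion by $\hat{\Sigma}_{[-\ell]}$ and reruns the same concentration argument, which needs no such bound.
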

\begin{proof}
	First using Lemma \ref{lem:prox_Lip} and the smoothness of $\mathsf{f}$, we have $\pnorm{\prox'_{\eta \mathsf{f}}}{\infty}\leq 1/(1+\eta \kappa^{-1})\leq (1-\eta/2\kappa)$ whenever $\eta\leq 1<\kappa$.

	\noindent (1). Using (\ref{eqn:ERM_first_order}), we have
	\begin{align*}
	\pnorm{\hat{\mu}}{}\leq (1-\eta/2\kappa)\pnorm{I-\eta \hat{\Sigma}}{\op} \pnorm{\hat{\mu}}{}+(1-\eta/2\kappa) \eta \pnorm{A^\top Y}{}.
	\end{align*}
	So for $\eta\leq 1/C$, with probability at least $1-Ce^{-n/C}$, 
	\begin{align*}
	\pnorm{\hat{\mu}}{}\leq 2\kappa(1-\eta/2\kappa)  \pnorm{A^\top Y}{}.
	\end{align*}
	The claim now follows by using $\pnorm{A^\top Y}{}\leq \pnorm{\hat{\Sigma}}{\op}\pnorm{\mu_0}{}+ \pnorm{A}{\op}\pnorm{\xi}{}$ and the exponentially high probability boundedness of $\pnorm{\hat{\Sigma}}{\op},\pnorm{A}{\op}$. For the case $n/m\leq 1-\epsilon$ without strong convexity of $\mathsf{f}$, we may use alternatively the exponentially high probability boundedness of $\pnorm{(\hat{\Sigma})^{-1}}{\op}$ (cf. \cite{rudelson2009smallest}) to proceed.  The claim for $\pnorm{\hat{\mu}_{[-\ell]}}{}$ proceeds similarly. 
	
	On the other hand, using (\ref{def:ERM_grad_descent}), for $\eta\leq 1/C$, with probability at least $1-Ce^{-n/C}$, for all $t \in \N$, 
	\begin{align*}
	\pnorm{{\mu}^{(t)}}{}&\leq (1-\eta/2\kappa)\pnorm{{\mu}^{(t-1)} }{}+(1-\eta/2\kappa) \eta \pnorm{A^\top Y}{}\\
	& \leq \cdots \leq \eta \sum_{s\in [t]}(1-\eta/2\kappa)^s \pnorm{A^\top Y}{} \leq 2\kappa \pnorm{A^\top Y}{}. 
	\end{align*}
	Now we may proceed as above to conclude for both $\pnorm{{\mu}^{(t)}}{}$ and its leave-one-out version $\pnorm{{\mu}^{(t)}_{[-\ell]} }{}$.

	\noindent (2). Combining (\ref{eqn:ERM_first_order}) and (\ref{eqn:ERM_first_order_loo_predictor}), we have for some $\beta_{[-\ell]} \in [0,1-\eta/2\kappa]^n$, with $D_{[-\ell]}\equiv \mathrm{diag}(\beta_{[-\ell]})$, 
	\begin{align*}
	\hat{\mu}-\hat{\mu}_{[-\ell]}&=D_{[-\ell]}\Big(\big(I-\eta\hat{\Sigma}\big)(\hat{\mu}-\hat{\mu}_{[-\ell]})-\eta\Delta_{[-\ell]}\Big),
	\end{align*}
	where
	\begin{align*}
	\Delta_{[-\ell]}&\equiv \big(\hat{\Sigma}-\hat{\Sigma}_{[-\ell]}\big)\hat{\mu}_{[-\ell]}- \big(A^\top Y-A^\top_{[-\ell]}Y_{[-\ell]}\big)\\
	& = (A^\top A-A_{[-\ell]}^\top A_{[-\ell]}) \big(\hat{\mu}_{[-\ell]}-\mu_0\big)- (A-A_{[-\ell]})^\top \xi\\
	& = -A_{[-\ell]}^\top A_{\cdot,\ell}\cdot \mu_{0,\ell}+ e_\ell \Big(A_{\cdot,\ell}^\top A_{[-\ell]}\big(\hat{\mu}_{[-\ell]}-\mu_0\big)-\pnorm{A_{\cdot,\ell}}{}^2\cdot \mu_{0,\ell}- A_{\cdot, \ell}^\top \xi \Big).
	\end{align*}
	Here in the last line we used
	\begin{align}\label{ineq:ERM_loo_predictor_error_0}
	A^\top A-A_{[-\ell]}^\top A_{[-\ell]}& = A_{[-\ell]}^\top A_{\cdot,\ell} e_\ell^\top + e_\ell A_{\cdot,\ell}^\top A_{[-\ell]}+ \pnorm{A_{\cdot,\ell}}{}^2 e_\ell e_\ell^\top.
	\end{align}
	Using the independence of $A_{\cdot,\ell}$ and $A_{[-\ell]}\hat{\mu}_{[-\ell]}$, for $x\geq 1$, with probability at least $1- C_1 e^{-(x^2\wedge n)/C_1}$, we have $\pnorm{\Delta_{[-\ell]}}{}\leq C_1 L_\xi\cdot x$. Therefore on the same event, by possibly adjusting $C_1>0$, 
	\begin{align}\label{ineq:ERM_loo_predictor_error_1}
	\abs{\hat{\mu}_\ell}\leq \pnorm{\hat{\mu}-\hat{\mu}_{[-\ell]}}{}\leq C_1 L_\xi\cdot x.
	\end{align}
	The case $n/m\leq {1-\epsilon}$ without strong convexity of $\mathsf{f}$ follows similarly as above. 
	
	Now let us write alternatively
	\begin{align}\label{ineq:ERM_loo_predictor_error_2}
	\hat{\mu}-\hat{\mu}_{[-\ell]}&=D_{[-\ell]}\Big(\big(I-\eta\hat{\Sigma}_{[-\ell]}\big)(\hat{\mu}-\hat{\mu}_{[-\ell]})-\eta\Xi_{[-\ell]}\Big),
	\end{align}
	where
	\begin{align*}
	\Xi_{[-\ell]}&\equiv \big(\hat{\Sigma}-\hat{\Sigma}_{[-\ell]}\big)\hat{\mu}- \big(A^\top Y-A^\top_{[-\ell]}Y_{[-\ell]}\big)\\
	& =  A_{[-\ell]}^\top A_{\cdot,\ell}\cdot (\hat{\mu}_\ell-\mu_{0,\ell})+ e_\ell \Big(A_{\cdot,\ell}^\top A_{[-\ell]}\big(\hat{\mu}-\mu_0\big)+\pnorm{A_{\cdot,\ell}}{}^2\cdot (\hat{\mu}_\ell-\mu_{0,\ell})- A_{\cdot, \ell}^\top \xi \Big).
	\end{align*}
	Iterating (\ref{ineq:ERM_loo_predictor_error_2}), with $\hat{M}_{[-\ell],s}\equiv \big[D_{[-\ell]}\big(I-\eta\hat{\Sigma}_{[-\ell]}\big)\big]^s D_{[-\ell]}$, for any $t \in \N$, 
	\begin{align}\label{ineq:ERM_loo_predictor_error_2_0}
	\hat{\mu}-\hat{\mu}_{[-\ell]}& = \big[D_{[-\ell]}\big(I-\eta\hat{\Sigma}_{[-\ell]}\big)\big]^t(\hat{\mu}-\hat{\mu}_{[-\ell]}) -\eta \sum_{s \in [0:t-1]} \hat{M}_{[-\ell],s}\Xi_{[-\ell]}.
	\end{align}
	Note that for any $k\neq \ell$, using that $\hat{\Sigma}_{[-\ell]}e_\ell=0$, 
	\begin{align*}
	& e_k^\top \hat{M}_{[-\ell],s}\Xi_{[-\ell]} = e_k^\top \hat{M}_{[-\ell],s} A_{[-\ell]}^\top A_{\cdot,\ell}\cdot (\hat{\mu}_\ell-\mu_{0,\ell}).
	\end{align*}
	By subgaussian inequality applied to $\iprod{A_{\cdot,\ell}}{\cdot}$, the exponentially high probability boundedness of $\pnorm{\hat{\Sigma}_{[-\ell]} }{}$ and the estimate (\ref{ineq:ERM_loo_predictor_error_1}), for $\eta\leq 1/C_2$ and $x\geq 1$, with probability at least $1-C_2e^{-(x^2\wedge n)/C_2}$,
	\begin{align}\label{ineq:ERM_loo_predictor_error_3}
	\abs{e_k^\top\hat{M}_{[-\ell],s} \Xi_{[-\ell]}}&\leq \frac{CL_\xi x^2}{\sqrt{n}} \pnorm{ A_{[-\ell] } \hat{M}_{[-\ell],s}^\top e_k}{}\leq \frac{C_2 L_\xi x^2}{\sqrt{n}}(1-\eta/2\kappa)^{s}.
	\end{align}
	Consequently, for $\eta\leq 1/C_3$ and $x\geq 1$, with probability at least $1-C_3 te^{-(x^2\wedge n)/C_3}$, for any $k\neq \ell$ and $t \in \N$,
	\begin{align*}
	\abs{(\hat{\mu}-\hat{\mu}_{[-\ell]})_k}&\leq (1-\eta/2\kappa)^t \pnorm{\hat{\mu}-\hat{\mu}_{[-\ell]}}{ }+\frac{C\eta L_\xi x^2}{\sqrt{n}} \sum_{s \in [0:t-1]} (1-\eta/2\kappa)^s\\
	&\leq  C_3 L_\xi x^2 \cdot \big[(1-\eta/2\kappa)^t + n^{-1/2} \big]. 
	\end{align*}
	Choosing $t=C_4\log n$ for a large enough $C_4>0$ to conclude via a union bound over $k\in [n]\setminus \{\ell\}$. The case $k=\ell$ is already included in (\ref{ineq:ERM_loo_predictor_error_1}).

	On the other hand, by multiplying on the left with $\hat{\Sigma}_{[-\ell]}$ on both sides of the equation (\ref{ineq:ERM_loo_predictor_error_2_0}) and using (almost) the same subsequent arguments, we have for $\eta\leq 1/C_5$ and $x\geq 1$, with probability at least $1-C_5 te^{-(x^2\wedge n)/C_5}$, for any $k\neq \ell$ and $t \in \N$,
	\begin{align}\label{ineq:ERM_loo_predictor_error_4}
	\bigabs{\big(\hat{\Sigma}_{[-\ell]}(\hat{\mu}-\hat{\mu}_{[-\ell]})\big)_k}&\leq  C_5 L_\xi x^2 \cdot \big[(1-\eta/2\kappa)^t + n^{-1/2} \big]. 
	\end{align}
	The bound trivially holds for $k=\ell$ as the left hand side is simply $0$. Finally the claimed entrywise control for $\hat{\Sigma}\hat{\mu}-\hat{\Sigma}_{[-\ell]}\hat{\mu}_{[-\ell]}$ follows from the above display (\ref{ineq:ERM_loo_predictor_error_4}) and the decomposition
	\begin{align*}
	\hat{\Sigma}\hat{\mu}-\hat{\Sigma}_{[-\ell]}\hat{\mu}_{[-\ell]}&= \hat{\Sigma}_{[-\ell]}(\hat{\mu}-\hat{\mu}_{[-\ell]})+ A_{[-\ell]}^\top A_{\cdot,\ell}\cdot \hat{\mu}_{\ell}  +e_\ell\big(A_{\cdot,\ell}^\top A_{[-\ell]}\hat{\mu}+\pnorm{A_{\cdot,\ell}^2}{}^2 \hat{\mu}_\ell\big),
	\end{align*}
	by noting that the scalar term $A_{\cdot,\ell}^\top A_{[-\ell]}\hat{\mu}=A_{\cdot,\ell}^\top A_{[-\ell]}(\hat{\mu}-\hat{\mu}_{[-\ell]})+A_{\cdot,\ell}^\top A_{[-\ell]}\hat{\mu}_{[-\ell]}$ in the above display can be handled via (\ref{ineq:ERM_loo_predictor_error_1}).

	Entrywise control for ${\mu}^{(t)}-{\mu}_{[-\ell]}^{(t)}$ and $\hat{\Sigma}{\mu}^{(t)}-\hat{\Sigma}_{[-\ell]}{\mu}_{[-\ell]}^{(t)}$ follows from (almost) the same proof above with minor modifications, by starting from (\ref{def:ERM_grad_descent}) and (\ref{eqn:ERM_grad_descent_loo_predictor}).
\end{proof}

Using the above lemma, we may now prove delocalization of $\mu^{(t)}-\hat{\mu}$. 

\begin{proposition}\label{prop:ERM_Delta_r_l2}
	Suppose the following hold.
	\begin{enumerate}
		\item For some $K>1$, $m/n \in [1/K,K]$, $\pnorm{\mu_0}{\infty}\leq K$, and $A=A_0/\sqrt{m}$ where $A_0$ is an $m\times n$ random matrix whose entries are independent mean $0$ variables with $\max_{i,j} \pnorm{A_{0,ij}}{\psi_2}\leq K$.
		\item The function $\mathsf{f}\geq \mathsf{f}(0)=0$, $\mathsf{f}\in C^2(\R)$ and $\pnorm{\mathsf{f}^{(3)}}{\infty}\vee \pnorm{(\mathsf{f}^{(2)})^{-1}}{\infty}\leq \kappa$ for some $ \kappa>1$.
	\end{enumerate}
	Then for any $D>0$, there exist some $C_0=C_0(K,\kappa,D)>1$ and $\delta_0 =\delta_0(K,\kappa) \in (0,1/2)$ such that with $L_\xi\equiv 1+m^{-1/2}\pnorm{\xi}{}$,
	\begin{align*}
	\sup_{\eta\leq 1/C_0}\sup_{t\leq C_0\log n}\Prob^\xi\Big(\pnorm{\mu^{(t)}-\hat{\mu}  }{\infty}\geq C_0L_\xi^2\log n \cdot (1-\delta_0\eta)^{t} \Big)\leq C_0n^{-D}.
	\end{align*}
	If $n/m\leq 1-\epsilon$ for some $\epsilon \in (0,1)$, then the above estimate holds without the condition $\pnorm{(\mathsf{f}^{(2)})^{-1}}{\infty}\leq \kappa$, with the constants $C_0,\delta$ in the above displays replaced further by some  $C_1=C_1(K,\epsilon,D)>0,\delta_1 =\delta_1(K,\epsilon)>0$. 
\end{proposition}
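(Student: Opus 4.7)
The plan is to exploit a first-order expansion of the proximal operator to linearize the recursion for $\Delta^{(t)} \equiv \mu^{(t)} - \hat\mu$, and then to analyze its $k$-th coordinate via a leave-one-predictor-out comparison that crucially leverages the second-order delocalization estimates in Lemma \ref{lem:ERM_loo_predictor_error}. Since $\mathsf{f} \in C^2$ with $\mathsf{f}'' \geq 1/\kappa$, the proximal map $\prox_{\eta\mathsf{f}}$ has derivative in $[0, 1-\eta/(2\kappa)]$, so subtracting (\ref{def:ERM_grad_descent}) from (\ref{eqn:ERM_first_order}) and applying the mean value theorem coordinate-wise yields $\Delta^{(t)} = D^{(t)}(I-\eta\hat\Sigma)\Delta^{(t-1)}$ for a random diagonal matrix $D^{(t)}$ with entries in $[0, 1-\eta/(2\kappa)]$. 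Iterating with $\Delta^{(0)} = -\hat\mu$ gives $\Delta^{(t)} = -M^{(t)}\hat\mu$ with $M^{(t)} \equiv \prod_{s\in[t]}D^{(s)}(I-\eta\hat\Sigma)$, and the operator-norm contraction $\pnorm{M^{(t)}}{\op} \leq (1-\delta_0\eta)^t$ holds on the high-probability event $\{\pnorm{I-\eta\hat\Sigma}{\op} \leq 1\}$ valid for $\eta \leq 1/C$.

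For each $k \in [n]$ I would introduce the analogous leave-one-predictor-out operator $M^{(t)}_{[-k]} \equiv \prod_{s\in[t]} D^{(s)}_{[-k]}(I-\eta\hat\Sigma_{[-k]})$. Two observations drive the argument: (i) both $\hat\mu_{[-k],k}$ and $\mu^{(t)}_{[-k],k}$ equal $0$ because the $k$-th predictor is absent from the data term and $\mathsf{f}$ is minimized at $0$; (ii) $\hat\Sigma_{[-k]}$ has vanishing $k$-th row and column, so $e_k^\top(I-\eta\hat\Sigma_{[-k]}) = e_k^\top$ and hence $[M^{(t)}_{[-k]}\hat\mu_{[-k]}]_k = 0$. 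Inserting this identically-zero quantity yields the decomposition
\begin{align*}
\Delta^{(t)}_k = -[M^{(t)}(\hat\mu - \hat\mu_{[-k]})]_k - [(M^{(t)} - M^{(t)}_{[-k]})\hat\mu_{[-k]}]_k.
\end{align*}
The first term is bounded by $\pnorm{M^{(t)}}{\op}\pnorm{\hat\mu - \hat\mu_{[-k]}}{2} \lesssim (1-\delta_0\eta)^t L_\xi\sqrt{\log n}$ via Lemma \ref{lem:ERM_loo_predictor_error}.

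The second term requires a telescoping expansion $M^{(t)} - M^{(t)}_{[-k]} = \sum_{s\in[t]} P_{s+1:t}\, E_s\, Q_{1:s-1}$ with per-step increment
\begin{align*}
E_s = (D^{(s)} - D^{(s)}_{[-k]})(I-\eta\hat\Sigma) - \eta D^{(s)}_{[-k]}(\hat\Sigma - \hat\Sigma_{[-k]}).
\end{align*}
Using the identity $Q_{1:s-1}\hat\mu_{[-k]} = \hat\mu_{[-k]} - \mu^{(s-1)}_{[-k]}$ and the rank-three decomposition of $\hat\Sigma - \hat\Sigma_{[-k]}$ from (\ref{ineq:ERM_loo_predictor_error_0}), the contribution of the covariance-difference piece collapses (since the $k$-th coordinate of $\hat\mu_{[-k]} - \mu^{(s-1)}_{[-k]}$ vanishes) to a scalar multiple of $e_k$ whose size is controlled by sub-Gaussian concentration of $A_{\cdot,k}^\top A_{[-k]}(\hat\mu_{[-k]} - \mu^{(s-1)}_{[-k]})$ (the two factors being independent). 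The diagonal piece $D^{(s)} - D^{(s)}_{[-k]}$ is $O(\eta)$-Lipschitz in its argument thanks to $\pnorm{\mathsf{f}^{(3)}}{\infty} \leq \kappa$, and Lemma \ref{lem:ERM_loo_predictor_error} (in particular the entrywise control on $\hat\Sigma\mu^{(s)} - \hat\Sigma_{[-k]}\mu^{(s)}_{[-k]}$) yields $|D^{(s)}_{jj} - D^{(s)}_{[-k],jj}| \lesssim \eta L_\xi\log n/\sqrt{n}$ for $j\neq k$ and $\lesssim \eta L_\xi\log n$ for $j=k$. Bounding each summand by Cauchy--Schwarz against the appropriate row of $P_{s+1:t}$ and using $\pnorm{Q_{1:s-1}\hat\mu_{[-k]}}{2} \lesssim \sqrt{n}(1-\delta_0\eta)^{s-1}L_\xi$, each $s$-contribution is at most $\eta(1-\delta_0\eta)^{t-1}L_\xi^2\log n$; summing over $s \in [t]$ with $t \leq C\log n$ and union-bounding over $k$ produces the desired entrywise bound after a harmless polylogarithmic loss. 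The case $n/m \leq 1-\epsilon$ proceeds identically with the contraction supplied instead by the high-probability lower bound $\lambda_{\min}(\hat\Sigma) \geq 1/C_\epsilon$, so $\pnorm{I-\eta\hat\Sigma}{\op} \leq 1-\eta/C_\epsilon$ for small $\eta$, obviating the need for strong convexity of $\mathsf{f}$.

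The main obstacle is the telescoping bookkeeping: one must verify that in each of the two pieces of $E_s$, the contraction factors from $P_{s+1:t}$ and $Q_{1:s-1}$ combine cleanly to a net $(1-\delta_0\eta)^{t-1}$, so that summing $s\in[t]$ multiplies by only $t\eta \lesssim \log n$ and does not erode the geometric contraction. A subsidiary challenge is propagating the second-order entrywise leave-one-out controls uniformly along the whole iteration path $s\leq C\log n$, which is precisely what the full power of Lemma \ref{lem:ERM_loo_predictor_error} provides.
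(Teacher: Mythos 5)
Your proposal is correct and follows essentially the same route as the paper: a second-order leave-one-predictor-out comparison driven by the entrywise controls of Lemma \ref{lem:ERM_loo_predictor_error}, the rank-three decomposition (\ref{ineq:ERM_loo_predictor_error_0}) of $\hat{\Sigma}-\hat{\Sigma}_{[-k]}$, independence-based concentration of $A_{\cdot,k}^\top A_{[-k]}(\hat{\mu}_{[-k]}-\mu^{(s)}_{[-k]})$, and the prox contraction. The only difference is presentational — you telescope the product $M^{(t)}-M^{(t)}_{[-k]}$ applied to $\hat{\mu}_{[-k]}$ and extract the $k$-th coordinate, whereas the paper iterates a recursive $\ell_2$ inequality for $r^{(t)}-r^{(t)}_{[-\ell]}$ and uses $|r^{(t)}_\ell|\leq\|r^{(t)}-r^{(t)}_{[-\ell]}\|$; these are the same computation, and both incur the same harmless extra factor of $t\lesssim\log n$ from the sum over $s$.
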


\begin{proof}
	Let $
	r^{(t)}\equiv \mu^{(t)}-\hat{\mu}$, and 
	\begin{align*}
	\alpha^{(t)}\equiv \int_0^1 \prox_{\eta \mathsf{f}_n}' \Big[\big(I-\eta\hat{\Sigma}\big)\big( s \mu^{(t)}+ (1-s) \hat{\mu}\big)+\eta\cdot  A^\top Y\Big]\,\d{s}\in \R^n.
	\end{align*}
	Then combining (\ref{eqn:ERM_first_order}) and (\ref{def:ERM_grad_descent}), we obtain
	\begin{align}\label{eqn:ERM_r_recursion}
	r^{(t+1)}&= \mathrm{diag}(\alpha^{(t)})\big(I-\eta\hat{\Sigma}\big) r^{(t)}.
	\end{align}
	Similarly let $
	r^{(t)}_{[-\ell]}\equiv \mu^{(t)}_{[-\ell]}-\hat{\mu}_{[-\ell]}$, and
	\begin{align*}
	\alpha^{(t)}_{[-\ell]}\equiv \int_0^1 \prox_{\eta \mathsf{f}_n}' \Big[\big(I-\eta\hat{\Sigma}_{[-\ell]}\big)\big( s \mu^{(t)}_{[-\ell]}+ (1-s) \hat{\mu}_{[-\ell]}\big)+\eta\cdot  A^\top_{[-\ell]} Y_{[-\ell]}\Big]\,\d{s}\in \R^n.
	\end{align*}
	Then we have
	\begin{align}\label{eqn:ERM_r_recursion_loo_predictor}
	r^{(t+1)}_{[-\ell]}&=\mathrm{diag}(\alpha^{(t)}_{[-\ell]})\big(I-\eta\hat{\Sigma}_{[-\ell]}\big) r^{(t)}_{[-\ell]}.
	\end{align}	
	Now combining (\ref{eqn:ERM_r_recursion}) and (\ref{eqn:ERM_r_recursion_loo_predictor}), with $\Delta r^{(t)}_{[-\ell]}\equiv r^{(t)}-r^{(t)}_{[-\ell]}$, 
	\begin{align*}
	\Delta r^{(t+1)}_{[-\ell]} &=\mathrm{diag}\big(\alpha^{(t)}-\alpha_{[-\ell]}^{(t)}\big) \big(I-\eta \hat{\Sigma}_{[-\ell]}\big) r_{[-\ell]}^{(t)}\\
	&\qquad +\mathrm{diag}(\alpha^{(t)})\Big[(1-\eta \hat{\Sigma})\Delta r^{(t)}_{[-\ell]}-\eta (\hat{\Sigma}-\hat{\Sigma}_{[-\ell]}) r_{[-\ell]}^{(t)}  \Big].
	\end{align*}
	Consequently, by applying (\ref{ineq:ERM_loo_predictor_error_0}) to the last term in the above display, for $\eta\leq 1/C_1$, with probability at least $1-C_1e^{-n/C_1}$,
	\begin{align}\label{ineq:ERM_Delta_r_l2_1}
	\bigpnorm{\Delta r^{(t+1)}_{[-\ell]}}{}&\leq \max_{k\neq \ell} \bigabs{\big(\alpha^{(t)}-\alpha_{[-\ell]}^{(t)}\big)_k  }\cdot  \pnorm{r_{[-\ell]}^{(t)}}{}\nonumber\\
	&\qquad + (1-\eta/2\kappa) \bigpnorm{\Delta r^{(t)}_{[-\ell]}}{}+ \eta  \cdot \bigabs{ A_{\cdot,\ell}^\top A_{[-\ell]} r_{[-\ell]}^{(t)} }.
	\end{align}
	Using $\pnorm{\mathsf{f}^{(3)}}{\infty}\leq \kappa$, so $\pnorm{\prox^{(2)}_{\eta \mathsf{f}}}{\infty}\leq \eta \pnorm{\mathsf{f}^{(3)}}{\infty}\leq \eta \kappa$, we have
	\begin{align*}
	&\max_{k\neq \ell}\abs{\big(\alpha^{(t)}-\alpha_{[-\ell]}^{(t)}\big)_k  }\lesssim \max_{k\neq \ell}\Big[\abs{(\hat{\mu}-\hat{\mu}_{[-\ell]})_k}+\bigabs{\big(\hat{\Sigma}\hat{\mu}-\hat{\Sigma}_{[-\ell]}\hat{\mu}_{[-\ell]}\big)_k}  \Big]\\
	&\qquad + \max_{k\neq \ell}\Big[\abs{({\mu}^{(t)}-{\mu}^{(t)}_{[-\ell]})_k}+\bigabs{\big(\hat{\Sigma}{\mu}^{(t)}-\hat{\Sigma}_{[-\ell]}{\mu}^{(t)}_{[-\ell]}\big)_k}  \Big]+\pnorm{A_{[-\ell]}^\top A_{\cdot,\ell} }{\infty}.
	\end{align*}
	So using the second claim of Lemma \ref{lem:ERM_loo_predictor_error} and a simple large deviation estimate for $\pnorm{A_{[-\ell]}^\top A_{\cdot,\ell} }{\infty}$, for $\eta\leq 1/C_2$ and $t\leq C_2 \log n$, with probability at least $1-C_2 n^{-D}$, 
	\begin{align}\label{ineq:ERM_Delta_r_l2_2}
	\max_{k\neq \ell}\abs{\big(\alpha^{(t)}-\alpha_{[-\ell]}^{(t)}\big)_k  }\leq C_2 L_\xi\cdot n^{-1/2}\log n.
	\end{align}
	On the other hand, using the independence of $A_{\cdot,\ell}$ and  $A_{[-\ell]} r_{[-\ell]}^{(t)} $, by subgaussian inequality followed by exponentially high probability boundedness of $\pnorm{A_{[-\ell]}}{\op}$, with probability at least $1-C n^{-D}$, 
	\begin{align*}
	\bigabs{ A_{\cdot,\ell}^\top A_{[-\ell]} r_{[-\ell]}^{(t)} }&\leq C\sqrt{\frac{\log n}{n}}\bigpnorm{ A_{[-\ell]} r_{[-\ell]}^{(t)} }{ }\leq C' \sqrt{\frac{\log n}{n}} \pnorm{   r_{[-\ell]}^{(t)}}{} \\
	&\leq C' \sqrt{\log n}\cdot (1-\eta/2\kappa)^t\cdot \frac{ \pnorm{\hat{\mu}_{[-\ell]}}{} }{\sqrt{n}}.
	\end{align*}
	Using the first claim of Lemma \ref{lem:ERM_loo_predictor_error}, for $\eta\leq 1/C_3$ and $t\leq C_3\log n$, with probability at least $1-C_3 n^{-D}$, 
	\begin{align}\label{ineq:ERM_Delta_r_l2_3}
	\bigabs{ A_{\cdot,\ell}^\top A_{[-\ell]} r_{[-\ell]}^{(t)} }\leq C_3 L_\xi \sqrt{\log n}\cdot (1-\eta/2\kappa)^t.
	\end{align}
	Combining (\ref{ineq:ERM_Delta_r_l2_1})-(\ref{ineq:ERM_Delta_r_l2_3}), for $\eta\leq 1/C_4$, with probability at least $1-C_4 n^{-D}$, uniformly in $t\leq C_4\log n$, 
	\begin{align*}
	\bigpnorm{\Delta r^{(t+1)}_{[-\ell]}}{}&\leq C_4 L_\xi^2 \log n\cdot  (1-\eta/2\kappa)^t + (1-\eta/2\kappa) \bigpnorm{\Delta r^{(t)}_{[-\ell]}}{}.
	\end{align*}
	Iterating the above inequality to conclude that for $\eta\leq 1/C_5$, with probability at least $1-C_5 n^{-D}$, uniformly in $t\leq C_5\log n$, 
	\begin{align*}
	\abs{r^{(t)}_\ell}\leq \bigpnorm{\Delta r^{(t)}_{[-\ell]}}{}&\leq C_5 L_\xi^2 \log n\cdot  t (1-\eta/2\kappa)^{t},
	\end{align*}
	proving the claim. 
\end{proof}

\begin{proof}[Proof of Theorem \ref{thm:universality_ls}]
	The proximal gradient descent algorithm (\ref{def:ERM_grad_descent}) can be reformulated in the form of (\ref{def:GFOM_asym}) via the following identification: Let $\mu^{(t)}\equiv \prox_{\eta \mathsf{f}_n}(v^{(t)})$, where
	\begin{align*}
	\begin{cases}
	u^{(t)}= A \big(\prox_{\eta \mathsf{f}_n}(v^{(t-1)})-\mu_0\big)\in \R^m,\\
	v^{(t)}= A^\top \big[\eta(\xi-u^{(t)})\big]+\prox_{\eta \mathsf{f}_n}(v^{(t-1)})\in \R^n,
	\end{cases}
	\end{align*}
	with the initialization $u^{(0)}=0_m,v^{(0)}=0_n$. Now using Theorem \ref{thm:universality_asym} with $\Psi_n\equiv \Psi\circ \prox_{\eta \mathsf{f}_n}$, we have for any $t \in \N$, for $\eta\leq 1/C_1$,
	\begin{align*}
	&\max_{j \in [n]}\bigabs{\E^\xi \big[\Psi\big(\mu^{(t)}_j(A)\big)|(u^{(0)},v^{(0)})\big]-  \E^\xi \big[\Psi\big(\mu^{(t)}_j(B)\big)|(u^{(0)},v^{(0)})\big] }\\
	&\leq \big(C_1 (1+\pnorm{\xi}{\infty})\log n\big)^{C_1 t^3}\cdot n^{-1/2}.
	\end{align*}
	On the other hand, using Proposition \ref{prop:ERM_Delta_r_l2}, there exists some $C_2>1,\delta_0\in (0,1/2)$ such that for $\eta\leq 1/C_2$ and $M \in \{A,B\}$, 
	\begin{align*}
	&\max_{j \in [n]} \bigabs{\E^\xi \big[\Psi\big(\mu^{(t)}_j(M)\big)|(u^{(0)},v^{(0)})\big]-\E^\xi\big[\Psi\big(\hat{\mu}_j(M)\big)\big]}\\
	&\leq C_2(1+m^{-1/2}\pnorm{\xi}{})^2  \log n\cdot (1-\delta_0 \eta)^t+ C_2 n^{-100}. 
	\end{align*}
	Combining the above two displays and taking expectation over $\xi$ by using $\pnorm{\xi_1}{\psi_2}\leq K$, there exists some constant $C_3>0$ such that for all $\eta\leq 1/C_3$ and $t \leq \log n$, 
	\begin{align*}
	&\max_{j \in [n]} \bigabs{\E\big[\Psi\big(\hat{\mu}_j(A)\big)\big]-  \E\big[\Psi\big(\hat{\mu}_j(B)\big)\big]  }\\
	&\leq C_3\log n\cdot (1-\delta_0 \eta)^t+   \big(C_3 \log n\big)^{C_3 t^3}\cdot n^{-1/2}.
	\end{align*}
	Finally we choose $\eta=1/C_3$ and $t=(\log n)^{1/4}$ to conclude. 
\end{proof}

\subsection{Proof of Theorem \ref{thm:universality_logistic}}\label{section:proof_ERM_logistic}

Let $\varphi \in C^\infty(\R)$ be such that $\varphi \in [0,1]$, $\varphi(x)|_{x\geq 1}=1$ and $\varphi(x)|_{x\leq -1}=0$. For any $\sigma>0$, let $\varphi_\sigma(x)\equiv \varphi(x/\sigma)$, and $\mathsf{L}_\sigma(x,y,\xi)\equiv \rho\big(-(2\varphi_\sigma(y+\xi)-1)x\big)$. We then define $\hat{\mu}_\sigma, \mu^{(t+1)}_\sigma$ similarly as in (\ref{def:ERM_logit_equiv}) and (\ref{def:ERM_grad_descent_logit}), with $\mathsf{L}$ replaced by $\mathsf{L}_\sigma$.  By convention, we shall write $\sigma_0(x)=\bm{1}_{x\geq 0}$ and $\mathsf{L}_0\equiv \mathsf{L}$. 

The following lemma establishes the $\ell_2$ boundedness of $\hat{\mu}_\sigma$ and the geometric decay of $\pnorm{\mu^{(t)}_\sigma-\hat{\mu}_\sigma }{}$, for any smoothing level $\sigma\geq 0$.

\begin{lemma}\label{lem:logistic_error}
	Suppose the following hold.
	\begin{enumerate}
		\item For some $K>1$, $m/n \in [1/K,K]$, $\pnorm{\mu_0}{}/\sqrt{n}\leq K$, and $A=A_0/\sqrt{m}$ where $A_0$ is an $m\times n$ random matrix whose entries are independent mean $0$ variables with $\max_{i,j} \pnorm{A_{0,ij}}{\psi_2}\leq K$.
		\item The regularizer $\mathsf{f}\geq \mathsf{f}(0)=0$ is $\kappa$-strongly convex for some $\kappa>0$. 
	\end{enumerate}
	Then for any $D>0$, there exist some $C_0=C_0(K,\kappa,D)>1$ and $\delta_0 =\delta_0(K,\kappa) \in (0,1/2)$ such that for all $\sigma\geq 0$ and $t \leq e^{n/C_0}$,
	\begin{align*}
	&\sup_{\eta\leq 1/C_0} \Prob\bigg(\frac{\pnorm{\mu^{(t)}_\sigma-\hat{\mu}_\sigma }{}}{\sqrt{n} (1-\delta_0\eta)^t}\vee \frac{\pnorm{\hat{\mu}_\sigma}{} }{\sqrt{n}}\geq C_0 \sqrt{\log n} \bigg)\leq C_0 n^{-D}.
	\end{align*}
\end{lemma}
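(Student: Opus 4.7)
The plan is to combine a direct energy bound for $\hat{\mu}_\sigma$ with a standard contraction argument for proximal gradient descent; both are made uniform in $\sigma\geq 0$ by exploiting that only the first-coordinate Hessian of $\mathsf{L}_\sigma$ enters the analysis, and
\begin{align*}
\partial_{11}^2 \mathsf{L}_\sigma(x,y;\xi) = \big(2\varphi_\sigma(y+\xi)-1\big)^2\, \rho''\big(-(2\varphi_\sigma(y+\xi)-1)x\big)\in [0,1/4]
\end{align*}
holds uniformly in $\sigma\geq 0$ (including $\sigma=0$, where $\varphi_0=\bm{1}_{\cdot\geq 0}$). Throughout I would work on the event $\mathscr{E}_A\equiv \{\pnorm{A}{\op}\leq C_K\}$, of probability at least $1-c_0 e^{-n/c_0}$ by the standard operator norm tail bound for subgaussian random matrices \cite[Theorem 4.4.5]{vershynin2018high}.

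First, I would establish the a priori bound $\pnorm{\hat{\mu}_\sigma}{}\lesssim_{K,\kappa}\sqrt{n}$. Since $\mathsf{L}_\sigma\geq 0$ and $\mathsf{L}_\sigma(0,\cdot;\cdot)=\rho(0)=\log 2$, optimality of $\hat{\mu}_\sigma$ in the smoothed analogue of (\ref{def:ERM_logit_equiv}) yields
\begin{align*}
\frac{\kappa}{2}\pnorm{\hat{\mu}_\sigma}{}^2 \leq \sum_{j \in [n]}\mathsf{f}(\hat{\mu}_{\sigma,j}) \leq \sum_{i \in [m]}\mathsf{L}_\sigma(0,A_i^\top \mu_0;\xi_i)=m\log 2,
\end{align*}
using $\mathsf{f}(x)\geq (\kappa/2)x^2$, a consequence of $\kappa$-strong convexity together with $\mathsf{f}(0)=0=\min \mathsf{f}$. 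The $\sqrt{\log n}$ slack in the stated bound comfortably absorbs all $(K,\kappa)$-dependent constants.

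Second, I would establish geometric contraction of $\{\mu^{(t)}_\sigma\}$ toward $\hat{\mu}_\sigma$. Writing $G_\sigma(\mu)\equiv \sum_{i\in[m]} A_i\,\partial_1\mathsf{L}_\sigma(A_i^\top \mu, A_i^\top\mu_0;\xi_i)$, both iterates satisfy the same fixed-point relation $\mu=\prox_{\eta \mathsf{f}_n}(\mu-\eta G_\sigma(\mu))$ (for $\hat{\mu}_\sigma$, this is the smoothed version of (\ref{eqn:ERM_first_order_logit})). The mean-value representation gives
\begin{align*}
G_\sigma(\mu^{(t-1)}_\sigma)-G_\sigma(\hat{\mu}_\sigma) = H_\sigma^{(t-1)}(\mu^{(t-1)}_\sigma -\hat{\mu}_\sigma), \qquad H_\sigma^{(t-1)}\equiv A^\top D_\sigma^{(t-1)} A,
\end{align*}
where $D_\sigma^{(t-1)}$ is a diagonal matrix with entries in $[0,1/4]$; hence on $\mathscr{E}_A$, $H_\sigma^{(t-1)}$ is PSD with $\pnorm{H_\sigma^{(t-1)}}{\op}\leq C_K^2/4$. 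Choosing $\eta\leq 4/C_K^2$ ensures $\pnorm{I-\eta H_\sigma^{(t-1)}}{\op}\leq 1$. Combined with the fact that $\prox_{\eta \mathsf{f}_n}$ is $(1+\eta \kappa)^{-1}$-Lipschitz by $\kappa$-strong convexity of $\mathsf{f}_n$, this yields the one-step contraction
\begin{align*}
\pnorm{\mu^{(t)}_\sigma-\hat{\mu}_\sigma}{}&\leq \frac{1}{1+\eta \kappa}\pnorm{(I-\eta H_\sigma^{(t-1)})(\mu^{(t-1)}_\sigma-\hat{\mu}_\sigma)}{}\leq \frac{1}{1+\eta \kappa}\pnorm{\mu^{(t-1)}_\sigma-\hat{\mu}_\sigma}{}.
\end{align*}
Iterating and using $\mu^{(0)}_\sigma=0$ gives $\pnorm{\mu^{(t)}_\sigma-\hat{\mu}_\sigma}{}\leq (1+\eta \kappa)^{-t}\pnorm{\hat{\mu}_\sigma}{}\lesssim_{K,\kappa}(1-\delta_0\eta)^t\cdot \sqrt{n}$ for $\delta_0\equiv \kappa/2$ (valid for $\eta\kappa\leq 1$).

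I do not foresee a serious obstacle here: the permissible range $t\leq e^{n/C_0}$ is absorbed entirely by the probability $c_0 e^{-n/c_0}$ of $\mathscr{E}_A^c$, and the smoothing parameter $\sigma\geq 0$ enters only through the uniform Hessian bound $\partial_{11}^2\mathsf{L}_\sigma\leq 1/4$. The only mild conceptual point is that at $\sigma=0$, $\mathsf{L}_0$ is non-smooth in its second argument, but this is irrelevant for the contraction argument, which relies only on differentiability in the first argument; the first-order condition for $\hat{\mu}_0=\hat{\mu}$ itself is well-defined almost surely in $\xi$ since $\{A_i^\top \mu_0+\xi_i = 0\}$ has probability zero under the logistic law.
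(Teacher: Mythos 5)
Your proposal is correct. The contraction half is essentially the paper's argument: the paper likewise writes $\mu^{(t+1)}_\sigma-\hat{\mu}_\sigma=\mathrm{diag}(\alpha^{(t)}_\sigma)\big(I-\eta\sum_i\beta^{(t)}_{\sigma;i}A_iA_i^\top\big)(\mu^{(t)}_\sigma-\hat{\mu}_\sigma)$ with $\alpha^{(t)}_\sigma\in[0,1/(1+\eta\kappa)]^n$, $\beta^{(t)}_{\sigma;i}\in[0,1]$, and contracts by $(1+\eta\kappa)^{-1}$ on the bounded-operator-norm event; your observation that a single $t$-independent event $\{\pnorm{A}{\op}\leq C_K\}$ suffices (rather than a union bound over $t$) is a minor tidying. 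Where you genuinely diverge is the a priori bound on $\pnorm{\hat{\mu}_\sigma}{}$: the paper runs the fixed-point identity on $\hat{\mu}_\sigma$ itself and then spends most of the proof on a concentration analysis of $\pnorm{\sum_i A_i(2\varphi_\sigma(A_i^\top\mu_0+\xi_i)-1)}{}$ (splitting off the centered part and using a leave-one-coordinate-out decoupling of $A_{ij}$ from $\mathsf{F}_\sigma(A_i^\top\mu_0)$), arriving at $\pnorm{\hat{\mu}_\sigma}{}/\sqrt{n}\lesssim\sqrt{\log n}$ with probability $1-Cn^{-D}$. Your energy argument — comparing the objective at $\hat{\mu}_\sigma$ and at $0$, using $\mathsf{L}_\sigma(0,\cdot;\cdot)=\log 2$ and $\mathsf{f}(x)\geq(\kappa/2)x^2$ — is deterministic, uniform in $\sigma\geq 0$, and yields the stronger bound $\pnorm{\hat{\mu}_\sigma}{}/\sqrt{n}\lesssim_{K,\kappa}1$; it is the same device the paper itself uses for the a priori bound in the proof of Theorem \ref{thm:avg_universality_ERM}, and it is entirely adequate here since the stated conclusion only demands the weaker $C_0\sqrt{\log n}$ level. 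One cosmetic point: take $\delta_0=\min(\kappa/2,1/4)$ rather than $\kappa/2$ so that $\delta_0\in(0,1/2)$ as stated; since decreasing $\delta_0$ only weakens the inequality $(1+\eta\kappa)^{-1}\leq 1-\delta_0\eta$, this costs nothing.
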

\begin{proof}
	Note that $\rho'(x)=1/(1+e^{-x})$, $\rho''(x)=e^x/(1+e^x)^2$, and
	\begin{align*}
	\partial_1\mathsf{L}_\sigma(x,y;\xi)&=-(2\varphi_\sigma(y+\xi)-1)\rho'\big(-(2\varphi_\sigma(y+\xi)-1)x\big)\in [-1,1],\\
	\partial_1^2 \mathsf{L}_\sigma (x,y;\xi)&=(2\varphi_\sigma(y+\xi)-1)^2\rho''\big(-(2\varphi_\sigma(y+\xi)-1)x\big) \in [0,1].
	\end{align*}
	Using (\ref{eqn:ERM_first_order_logit})-(\ref{def:ERM_grad_descent_logit}) with $\mathsf{L}$ replaced by $\mathsf{L}_\sigma$, and Lemma \ref{lem:prox_Lip}, for some $\alpha_\sigma^{(t)}\in [0,1/(1+\eta\kappa)]^n$ and $\beta_\sigma^{(t)}\in [0,1]^m$,
	\begin{align*}
	\mu^{(t+1)}_\sigma-\hat{\mu}_\sigma&=\mathrm{diag}(\alpha_\sigma^{(t)})\bigg(\mu^{(t)}_\sigma-\hat{\mu}_\sigma- \eta \sum_{i \in [m]} \beta^{(t)}_{\sigma;i} A_i A_i^\top \big(\mu^{(t)}_\sigma-\hat{\mu}_\sigma\big) \bigg)\\
	& = \mathrm{diag}(\alpha^{(t)}_\sigma)\bigg(I-\eta \sum_{i \in [m]} \beta^{(t)}_{\sigma;i} A_i A_i^\top \bigg)\big(\mu^{(t)}_\sigma-\hat{\mu}_\sigma\big).
	\end{align*}
	Consequently, for $\eta\leq 1/C_1$, with probability at least $1-C_1 e^{-n/C_1}$, 
	\begin{align*}
	\pnorm{\mu^{(t+1)}_\sigma-\hat{\mu}_\sigma}{} \leq \frac{1}{1+\eta\kappa} \pnorm{\mu^{(t)}_\sigma-\hat{\mu}_\sigma}{}.
	\end{align*}
	Iterating the bound, we have for the same range of $\eta$, with probability at least $1-C_1 te^{-n/C_1}$, 
	\begin{align}\label{ineq:logistic_error_1}
	\frac{\pnorm{\mu^{(t)}_\sigma-\hat{\mu}_\sigma}{}}{\sqrt{n}} \leq \frac{1}{(1+\eta\kappa)^t} \cdot  \frac{\pnorm{\hat{\mu}_\sigma}{}}{\sqrt{n}}.
	\end{align}
	Below we shall provide a bound on $\pnorm{\hat{\mu}_\sigma}{}/\sqrt{n}$. Using  (\ref{eqn:ERM_first_order_logit}) with $\mathsf{L}$ replaced by $\mathsf{L}_\sigma$, Lemma \ref{lem:prox_Lip} and $\prox_{\eta \mathsf{f}_n}(0)=0$, for some $\alpha_\sigma \in [0,1/(1+\eta\kappa)]^n$ and $\beta_\sigma \in [0,1]^m$,
	\begin{align*}
	\hat{\mu}_\sigma=\mathrm{diag}(\alpha_\sigma)\bigg[\bigg(I-\eta \sum_{i \in [m]} \beta_{\sigma;i} A_i A_i^\top \bigg) \hat{\mu}_\sigma-\eta \sum_{i \in [m]} A_i \partial_1 \mathsf{L}_\sigma(0,A_i^\top \mu_0;\xi_i)\bigg].
	\end{align*}
	So for $\eta= 1/C_2$, with probability at least $1-C_2 e^{-n/C_2}$, 
	\begin{align}\label{ineq:logistic_error_2}
	\pnorm{\hat{\mu}_\sigma}{}&\leq \frac{1}{1+\kappa/C_2}\pnorm{\hat{\mu}_\sigma}{}+\frac{1}{2C_2} \biggpnorm{\sum_{i \in [m]} A_i \big(2\varphi_\sigma(A_i^\top \mu_0+\xi_i)-1\big) }{}.
	\end{align}
	Note that with $\mathsf{F}_\sigma(t)\equiv \E_\xi \varphi_\sigma (t+\xi)$, 
	\begin{align}\label{ineq:logistic_error_3}
	&\biggpnorm{\sum_{i \in [m]} A_i \big(2 \varphi_\sigma(A_i^\top \mu_0+\xi_i)-1\big) }{}^2\lesssim \sum_{j \in [n]} \bigg(\sum_{i \in [m]} A_{ij}\big(\varphi_\sigma(A_i^\top \mu_0+\xi_i)-\mathsf{F}_\sigma(A_i^\top \mu_0)\big) \bigg)^2\nonumber\\
	&\qquad + \sum_{j \in [n]}\bigg(\sum_{i \in [m]} A_{ij}\big(2\mathsf{F}_\sigma(A_i^\top \mu_0)-1 \big)\bigg)^2 \equiv S_1+S_2. 
	\end{align}
	
	We first handle $S_1$. For fixed $j \in [n]$, $\big\{D_{ij}\equiv A_{ij} \big(\varphi_\sigma(A_i^\top \mu_0+\xi_i)-\mathsf{F}_\sigma(A_i^\top \mu_0)\big): i \in [m]\big\}$ are centered, independent variables with $\max_{i \in [m]} \pnorm{n^{1/2} D_{ij}}{\psi_2}\lesssim K$. By subgaussian concentration and a union bound, we conclude that with probability at least $1-C n^{-D}$, $S_1\leq C n \log n$. 
	
	Next we handle $S_2$. To this end, note that for any $t \in \R$,
	\begin{align*}
	\abs{\mathsf{F}_\sigma'(t)}& = \frac{1}{\sigma}\biggabs{\E \varphi'\bigg(\frac{t+\xi}{\sigma}\bigg)}\leq \frac{\pnorm{\varphi'}{\infty}}{\sigma}\Prob\big(\xi \in [-t\pm \sigma]\big) \leq \pnorm{\varphi'}{\infty}. 
	\end{align*}
	This means that for fixed $j \in [n]$, for some (random) $\tau_j \in [\pm \pnorm{\varphi'}{\infty}]$,
	\begin{align*}
	\mathsf{F}_\sigma(A_i^\top \mu_0)&= \mathsf{F}_\sigma(A_{i;[-j]}^\top \mu_0+ A_{ij}\mu_{0,j})= \mathsf{F}_\sigma\big(A_{i;[-j]}^\top \mu_0\big)+ \tau_j A_{ij}\mu_{0,j},
	\end{align*}
	and therefore 
	\begin{align*}
	S_2&\lesssim \sum_{j \in [n]}\bigg(\sum_{i \in [m]} A_{ij}\bigg)^2+ \sum_{j \in [n]}\bigg(\sum_{i \in [m]} A_{ij}\mathsf{F}_\sigma\big(A_{i;[-j]}^\top \mu_0\big)\bigg)^2+ \sum_{j \in [n]}\bigg(\sum_{i \in [m]} A_{ij}^2\bigg)^2 \mu_{0,j}^2.
	\end{align*}
	For the first two terms above, we may use subgaussian inequality and a union bound to conclude that with probability at least $1-Cn^{-D}$, these terms are bounded by $C n\log n$. For the third term, a standard large deviation bound yields that $\sum_{i \in [m]} A_{ij}^2\leq C$ holds for all $j \in [n]$ with probability at least $1-Cn^{-D-1}$, so a union bound concludes that the third term is bounded by $Cn$ with probability at least $1-Cn^{-D}$. So with probability at least $1-Cn^{-D}$, $S_2\leq C n\log n$. 
	
	Combining the above arguments with (\ref{ineq:logistic_error_3}), we have with probability at least $1-C_3 n^{-D}$,
	\begin{align*}
	\biggpnorm{\sum_{i \in [m]} A_i \big(2 \varphi_\sigma(A_i^\top \mu_0+\xi_i)-1\big) }{}\leq C_3 \sqrt{n\log n}.  
	\end{align*}
	Combined with (\ref{ineq:logistic_error_2}), with probability at least $1-C_4 n^{-D}$,
	\begin{align}\label{ineq:logistic_error_4}
	\frac{\pnorm{\hat{\mu}_\sigma}{}}{\sqrt{n}}\leq C_4 \sqrt{\log n}.
	\end{align}
	It is easy to verify the same estimate for $\sigma=0$. The claim now follows from (\ref{ineq:logistic_error_1}). 
\end{proof}

Next we prove that $\hat{\mu}_\sigma$ is close to $\hat{\mu}$ in $\ell_2$ for small smoothing parameters $\sigma>0$.

\begin{lemma}\label{lem:logistic_smooth_error}
	Suppose the following hold.
	\begin{enumerate}
		\item For some $K>1$, $m/n \in [1/K,K]$, $\pnorm{\mu_0}{}/\sqrt{n}\leq K$, and $A=A_0/\sqrt{m}$ where $A_0$ is an $m\times n$ random matrix whose entries are independent mean $0$ variables with $\max_{i,j} \pnorm{A_{0,ij}}{\psi_2}\leq K$.
		\item The regularizer $\mathsf{f}\geq \mathsf{f}(0)=0$ is $\kappa$-strongly convex for some $\kappa>0$. 
	\end{enumerate}
	Then for any $D>0$, there exists some $C_0=C_0(K,\kappa,D)>1$ such that if $\sigma\geq \log n/n$, with probability at least $1-C_0 n^{-D}$,
	\begin{align*}
	\frac{ \pnorm{\hat{\mu}-\hat{\mu}_\sigma}{} }{\sqrt{n}}\leq C_0 (\sigma \log n)^{1/4}. 
	\end{align*}
\end{lemma}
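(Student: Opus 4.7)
The plan is to use the standard convex-optimization comparison argument, coupling strong convexity of the regularized smoothed objective with an $L^\infty$-type control of the loss perturbation $\mathsf{L}_\sigma-\mathsf{L}_0$. Concretely, write
\[
F_\sigma(\mu)\equiv\sum_{i\in[m]}\mathsf{L}_\sigma(A_i^\top\mu,A_i^\top\mu_0;\xi_i)+\sum_{j\in[n]}\mathsf{f}(\mu_j),
\]
so that $\hat{\mu}$ minimizes $F_0$ and $\hat{\mu}_\sigma$ minimizes $F_\sigma$, and $F_\sigma$ is $\kappa$-strongly convex. Strong convexity at the minimizer $\hat\mu_\sigma$ gives
\[
\frac{\kappa}{2}\pnorm{\hat\mu-\hat\mu_\sigma}{}^2\leq F_\sigma(\hat\mu)-F_\sigma(\hat\mu_\sigma),
\]
and inserting $F_0(\hat\mu)\leq F_0(\hat\mu_\sigma)$ then bounds the right hand side by $\abs{F_\sigma(\hat\mu)-F_0(\hat\mu)}+\abs{F_\sigma(\hat\mu_\sigma)-F_0(\hat\mu_\sigma)}$. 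So the whole task reduces to estimating $\abs{F_\sigma-F_0}$ at the two (random) points $\hat\mu$ and $\hat\mu_\sigma$.

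For the pointwise loss perturbation, I would use that $2\varphi_\sigma(\cdot)-1,\,2\bm{1}_{\cdot\geq 0}-1\in[-1,1]$ agree outside $\{|y+\xi|\leq\sigma\}$, combined with the fact that $\rho$ is $1$-Lipschitz, to obtain the pointwise inequality
\[
\bigabs{\mathsf{L}_\sigma(x,y;\xi)-\mathsf{L}_0(x,y;\xi)}\leq 2\abs{x}\cdot\bm{1}_{|y+\xi|\leq\sigma}.
\]
Applied with $x=A_i^\top\hat\mu$, $y=A_i^\top\mu_0$, $\xi=\xi_i$ and Cauchy–Schwarz,
\[
\abs{F_\sigma(\hat\mu)-F_0(\hat\mu)}\leq 2\,N_\sigma^{1/2}\cdot\pnorm{A\hat\mu}{},\qquad N_\sigma\equiv\#\{i\in[m]:\abs{A_i^\top\mu_0+\xi_i}\leq\sigma\},
\]
and identically at $\hat\mu_\sigma$.

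The three remaining ingredients are routine high-probability estimates. First, the operator-norm bound $\pnorm{A}{\op}\lesssim 1$ together with Lemma \ref{lem:logistic_error} (applied both at the smoothing level $0$ and at $\sigma$) gives $\pnorm{A\hat\mu}{}\vee\pnorm{A\hat\mu_\sigma}{}\lesssim\sqrt{n\log n}$ with probability $1-C n^{-D}$. Second, for $N_\sigma$, conditional on $A$ the indicators $\bm{1}_{|A_i^\top\mu_0+\xi_i|\leq\sigma}$ are independent Bernoullis whose means are uniformly $\leq 2\sigma\cdot\pnorm{\rho''}{\infty}\leq \sigma/2$ because $\xi_i$ is standard logistic and therefore has bounded density; a Chernoff/Bernstein bound, in the regime $m\sigma\gtrsim\log n$ guaranteed by $\sigma\geq\log n/n$, yields $N_\sigma\leq C m\sigma$ with probability at least $1-Cn^{-D}$. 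Combining these,
\[
\abs{F_\sigma(\hat\mu)-F_0(\hat\mu)}+\abs{F_\sigma(\hat\mu_\sigma)-F_0(\hat\mu_\sigma)}\lesssim \sqrt{m\sigma}\cdot\sqrt{n\log n}\lesssim n\sqrt{\sigma\log n},
\]
and plugging into the strong-convexity inequality gives $\pnorm{\hat\mu-\hat\mu_\sigma}{}^2/n\lesssim\sqrt{\sigma\log n}$, i.e.\ the claimed $(\sigma\log n)^{1/4}$ bound.

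The only mildly delicate point is the control of $N_\sigma$: one must apply the density upper bound conditionally on $A$ (so the $A_i^\top\mu_0$ are frozen), then take a union bound with the $A$-measurable events from Lemma \ref{lem:logistic_error}. The assumption $\sigma\geq\log n/n$ is exactly what is needed so that the Poissonian deviation $\log n$ does not dominate the mean $m\sigma$, which is why that hypothesis appears in the statement.
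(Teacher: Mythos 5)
Your proposal is correct and follows essentially the same route as the paper's proof: an optimality/strong-convexity comparison, the pointwise bound $\abs{\mathsf{L}_\sigma-\mathsf{L}_0}\lesssim\abs{x}\bm{1}_{\abs{y+\xi}\leq\sigma}$, Cauchy--Schwarz against the count $N_\sigma$, Bernstein-type concentration of $N_\sigma$ around its mean $\lesssim m\sigma$ (where $\sigma\geq\log n/n$ keeps the deviation subdominant), and the $\ell_2$ bounds on $\hat\mu,\hat\mu_\sigma$ from Lemma \ref{lem:logistic_error}. The only cosmetic difference is that you invoke strong convexity of the smoothed objective at $\hat\mu_\sigma$ whereas the paper sandwiches the two objectives and then uses strong convexity of the unsmoothed one at $\hat\mu$; these are symmetric variants of the same argument.
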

\begin{proof}
	Let for $\sigma\geq 0$
	\begin{align*}
	\mathcal{L}_\sigma(\mu)\equiv \frac{1}{n}\sum_{i \in [m]} \mathsf{L}_\sigma(A_i^\top \mu, A_i^\top \mu_0;\xi_i)+ \frac{1}{n} \sum_{j \in [n]} \mathsf{f}(\mu_j). 
	\end{align*}
	Using the optimality of $\hat{\mu}_\sigma$, we have
	\begin{align}\label{ineq:logistic_smooth_error_1}
	\mathcal{L}_\sigma(\hat{\mu}_\sigma)&\leq \mathcal{L}_\sigma(\hat{\mu})\leq \mathcal{L}_0(\hat{\mu})+\err_{1;\sigma},
	\end{align}
	where
	\begin{align*}
	\err_{1;\sigma}&\equiv \frac{1}{n}\sum_{i \in [m]} \abs{(\mathsf{L}_\sigma-\mathsf{L}_0)(A_i^\top \hat{\mu}, A_i^\top \mu_0;\xi_i)}\\
	&\leq \frac{2\pnorm{\rho'}{\infty}}{n}\sum_{i \in [m]} \abs{A_i^\top \hat{\mu}}\cdot\abs{(\varphi_\sigma-\varphi_0)(A_i^\top \mu_0+\xi_i)}\\
	&\leq 2\pnorm{A}{\op}\cdot \frac{ \pnorm{\hat{\mu}}{} }{\sqrt{n}}\cdot \bigg(\frac{1}{n}\sum_{i \in [m]}\bm{1}\big(\abs{A_i^\top \mu_0+\xi_i}\leq \sigma\big)\bigg)^{1/2}.
	\end{align*}
	Now using (i) subgaussian inequality for $\pnorm{A}{\op}$ and Bernstein's inequality for $\sum_{i \in [m]}\bm{1}(\abs{A_i^\top \mu_0+\xi_i})$, and (ii) the bound for $\pnorm{\hat{\mu}}{}$ in Lemma \ref{lem:logistic_error}, with $\phi(\sigma)\equiv \Prob\big(A_1^\top \mu_0+\xi_1\in [\pm \sigma] \big)$, we have with probability at least $1-C_1 n^{-D}$, if $\sigma\geq \log n/n$,
	\begin{align*}
	\err_{1;\sigma}&\lesssim \sqrt{\log n}\cdot \Big(\phi(\sigma)+n^{-1/2}\sqrt{\log n}\cdot \phi^{1/2}(\sigma)+n^{-1}\log n\Big)^{1/2}\leq C_1 \sqrt{\sigma \log n}.
	\end{align*}
	Combined with (\ref{ineq:logistic_smooth_error_1}), with the same probability estimate, we have 
	\begin{align}\label{ineq:logistic_smooth_error_2}
	\mathcal{L}_\sigma(\hat{\mu}_\sigma)&\leq \mathcal{L}_\sigma(\hat{\mu})\leq \mathcal{L}_0(\hat{\mu})+C_1\sqrt{\sigma \log n}.
	\end{align}
	On the other hand, using the optimality of $\hat{\mu}$, if $\sigma\geq \log n/n$,
	\begin{align*}
	\mathcal{L}_0(\hat{\mu})\leq \mathcal{L}_0(\hat{\mu}_\sigma)\leq \mathcal{L}_\sigma(\hat{\mu}_\sigma)+\err_{2;\sigma},
	\end{align*}
	where 
	\begin{align*}
	\err_{2;\sigma}&\equiv \frac{1}{n}\sum_{i \in [m]} \abs{(\mathsf{L}_\sigma-\mathsf{L}_0)(A_i^\top \hat{\mu}_\sigma, A_i^\top \mu_0;\xi_i)}\\
	&\leq 2\pnorm{A}{\op}\cdot \frac{ \pnorm{\hat{\mu}_\sigma}{} }{\sqrt{n}}\cdot \bigg(\frac{1}{n}\sum_{i \in [m]}\bm{1}\big(\abs{A_i^\top \mu_0+\xi_i}\leq \sigma\big)\bigg)^{1/2}.
	\end{align*}
	Using the same argument as above, if $\sigma\geq \log n/n$, we have with probability at least $1-C_2 n^{-D}$, 
	\begin{align}\label{ineq:logistic_smooth_error_3}
	\mathcal{L}_0(\hat{\mu})\leq \mathcal{L}_0(\hat{\mu}_\sigma)\leq \mathcal{L}_\sigma(\hat{\mu}_\sigma)+C_2 \sqrt{\sigma \log n}.
	\end{align}
	Combining (\ref{ineq:logistic_smooth_error_2}) and (\ref{ineq:logistic_smooth_error_3}), if $\sigma\geq \log n/n$, with probability at least $1-C_3 n^{-D}$, 
	\begin{align*}
	\abs{\mathcal{L}_0(\hat{\mu})- \mathcal{L}_0(\hat{\mu}_\sigma)  }\leq C_3\sqrt{\sigma \log n}.
	\end{align*}
	Finally using the $\kappa/n$-strong convexity of $\mathcal{L}_0$ to conclude. 
\end{proof}

Fix $k \in [m]$ and $\sigma\geq 0$. Due to technical reasons, we consider the following leave-one-sample-out version of $\hat{\mu}_\sigma$: 
\begin{align}\label{def:ERM_logit_equiv_loo}
\hat{\mu}_{\sigma;(-k)} \equiv \argmin_{\mu \in \R^n } \bigg\{\sum_{i \neq k} \mathsf{L}_\sigma(A_i^\top \mu, A_i^\top \mu_0;\xi_i)+  \sum_{j \in [n]} \mathsf{f}(\mu_j) \bigg\}.
\end{align}
Similar to (\ref{eqn:ERM_first_order_logit}), $\hat{\mu}_{\sigma;(-k)}$ satisfies the following first-order condition: for any $\eta>0$, 
\begin{align}\label{eqn:ERM_first_order_logit_loo}
\hat{\mu}_{\sigma;(-k)} = \prox_{\eta \mathsf{f}_n}\bigg(\hat{\mu}_{\sigma;(-k)}-\eta \sum_{i \neq k} A_i \partial_1 \mathsf{L}_\sigma(A_i^\top \hat{\mu}_{\sigma;(-k)}, A_i^\top \mu_0;\xi_i) \bigg).
\end{align}
Similar to (\ref{def:ERM_grad_descent_logit}), consider the following leave-one-sample-out proximal gradient descent algorithm: for $\eta>0$, let for $t=1,2,\ldots,$
\begin{align}\label{def:ERM_grad_descent_logit_loo}
\mu^{(t)}_{\sigma;(-k)} \equiv \prox_{\eta \mathsf{f}_n}\bigg(\mu^{(t-1)}_{\sigma;(-k)}-\eta \sum_{i \neq k} A_i \partial_1 \mathsf{L}_\sigma(A_i^\top \mu^{(t-1)}_{\sigma;(-k)}, A_i^\top \mu_0;\xi_i) \bigg),
\end{align}
with the initialization $\mu^{(0)}_{\sigma;(-k)}=0$.

\begin{lemma}\label{lem:logistic_loo}
Suppose the following hold.
\begin{enumerate}
	\item For some $K>1$, $m/n \in [1/K,K]$, $\pnorm{\mu_0}{\infty}\leq K$, and $A=A_0/\sqrt{m}$ where $A_0$ is an $m\times n$ random matrix whose entries are independent mean $0$ variables with $\max_{i,j} \pnorm{A_{0,ij}}{\psi_2}\leq K$.
	\item The regularizer $\mathsf{f}\geq \mathsf{f}(0)=0$ is $\kappa$-strongly convex for some $\kappa>0$.
\end{enumerate}
Then there exists some $C_0=C_0(K,\kappa)>1$  such that for all $\sigma\geq 0$,
\begin{align*}
& \sup_{\xi \in \R^m} \bigg\{\sup_{\eta\leq 1/C_0}\sup_{t \leq e^{n/C_0}}\Prob^\xi\Big(\pnorm{\mu_\sigma^{(t)}-\mu^{(t)}_{\sigma;(-k)}}{}\geq C_0 \Big)+ \Prob^\xi\Big(\pnorm{\hat{\mu}_\sigma-\hat{\mu}_{\sigma;(-k)}}{}\geq C_0 \Big)  \bigg\}\leq C_0 e^{-n/C_0}.
\end{align*}
\end{lemma}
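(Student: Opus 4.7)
The plan is to establish the two stability estimates separately, exploiting the fact that $\partial_1 \mathsf{L}_\sigma \in [-1,1]$ and $\partial_1^2 \mathsf{L}_\sigma \in [0,1]$ uniformly in $\sigma \geq 0$ (computed explicitly in the proof of Lemma \ref{lem:logistic_error}). The only stochastic ingredient needed is that $\pnorm{A_k}{} = m^{-1/2}\pnorm{A_{0,k}}{} \leq C_0$ with $\Prob^\xi$-probability at least $1 - e^{-n/C_0}$, which follows from standard sub-Gaussian norm concentration and is independent of $\xi$.

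For the estimator difference $\hat{\mu}_\sigma - \hat{\mu}_{\sigma;(-k)}$, I would use the $\kappa/n$-strong convexity of the leave-one-out cost $\mathcal{L}_{\sigma;(-k)}(\mu) \equiv n^{-1}\sum_{i \neq k}\mathsf{L}_\sigma(A_i^\top \mu, A_i^\top \mu_0;\xi_i) + n^{-1}\sum_j \mathsf{f}(\mu_j)$, inherited from $\mathsf{f}$ alone. This gives
\begin{align*}
\frac{\kappa}{2n}\pnorm{\hat{\mu}_\sigma - \hat{\mu}_{\sigma;(-k)}}{}^2 \leq \mathcal{L}_{\sigma;(-k)}(\hat{\mu}_\sigma) - \mathcal{L}_{\sigma;(-k)}(\hat{\mu}_{\sigma;(-k)}).
\end{align*}
Rewriting the right side as $\mathcal{L}_\sigma(\hat{\mu}_\sigma) - \mathcal{L}_\sigma(\hat{\mu}_{\sigma;(-k)}) - n^{-1}[\mathsf{L}_\sigma(A_k^\top \hat{\mu}_\sigma,\cdot) - \mathsf{L}_\sigma(A_k^\top \hat{\mu}_{\sigma;(-k)},\cdot)]$, bounding the first difference by $0$ via optimality of $\hat{\mu}_\sigma$, and estimating the second via $\pnorm{\partial_1 \mathsf{L}_\sigma}{\infty} \leq 1$ and Cauchy–Schwarz yields $\mathcal{L}_{\sigma;(-k)}(\hat{\mu}_\sigma) - \mathcal{L}_{\sigma;(-k)}(\hat{\mu}_{\sigma;(-k)}) \leq n^{-1}\pnorm{A_k}{}\pnorm{\hat{\mu}_\sigma - \hat{\mu}_{\sigma;(-k)}}{}$. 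Dividing yields $\pnorm{\hat{\mu}_\sigma - \hat{\mu}_{\sigma;(-k)}}{} \leq 2\pnorm{A_k}{}/\kappa$.

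For the iterate difference $\mu^{(t)}_\sigma - \mu^{(t)}_{\sigma;(-k)}$, I would expand the two recursions (\ref{def:ERM_grad_descent_logit}) and (\ref{def:ERM_grad_descent_logit_loo}) and apply Lemma \ref{lem:prox_Lip}, which gives that $\prox_{\eta \mathsf{f}_n}$ is $(1+\eta\kappa)^{-1}$-Lipschitz. The arguments inside the two proxes differ by
\begin{align*}
\big(\mu^{(t-1)}_\sigma - \mu^{(t-1)}_{\sigma;(-k)}\big) - \eta \sum_{i\neq k} A_i\big[\partial_1\mathsf{L}_\sigma(A_i^\top\mu^{(t-1)}_\sigma,\cdot) - \partial_1\mathsf{L}_\sigma(A_i^\top\mu^{(t-1)}_{\sigma;(-k)},\cdot)\big] - \eta A_k \partial_1\mathsf{L}_\sigma(A_k^\top\mu^{(t-1)}_\sigma,\cdot),
\end{align*}
and the first two terms combine into $(I - \eta H^{(t-1)})(\mu^{(t-1)}_\sigma - \mu^{(t-1)}_{\sigma;(-k)})$ where $H^{(t-1)} = \sum_{i\neq k}\beta_i^{(t-1)} A_i A_i^\top$ with $\beta_i^{(t-1)} \in [0,1]$ arising from a mean-value expansion of $\partial_1\mathsf{L}_\sigma$ (using $\partial_1^2\mathsf{L}_\sigma \in [0,1]$). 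Since $0 \preceq H^{(t-1)} \preceq \pnorm{A}{\op}^2 I$, for $\eta\leq 1/C_0$ small enough and on the event $\{\pnorm{A}{\op}\leq C_0\}$ of probability $\geq 1 - e^{-n/C_0}$, $\pnorm{I - \eta H^{(t-1)}}{\op} \leq 1$. Combined with the prox-contraction, this yields the recursion
\begin{align*}
\pnorm{\mu^{(t)}_\sigma - \mu^{(t)}_{\sigma;(-k)}}{} \leq (1+\eta\kappa)^{-1}\Big(\pnorm{\mu^{(t-1)}_\sigma - \mu^{(t-1)}_{\sigma;(-k)}}{} + \eta\pnorm{A_k}{}\Big),
\end{align*}
which iterates (from initialization $0$) to the geometric-series bound $\pnorm{\mu^{(t)}_\sigma - \mu^{(t)}_{\sigma;(-k)}}{} \leq \pnorm{A_k}{}/\kappa$, uniformly in $t$. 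Combining both pieces with the high-probability bound on $\pnorm{A_k}{}$ and taking a supremum over $\xi$ (which enters only through $\sigma$-independent Lipschitz bounds on $\partial_1\mathsf{L}_\sigma$) concludes. There is no real obstacle here; the main point is simply to verify that the $\sigma$-dependence drops out because of the uniform $\pnorm{\partial_1 \mathsf{L}_\sigma}{\infty} \leq 1$ and $\pnorm{\partial_1^2 \mathsf{L}_\sigma}{\infty} \leq 1$ bounds, so that the argument reduces to the standard leave-one-out stability for strongly convex regularized $M$-estimators.
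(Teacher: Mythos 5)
Your proposal is correct. For the iterate difference $\mu^{(t)}_\sigma-\mu^{(t)}_{\sigma;(-k)}$ you follow essentially the same route as the paper: expand the two proximal updates, absorb the shared samples into a PSD matrix $\sum_{i\neq k}\beta_i A_iA_i^\top$ via a mean-value expansion of $\partial_1\mathsf{L}_\sigma$ (using $\partial_1^2\mathsf{L}_\sigma\in[0,1]$), use the $(1+\eta\kappa)^{-1}$-contraction of $\prox_{\eta\mathsf{f}_n}$ from Lemma \ref{lem:prox_Lip} together with $\pnorm{I-\eta H^{(t-1)}}{\op}\leq 1$ on the high-probability event $\{\pnorm{A}{\op}\leq C\}$, bound the extra term by $\eta\pnorm{A_k}{}$ via $\pnorm{\partial_1\mathsf{L}_\sigma}{\infty}\leq 1$, and iterate to a geometric series. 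For the estimator difference $\hat{\mu}_\sigma-\hat{\mu}_{\sigma;(-k)}$ you depart from the paper: the paper applies the same contraction identity to the fixed-point equations (\ref{eqn:ERM_first_order_logit})--(\ref{eqn:ERM_first_order_logit_loo}), replacing both $\Delta\mu^{(t)}_{\sigma;(-k)}$ and $\Delta\mu^{(t-1)}_{\sigma;(-k)}$ by $\hat{\mu}_\sigma-\hat{\mu}_{\sigma;(-k)}$ and rearranging, whereas you use $\kappa/n$-strong convexity of the leave-one-out objective plus optimality of $\hat{\mu}_\sigma$ and the Lipschitz bound on the single dropped loss term. Your variant is a standard leave-one-out stability argument for strongly convex $M$-estimation; it has the minor advantage of not needing the step size $\eta$ or the bound on $\pnorm{A}{\op}$ for this half of the claim (only $\pnorm{A_k}{}\leq C$), while the paper's version keeps the two halves of the proof formally parallel. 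Both give $\pnorm{\hat{\mu}_\sigma-\hat{\mu}_{\sigma;(-k)}}{}\lesssim\pnorm{A_k}{}/\kappa$, uniformly in $\sigma\geq 0$ and $\xi$, as required.
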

\begin{proof}
Let $\Delta \mu^{(t)}_{\sigma;(-k)}\equiv \mu_\sigma^{(t)}-\mu^{(t)}_{\sigma;(-k)}$. Using (\ref{def:ERM_grad_descent_logit}) with $\mathsf{L}$ replaced by $\mathsf{L}_\sigma$, (\ref{def:ERM_grad_descent_logit_loo}) and Lemma \ref{lem:prox_Lip}, for some $\alpha_{\sigma;(-k)}^{(t)}\in [0,1/(1+\eta\kappa)]^n$ and $\beta_{\sigma;(-k)}^{(t)}\in [0,1]^m$,
\begin{align*}
\Delta \mu^{(t)}_{\sigma;(-k)}&=\mathrm{diag}\big(\alpha_{\sigma;(-k)}^{(t)}\big)\bigg[\bigg(I-\eta \sum_{i \in [m]}\big(\beta_{\sigma;(-k)}^{(t)}\big)_i A_iA_i^\top \bigg) \Delta \mu^{(t-1)}_{\sigma;(-k)}\\
&\qquad -\eta \cdot A_k \partial_1 \mathsf{L}_\sigma \big(A_k^\top \mu^{(t-1)}_{\sigma;(-k)}, A_k^\top \mu_0;\xi_k\big) \bigg].
\end{align*}
Consequently, using that $\pnorm{\partial_1 \mathsf{L}_\sigma}{\infty}\leq 1$, for $\eta\leq 1/C_1$, with probability at least $1-C_1 e^{-n/C_1}$,
\begin{align*}
\pnorm{\Delta \mu^{(t)}_{\sigma;(-k)}}{}\leq \frac{1}{1+\eta\kappa} \pnorm{\Delta \mu^{(t-1)}_{\sigma;(-k)}}{}+ \eta \pnorm{A_k}{}.
\end{align*}
Iterating the bound and using the subgaussian estimate for $\pnorm{A_k}{}$ to conclude the estimate for $\pnorm{\mu_\sigma^{(t)}-\mu^{(t)}_{\sigma;(-k)}}{}$. A completely similar argument (replacing both terms $\pnorm{\Delta \mu^{(t)}_{\sigma;(-k)}}{}$ and $\pnorm{\Delta \mu^{(t-1)}_{\sigma;(-k)}}{}$ above by $\pnorm{\hat{\mu}_\sigma-\hat{\mu}_{\sigma;(-k)}}{}$) yields the desired estimate for $\pnorm{\hat{\mu}_\sigma-\hat{\mu}_{\sigma;(-k)}}{}$.
\end{proof}

\begin{proof}[Proof of Theorem \ref{thm:universality_logistic}]
	For notational simplicity, we only work with $\psi_k\equiv \psi$.
	Fix $\sigma>0$. By Lemmas \ref{lem:logistic_error} and \ref{lem:logistic_loo}, on an event $E_{0;\sigma}$ with $\Prob(E_{0;\sigma}^c)\leq C_0 n^{-D}$, for $t\leq e^{n/C_0}$,
	\begin{align*}
	\max_{k\in [m]}\abs{A_k^\top \mu^{(t)}_\sigma}\leq \max_{k\in [m]}\abs{A_k^\top \mu^{(t)}_{\sigma;(-k)} }+ \max_{k\in [m]}\pnorm{A_k}{}\cdot \pnorm{ \mu^{(t)}_\sigma -\mu^{(t)}_{\sigma;(-k)}  }{}\leq C_0\log n\equiv L_n.
	\end{align*}	
	Let $\mathsf{G}^\xi_\sigma:\R^{m\times 2}\to \R^m $ be 
	\begin{align*}
	\mathsf{G}^\xi_\sigma(u)\equiv \big(\mathsf{G}^\xi_{\sigma,k}(u_{k\cdot})\big)_{k \in [m]}\equiv  \Big(\partial_1 \mathsf{L}_\sigma\big( (u_{k1}\wedge L_n)\vee (-L_n),u_{k2};\xi_k\big)\Big)_{k \in [m]}.
	\end{align*}
	Then some calculations show that
	\begin{align*}
	\sup_{\{\xi_i\}}\max_{k \in [m]}\Big(\abs{\mathsf{G}^\xi_{\sigma,k}(0)}+\pnorm{\mathsf{G}^\xi_{\sigma,k}}{\mathrm{Lip}}\Big)\lesssim 1\vee \frac{L_n}{\sigma}.
	\end{align*}
	The ($\mu^{(t)}_\sigma$ version of the) proximal gradient descent algorithm (\ref{def:ERM_grad_descent_logit}) can be reformulated in the form of (\ref{def:GFOM_asym}) via the following identification: Consider initialization $u^{(-1)}\equiv 0_m$ and $v^{(-1)}\equiv\mu_0$ at $t=-1$. For $t=0$, let $u^{(0)}\equiv A v^{(-1)}=A\mu_0$ and $v^{(0)}\equiv 0$. For $t\geq 1$, let
	\begin{align}\label{ineq:universality_logistic_0}
	\begin{cases}
	u^{(t)} = A\prox_{\eta \mathsf{f}_n}(v^{(t-1)}) \in \R^{m},\\
	v^{(t)} = A^\top \big[-\eta \mathsf{G}^\xi_\sigma (u^{(t)},u^{(0)})\big]+\prox_{\eta \mathsf{f}_n}(v^{(t-1)}) \in \R^{n},
	\end{cases}
	\end{align}
	Then on the event $E_{0;\sigma}$, for $t\leq e^{n/C_0}$, we have $\mu^{(t)}_\sigma\equiv \prox_{\eta \mathsf{f}_n}(v^{(t)})$.

	We assume without loss of generality that $\psi(0)=0$. Now using Theorem \ref{thm:universality_asym_avg} with $\psi_n\equiv \psi\circ \prox_{\eta \mathsf{f}_n}$, by first conditionally on $\xi$ and then taking expectation, we have for $\sigma\in (0,1)$ and $t \leq e^{n/C_1}$, 
	\begin{align*}
	&\E \biggabs{\frac{1}{n}\sum_{j \in [n]} \psi\big(\mu_{\sigma,j}^{(t)}(A)\big)-\frac{1}{n}\sum_{j \in [n]} \psi\big(\mu_{\sigma,j}^{(t)}(B)\big) }\bm{1}_{E_{0;\sigma}}\leq \big(C_1 \sigma^{-1}\log n\big)^{C_1 t^3}\cdot n^{-1/C_1 t^3}.
	\end{align*}	
	Using the simple apriori estimate $\abs{n^{-1}\sum_{j \in [n]} \psi\big(\mu_{\sigma,j}^{(t)}\big)}\lesssim  \pnorm{\mu_{\sigma}^{(t)}}{}^2/n+\big(\pnorm{\mu_{\sigma}^{(t)}}{}^2/n\big)^{1/2}$ and Lemma \ref{lem:logistic_error}, for $\sigma\in (0,1)$ and $t \leq e^{n/C_1}$, 
	\begin{align}\label{ineq:universality_logistic_1}
	\E \biggabs{\frac{1}{n}\sum_{j \in [n]} \psi\big(\mu_{\sigma,j}^{(t)}(A)\big)-\frac{1}{n}\sum_{j \in [n]} \psi\big(\mu_{\sigma,j}^{(t)}(B)\big) }\leq \big(C_1 \sigma^{-1}\log n\big)^{C_1 t^3}\cdot n^{-1/C_1 t^3}. 
	\end{align}
	On the other hand, for $M \in \{A,B\}$, using Lemma \ref{lem:logistic_error}, for $\eta\leq 1/C_2$ and $t\leq e^{n/C_2}$,
	\begin{align*}
	&\E \biggabs{\frac{1}{n}\sum_{j \in [n]} \psi\big(\mu_{\sigma,j}^{(t)}(M)\big)-\frac{1}{n}\sum_{j \in [n]} \psi\big(\hat{\mu}_{\sigma,j}(M)\big) }\\
	&\lesssim \frac{1}{n}\cdot  \E \pnorm{\mu^{(t)}_\sigma(M)-\hat{\mu}_\sigma(M) }{}\cdot \big(\sqrt{n} +\E \pnorm{\mu^{(t)}_\sigma(M)}{}+ \E \pnorm{\hat{\mu}_\sigma(M)}{}\big)\\
	&\lesssim \Big( (1-\delta_0 \eta)^t \sqrt{\log n}+ b_{t,n}(M)\cdot n^{-D}\Big)\cdot \big(\sqrt{\log n}+b_{t,n}(M)\cdot n^{-D}\big),
	\end{align*}
	where $b_{n,t}^2(M)\equiv \E \pnorm{\mu^{(t)}_\sigma(M)}{}^2/n+ \E \pnorm{\hat{\mu}_\sigma(M)}{}^2/n$. Now let us get a (crude) bound on $b_{n,t}^2(M)$. First, using the optimality of $\hat{\mu}_\sigma(M)$, we have
	\begin{align*}
	\frac{ \pnorm{\hat{\mu}_\sigma}{}^2}{n}\lesssim \frac{1}{n}\sum_{i \in [m]} \mathsf{L}_\sigma(0,A_i^\top \mu_0;\xi_i)\, \implies\, \frac{\E \pnorm{\hat{\mu}_\sigma}{}^2}{n}\leq C_3. 
	\end{align*}
	Next, using the definition (\ref{def:ERM_grad_descent_logit}), we have for $\eta\leq 1$,
	\begin{align*}
	\pnorm{\mu^{(t)}}{}\leq \pnorm{\mu^{(t-1)}}{}+\sum_{i \in [m]} \pnorm{A_i}{}\leq \cdots \leq t \sum_{i \in [m]} \pnorm{A_i}{}\,\implies\, \E \pnorm{\mu^{(t)}}{}^2\leq C_3 t^2 n^2. 
	\end{align*}
	Combining the above three displays and adjusting constants, we have for $M\in \{A,B\}$, $\eta\leq 1/C_4$ and $t\leq n$, 
	\begin{align}\label{ineq:universality_logistic_2}
	\E \biggabs{\frac{1}{n}\sum_{j \in [n]} \psi\big(\mu_{\sigma,j}^{(t)}(M)\big)-\frac{1}{n}\sum_{j \in [n]} \psi\big(\hat{\mu}_{\sigma,j}(M)\big) }\leq C_4 \cdot  \big[\log n (1-\delta_0 \eta)^t+n^{-D}\big]. 
	\end{align}
	Finally, using Lemma \ref{lem:logistic_smooth_error} and $\sup_{\sigma>0}\E \pnorm{\hat{\mu}_\sigma}{}^2\vee \E \pnorm{\hat{\mu}}{}^2\leq C_5 n$, for $M \in \{A,B\}$, 
	\begin{align}\label{ineq:universality_logistic_3}
	&\E \biggabs{\frac{1}{n}\sum_{j \in [n]} \psi\big(\hat{\mu}_{j}(M)\big)-\frac{1}{n}\sum_{j \in [n]} \psi\big(\hat{\mu}_{\sigma,j}(M)\big) }\nonumber\\
	&\lesssim \frac{1}{n}\cdot  \E \pnorm{\hat{\mu}(M)-\hat{\mu}_\sigma(M) }{}\cdot \big(\sqrt{n} +\E \pnorm{\hat{\mu}(M)}{}+ \E \pnorm{\hat{\mu}_\sigma(M)}{}\big)\nonumber\\
	&\leq C_5\cdot \big[(\sigma \log n)^{1/4}+n^{-D}\big].
	\end{align}
	Now combining (\ref{ineq:universality_logistic_1})-(\ref{ineq:universality_logistic_3}), for $\sigma \in (0,1)$, $\eta=1/C_6$ and $t\leq n$,
	\begin{align*}
	&\E\biggabs{\frac{1}{n}\sum_{j \in [n]} \psi\big(\hat{\mu}_j(A)\big)-\frac{1}{n}\sum_{j \in [n]} \psi\big(\hat{\mu}_j(B)\big) }\\
	&\lesssim (\sigma \log n)^{1/4}+ \log n\cdot (1-\delta_0/C_6)^t + \big(C_6 \sigma^{-1}\log n\big)^{C_6 t^3}\cdot n^{-1/C_6 t^3}. 
	\end{align*}
	Fix $\epsilon \in (0,1/7)$ to be chosen later. For the choice $t\equiv (\log n)^\epsilon$, $\log(1/\sigma)=(\log n)^{1-7\epsilon}$, the above bound reduces to $C \exp\big(-\delta' (\log n)^{\min\{\epsilon,1-7\epsilon\}}\big)$. Now we may choose $\epsilon=1/8$ to conclude. 
\end{proof}

\section{Proof of Theorem \ref{thm:grad_descent}}\label{section:proof_grad_descent}

For notational simplicity, we work with the full sample case $S_\cdot\equiv [m]$.

\noindent (\textbf{Step 1}). The gradient descent algorithm (\ref{def:ERM_stoc_grad_descent_general_loss}) can be reformulated in the form of the GFOM iterate (\ref{def:GFOM_asym}) via the following identification: Let $\mu^{(t)}-\mu_0\equiv v^{(t)}$, and 
\begin{align*}
\begin{cases}
u^{(t)}= A v^{(t-1)}\in \R^m,\\
v^{(t)}= A^\top \big[\eta \mathsf{L}'(\xi-u^{(t)})\big]+(1-\eta \lambda) v^{(t-1)}-\eta\lambda \mu_0\in \R^n,
\end{cases}
\end{align*}
with the initialization $u^{(0)}=0_m,v^{(0)}=-\mu_0$. In other words, we may take
\begin{align*}
&\mathsf{F}_t^{\langle 1\rangle}(v^{([0:t-1])})\equiv v^{(t-1)},\,\mathsf{F}_t^{\langle 2\rangle}(v^{([0:t-1])})\equiv (1-\eta \lambda)v^{(t-1)}-\eta\lambda \mu_0,\\
& \mathsf{G}_t^{\langle 1\rangle}(u^{([0:t-1])})\equiv 0,\, \mathsf{G}_t^{\langle 2\rangle}(u^{([0:t])})\equiv \eta \mathsf{L}'(\xi-u^{(t)})
\end{align*}
in the GFOM iterate (\ref{def:GFOM_asym}). Consequently, the transformation maps $\Phi_{t}: \R^{m\times [0:t]}\to \R^{m\times [0:t]},\Xi_{t}:\R^{n\times [0:t]}\to \R^{n\times [0:t]}$ are specified as follows. Let $\Phi_0\equiv \mathrm{id}(\R^m)$, $\Xi_0\equiv \mathrm{id}(\R^n)$, and $\mathfrak{U}^{(0)}=u^{(0)}=0_m$, $\mathfrak{V}^{(0)}\equiv v^{(0)}=-\mu_0$. For $t=1,2,\ldots$, execute the following steps:
\begin{enumerate}
	\item[(G1)] Let $\Phi_{t}:\R^{m\times [0:t]}\to \R^{m\times [0:t]}$ be defined as follows: for $w \in [0:t-1]$, $\big[\Phi_{t}(\mathfrak{u}^{([0:t])})\big]_{\cdot,w}\equiv \big[\Phi_{w}(\mathfrak{u}^{([0:w])})\big]_{\cdot,w}$, and for $w=t$,
	\begin{align*}
	\big[\Phi_{t}(\mathfrak{u}^{([0:t])})\big]_{\cdot,t} \equiv \mathfrak{u}^{(t)}+\eta\sum_{s \in [1:t-1]} \mathfrak{f}_{s}^{(t-1) }\circ  \mathsf{L}'\big(\xi-\big[\Phi_{s}(\mathfrak{u}^{([0:s])})\big]_{\cdot,s}\big),
	\end{align*}
	where the coefficient vectors $\{\mathfrak{f}_{s}^{(t-1) } \}_{s \in [1:t-1]}\subset \R^m$ are determined by
	\begin{align*}
	\mathfrak{f}_{s,k}^{(t-1) } \equiv \sum_{\ell \in [n]} \E A_{k\ell}^2\cdot \E^\xi \partial_{\mathfrak{V}_\ell^{(s)}}\bigiprod{e_\ell}{\big[ \Xi_{t-1} (\mathfrak{V}^{([0:t-1])})\big]_{\cdot,t-1}}, \quad  k \in [m].
	\end{align*}
	\item[(G2)] Let the Gaussian law of $\mathfrak{U}^{(t)}$ be determined via the following correlation specification: for $s \in [1:t]$ and $k \in [m]$,
	\begin{align*}
	\mathrm{Cov}\big(\mathfrak{U}_k^{(t)}, \mathfrak{U}_k^{(s)} \big)\equiv \sum_{\ell \in [n]} \E A_{k\ell}^2\cdot \E^\xi \prod_{\tau \in \{t,s\}} \bigiprod{e_\ell}{\big[ \Xi_{\tau-1} (\mathfrak{V}^{([0:\tau-1])})\big]_{\cdot,\tau-1}}.
	\end{align*}
	\item[(G3)] Let $\Xi_{t}:\R^{n\times [0:t]}\to \R^{n\times [0:t]}$ be defined as follows: for $w \in [0:t-1]$, $\big[\Xi_{t}(\mathfrak{v}^{([0:t])})\big]_{\cdot,w}\equiv \big[\Xi_{w}(\mathfrak{v}^{([0:w])})\big]_{\cdot,w}$, and for $w=t$,
	\begin{align*}
	\big[\Xi_{t}(\mathfrak{v}^{([0:t])})\big]_{\cdot,t} &\equiv \mathfrak{v}^{(t)}+\sum_{s \in [1:t]} \mathfrak{g}_{s}^{(t)}\circ \big[\Xi_{s-1}(\mathfrak{v}^{([0:s-1])}) \big]_{\cdot,s-1}\\
	&\qquad \qquad +(1-\eta\lambda)\cdot\big[\Xi_{t-1}(\mathfrak{v}^{([0:t-1])}) \big]_{\cdot,t-1}-\eta\lambda \mu_0,
	\end{align*}
	where the coefficient vectors $\{\mathfrak{g}_{s}^{(t)}\}_{s \in [1:t]}\subset \R^n$ are determined via
	\begin{align*}
	\mathfrak{g}_{s,\ell}^{(t)}\equiv \eta \sum_{k \in [m]} \E A_{k\ell}^2\cdot  \E^\xi \partial_{\mathfrak{U}_k^{(s)}} \bigiprod{e_k}{ \mathsf{L}'\big(\xi-\big[\Phi_{t} (\mathfrak{U}^{([0:t])})\big]_{\cdot,t}\big)},\quad \ell \in [n].
	\end{align*}
	\item[(G4)] Let the Gaussian law of $\mathfrak{V}^{(t)}$ be determined via the following correlation specification: for $s \in [1:t]$ and $\ell \in [n]$,
	\begin{align*}
	\mathrm{Cov}(\mathfrak{V}_\ell^{(t)},\mathfrak{V}_\ell^{(s)})\equiv \eta^2 \sum_{k \in [m]} \E A_{k\ell}^2\cdot  \E^\xi \prod_{\tau \in \{t,s\}} \bigiprod{e_k}{ \mathsf{L}'\big(\xi-\big[\Phi_{\tau} (\mathfrak{U}^{([0:\tau])})\big]_{\cdot,\tau}\big)}.
	\end{align*}
\end{enumerate}
Note that the above evolution only depends on $\Phi_t$ only via its last column, we may then identify $\Phi_t$ as its restriction to its last column.

\noindent (\textbf{Step 2}). In this step, we provide correspondence of (G1)-(G4) above to (1)-(4) in Definition \ref{def:grad_descent_se} as in the statement of the theorem. A major simplification in this case is to note the row-wise linearity of $\big\{\big[\Xi_{t}(\mathfrak{v}^{([0:t])})\big]_{\cdot,t}\big\}$. In the sequel, we always use the indices $k \in [m]$ and $\ell \in [n]$. Let $\mathsf{M}_\ell^{\mathfrak{V}}\in \R^{[0:\infty)\times [0:\infty)}$ be a matrix such that
\begin{align}\label{ineq:grad_descent_0}
\bigiprod{e_\ell}{\big[\Xi_{t}(\mathfrak{v}^{([0:t])})\big]_{\cdot,t}}\equiv \sum_{s \in [0:t]} (\mathsf{M}_\ell^{\mathfrak{V}})_{s,t} \mathfrak{v}^{(s)}_\ell = \bigiprod{e_t}{(\mathsf{M}_\ell^{\mathfrak{V}})^\top  \mathfrak{v}_\ell^{[0:\infty)}  }.
\end{align}
Clearly, $
\mathfrak{f}_{s,k}^{(t)} = \sum_{\ell \in [n]} \E A_{k\ell}^2\cdot (\mathsf{M}_{\ell}^{\mathfrak{V}})_{s,t}$ holds for $s \in [1:t]$. 
Recall $\Sigma^{\mathfrak{U}}_k\in \R^{[1:\infty)\times [1:\infty)}, \Sigma^{\mathfrak{V}}_\ell \in \R^{[0:\infty)\times [0:\infty)}$, which are understood as the covariance matrices associated with the Gaussian vectors $\mathfrak{U}^{([1:\infty))}_k, \mathfrak{V}^{([0:\infty))}_\ell\in \R^\infty$ (except for $\mathfrak{V}^{(0)}$). 	

Now we shall convert the above (G1)-(G4) into the stated recursion. First, by using (\ref{ineq:grad_descent_0}), (G2) can be rewritten as follows: for $s \in [1:t]$,
\begin{align*}
(\Sigma^{\mathfrak{U}}_k)_{t,s}&\equiv \mathrm{Cov}\big(\mathfrak{U}^{(t)}_k, \mathfrak{U}^{(s)}_k \big) = \sum_{\ell \in [n]} \E A_{k\ell}^2\cdot \bigiprod{(\mathsf{M}_{\ell}^{\mathfrak{V}})_{\cdot,t-1} }{\Sigma_\ell^{\mathfrak{V}} (\mathsf{M}_{\ell}^{\mathfrak{V}})_{\cdot,s-1}}.
\end{align*}
This corresponds to (1).

Next, (G1) corresponds to the identity in (2). Moreover, with 
\begin{align*}
W^{(t)}&\equiv W^{(t)}(\mathfrak{U}^{([0:t])})=\mathsf{L}'' \big(\xi-\big[\Phi_{t}(\mathfrak{U}^{([0:t])})\big]_{\cdot,t}\big) \in \R^m,\\
D_{s}^{(t)}&\equiv \Big(\partial_{\mathfrak{U}_k^{(s)}}\bigiprod{e_k}{\big[\Phi_{t}(\mathfrak{U}^{([0:t])})\big]_{\cdot,t}}\Big)_{k \in [m]} \in \R^m,
\end{align*}
(G1) implies that
\begin{align}\label{ineq:grad_descent_1}
D_{s}^{(t)}& = \delta_{s,t} 1_m - \eta \sum_{r \in [s:t-1]} \mathfrak{f}_{r}^{(t-1) } \circ W^{(r)} \circ D_{s}^{(r)}.
\end{align}
Applying (\ref{ineq:grad_descent_1}) to the term $D_{s}^{(r)}$ on the far right hand side of the above display, the left hand side of (\ref{ineq:grad_descent_1}) is equal to 
\begin{align*}
& \delta_{s,t} 1_m - \eta \sum_{r_1 \in [s:t-1]} \mathfrak{f}_{r_1}^{(t-1) } \circ W^{(r_1)}\circ \bigg( \delta_{s,r_1} 1_m - \eta \sum_{r_2 \in [s:r_1-1]} \mathfrak{f}_{r_2}^{(r_1-1) } \circ W^{(r_2)}\circ D_{s}^{(r_2)}  \bigg)\\
& = \delta_{s,t}1_m - \eta \cdot \mathfrak{f}_{s}^{(t-1) }\circ W^{(s)}+\eta^2 \sum_{s\leq r_2<r_1\leq t-1} \mathfrak{f}_{r_1}^{(t-1) }\circ \mathfrak{f}_{r_2}^{(r_1-1) }\circ W^{(r_1)} \circ W^{(r_2)} \circ D_{s}^{(r_2)}.
\end{align*}
From here, we may again apply (\ref{ineq:grad_descent_1}) to the term $D_{s}^{(r_2)}$ on the far right hand side of the above display, which shows that $D_{s}^{(t)}$ is equal to 
\begin{align*}
& \delta_{s,t}1_m - \eta\cdot\mathfrak{f}_{s}^{(t-1) }\circ W^{(s)}+ \eta^2\sum_{s<r_1\leq t-1} \mathfrak{f}_{r_1}^{(t-1) }\circ \mathfrak{f}_{s}^{(r_1-1) }\circ W^{(r_1)}\circ W^{(s)}\\
&\quad - \eta^3 \sum_{s\leq r_3< r_2<r_1\leq t-1} \mathfrak{f}_{r_1}^{(t-1) }\circ \mathfrak{f}_{r_2}^{(r_1-1) }\circ \mathfrak{f}_{r_3}^{(r_2-1) }\circ W^{(r_1)} \circ W^{(r_2)}  \circ W^{(r_3)} \circ D_{s}^{(r_3)}.
\end{align*}
Iterating this procedure, we finally arrive at the formula
\begin{align}\label{ineq:grad_descent_2}
D_{s}^{(t)}&= \delta_{s,t}1_m-\eta\cdot\mathfrak{f}_{s}^{(t-1) }\circ W^{(s)}\nonumber\\
&+\sum_{\tau \in [1:t-s-1]}(-\eta)^{\tau+1}  \sum_{s+1\leq r_\tau<r_{\tau-1}<\cdots<r_1\leq t-1} \bigodot_{\iota \in [1:\tau+1]}  \mathfrak{f}_{r_\iota}^{(r_{\iota-1}-1) } \circ W^{(r_\iota)}.
\end{align}
Using the above formula (\ref{ineq:grad_descent_2}), 
\begin{align*}
\mathfrak{g}_{s,\ell}^{(t)}&\equiv -\eta \sum_{k \in [m]} \E A_{k\ell}^2\cdot  \E^\xi   W_k^{(t)} \bigg\{ \delta_{s,t}-\eta\cdot\mathfrak{f}_{s,k}^{(t-1) } W_k^{(s)}\\
&\qquad\qquad +\sum_{\tau \in [1:t-s-1]}(-\eta)^{\tau+1}  \sum_{r_{[1:\tau]}\subset [s+1: t-1]} \prod_{\iota \in [1:\tau+1]}  \mathfrak{f}_{r_\iota,k}^{(r_{\iota-1}-1) } W_k^{(r_\iota)}\bigg\}.
\end{align*}
This gives the second identity in (3).

For (G3), using (\ref{ineq:grad_descent_0}) it can be rewritten as follows:
\begin{align*}
&\sum_{r \in [0:t]} (\mathsf{M}_\ell^{\mathfrak{V}})_{r,t} \mathfrak{v}^{(r)}_\ell= \bigiprod{e_\ell}{\big[\Xi_{t}(\mathfrak{v}^{([0:t])})\big]_{\cdot,t}} \\
& = \mathfrak{v}_\ell^{(t)}+\sum_{s \in [1:t]} \mathfrak{g}_{s,\ell}^{(t)}  \sum_{r \in [0:s-1]} (\mathsf{M}_\ell^{\mathfrak{V}})_{r,s-1} \mathfrak{v}^{(r)}_\ell  +(1-\eta \lambda) \sum_{r \in [0:t-1]} (\mathsf{M}_\ell^{\mathfrak{V}})_{r,t-1} \mathfrak{v}^{(r)}_\ell +\eta \lambda \mathfrak{v}^{(0)}_\ell\\
&=\eta \lambda \mathfrak{v}^{(0)}_\ell+\sum_{r \in [0:t-1]} \bigg(\sum_{s \in [r+1:t]} \mathfrak{g}_{s,\ell}^{(t)}  (\mathsf{M}_{\ell}^{\mathfrak{V}})_{r,s-1}  +(1-\eta \lambda)\cdot (\mathsf{M}_\ell^{\mathfrak{V}})_{r,t-1}\bigg)\cdot \mathfrak{v}^{(r)}_\ell+ \mathfrak{v}^{(t)}_\ell.
\end{align*}
This corresponding to the formula in (3).

Finally, (G4) corresponds to the formula in (4).

\noindent (\textbf{Step 3}).  By the identification between the gradient descent of the GFOM, we have 
\begin{align*}
\mu^{(t)}_\ell-\mu_{0,\ell}&= v^{(t)}_\ell=\bigiprod{e_\ell}{\big[\Xi_{t}(\mathfrak{v}^{([0:t])})\big]_{\cdot,t}}= \sum_{s \in [0:t]} (\mathsf{M}_{\ell}^{\mathfrak{V}})_{s,t} \mathfrak{v}^{(s)}_\ell.
\end{align*}
Using the state evolution for $\{\mathfrak{v}^{(t)}\}_{t\geq 1}$ in Theorem \ref{thm:GFOM_se_asym}, we have
\begin{align*}
\mu^{(t)}_\ell\approx \mathcal{N}\Big(\big(1-(\mathsf{M}_{\ell}^{\mathfrak{V}})_{0,t}\big)\cdot \mu_{0,\ell}, \bigiprod{(\mathsf{M}^{\mathfrak{V}}_\ell)_{{[1:\infty),t}}}{(\Sigma_\ell^{\mathfrak{V}})_{[1:\infty)^2} (\mathsf{M}^{\mathfrak{V}}_\ell)_{{[1:\infty),t}} }\Big),
\end{align*}
where $\approx$ contains relevant error terms in the formulation of Theorem \ref{thm:GFOM_se_asym}. \qed

\appendix

\section{Auxiliary results}

\begin{lemma}\label{lem:count_total_num}
	For any $V \in \N$, $n \in \N$ and $S \subset [n]$,
	\begin{align*}
	\sum_{\ell_{[V]} \in [n]} n^{- \abs{ \{S,\ell_{[V]}\} }}\leq (2\abs{S}V)^{V+1} n^{-\abs{S}}.
	\end{align*}
\end{lemma}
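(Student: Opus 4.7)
The plan is to decompose the sum on the left-hand side according to the number of coordinates of $\ell_{[V]}$ that land outside $S$ as distinct new indices. Concretely, for each $k \in [0:V]$, let $T_k$ denote the number of tuples $\ell_{[V]} \in [n]^V$ such that $|\{\ell_1,\ldots,\ell_V\} \setminus S| = k$, equivalently $|\{S,\ell_{[V]}\}| = |S|+k$. The point of this decomposition is that the exponent $|\{S,\ell_{[V]}\}|$ then becomes constant within each class, turning the sum into $\sum_{k=0}^V T_k \cdot n^{-(|S|+k)}$.

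To estimate $T_k$, I would use a two-step counting argument: first choose which $k$ indices from $[n]\setminus S$ appear as new values, which contributes at most $\binom{n-|S|}{k} \leq n^k/k!$ possibilities; then assign each of the $V$ coordinates to one of the $|S|+k$ available indices (the $|S|$ elements of $S$ together with the $k$ new ones), giving at most $(|S|+k)^V \leq (|S|+V)^V$ assignments. This yields the uniform bound $T_k \leq n^k (|S|+V)^V/k!$, which after substitution gives
\begin{align*}
\sum_{\ell_{[V]} \in [n]^V} n^{-|\{S,\ell_{[V]}\}|}
\leq n^{-|S|}(|S|+V)^V \sum_{k=0}^V \frac{1}{k!}
\leq e \cdot n^{-|S|}(|S|+V)^V.
\end{align*}

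To finish, I would use the assumption (implicit from the statement) that $|S|, V \geq 1$ to derive $|S|+V \leq 2|S|V$ and $e \leq 2|S|V$, so that $e \cdot (|S|+V)^V \leq (2|S|V)^{V+1}$, producing the claimed bound. There is no real obstacle beyond this routine counting; the only minor subtlety is making sure the dominating factor $(2|S|V)^{V+1}$ absorbs both the combinatorial term $(|S|+V)^V$ and the factorial sum $\sum 1/k! \leq e$, which is precisely why the exponent $V+1$ (rather than $V$) and the product $|S|V$ (rather than $\max\{|S|,V\}$) appear in the statement.
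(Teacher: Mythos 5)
Your argument follows the same elementary stratified-counting strategy as the paper's own proof, just organized differently: the paper first splits the coordinates of $\ell_{[V]}$ according to which positions take values in $S$ and then stratifies the remaining coordinates by their number of distinct values, whereas you stratify once and for all on $k=\abs{\{\ell_{[V]}\}\setminus S}$ so that the exponent is constant on each stratum. Your version is if anything cleaner, and the intermediate bound
\begin{align*}
\sum_{\ell_{[V]}\in[n]} n^{-\abs{\{S,\ell_{[V]}\}}}\leq e\cdot n^{-\abs{S}}(\abs{S}+V)^V
\end{align*}
is correct: the counting $T_k\leq \binom{n-\abs{S}}{k}(\abs{S}+k)^V\leq n^k(\abs{S}+V)^V/k!$ is a valid over-count.

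The one flaw is in the final constant absorption. When $\abs{S}=V=1$ you have $2\abs{S}V=2<e$, so the asserted inequality $e\leq 2\abs{S}V$ is false; indeed $e(\abs{S}+V)^V=2e\approx 5.44$ exceeds $(2\abs{S}V)^{V+1}=4$, so your chain does not close in that case. The lemma itself still holds there, since $\sum_{\ell_1\in[n]}n^{-\abs{\{S,\ell_1\}}}=n^{-1}+(n-1)n^{-2}\leq 2n^{-1}\leq 4n^{-1}$, and the patch is immediate: either dispose of $\abs{S}=V=1$ by this direct computation, or note that for $V=1$ the factorial sum is exactly $\sum_{k=0}^{1}1/k!=2\leq 2\abs{S}V$, so the crude bound $e\leq 2\abs{S}V$ is only needed when $V\geq 2$, where it holds. (You should also record explicitly, as you do implicitly, that $S\neq\emptyset$; for $S=\emptyset$ the right-hand side vanishes and the statement as written is vacuously false, which is why the paper only ever invokes it with nonempty $S$.) With that one-line fix the proof is complete.
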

\begin{proof}
	Let $s=\abs{S}\leq n$. Then $
	\sum_{\ell_{[V]}\in [n]} n^{- \abs{ \{S, \ell_{[V]}\}  } } = \sum_{\ell_{[V]}\in [n]}  n^{ - \abs{ \{[s], \ell_{[V]}\}  }}$. 
	Using 
	\begin{align*}
	\sum_{\ell_{[v]}\in [n]} n^{- \abs{\{\ell_{[v]}\} }  }= \sum_{q= 1}^{v\wedge n} n^{-q} \bigg(\sum_{\ell_{[v]}\in [n],  \abs{\{\ell_{[v]}\} } =q  } 1\bigg)\leq \sum_{q=1}^v q^v\leq v^{v+1},
	\end{align*}
	we may then bound
	\begin{align*}
	\sum_{\ell_{[V]}\in [n]}  n^{ - \abs{ \{[s], \ell_{[V]}\}  } }  &=   \sum_{\mathcal{Q}\subset [V]} \sum_{\ell_{\mathcal{Q}}\in [s],\ell_{[V]\setminus \mathcal{Q}} \in [n]\setminus [s] } n^{- \abs{ \{\ell_{[V]\setminus \mathcal{Q}}\}  }-s}\\
	&\leq n^{-s}\sum_{\mathcal{Q}\subset [V]} s^{\abs{\mathcal{Q}}}\sum_{\ell_{[V]\setminus \mathcal{Q}} \in [n]\setminus [s] } (n-s)^{- \abs{ \{\ell_{[V]\setminus \mathcal{Q}}\}  }} \leq n^{-s} \cdot (2sV)^{V+1}.
	\end{align*}
	Here in the last inequality we used the estimates $\sum_{\ell_{[V]\setminus \mathcal{Q}} \in [n]\setminus [s] } (n-s)^{- \abs{ \{\ell_{[V]\setminus \mathcal{Q}}\}  }}\leq V^{V+1}$ and $\sum_{\mathcal{Q}\subset [V]} s^{\abs{\mathcal{Q}}}\leq s^V\cdot \big(\sum_{\mathcal{Q}\subset [V]} 1\big)= (2s)^V$.
\end{proof}

\begin{lemma}\label{lem:set_merge}
	For two sets $T,S$ with $\abs{T\cap S}\geq 1$, and another two elements $t,s$, 
	\begin{align*}
	\big(\abs{\{T,t\}}-1\big)+\big(\abs{\{S,s\}}-1\big)\geq \abs{\{T,S,t,s\} }-1.
	\end{align*}
\end{lemma}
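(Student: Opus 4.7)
The plan is to rewrite the claimed inequality in a form where the standard inclusion--exclusion identity for set cardinalities applies directly. Moving the $-1$'s to the other side, the desired inequality is equivalent to
\begin{align*}
\bigabs{\{T,t\}} + \bigabs{\{S,s\}} \geq \bigabs{\{T,S,t,s\}} + 1.
\end{align*}
Writing $T' \equiv T \cup \{t\}$ and $S' \equiv S \cup \{s\}$, inclusion--exclusion gives $|T'| + |S'| = |T' \cup S'| + |T' \cap S'|$, and since $T' \cup S' = T \cup S \cup \{t,s\}$, the inequality reduces to the purely set-theoretic statement $|T' \cap S'| \geq 1$.

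The key (and only) observation is then that the hypothesis $|T \cap S| \geq 1$ immediately yields $T \cap S \subseteq T' \cap S'$, so that $|T' \cap S'| \geq |T \cap S| \geq 1$, completing the proof. There is no real obstacle here; the statement is essentially a one-line consequence of inclusion--exclusion, and the role of the extra singletons $\{t\},\{s\}$ is only to enlarge, not shrink, the intersection.
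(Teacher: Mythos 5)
Your proof is correct, and it takes a genuinely different (and cleaner) route than the paper's. The paper proves the lemma by an exhaustive case analysis over the nine possible configurations of where $t$ and $s$ sit relative to $T$ and $S$ (namely $t \in T$, $t \in S\setminus T$, or $t \in (T\cup S)^c$, crossed with the analogous trichotomy for $s$), verifying the inequality separately in each case and invoking $\abs{T\cap S}\geq 1$ only in the tight cases. Your argument instead observes that, with $T'\equiv T\cup\{t\}$ and $S'\equiv S\cup\{s\}$, the claim is exactly $\abs{T'}+\abs{S'}\geq \abs{T'\cup S'}+1$, which by inclusion--exclusion is equivalent to $\abs{T'\cap S'}\geq 1$; this follows at once from $T\cap S\subseteq T'\cap S'$ and the hypothesis. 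Your approach is shorter, makes transparent exactly where the hypothesis $\abs{T\cap S}\geq 1$ is used, and also makes clear when equality holds (precisely when $\abs{T'\cap S'}=1$), whereas the paper's case analysis obscures this; the only thing the paper's version buys is that it requires no identity beyond counting, but that is not a meaningful advantage here.
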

\begin{proof}
	We discuss all possible scenarios below:
	\begin{itemize}
		\item Suppose $t \in T $. Then the LHS is equal to
		\begin{align*}
		\begin{cases}
		\abs{T}+\abs{S}-2\stackrel{(\ast)}{\geq} \abs{\{T,S\}}-1, & s \in S;\\
		\abs{T}+\abs{\{S,s\}}-2=\abs{T}+\abs{S}-1\geq \abs{\{T,S\}}-1, & s \in T\setminus S;\\
		\abs{T}+\abs{\{S,s\}}-2=\abs{T}+\abs{S}-1\stackrel{(\ast)}{\geq} \abs{\{T,S\}}= \abs{\{T,S,s\}}-1, & s \in (T\cup S)^c.
		\end{cases}
		\end{align*}
		\item Suppose $t \in S \setminus T$. Then the LHS is equal to
		\begin{align*}
		\begin{cases}
		\abs{\{T,t\}}+\abs{S}-2=\abs{T}+\abs{S}-1\geq \abs{\{T,S\}}-1, & s \in S;\\
		\abs{\{T,t\}}+\abs{\{S,s\}}-2=\abs{T}+\abs{S}\geq \abs{\{T,S\}}-1, & s \in T\setminus S;\\
		\abs{\{T,t\}}+\abs{\{S,s\}}-2=\abs{T}+\abs{S} \geq \abs{\{T,S\}}= \abs{\{T,S,s\}}-1, & s \in (T\cup S)^c.
		\end{cases}
		\end{align*}
		\item Suppose $t \in (T\cup S)^c$. Then the LHS is equal to
		\begin{align*}
		\begin{cases}
		\abs{\{T,t\}}+\abs{S}-2=\abs{T}+\abs{S}-1\stackrel{(\ast)}{\geq} \abs{\{T,S\}}=\abs{\{T,S,t\}}-1, & s \in S;\\
		\abs{\{T,t\}}+\abs{\{S,s\}}-2=\abs{T}+\abs{S}\geq \abs{\{T,S\}}=\abs{\{T,S,t\}}-1, & s \in T\setminus S;\\
		\abs{\{T,t\}}+\abs{\{S,s\}}-2=\abs{T}+\abs{S} \stackrel{(\ast)}{\geq} \abs{\{T,S\}}+1= \abs{\{T,S,t,s\}}-1, & s \in (T\cup S)^c.
		\end{cases}
		\end{align*}
	\end{itemize}
	Here in $(\ast)$ we used the condition $\abs{T\cap S}\geq 1$.
\end{proof}

\begin{lemma}\label{lem:prox_Lip}
	If $f:\R\to \R$ is $\alpha$-strongly convex, then $\prox_f:\R\to \R$ is $1/(1+\alpha)$-Lipschitz.
\end{lemma}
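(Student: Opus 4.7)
The plan is to use the standard first-order optimality characterization of the proximal operator together with the monotonicity inequality guaranteed by $\alpha$-strong convexity.

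First I would recall that, since $f$ is proper closed convex, $z = \prox_f(x)$ is equivalent to the subdifferential inclusion $x - z \in \partial f(z)$. Fix two points $x_1,x_2 \in \R$ and write $z_i \equiv \prox_f(x_i)$, so that $g_i \equiv x_i - z_i \in \partial f(z_i)$ for $i=1,2$.

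Next I would invoke the strong-monotonicity inequality: $\alpha$-strong convexity of $f$ is equivalent to $(g_1-g_2)(z_1-z_2) \geq \alpha (z_1-z_2)^2$ for any $g_i \in \partial f(z_i)$. Substituting $g_i = x_i - z_i$ and rearranging yields
\begin{align*}
(x_1-x_2)(z_1-z_2) \geq (1+\alpha)(z_1-z_2)^2.
\end{align*}
Applying $|ab| \leq |a||b|$ on the left then gives $|z_1-z_2| \leq (1+\alpha)^{-1}|x_1-x_2|$, which is the claimed Lipschitz bound.

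There is essentially no obstacle: the argument is a two-line consequence of the optimality condition and the strong-monotonicity characterization of strong convexity. The only minor care needed is to state the argument in the subdifferential form (rather than assuming $f \in C^1$), which handles the possibly non-smooth regularizers relevant to the applications (e.g.\ the Lasso). No further machinery from the paper is required.
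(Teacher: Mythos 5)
Your proof is correct and follows essentially the same route as the paper: both use the optimality characterization $x-\prox_f(x)\in \partial f(\prox_f(x))$, the strong monotonicity inequality $(g_1-g_2)(z_1-z_2)\geq \alpha(z_1-z_2)^2$ (which the paper derives explicitly from the subgradient inequality), and then the rearrangement $(x_1-x_2)(z_1-z_2)\geq (1+\alpha)(z_1-z_2)^2$ followed by Cauchy--Schwarz.
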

\begin{proof}
	The result is standard in convex analysis. We provide a proof for completeness. Let us fix $x_1,x_2\in \R$ and write $w_\ell\equiv \prox_f(x_\ell)$ for $\ell=1,2$. By first-order optimality, for any $x\in \R$, $x-\prox_f(x)\in \partial f(\prox_f(x))$. So there exists $g_\ell \in \partial f(w_\ell)$ such that $x_\ell-w_\ell=g_\ell$. In particular,
	\begin{align*}
	x_1-x_2 = w_1-w_2+(g_1-g_2). 
	\end{align*}
	On the other hand, using the subgradient inequality $
	f(w_\ell)-f(w_{3-\ell})\geq g_{3-\ell}(w_\ell-w_{3-\ell})+\frac{\alpha}{2} \big(w_\ell-w_{3-\ell}\big)^2$ that holds for both $\ell=1,2$, we have $
	(g_1-g_2)(w_1-w_2)\geq \alpha (w_1-w_2)^2$. 
	Combined with the above display, we have
	\begin{align*}
	(x_1-x_2) (w_1-w_2)\geq (1+\alpha) (w_1-w_2)^2. 
	\end{align*}
	The claim follows.
\end{proof}

\section*{Acknowledgments}
The author would like to thank Sheng Xu, Xiaocong Xu, Linjun Zhang, and three referees for numerous helpful comments and suggestions that significantly improved the quality of the paper.

\bibliographystyle{alpha}
\bibliography{mybib}

\end{document}